\documentclass[11pt,a4paper,reqno]{amsart}
\usepackage[margin=1.37in]{geometry}
\usepackage{amsmath, amsxtra, amsthm, amsfonts, amssymb, mathtools}
\usepackage{mathrsfs}
\usepackage{graphicx}
\usepackage{url}
\usepackage[dvipsnames]{xcolor}
\usepackage{bbm}
\usepackage{bm}
\usepackage{multicol}

\usepackage{tikz-cd}
\usepackage{tikz}
\tikzstyle{rectan} = [rectangle, draw, rounded corners, minimum width=2.5cm, minimum height=0.8cm, text centered]
\usetikzlibrary{backgrounds}
\usepackage[cal=boondoxo,scr=euler]{mathalfa}
\usepackage{float}
\usetikzlibrary{decorations.pathreplacing, positioning}
\usetikzlibrary{decorations.markings}
\usetikzlibrary{shapes,positioning,intersections,quotes}
\usetikzlibrary{shapes,fit}
\usetikzlibrary{arrows.meta, calc}
\definecolor{curvyarrow}{RGB}{200, 100, 150}
\usepackage{comment}
\usepackage[shortlabels]{enumitem} 
\usepackage{enumitem}
\usepackage{relsize}
\usepackage{stmaryrd}

\usepackage[all,cmtip]{xy}	
\xyoption{arrow}
\usepackage[T1]{fontenc}
\usepackage{enumitem}
\usepackage[backref=page,linktocpage]{hyperref} 
\hypersetup{
    colorlinks,
    allcolors=teal 
}

\usepackage[capitalize]{cleveref}

\newtheoremstyle{results}
{8pt}
{8pt}
{\itshape}
{}
{\bfseries}
{}
{.5em}
{}
\theoremstyle{results}
\newtheorem{thm}{Theorem}[section]

\newtheorem{lemma}[thm]{Lemma}
\newtheorem{corollary}[thm]{Corollary}
\newtheorem{proposition}[thm]{Proposition}

\newtheoremstyle{definitions}
{7pt}
{7pt}
{}
{}
{\bfseries}
{}
{.5em}
{}

\theoremstyle{definitions}
\newtheorem{definition}[thm]{Definition}
\newtheorem{conjecture}[thm]{Conjecture}

\newtheorem{example}[thm]{Example}

\newtheorem{remark}[thm]{Remark}

\newtheorem{notation}[thm]{Notation}

\newtheorem{question}[thm]{Question}

\crefname{thm}{theorem}{theorems}
\Crefname{thm}{Theorem}{Theorems}
\crefname{lemma}{lemma}{lemmas}
\Crefname{lemma}{Lemma}{Lemmas}
\crefname{proposition}{proposition}{propositions}
\Crefname{proposition}{Proposition}{Propositions}
\crefalias{proposition}{proposition}
\crefname{corollary}{corollary}{corollaries}
\Crefname{corollary}{Corollary}{Corollaries}
\crefname{definition}{definition}{definitions}
\Crefname{definition}{Definition}{Definitions}
\crefname{conjecture}{conjecture}{conjectures}
\Crefname{conjecture}{Conjecture}{Conjectures}
\crefname{example}{example}{examples}
\Crefname{example}{Example}{Examples}
\crefname{remark}{remark}{remarks}
\Crefname{remark}{Remark}{Remarks}

\def\Z{\mathbb{Z}}
\def\Q{\mathbb{Q}}
\def\R{\mathbb{R}}
\def\C{\mathbb{C}}

\def\P{\mathbb{P}}

\def\N{\mathbb{N}}
\def\Z{\mathbb{Z}}

\def\calA{\mathcal{A}}

\def\calG{\mathcal{G}}

\def\calL{\mathcal{L}}
\def\calM{\mathcal{M}}
\def\calN{\mathcal{N}}

\def\calP{\mathcal{P}}

\def\calS{\mathcal{S}}
\newcommand{\T}{\mathcal{T}}

\def\N{\mathbf{N}}

\newcommand{\abs}[1]{\left\lvert#1\right\rvert}

\newcommand{\ch}{C}
\newcommand{\cl}{\operatorname{cl}}
\newcommand{\lk}{\operatorname{lk}}

\newcommand{\Cone}{\operatorname{Cone}}

\newcommand{\e}{\mathbf{e}}

\newcommand{\Hom}{\operatorname{Hom}}

\renewcommand{\hat}{\widehat}

\newcommand{\aug}{\operatorname{aug}}

\newcommand{\uCH}{\operatorname{CH}}
\newcommand{\uH}{\textrm{H}}

\newcommand{\rk}{\operatorname{rk}}

\newcommand{\M}{\mathrm{M}}
\renewcommand{\emptyset}{\varnothing}

\newcommand{\floor}[1]{\lfloor #1 \rfloor}

\renewcommand{\L}{\mathcal{L}}

\newcommand{\un}{\hat{1}}
\newcommand{\zero}{\hat{0}}

\newcommand{\G}{\calG}

\renewcommand{\S}{\calS}

\newcommand{\CH}{\uCH}
\newcommand{\fact}{\mathrm{fact}}

\newcommand{\Mcut}{\mathcal{M}}
\newcommand{\At}{\mathrm{At}}

\newcommand{\Flag}{\mathbf{FL}}
\newcommand{\Gpos}{\mathbf{\Gamma P}}
\newcommand{\Gposs}{\Gpos^{\ast}}

\newcommand{\des}{\mathrm{des}}

\newcommand{\PL}{\textrm{PL}}

\newcommand{\Tr}{\mathrm{Tr}}
\newcommand{\MW}{\mathrm{MW}}

\newcommand{\B}{\mathrm{B}}
\renewcommand{\H}{\mathrm{H}}

\newcommand{\U}{\mathrm{U}}
\newcommand{\Nest}{\mathcal{N}}
\renewcommand{\N}{\mathcal{N}}
\newcommand{\stable}{\mathrm{stable}}
\newcommand{\Des}{\mathrm{Des}}
\newcommand{\FY}{\mathrm{FY}}
\newcommand{\supp}{\mathrm{supp}}
\newcommand{\Moduli}{\overline{\mathcal{M}}}

\newcommand{\Braid}{\mathcal{B}raid}
\newcommand{\Leaf}{\operatorname{Leaf}}

\renewcommand{\subset}{\subseteq}
\renewcommand{\supset}{\supseteq}

\newcommand{\bigast}{\mathop{\scalebox{2.1}{$\ast$}}\displaylimits}
\newcommand{\mult}{\textrm{mult}}
\newcommand{\Nlatt}{\mathbf{N}}
\newcommand{\Mlatt}{\mathbf{M}}

\title{Matroid analogues of Gal's conjecture}

\author{Basile Coron}
\address{B. Coron, Centre de Mathématiques Laurent Schwartz, Polytechnique, Paris, France}
\email{basile.coron@polytechnique.edu}

\author{Luis Ferroni}
\address{L. Ferroni, Dipartimento di Matematica, Universit\`a di Pisa, Pisa, Italy}\email{luis.ferroni@unipi.it}

\author{Shiyue Li}
\address{S. Li, Department of Mathematics, University of Michigan, Ann Arbor, USA}\email{shiyueli@umich.edu}
\subjclass[2020]{05B35, 14T15, 14T20, 14C15, 05E45}
\date{\today}

\begin{document}
\begin{abstract}
   Well-known conjectures of Charney--Davis, Gal, and Nevo--Petersen predict increasingly strong positivity phenomena for the $h$-vectors of flag simplicial spheres. In this paper, we formulate and prove matroid analogues of these conjectures in the setting of Chow polynomials of matroids with building sets.  We introduce a new class of matroids with building sets, called complete built matroids, encompassing many prominent families of built matroids such as arbitrary matroids with maximal building sets and braid matroids with minimal building sets. For complete built matroids, we prove $\gamma$-positivity as an analogue of Gal's conjecture, via a combinatorial formula for the $\gamma$-coefficients. We further realize the $\gamma$-vector as the $f$-vector of a simplicial complex, as an analogue of the Nevo--Petersen conjecture. As an application, we obtain a new formula for the $\gamma$-polynomial of the Poincar\'e polynomial of $\overline{\mathcal{M}}_{0,n}$, together with new coefficient inequalities. We also study flag built matroids, and prove $\gamma$-positivity of their Chow polynomials, extending several known results. Our proofs crucially use toric geometry and tropical intersection theory. Finally, we construct an infinite family of flag chordal nestohedra whose $h$-polynomials are not real-rooted, invalidating a natural strengthening of our result at this level of generality.
\end{abstract}

\maketitle

\setcounter{tocdepth}{1}
\tableofcontents

\section{Introduction}
Throughout, $\M$ is a loopless matroid. 

\subsection{Feichtner--Yuzvinsky Chow rings and their Hilbert--Poincar\'e series}
In a landmark paper, Feichtner and Yuzvinsky~\cite{feichtner2004chow} introduced the Chow ring $\uCH(\M, \G)$ associated to $\M$ together with a building set $\mathcal{G}$ of the lattice of flats of $\M$. This ring generalizes the Chow ring of the wonderful compactification $W_{\M, \G}$ associated to $\G$ introduced by De Concini and Procesi~\cite{de1995wonderful} in the realizable case. In general, Feichtner and Yuzvinsky showed that $\uCH(\M, \G)$ can be identified with the Chow ring of a smooth but usually non-complete toric variety. These rings played a fundamental role in the recent development of combinatorial Hodge theory. Adiprasito, Huh, and Katz \cite{adiprasito-huh-katz} show that, when $\mathcal{G}$ is \emph{maximal}, the Chow ring $\uCH(\M, \G)$ is a graded Artinian algebra, enjoying the \emph{K\"ahler package}: 
\begin{enumerate}[leftmargin=28pt]
    \item[(PD)] Poincar\'e duality; 
    \item[(HL)] the hard Lefschetz property; and 
    \item[(HR)] the mixed Hodge--Riemann relations in arbitrary degree.
\end{enumerate} The work of Ardila, Denham, and Huh strengthened this result by developing the tropical Hodge theory \cite{ardila-denham-huh} (see also \cite{pagaria2023hodge,amini2023hodgetheorytropicalfans}), which implies that for any loopless matroid $\M$ and any building set $\mathcal{G}$, the algebra $\uCH(\M, \G)$ satisfies the K\"ahler package.
This package implies positivity properties of the Hilbert--Poincar\'e series $\H(\M,\mathcal{G})(t)$ of $\uCH(\M, \G)$, commonly referred to as the \emph{Chow polynomial} of $(\M,\mathcal{G})$: 
\begin{align*}
    &\text{Poincar\'{e} duality implies the palindromicity of }\H(\M,\mathcal{G})(t). \\
    &\text{The hard Lefschetz property implies the unimodality of }\H(\M,\mathcal{G})(t). 
\end{align*}
These Hilbert--Poincar\'e series exhibit various stronger positivity properties. The following conjecture, posed independently by Huh–Stevens \cite{stevens-bachelor} and Ferroni–Schr\"{o}ter~\cite{ferroni-schroter}, predicts that they satisfy the strongest such property: real-rootedness.

\begin{conjecture}\label{conj:chow-rr-maximal}
    If $\G$ is maximal, then $\H(\M,\G)(t)$ is real-rooted.
\end{conjecture}
Below real-rootedness, we have a hierarchy of positivity properties for palindromic polynomials. A palindromic polynomial $h(t) \in \R[t]$ of degree $d$ is \emph{$\gamma$-positive} if 
\[ 
h(t) = \sum_{i=0}^{\lfloor d/2\rfloor} \gamma_i\, t^i(1+t)^{d-2i}, \text{ with } \gamma_i \geqslant 0 \text{ for every } 0 \leqslant i \leqslant \floor{d/2}. 
\]
If the coefficients of $h(t)$ are positive and palindromic, then these implications\footnote{A sequence of real numbers $a_0,\ldots,a_n$ is said to be \emph{unimodal}, if there exists some index $j$ such that 
    \[
    a_0 \leqslant \cdots \leqslant a_{j-1} \leqslant a_j \geqslant a_{j+1} \geqslant \cdots \geqslant a_n.
    \] A sequence is \emph{log-concave} if $a_i^2 \geqslant a_{i-1} a_{i+1}$ for every $0 < i < n$. Given a sequence of nonnegative real numbers with no internal zeros (for all $i < j <k$, if $a_i \ne 0, a_k \ne 0$, then $a_j \ne 0$), if it is log-concave, then it is unimodal. A polynomial is unimodal (resp. log-concave), if its sequence of coefficients is unimodal (resp. log-concave). A classical theorem by Newton states that if a polynomial is real-rooted, then its coefficients are log-concave \cite[Theorem~2]{stanley-unimodality}.} hold. 
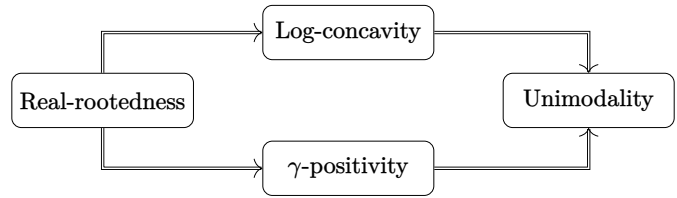
\begin{figure}[ht]
    \centering
    \scalebox{0.9}{
    \begin{tikzpicture}
        \tikzset{node distance = 2.0cm and 1cm}
        \tikzstyle{arrow} = [-{>[scale=1.8, length=2, width=4.5]}, double]
        \node (real-rooted)     [rectan]                                    {Real-rootedness};
        \node (log-concave)     [rectan, right=of real-rooted, yshift= 1.0cm] {Log-concavity};
        \node (gamma-positivity)[rectan, right=of real-rooted, yshift=-1.0cm] {$\gamma$-positivity};
        \node (unimodal)        [rectan, right=of log-concave, yshift=-1.0cm] {Unimodality};
        \draw[arrow] (real-rooted)      |- (log-concave);
        \draw[arrow] (real-rooted)      |- (gamma-positivity);
        \draw[arrow] (gamma-positivity) -| (unimodal);
        \draw[arrow] (log-concave)      -| (unimodal);
    \end{tikzpicture}}
    \caption{Hierarchy of positivity properties for positive, palindromic polynomials.}
    \label{fig:hierarchy}
\end{figure}

Except for unimodality, the positivity properties appearing in Figure~\ref{fig:hierarchy} are highly sensitive to the choice of building sets. For the choice of maximal building set $\G_{\max}$, Conjecture~\ref{conj:chow-rr-maximal} has been proven in various special families of matroids \cite{ferroni-matherne-stevens-vecchi,hoster-stump,branden-vecchi2,coron-ferroni-li}, and $\gamma$-positivity holds for all matroids \cite{ferroni-matherne-stevens-vecchi,stump,ferroni-matherne-vecchi}. However, their behavior becomes mixed once we leave the realm of maximal building sets. For example, the uniform matroid $\U_{n-1,n}$ with the minimal building set $\mathcal{G}_{\min}$ has associated wonderful variety $W_{\U_{n-1, n}, \G_{\min}}$ isomorphic to the projective space $\P^{n-2}$. Therefore, the Hilbert--Poincar\'{e} series is  
\[
\H(\U_{n-1, n},\mathcal{G}_{\min})(t) = 1 + t + \cdots + t^{n-2}, 
\] which fails to be 
real-rooted or $\gamma$-positive for $n \geqslant 4$. Log-concavity can fail too \cite[Section~6]{eur-ferroni-matherne-pagaria-vecchi}.  

Despite the examples above, these four positivity properties appear to hold in a surprisingly broad range of cases, far beyond the realm of maximal building sets. A particularly important example arises when $\M$ is the braid matroid $\Braid_n$ and $\G=\G_{\min}$ is the minimal building set. In this case, extensive computer experiments support the following conjecture of Aluffi--Chen--Marcolli \cite[Conjecture~1]{aluffi-chen-marcolli}.
\begin{conjecture}\label{conj:chow-rr-braid-min}
    The Poincaré polynomial $P_{\overline{\mathcal{M}}_{0, n+1}}(t) = \H(\Braid_n, \G_{\min})(t)$ of the moduli space $\overline{\mathcal{M}}_{0, n+1}$ is real-rooted.
\end{conjecture}
This suggests that one should look for conceptual explanations for the positivity properties of $\H(\M,\G)$. Our goal is to identify structural conditions on the main objects arising from $(\M,\G)$---including the nested set complex $\N(\M,\G)$, the wonderful variety $W_{\M,\G}$, and the toric variety $X_{\M,\G}$---that guarantee such properties. In the present article, we focus in particular on understanding the relationship between these structural features and $\gamma$-positivity.

\subsection{Gamma-positivity for flag simplicial spheres} 
A long-standing open conjecture in geometric and topological combinatorics posed by Charney and Davis \cite[Conjecture~A]{charney-davis} predicts the sign of the Euler characteristic of any even dimensional, nonpositively curved compact piecewise Euclidean manifold. The origins of this conjecture trace back to open conjectures by Hopf from the 1930s, which predict that the signed Euler characteristics of compact Riemannian manifolds of even dimension with nonpositive sectional curvature are always positive. 

The Charney--Davis conjecture, in the special case of \emph{cubical} piecewise Euclidean manifolds, can be recast in purely combinatorial terms. Recall that a simplicial complex $\Delta$ is said to be \emph{flag}, if every minimal nonface of $\Delta$ has cardinality $2$. Gromov \cite[p.~122]{gromov} observed that the cubicality of a nonpositively curved piecewise Euclidean manifold is equivalent to flagness of all vertex links. Babson--Billera--Chan \cite{babson-billera-chan} proved that for every simplicial sphere $\Delta$, there exists a cubical piecewise Euclidean manifold all of whose links are isomorphic to $\Delta$. If we denote the number of faces of size $i$ of a simplicial complex $\Delta$ by $f_i(\Delta)$, we have the following reformulation of the Charney--Davis conjecture (cf. \cite[p.~186]{forman}). 

\begin{conjecture}[Charney--Davis conjecture]\label{conj:charney-davis}
    If $\Delta$ is a flag simplicial sphere of dimension $2n-1$, then the \emph{Charney--Davis quantity},
    \[
    \widetilde{\gamma}_{n}(\Delta) :=  \sum_{i=0}^{2n} f_i(\Delta) (-2)^{n-i},
    \]
    is nonnegative.
\end{conjecture}

If $\Delta$ is a simplicial $(d-1)$-sphere, the polynomial $h(\Delta;x) := \sum_{i=0}^d f_i(\Delta)x^i(1-x)^{d-i}$ is palindromic by the Dehn--Sommerville equations, which allows the expansion
\begin{equation*} \label{eq:h-gamma}
    h(\Delta; x) = \sum_{i=0}^{\lfloor d/2\rfloor} \gamma_i(\Delta)\, x^i(1+x)^{d-2i}, 
\end{equation*}
where each $\gamma_i(\Delta)$ is an integer number. 
When $\Delta$ is odd-dimensional, the specialization $h(\Delta;x)$ at $x = -1$ yields $\gamma_n(\Delta) = 2^{n} \,\widetilde{\gamma}_n(\Delta)$. This implies that the nonnegativity of the Charney--Davis quantity is determined by the nonnegativity of the top entry of the $\gamma$-polynomial of $h(\Delta;x)$.
Gal proposes a significant strengthening of the Charney--Davis conjecture \cite[Conjecture~2.1.7]{gal2005real}, while removing the parity assumption on dimensionality. 

\begin{conjecture}[Gal's conjecture]\label{conj:gal}
    If $\Delta$ is a flag simplicial sphere, then the polynomial $h(\Delta;x)$ is $\gamma$-positive.
\end{conjecture}

Nevo and Petersen further strengthen Gal's conjecture in \cite[Conjecture~1.4]{nevo-petersen} by proposing that the sequence $\gamma(\Delta) \coloneqq (\gamma_0(\Delta),\ldots,\gamma_{\lfloor d/2\rfloor}(\Delta))$ is the $f$-vector of a simplicial complex, which is well-known to be characterized by the Kruskal--Katona inequalities.

\begin{conjecture}[Nevo--Petersen conjecture]\label{conj:nevo-petersen}
    If $\Delta$ is a flag simplicial sphere, then there exists a simplicial complex $\Gamma$ such that $\gamma(\Delta) = f(\Gamma)$. 
\end{conjecture}

Furthermore, Nevo and Petersen conjecture that the simplicial complex $\Gamma$ appearing in the above conjecture can in fact be chosen to be \emph{balanced} \cite[Conjecture~6.3]{nevo-petersen}. Thus, for flag simplicial spheres, the following hierarchy of implications holds. 

\begin{figure}[ht]
    \centering
    \scalebox{0.9}{
    \begin{tikzpicture}
        \tikzset{node distance = 2.0cm and 1cm}
        \tikzstyle{arrow} = [-{>[scale=1.8, length=2, width=3.5]}, double]
        \node (np-bal)  [rectan]                        {Nevo--Petersen (balanced $\Gamma$)};
        \node (np)      [rectan, right=of np-bal]       {Nevo--Petersen};
        \node (gal)     [rectan, right=of np]           {Gal};
        \node (cd)      [rectan, right=of gal]          {Charney--Davis};
        \draw[arrow] (np-bal) -- (np);
        \draw[arrow] (np)     -- (gal);
        \draw[arrow] (gal)    -- (cd);
    \end{tikzpicture}}
    \caption{Strengths of conjectures on $\gamma(\Delta)$ for a flag simplicial sphere $\Delta$.}
    \label{fig:implications}
\end{figure}
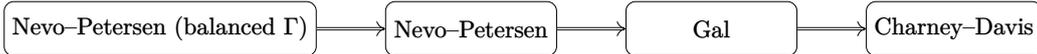

\subsection{Main results}
We formulate and prove analogues of Conjecture~\ref{conj:gal} and Conjecture~\ref{conj:nevo-petersen} in the setting of Chow rings of matroids along with building sets. A pair $(\M,\G)$ consisting of a matroid and a building set on its lattice of flats will be referred to as a \emph{built matroid}.

Our first main result concerns the class of \emph{complete built matroids}, which we introduce and analyze in Section ~\ref{subsec:complete}. This class encompasses several important families of built matroids such as matroids with maximal building sets, free coextensions with the augmented building set, chordal nestohedra, and supersolvable built lattices (in particular, braid matroids with the minimal building set).

We establish $\gamma$-positivity of the Chow polynomial for complete built matroids, and provide a combinatorial formula for the $\gamma$-expansion of these polynomials which gives a generalization of a recent result by Stump~\cite{stump} in the maximal and augmented building set case.
\begin{thm}[Theorem~\ref{thm:complete-formula}]\label{thm:main-complete-des-formula}
    If $(\M,\G)$ is a complete built matroid, and $\gamma(\M, \G)(t)$ is the $\gamma$-expansion of $\H(\M, \G)(t)$, then we have the formula 
    \[ \gamma(\M, \G)(t) = \sum_{\S \in \Nest(\M,\G)^{\max}_{\stable}} t^{\des(\S)}.\]
    In particular, the polynomial $\H(\M, \G)$ is $\gamma$-positive. 
\end{thm}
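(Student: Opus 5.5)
The plan is to reduce the $\gamma$-expansion to a descent statistic on maximal nested sets, in the spirit of Stump's argument for maximal and augmented building sets, using completeness as the structural input. First I will write $\H(\M,\G)(t)$ as a sum over maximal nested sets, $\H(\M,\G)(t)=\sum_{\S\in\Nest(\M,\G)^{\max}} t^{\mathrm{asc}(\S)}$, for a suitable ascent/descent statistic. To get this I will use a shelling, or equivalently a Gröbner/monomial basis, of the Feichtner--Yuzvinsky presentation. Concretely, I fix the linear order on $\G$ implicit in the definition of $\des$ for complete built matroids. Then each maximal nested set $\S$ contributes the monomial basis element $\prod_{G\in D(\S)} x_G$, where $D(\S)$ is the set of "descent" elements of $\S$ (those $G$ that cannot be exchanged for a smaller element while remaining nested). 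Degree-wise this is the $h$-vector of the nested set complex, and since $\N(\M,\G)$ is a sphere this equals the Chow polynomial.

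Second, I will construct a Foata--Strehl type involution action (a $(\Z/2)^{k}$-action, "valley hopping") on $\Nest(\M,\G)^{\max}$. For each element $G\in\S$ I will define a local flip that replaces $G$ by the unique other element $G'$ of $\G$ completing $\S\setminus\{G\}$ to a maximal nested set; uniqueness follows from $\N(\M,\G)$ being a pseudomanifold. The flip should toggle $G$ between an ascent and a descent position, with the other positions unchanged, in the cases where $G$ is a "double ascent/double descent"-type element. Completeness is exactly what I expect to make these flips commute and preserve the type of all other positions. I will argue this from the defining property of complete built matroids: intervals and restrictions of nested sets again carry complete structures, so the exchange is governed locally by a totally ordered chain of candidates. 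The orbits are then Boolean. Each orbit contains a unique representative with no flippable element, namely the stable maximal nested set. An orbit whose representative $\S$ has $m$ flippable positions and $\des(\S)$ "peaks" contributes $t^{\des(\S)}(1+t)^{d-2\des(\S)}$, with $m=d-2\des(\S)$. Summing over orbits yields the formula, and $\gamma$-positivity is immediate since the coefficients count stable nested sets.

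The main obstacle is the second step. I need to verify that the flips are well defined involutions that commute, and that each flip changes the ascent statistic by exactly one while leaving every other position's status unchanged. This is where completeness must be used essentially, since for $\G_{\min}$ on $\U_{n-1,n}$ the statement fails. I would first test the construction on the maximal building set, where it should recover Stump's valley-hopping on flags of flats. I would then isolate the exact exchange property needed and check that it is precisely the completeness axiom.

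As a fallback, I would prove the formula recursively. I would use the Feichtner--Yuzvinsky recursion $\H(\M,\G)=\sum$ over the first element of a nested set of products of Chow polynomials of smaller complete built matroids (restrictions and contractions, assumed closed under completeness). I would then check that the stable-descent generating function satisfies the same recursion for the $\gamma$-polynomials, via the identity relating $\gamma$-expansions of products and the factor $t/(1+t)^2$.
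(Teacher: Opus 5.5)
\paragraph{The gap.} Your first step already fails, before any involution is defined. The identity $\H(\M,\G)(t)=h(\N(\M,\G);t)$ holds only when $\L(\M)$ is Boolean, which is the nestohedral case of Example~\ref{ex:boolean-chordal}. The same restriction applies to your claims that $\N(\M,\G)$ is a sphere and a pseudomanifold.

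In general the nested set fan is not complete; its support is the Bergman fan of $\M$. So the linear relations in the Feichtner--Yuzvinsky presentation are not a linear system of parameters for the face ring of $\N(\M,\G)$, and no shelling of $\N(\M,\G)$ computes $\H(\M,\G)$.

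Take $\U_{2,3}$ with $\G_{\max}$:
\begin{itemize}
\item $\N(\M,\G)$ is three isolated points, so $h(\N(\M,\G);t)=1+2t$, whereas $\H(\M,\G)(t)=1+t$.
\item The codimension-one (empty) face lies in three facets, so your flips are not even well defined. In general, a codimension-one nested set lies in as many maximal ones as there are atoms in its rank-$2$ local interval.
\end{itemize}

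The obstruction is numerical, so better flips cannot fix it. Any partition of all of $\Nest(\M,\G)^{\max}$ into orbits contributing $t^{\des(\S)}(1+t)^{\rk\M-1-2\des(\S)}$ evaluates at $t=1$ to the number of maximal nested sets. For non-Boolean $\M$ this is strictly larger than $\H(\M,\G)(1)$; in the example, $3\neq 2$. The non-stable maximal nested sets must therefore contribute nothing, not be absorbed into Boolean orbits. With $1\vartriangleleft 2\vartriangleleft 3$, the nested sets $\{2,\un\}$ and $\{3,\un\}$ have bottom descents and simply drop out, leaving $\gamma=1$. For the same counting reason, your planned sanity check on $\G_{\max}$ would fail for every non-Boolean geometric lattice.

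\paragraph{The missing idea.} Put the Boolean structure on the Feichtner--Yuzvinsky monomial basis $\FY(\M,\G)$, not on maximal nested sets. The paper does this as follows.
\begin{itemize}
\item It sends $x_{\S}^{\alpha}$ to the completion $\widehat{\S}=\S\circ(\ch_{\vartriangleleft}(J^G,G))_{G\in\S\cup\un}$. Completeness is used exactly to guarantee that these $\vartriangleleft$-chains lie in $\G^G_{J^G}\cup\{J^G\}$, so $\widehat{\S}$ is a maximal nested set.
\item Composition preserves labels (Lemma~\ref{lem:composition-descent}) and $\vartriangleleft$-chains have no descents, so $\Des(\widehat{\S})\subseteq\S$.
\item Supports of FY monomials have no rank-one local intervals, so $\widehat{\S}$ is stable.
\item The monomials lying over a stable $\S$ are those supported on $\Des(\S)\circ(\S'_G)_G$ with $\S'_G\subseteq\ch_{\vartriangleleft}(J^G,G)$, where the local intervals are those of $\Des(\S)$. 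FY exponent vectors on a chain correspond bijectively to subsets of that chain.
\item This identifies the fiber over $\S$ with the power set of a set of size $\rk\M-1-2\des(\S)$, with a subset $I$ giving degree $\des(\S)+|I|$. Fibers over non-stable nested sets are empty, which is exactly what your orbit picture cannot produce.
\end{itemize}

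\paragraph{Your fallback.} A recursion is the right kind of tool for positivity alone. The paper's second proof uses the Pagaria--Pezzoli deletion formula, where completeness forces $n_F=1$. But a recursion does not give the descent formula unless you also build a matching decomposition of $\Nest(\M,\G)^{\max}_{\stable}$. That decomposition is the whole content of the theorem, and it is absent from your sketch.
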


We also obtain the following combinatorial extension of Theorem~\ref{thm:main-complete-des-formula}, which establishes an analogue of the Nevo--Petersen conjecture for complete built matroids (and the stronger version for maximal building sets).

\begin{thm}[Theorems~\ref{thm:gamma-is-simplicial-complex} and \ref{thm:gamma-maximal-is-balanced}]\label{thm:main-complete-implies-nevo-petersen+balanced}
    If $(\M, \G)$ is complete, then there exists a simplicial complex $\Gamma$ such that 
    \[
    \gamma(\M, \G) = f(\Gamma).
    \]
    Furthermore, if $\G$ is maximal, then $\Gamma$ is balanced. 
\end{thm}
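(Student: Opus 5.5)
Taking Theorem~\ref{thm:main-complete-des-formula} as given, the plan is to build $\Gamma$ directly on the set $\Nest(\M,\G)^{\max}_{\stable}$ of stable maximal nested sets. By that formula, $\gamma_i(\M,\G)$ is the number of such $\S$ with $\des(\S)=i$. I would attach to each stable maximal nested set $\S$ its \emph{descent set} $\Des(\S)$, a subset of $\G$ (or of the flats) with $|\Des(\S)|=\des(\S)$. Then I would show two things. First, $\S\mapsto\Des(\S)$ is injective on stable maximal nested sets. Second, the image $\Gamma:=\{\Des(\S)\}$ is closed under taking subsets. Together these make $\Gamma$ a simplicial complex with $f_i(\Gamma)=\#\{\S:\des(\S)=i\}=\gamma_i$, which is the claim.

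Injectivity should come from a reconstruction procedure. Given a maximal nested set, one reads it as a chain or forest built greedily, and completeness of $(\M,\G)$ is exactly the hypothesis that makes the ``canonical'' (ascent) choice available at every step. So the stable nested set should be recoverable from the positions where the non-canonical, descending choices were made. For closure under subsets, given $\S$ and $D\in\Des(\S)$, I would construct a stable $\S'$ with $\Des(\S')=\Des(\S)\setminus\{D\}$. To do this, replace the element of $\S$ that realizes the descent at $D$ by the canonical choice that completeness supplies, and then re-canonicalize everything below it. Stability must be checked to persist under this local move. Iterating the move gives every subset.

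For the balanced statement when $\G=\G_{\max}$, maximal nested sets are complete flags of flats $F_1<\cdots<F_{r-1}$. Here the descents can be indexed by ranks, and stability forbids two consecutive descents, in the spirit of Stump's formula. I would colour each vertex of $\Gamma$ by the rank at which the descent occurs. Two descents in one face never share a rank, so this colouring is proper on every face. To make it a genuine balancing, meaning the number of colours equals $\dim\Gamma+1$, I would group ranks into pairs $\{2j-1,2j\}$ (or take $\lceil i/2\rceil$). Since descents are non-adjacent, each such block contains at most one descent, and there are exactly $\lfloor (r-1)/2\rfloor$ blocks, matching the degree of $\gamma$. I would verify this colour count against the top degree of $\gamma$.

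The main obstacle I expect is closure under subsets: showing that removing a single descent yields another \emph{stable} maximal nested set whose descents are exactly the remaining ones. Otherwise removal could create new descents elsewhere. I would first try to prove it for $\G_{\max}$, where everything reduces to flags and lexicographic choices of atoms. Then I would identify the axiom of completeness that localizes the exchange to an interval $[F_{i-1},F_{i+1}]$ of the lattice, so that the general case follows by the same local argument.
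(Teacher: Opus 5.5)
Your framework matches the paper's. $\Gamma$ is the family $\{\Des(\S)\}$ over $\S\in\Nest(\M,\G)^{\max}_{\stable}$, the $f$-vector identity follows from Theorem~\ref{thm:complete-formula} once $\Des$ is injective, and balancedness comes from a rank-block colouring. The gap is the step you yourself call the main obstacle: closure under deleting one vertex. You give no argument for it, and the mechanism you sketch fails. Deleting a descent $D$ is not a local exchange in $[F_{i-1},F_{i+1}]$, and re-canonicalizing below $D$ does not achieve it either.

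Take $\B_4$ with $\G_{\max}$ and the natural order. The chain $2<24<124<1234$ has labels $2,4,1,3$, is stable, and has $\Des=\{24\}$. Yet each of the six maximal chains through $124$ has a descent. So the face $\varnothing$ is realized only by $1<12<123<1234$. That chain differs from the original in $F_3$ (and $F_1$), the endpoints that any exchange inside $[F_1,F_3]=[2,124]$ keeps fixed. Concretely, replacing $24$ by $2\vee 1=12$ and re-canonicalizing below gives $1<12<124<1234$, which has a new descent at $124$. There is a second problem: re-canonicalizing \emph{everything} below $D$ would also erase the elements of $\Des(\S)$ lying below $D$. Those must survive as the same flats, since the vertices of $\Gamma$ are flats.

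The missing idea is the completion. With $\mathcal D=\Des(\S)$, take $\widehat{\mathcal D\setminus D}$. The local intervals of $\mathcal D$ at $D$ and at its parent in $\mathcal D\cup\{\un\}$ merge into one local interval of $\mathcal D\setminus D$. The completion fills that merged interval with a single $\vartriangleleft$-chain and leaves every other element of $\S=\widehat{\mathcal D}$ untouched, in particular all of $\mathcal D\setminus D$. Three checks then finish the argument:
\begin{itemize}
\item No new descents appear, since $\Des(\widehat{\mathcal T})\subseteq\mathcal T$ for any nested set $\mathcal T$ (Claim~1 in the proof of Theorem~\ref{thm:complete-formula}).
\item Stability is automatic: the merged local interval only grows in rank, so Remark~\ref{rmk:nodd-rank1} applies.
\item Only the parent, siblings and children of $D$ in $\mathcal D$ could lose their descent status. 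Each case is excluded by contradiction using Lemma~\ref{lem:first-chain-restriction-contraction}. For the parent and siblings one uses the join compatibility $\ch_{\vartriangleleft}(F\vee D,\,\cdot\,)=D\vee\ch_{\vartriangleleft}(F,\,\cdot\,)$; for children, the restriction inclusion \ref{item:restriction}.
\end{itemize}

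The same device makes your injectivity step precise: $\S=\widehat{\Des(\S)}$. This holds because the $\vartriangleleft$-chain is the only descent-free maximal nested set in each local interval, and those intervals are again complete by Proposition~\ref{prop:complete-stability}.

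Finally, a small fix to your colouring. Colouring a descent flat $F$ by $\lceil \rk(F)/2\rceil$, i.e.\ blocks $\{2j-1,2j\}$, uses $\lceil (r-1)/2\rceil$ colours. That is one too many when $r$ is even: for $\B_4$ there are descent flats in ranks $2$ and $3$, and they must share a colour. The paper instead uses $\lfloor \rk(F)/2\rfloor$, i.e.\ blocks $\{2j,2j+1\}$. This works precisely because stability forbids descents in rank $1$.
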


\medskip

The combinatorics and intersection-theoretic properties of the boundary strata of the wonderful compactification $W_{\M, \G}$ when $\M$ is realizable, as well as those of the torus orbit closures of $X_{\M, \G}$ for a general matroid $\M$, are governed by the \emph{nested set complex} $\mathcal{N}(\M,\mathcal{G})$, a simplicial complex introduced by Feichtner--Kozlov~\cite{feichtner2004incidence}. Requiring this complex to be flag is another reasonable structural hypothesis to furnish an analogue of Gal's conjecture. We say that a pair $(\M,\mathcal{G})$ is a \emph{flag built matroid}, if the nested set complex $\N(\M, \G)$ is flag.  Our second result establishes the analogue of Gal's conjecture for flag built matroids.

\begin{thm}[Theorem~\ref{thm:flag-gamma-pos}]\label{thm:main-flag-implies-gamma}
    If $(\M, \G)$ is a flag built matroid, then $\H(\M,\mathcal{G})$ is $\gamma$-positive.
\end{thm}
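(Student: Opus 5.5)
We prove Theorem~\ref{thm:main-flag-implies-gamma} by induction on the rank of $\M$, using a recursion for the Chow polynomial that is indexed by the nested set complex. The statement is trivial in rank at most $2$, since $\H$ is then $1$ or $1+t$.

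\emph{The recursion.} Since $\uCH(\M,\G)$ is the Chow ring of the smooth toric variety $X_{\M,\G}$, whose fan is supported on the Bergman fan, we can express the class of a point, or equivalently the Hilbert series, via the orbit-closure stratification. Concretely, we will use the known Feichtner--Yuzvinsky Gröbner basis recursion. It writes $\H(\M,\G)(t)$ as a sum over the elements $G \in \G$ (flats with $G\neq E$) of the product
\[
t\,[\rk G -1]_t\cdot \H(\M^G,\G^{G})\cdot \H(\M_G,\G_G).
\]
Here $\M^G$ and $\M_G$ denote the restriction and contraction (the latter taken with the induced building set), and $[m]_t=1+t+\cdots+t^{m-1}$. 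Each factor comes from the corresponding divisor of $W_{\M,\G}$ together with the fact that its normal structure is a product.

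\emph{Preserving flagness.} The key structural input is that flagness passes to the pieces. The link of a vertex $G$ in $\N(\M,\G)$ is the join
\[
\N(\M^G,\G|_{\le G}) * \N(\M_G,\G_G),
\]
and links of vertices in flag complexes are flag. Since a join is flag exactly when both factors are flag, both built matroids appearing in the recursion are again flag, so induction applies to them.

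\emph{Why a naive recursion fails, and the fix.} The factors $[\rk G-1]_t$ are not $\gamma$-positive; this is exactly the obstruction seen for $\U_{n-1,n}$ with $\G_{\min}$. So the naive recursion is not enough, and flagness must be used to kill those factors. Flagness of $\N(\M,\G)$ forces the following: for each $G\in\G$, any two atoms (elements of $\G$ of rank $1$) below $G$ that are not in $G$'s ``building'' decomposition are compatible. This lets us replace the sum above by a sum over \emph{pairs} of building-set elements, in the spirit of the Gal-style arguments that use edge subdivisions and contractions.

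Concretely, we will use the toric picture. The fan of $X_{\M,\G}$ is obtained from the fan of a coarser building set by a sequence of stellar subdivisions along the elements of $\G$, added in decreasing rank. For flag complexes, each such subdivision can be realized as an \emph{edge} subdivision of a flag complex. The known formula for the change of the $h$-polynomial under an edge subdivision of a sphere is
\[
h(\Delta')=h(\Delta)+t\,h(\lk_{\Delta}e),
\]
and here the link of the edge $e$ is again flag and of two dimensions less. We will establish the matroidal analogue: the Chow polynomial changes by $t$ times the Chow polynomial of a built matroid corresponding to the link of the subdivided edge. We will prove this identity using tropical intersection theory on the Bergman fan, by computing the Hilbert series difference through the exceptional divisor, which is a $\P^1$-bundle over the stratum of the link.

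\emph{Finishing the induction.} Starting from the maximal building set, where $\gamma$-positivity is already known, we reach $\G$ through such moves. Each move adds $t\cdot(\text{a }\gamma\text{-positive polynomial of degree two less})$, and such a term is $\gamma$-positive with the correct center of symmetry. Hence $\gamma$-positivity is preserved at every step.

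\emph{Main obstacle.} The hardest step is showing that the passage from $\G_{\max}$ to a flag $\G$ can be ordered as a sequence of edge (rather than higher-face) modifications, with every intermediate complex remaining flag. The approach we would try first is to order the non-$\G$ flats by increasing rank and to use flagness to show that each one's minimal decomposition in the intermediate building set has exactly two parts.
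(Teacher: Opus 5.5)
There is a genuine gap, and it sits in the step you present as the finish. Every building set satisfies $\G\subseteq\G_{\max}$, so $\Sigma_{\M,\G_{\max}}$ refines $\Sigma_{\M,\G}$.

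\begin{itemize}
\item Passing from $\G_{\max}$ to $\G$ therefore \emph{removes} elements. These are inverse stellar subdivisions (blow-downs), and each one \emph{subtracts} $t\cdot\H(\mathrm{star})$ (or divides by $1+t$). So $\gamma$-positivity of $\H(\M,\G_{\max})$ says nothing about $\H(\M,\G)$. Compare $\H(\U_{3,3},\G_{\max})=1+4t+t^2$ with $\H(\U_{3,3},\G_{\min})=1$.
\item Your ``main obstacle'' has the same reversal. Ordering the flats of $\G_{\max}\setminus\G$ into binary steps costs nothing: Proposition~\ref{lem:flag-binary-filtration} only uses flagness of the \emph{larger} building set, and $\G_{\max}$ is always flag. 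But that filtration expresses $\H(\M,\G_{\max})$ as $\H(\M,\G)$ plus nonnegative terms, which is the wrong direction.
\item Flagness of $\G$ is useful precisely for filtrations that \emph{end} at $\G$. For any building set $\G_0\subseteq\G$ there is a binary filtration from $\G_0$ up to $\G$. Each step is either a $\P^1$-bundle (factor $1+t$) or a blowup along a codimension-$2$ orbit closure (adding $t$ times the Chow polynomial of a star fan).
\end{itemize}

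So what you actually need is a $\gamma$-positive starting point below $\G$, and this is the missing idea.

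\begin{itemize}
\item For Boolean matroids $\G_{\min}$ works ($\H=1$); this is the paper's base case.
\item For non-Boolean matroids $\G_{\min}$ is typically not $\gamma$-positive; this is the $\U_{n-1,n}$ obstruction you point out yourself. For the flag building set $\G=\{1,2,3,4,12,34,1234\}$ on $\U_{3,4}$, the filtration from $\G_{\min}$ reads $1+t+t^2\to 1+2t+t^2\to 1+3t+t^2$, so it starts from a non-$\gamma$-positive polynomial.
\end{itemize}

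The paper gets around this by induction on the ground set.

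\begin{itemize}
\item Take a non-coloop $e$. The building set $\G\setminus S_e$ is the single-element extension of the flag deletion $(\M,\G)\setminus e$ along $\Mcut_e$.
\item That extension is a tropical modification along the truncation divisor, and tropical modification preserves the Chow ring.
\item The paper first refines the divisor and the ambient fan by stellar subdivisions until the divisor becomes the flag contraction $(\M,\G)/e$. It then performs the modification, and finally climbs the binary filtration from $\G\setminus S_e$ to $\G$.
\end{itemize}

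Your inductive invariant is also not the right one. The intermediate building sets in such filtrations need not be flag: in the $\U_{3,4}$ example, $\G\setminus S_4=\{1,2,3,4,12,1234\}$ is not flag. So you cannot apply the induction hypothesis to the links you meet along the way. The paper instead carries $\gamma$-positivity of the Chow polynomials of \emph{all} star fans (the class $\Gposs$). That class is stable under $\P^1$-bundles, codimension-$2$ blowups and tropical modification along a divisor whose stars are $\gamma$-positive.
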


Flag built matroids include several previously studied built matroids as special cases: 
\begin{enumerate}[(i), itemsep=2pt, leftmargin = 20pt]
    \item All matroids with maximal building sets; Theorem~\ref{thm:main-flag-implies-gamma} recovers the first part of \cite[Theorem~3.25]{ferroni-matherne-stevens-vecchi}. 
    \item All supersolvable built matroids, introduced in \cite{Coron_2025}. 
    \item All flag nestohedra, studied in \cite{volodin, aisbett}. (In particular, as we discuss below, this also generalizes the $\gamma$-positivity of the simple generalized permutohedra known as \emph{chordal nestohedra}, considered in \cite{Postnikov2008})
\end{enumerate} 

\smallskip

We now describe the relationship among the classes of flag built matroids, complete built matroids, and other well-studied built matroids (see Figure~\ref{fig:euler-diagram}). Flag and complete built matroids are different generalizations of several built matroids which are prominently featured in the literature. In a recent paper, Coron \cite{Coron_2025} introduced a class of built matroids called \emph{supersolvable built matroids}---which, in a nutshell, consists of loopless matroids whose lattice of flats is supersolvable, along with a building set that is compatible with the supersolvability of the lattice. Sitting in the intersection of flag and complete built matroids, the class of supersolvable built matroids includes the following: 
\begin{itemize}[itemsep=2pt, leftmargin=10pt]
    \item All type A braid matroids with minimal building sets $(\Braid_n, \G_{\min})$. Theorem~\ref{thm:main-flag-implies-gamma} recovers the result by Aluffi--Chen--Marcolli in \cite[Theorem 1.2]{aluffi-chen-marcolli}. 
    \item All type B braid matroids with minimal building sets, considered in Pagaria--Ferroni--Vecchi \cite{ferroni-pagaria-vecchi}.  
    \item All chordal Boolean matroids, or chordal nestohedra, studied by Postnikov--Reiner--Williams \cite{Postnikov2008}. In fact, for a Boolean matroid, the notion of complete building set corresponds exactly to chordal nestohedra. In other words, the set of chordal nestohedra is the intersection of flag nestohedra and complete built matroids. 
\end{itemize}
Despite their overlaps, significant examples of non-flag complete built matroids are given by the augmented built matroids, whose Chow rings are the \emph{augmented Chow rings of matroids} (Example~\ref{ex:complete-ex-aug}). 

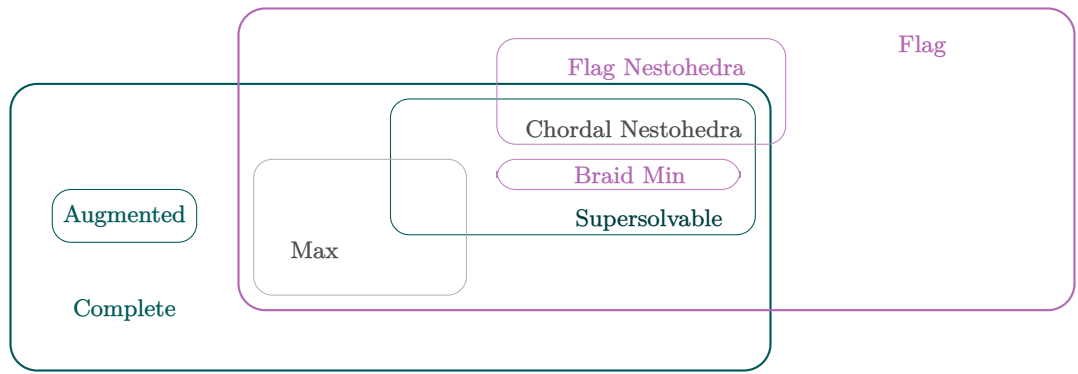
\begin{figure}[ht]
    \centering
    \begin{tikzpicture}
        \tikzset{
            outer/.style = {rounded corners=10pt, thick},
            inner/.style = {rounded corners=7pt}
        }

        \draw[outer, color=teal!70!black]
            (-5.0, -1.8) rectangle (5.0, 2);
        \draw[outer, color=violet!60]
            (-2.0, -1) rectangle (9.0, 3);
        \draw[inner, color=teal!70!black]
            (0, 0) rectangle (4.8, 1.8);
        \draw[inner, color=gray!60]
            (-1.8, -0.8) rectangle (1, 1);
        \draw[inner, color=violet!50]
            (1.4, 0.6) rectangle (4.6, 1); 
        \draw[inner, color=violet!50]
            (1.4, 1.2) rectangle (5.2, 2.6); 
        \draw[inner, color=teal!70!black]
            (-4.45, -0.10) rectangle (-2.55, 0.60);

        \node[font=\small, color=teal!70!black] at (-3.5, -1)  {Complete};
        \node[font=\small, color=violet!60]     at  (7, 2.5)  {Flag};
        \node[font=\small, color=teal!70!black] at (-3.5, 0.25) {Augmented};
        \node[font=\small,    color=teal!50!black] at  (3.4, 0.2)  {Supersolvable};
        \node[font=\small,    color=gray!60!black] at  (-1, -0.2)  {Max};
        \node[font=\small,    color=violet!60!] at  (3.15, 0.8)  {Braid Min};
        \node[font=\small,    color=gray!60!black] at  (3.2, 1.4)  {Chordal Nestohedra};
        \node[font=\small,    color=violet!60!] at  (3.5, 2.2)  {Flag Nestohedra};
    \end{tikzpicture}
    \caption{Relationships between relevant built matroids.}
    \label{fig:euler-diagram}
\end{figure}

Our proof of Theorem \ref{thm:main-flag-implies-gamma} proceeds by induction on the Chow ring of the toric variety $X_{\M, \G}$ associated to a built matroid, via a composition of three geometric operations which change the topology of $X_{\M, \G}$ in a controlled way that preserves $\gamma$-positivity: iterated $\P^1$-bundles, iterated blowups, and tropical modification. The order of these operations is dictated by the combinatorial structure of the built matroid, described below. 

Among all building sets, a flag built matroid $(\M, \G)$ is distinguished by a key combinatorial property: it can be constructed from an \emph{arbitrary} building subset via a \emph{binary filtration}, which prescribes a sequence of additions of elements terminating at the flag building set. This filtration has a precise geometric counterpart: the toric variety $X_{\M, \G}$ is realized as an iterated sequence of $\P^1$-bundles and iterated blowups along codimension-$2$ torus-invariant subvarieties. In the inductive step, along with iterated $\P^1$-bundles and iterated blowups, we perform a single-element extension of the built matroid, which corresponds geometrically to a tropical modification. Tropical modification, introduced by Mikhalkin in \cite{mikhalkin2007tropical}, is a procedure that alters the topology of the tropical fan while preserving the cohomology of the associated toric variety. Altogether, these operations propagate $\gamma$-positivity from a simpler $\gamma$-positive built Boolean matroid, to a flag built matroid. Our first proof of Theorem ~\ref{thm:complete-gamma-pos}, which is the first part of Theorem~\ref{thm:main-complete-des-formula} for the case of complete built matroids, employs a similar but simpler strategy and analysis by omitting tropical modification. 

\smallskip

As an application of Theorem~\ref{thm:main-complete-des-formula} and Theorem~\ref{thm:main-complete-implies-nevo-petersen+balanced}, we further derive a combinatorial formula for the $\gamma$-expansion of the Poincar\'{e} polynomial of the Deligne--Mumford--Knudsen compactification $\overline{\mathcal{M}}_{0, n}$ of rational stable marked curves.
\begin{corollary}[Corollary~\ref{cor:braid-min}]
    For any $n \geqslant 2$, the $\gamma$-expansion of the Poincar\'{e} polynomial of $\overline{\mathcal{M}}_{0, n+1}$ is 
    \[
    \gamma(P_{\Moduli_{0,n+1}};t) = \sum_{\T \in T_{n,\stable}^{\max}}t^{\des(\T)}.
    \] 
    In addition, the coefficients of the $\gamma$-expansion of $P_{\Moduli_{0,n+1}}(t)$ form the $f$-vector of a simplicial complex. 
\end{corollary}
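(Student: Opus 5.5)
The corollary follows by specializing Theorem~\ref{thm:main-complete-des-formula} and Theorem~\ref{thm:main-complete-implies-nevo-petersen+balanced} to $(\Braid_n,\G_{\min})$; the work is to check the hypotheses and to translate the combinatorics into trees.

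\textbf{Step 1: the Poincar\'e polynomial is a Chow polynomial.} By Conjecture~\ref{conj:chow-rr-braid-min}'s setup (the classical theorem of Keel and De Concini--Procesi), $P_{\Moduli_{0,n+1}}(t)=\H(\Braid_n,\G_{\min})(t)$. The identification comes from $\Moduli_{0,n+1}\cong W_{\Braid_n,\G_{\min}}$, whose cohomology is concentrated in even degrees and equals the Feichtner--Yuzvinsky Chow ring. So it suffices to understand the $\gamma$-expansion of $\H(\Braid_n,\G_{\min})$.

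\textbf{Step 2: completeness.} Recall the facts about the lattice $\Pi_n$ of set partitions. It is supersolvable, with modular chain obtained by successively merging the element $n$ (or $1$) with blocks. The minimal building set consists of the partitions with a single non-singleton block, i.e.\ subsets $S\subseteq[n]$ with $|S|\geqslant 2$, and it is compatible with this chain. Hence $(\Braid_n,\G_{\min})$ is a supersolvable built matroid in the sense of \cite{Coron_2025}. I would then invoke the result of Section~\ref{subsec:complete} stating that supersolvable built lattices are complete. If needed, I would verify the completeness axiom directly for $\G_{\min}$ on $\Pi_n$, using the explicit unions of overlapping subsets. Theorem~\ref{thm:main-complete-des-formula} then gives
\[\gamma(P_{\Moduli_{0,n+1}};t)=\sum_{\S\in\Nest(\Braid_n,\G_{\min})^{\max}_{\stable}}t^{\des(\S)}.\]

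\textbf{Step 3: translation to trees.} Nested sets of $(\Braid_n,\G_{\min})$ are collections of subsets of $[n]$ of size $\geqslant 2$ that are pairwise nested or disjoint, together with the top element $[n]$. These are exactly rooted trees with leaves labelled by $[n]$, equivalently stable $(n+1)$-marked rational trees with the extra marked point at the root. Maximal nested sets correspond to binary (trivalent) trees.

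I expect the main obstacle to be transporting the notions of \emph{stable} and \emph{descent} $\des$, which are defined on maximal nested sets via the ordering induced by the complete structure. Each one must be matched with the tree-theoretic notion defining $T_{n,\stable}^{\max}$ and $\des(\T)$; presumably both are phrased via minimal leaf labels of the two children at each internal vertex. I would do this by unwinding the definitions vertex by vertex: each internal vertex of a binary tree corresponds to a flat $S$ covered in the nested set by two children. The stability condition and the descent statistic should then be local conditions comparing the children's minimal labels against the chosen modular chain. Once this bijection is checked, the formula follows.

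\textbf{Step 4: the $f$-vector statement.} Apply Theorem~\ref{thm:main-complete-implies-nevo-petersen+balanced} to the complete built matroid $(\Braid_n,\G_{\min})$. It yields a simplicial complex $\Gamma$ with $f(\Gamma)=\gamma(\Braid_n,\G_{\min})$. Since $\gamma(\Braid_n,\G_{\min})$ is the $\gamma$-vector of $P_{\Moduli_{0,n+1}}$ by Step~1, this proves the second claim.
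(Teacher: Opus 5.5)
Your outline is the paper's: completeness via supersolvability (Example~\ref{ex:supersolvable-built-matroid}), then Theorem~\ref{thm:complete-formula}, then transport along Feichtner's isomorphism $\T\mapsto\{\Leaf(v)\}$ between $T_n$ and $\N(\Pi_n,\G_{\min})$, and finally Theorem~\ref{thm:gamma-is-simplicial-complex} for the $f$-vector. The gap is Step~3, the only part of the corollary not already contained in those theorems. You leave it as ``presumably'', and it cannot be checked until you fix the total order $\vartriangleleft$, which you never do.

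Both completeness and $\des$ are relative to $\vartriangleleft$. Example~\ref{ex:supersolvable-built-matroid} only gives completeness for orders satisfying \eqref{eq:condition-ss-order} with respect to a chosen modular chain, and the chain matters. The statistic $\ell(v)=\max(\min\Leaf(v_1),\min\Leaf(v_2))$ comes from the chain \eqref{eq:maximal-chain-partition} starting at $|12|$. A chain built from $n$ instead yields the mirror statistic $\min(\max\Leaf(v_1),\max\Leaf(v_2))$ with the reverse inequality, so you would need an extra relabeling $i\mapsto n+1-i$.

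For \eqref{eq:maximal-chain-partition}, condition \eqref{eq:condition-ss-order} forces $|a<b|\vartriangleleft|c<d|$ whenever $b<d$, i.e.\ atoms are compared first by their larger entry. The computation you then need is as follows.
\begin{itemize}
\item For $G=\Leaf(v)$ with children $v_1,v_2$, the atoms $e$ with $J^G\vee e=G$ are exactly the $|a<b|$ with $a,b$ in different children.
\item The least possible larger entry among these atoms is $\ell(v)$, so $\lambda_{\S}(G)$ has larger entry $\ell(v)$ however ties are broken.
\item $\ell(v)\neq\ell(p(v))$: indeed $\ell(v)\in\Leaf(v)$ is not $\min\Leaf(v)$, while $\ell(p(v))$ is either $\min\Leaf(v)$ or lies in the sibling's leaf set.
\end{itemize}
Hence $\lambda_{\S}(G)\vartriangleright\lambda_{\S}(p(G))$ if and only if $\ell(v)>\ell(p(v))$.

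The stability dictionary also needs care. Minimal elements of the nested set $\{\Leaf(v)\}$ are the two-element leaf sets, so bottom descents match. However, the children of $\Leaf(v)$ in the nested set are only the leaf sets of the \emph{internal} children of $v$. So a double descent must be read as a non-bottom descent all of whose internal children are descents.

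This is not cosmetic. For $n=5$, take the maximal nested set with blocks $23\subset 235\subset 2345\subset 12345$, i.e.\ the caterpillar $((((2,3),5),4),1)$.
\begin{itemize}
\item Its $\ell$-values are $3,5,4,2$, so its descents are $235$ and $2345$, and it has no bottom descent.
\item It is unstable only because $2345$, whose other child is the leaf $4$, counts as a double descent.
\item Under the literal reading ``both children are descents'' it would be stable and contribute $t^2$ to $\gamma(P_{\Moduli_{0,6}};t)=1+13t$, which is impossible.
\end{itemize}
With the order fixed and these two checks supplied, your Steps~2--4 go through exactly as in the paper.
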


In this formula, $T_{n, \stable}^{\max}$ denotes the collection of stable maximal trees of the well-studied complex of trees $T_n$ (see \cite{Boardman1971HomotopyTrees}, \cite{trappmann-ziegler}, and \cite{feichtner} for instance), which is bijective to the dual trees of maximally degenerated rational stable marked curves in the boundary of $\overline{\mathcal{M}}_{0, n+1}$. It is worth noting that stability for a tree in the complex of trees is a condition on a certain descent statistic introduced in our previous work \cite{coron2026structuralpropertiesnestedset}, different from the notion of stability for rational marked curves with nodal singularities.

The fact that the $\gamma$-expansion of $\H(\M,\G_{\max})$ corresponds to the $f$-vector of a balanced complex, allows us to deduce the following consequences below (see Section~\ref{subsec:f-vector-inequalities} and Section~\ref{subsec:beyond-rr}). We improve a previous real-rootedness result on matroids of rank at most $5$ \cite[Theorem~5.10]{ferroni-matherne-stevens-vecchi}.

\begin{corollary}[Corollary~\ref{cor:rk6-RR}]
    If $\M$ is a matroid of rank $r \leqslant 6$, then $\H(\M,\G_{\max})(t)$ is real-rooted.
\end{corollary}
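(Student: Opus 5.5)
We deduce real-rootedness from the structure of the $\gamma$-vector: for a palindromic polynomial $h(t)$ of degree $d$, we have $h(t)=(1+t)^d\,\gamma\!\left(t/(1+t)^2\right)$. The substitution $u=t/(1+t)^2$ maps the real line onto $(-\infty,1/4]$. Consequently $h$ is real-rooted if and only if $\gamma(u)$ has only real roots, all lying in $(-\infty, 1/4]$. Since the Chow polynomial of a rank $r$ matroid has degree $d=r-1\leqslant 5$, the $\gamma$-polynomial has degree at most $\lfloor 5/2\rfloor = 2$. The cases $r\leqslant 5$ are already known by \cite[Theorem~5.10]{ferroni-matherne-stevens-vecchi}, and the argument below covers them uniformly anyway.

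By Theorem~\ref{thm:main-complete-implies-nevo-petersen+balanced}, $\gamma(\M,\G_{\max}) = f(\Gamma)$ for a balanced simplicial complex $\Gamma$ of dimension at most $1$. So $\gamma(u) = 1 + f_1 u + f_2 u^2$, where $f_1$ is the number of vertices and $f_2$ the number of edges of $\Gamma$. Balancedness means $\Gamma$ has a proper vertex coloring with $\dim\Gamma+1\leqslant 2$ colors. Hence $\Gamma$ is a bipartite graph with parts of sizes $a+b=f_1$, and $f_2\leqslant ab\leqslant f_1^2/4$.

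If $f_2=0$, the only root is $u=-1/f_1<0$ (or there is none), and we are done. Otherwise the discriminant is $f_1^2-4f_2\geqslant 0$, so both roots are real. Since all coefficients are positive, both roots are negative, hence lie in $(-\infty,1/4]$. Pulling back under $u=t/(1+t)^2$, each negative real $u$ yields two real roots $t$ of $h$. Including the factor $(1+t)^{d-2\deg\gamma}$, this accounts for all $d$ roots of $h$.

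The only delicate point is the bound $f_2\leqslant f_1^2/4$. This is exactly where balancedness is essential: an arbitrary $1$-dimensional complex only gives $f_2\leqslant \binom{f_1}{2}$, which would allow a negative discriminant. This also explains why the method stops at rank $6$: for $r=7$ the $\gamma$-polynomial is cubic, and the balanced Kruskal--Katona-type inequalities no longer force its roots to be real.
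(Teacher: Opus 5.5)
Your proposal is correct and follows the paper's own argument. Theorem~\ref{thm:gamma-maximal-is-balanced} makes the (at most quadratic) $\gamma$-vector the $f$-vector of a bipartite graph, so $\gamma_2\leqslant\gamma_1^2/4$ and $\gamma$ is real-rooted; the paper then transfers this to $\H(\M,\G_{\max})(t)$ via Corollary~\ref{coro:small-rank}, and your substitution $u=t/(1+t)^2$ simply spells out the standard equivalence that the paper invokes without proof.
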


Another numerical consequence of Theorem~\ref{thm:main-complete-implies-nevo-petersen+balanced} generalizes recent results by Schweitzer and Vecchi \cite{schweitzer-vecchi} in a different setting.

\begin{corollary}[Corollaries~ \ref{cor:rk6-complete-logconcave} and \ref{cor:g-complete-is-pure-f-vector}]
    If $(\M, \G)$ is a complete built matroid, then the $g$-vector of the coefficients of the Chow polynomial $\H(\M, \G)(t)$ forms a pure $f$-vector. In particular, if $\rk(\M) \leqslant6$, then $\H(\M,\G)(t)$ is log-concave. 
\end{corollary}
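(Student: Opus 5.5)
The plan is to deduce the statement from Theorem~\ref{thm:main-complete-implies-nevo-petersen+balanced} together with known consequences of a $\gamma$-vector being an $f$-vector. The first claim concerns the $g$-vector of the Chow polynomial $\H(\M,\G)(t)=\sum_i h_i t^i$ of degree $d=\rk(\M)-1$. Here $g_i=h_i-h_{i-1}$ for $0\le i\le \lfloor d/2\rfloor$.

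\textbf{Step 1: express $g$ in terms of $\gamma$.} Expand $h(t)=\sum_j \gamma_j t^j(1+t)^{d-2j}$ and take differences. This gives
\[
g_i=\sum_{j\le i}\gamma_j\left[\binom{d-2j}{i-j}-\binom{d-2j}{i-j-1}\right],
\]
so each $g_i$ is a nonnegative combination of the $\gamma_j$ with ballot-number coefficients.

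\textbf{Step 2: realize $g$ as a pure $f$-vector.} By Theorem~\ref{thm:main-complete-implies-nevo-petersen+balanced}, $\gamma=f(\Gamma)$ for a simplicial complex $\Gamma$. I would use the standard construction, which appears in work on the Nevo--Petersen conjecture and in Schweitzer--Vecchi. Take a multicomplex built from $\Gamma$ by attaching to each face $F$ of size $j$ a ``ballot'' multicomplex. Concretely, join each face with an order ideal of monomials in new variables, or with the boundary-type complex whose $f$-vector is the ballot sequence $\binom{d-2j}{k}-\binom{d-2j}{k-1}$ for $k\le (d-2j)/2$. These ballot sequences are $f$-vectors of pure complexes, for instance truncations of the simplicial complex of non-crossing or Catalan-type structures.

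The crucial check is purity. Every facet of the combined complex must have size $\lfloor d/2\rfloor$. This holds because a face of size $j$ of $\Gamma$, joined with a pure ballot complex of dimension $\lfloor (d-2j)/2\rfloor -1$, gives top size $\lfloor d/2\rfloor$. I expect this step to be the main obstacle: one needs a pure complex with exactly the ballot $f$-vector, and joins that still glue into a simplicial complex. My first attempt would be to mimic the Schweitzer--Vecchi argument verbatim, replacing their $\gamma$-complex by $\Gamma$. If the gluing fails, I would instead verify the pure $O$-sequence characterization directly from $\Gamma$.

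\textbf{Step 3: log-concavity for $\rk(\M)\le 6$.} Here $d\le 5$, so only $\gamma_0=1$, $\gamma_1$, and $\gamma_2$ appear. We have $\gamma_2\le\binom{\gamma_1}{2}$ by Kruskal--Katona, since $\gamma=f(\Gamma)$. Writing $h_i$ explicitly in terms of $\gamma_1,\gamma_2$, the inequalities $h_i^2\ge h_{i-1}h_{i+1}$ become polynomial inequalities in $\gamma_1,\gamma_2$. They follow from $\gamma_1,\gamma_2\ge0$ together with $\gamma_2\le\gamma_1(\gamma_1-1)/2$, and I would check them case by case for $d\le5$. Alternatively, they follow from the pure $g$-vector conditions of Step 2 by a routine computation.
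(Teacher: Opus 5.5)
Your proposal follows the paper's proof. You get $\gamma=f(\Gamma)$ from the $\Gamma$-complex theorem and then use the Nevo--Petersen--Tenner complex $\bigcup_{F\in\Gamma}F\ast\mathcal{B}(d-2|F|)$. The union is closed under subsets because the ballot complexes sit on a single vertex set disjoint from $V(\Gamma)$ and are nested ($\mathcal{B}(m)\subseteq\mathcal{B}(m')$ for $m\leqslant m'$), which disposes of your gluing worry. Every facet has size $|F|+\lfloor (d-2|F|)/2\rfloor=\lfloor d/2\rfloor$ even though $\Gamma$ need not be pure. Drop the multicomplex variant and the pure-$O$-sequence fallback, since they would only yield an $M$-vector, not the pure $f$-vector required.

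Your direct check for $d\leqslant 5$ is correct. It reduces to $h_2\geqslant h_1$ and $h_1^2\geqslant h_2$, i.e.\ $0\leqslant g_2\leqslant g_1^2+g_1$. The paper instead appeals to the product of $1+g_1t+g_2t^2$ with $1+t+t^2$, which is not literally $h(t)$, so your verification is the safer way to finish.
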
 

Finally, we include a discussion on real-rootedness phenomena in the setting of flag and complete built matroids. The following result shows that our $\gamma$-positivity results are best possible at this level of generality.

\begin{thm}[Theorem~\ref{thm:chordal-not-RR}]
    There exists an infinite family of flag and chordal nestohedra whose $h$-polynomials are not real-rooted. In other words, there exists an infinite family of flag and complete Boolean matroids whose Chow polynomial is not real-rooted.
\end{thm}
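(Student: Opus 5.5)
The plan is to exhibit an explicit family of Boolean built matroids, equivalently nestohedra on a ground set $[n]$, that are both flag and chordal, and to show that their $h$-polynomials, which coincide with the Chow polynomials $\H(\M,\G)(t)$, have non-real roots. Since chordal nestohedra sit in the intersection of flag and complete Boolean matroids (as recorded in the introduction), it suffices to construct chordal building sets whose nested set complexes are flag.

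\textbf{Choice of family.} I would look for a graph-type construction, since graph associahedra of chordal graphs are chordal nestohedra and are flag, so both conditions hold automatically. One natural candidate is the star graph $K_{1,m}$, whose graph associahedron is the stellohedron. Its $h$-polynomial $\sum_k \frac{m!}{(m-k)!}\,\cdot(\text{Eulerian-type factor})$ is known to be real-rooted, however, so stars are likely a dead end. A better candidate is a chordal graph whose associahedron $h$-polynomial is a product-like or sum-like combination mixing a high-degree factor with many small pieces. A second option is a "fan" building set, namely $\{\{i\}\}\cup\{[k] : k\leq n\}\cup$ a disjoint block, which produces polynomials such as $(1+t+\dots+t^{a})$ times small factors. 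I would search computationally with the known formula for $h$-polynomials of nestohedra, via Postnikov--Reiner--Williams descent statistics on $\mathcal{B}$-trees, restricted to chordal building sets of small rank. I would then guess the family, for example joins or disjoint unions of a path-associahedron (the associahedron, which is real-rooted) with a high-dimensional piece.

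\textbf{Proving non-real-rootedness.} For the chosen family, I would compute $\gamma$-polynomials via the descent formula in Theorem~\ref{thm:main-complete-des-formula}. This gives $\gamma(t)=\sum t^{\des(\S)}$ over stable maximal nested sets, and for a structured family it should reduce to a closed formula in $n$. Real-rootedness of a palindromic $h$ is equivalent to real-rootedness (with nonpositive roots) of $\gamma$. So it suffices to show that $\gamma$ fails a necessary condition for some low-order coefficients, for instance Newton's inequality $\gamma_1^2 \cdot \frac{d'-1}{d'}\ \geq\ 2\gamma_0\gamma_2$ on $\gamma_0,\gamma_1,\gamma_2$. Alternatively, for fixed small rank, I would show the discriminant of a quadratic $\gamma$ is negative, and then propagate the failure to an infinite family. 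The propagation step would take products with interval/Boolean factors, or cones: the Chow polynomial of a direct sum of built matroids is the product, and multiplying by a real-rooted factor preserves non-real roots. A disjoint union of building sets on disjoint ground sets stays flag and chordal.

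\textbf{Main obstacle.} The hardest step is locating the first counterexample: a flag chordal building set whose $\gamma$-vector, or $h$-vector, violates Newton's inequalities. Once one such example is found with an explicit nestohedron $h$-polynomial, the infinite family follows immediately by taking direct sums with copies of itself or with simplices. The roots of $h_1h_2$ are the union of the roots of $h_1$ and $h_2$, and flagness and chordality are preserved under disjoint unions of building sets. I would first try small chordal graphs beyond paths and stars, such as a triangle with pendant paths, and compute $h$ by the descent-statistic formula.
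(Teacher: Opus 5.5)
\textbf{Gap: the proposal contains no counterexample.} The statement is purely existential. Once a single flag chordal building set with non-real-rooted Chow polynomial is known, the infinite family follows in one line. Your proposal defers exactly that example to an unspecified search (``the hardest step is locating the first counterexample''), so as written it proves nothing.

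The paper supplies the example explicitly. On $\B_7$ take $\G=\{1,\ldots,7,\allowbreak 12,13,123,134,1345,1235,1234,\allowbreak 13456,12345,\allowbreak 134567,123457,123456,1234567\}$.
\begin{itemize}
\item For the order $1<\cdots<7$, every initial segment of a member of $\G$ is again in $\G$. So $\G$ is chordal, i.e.\ complete.
\item Hence it is supersolvable and therefore automatically flag (Examples~\ref{ex:boolean-chordal} and~\ref{ex:flag-built-lattices}). You never need to check flagness separately.
\item Its Chow polynomial is $1+13t+48t^2+73t^3+48t^4+13t^5+t^6$.
\item This $\G$ is not graphical: $134\in\G$ although neither $14$ nor $34$ is. So a search confined to chordal graph associahedra would not find it.
\end{itemize}

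\textbf{The certificates you plan to use cannot detect any counterexample in this class.}
\begin{itemize}
\item By Aisbett's theorem, quoted in the last section of the paper, the $\gamma$-vector of a flag nestohedron is the $f$-vector of a flag complex. Set $d'=\deg\gamma$. This complex has dimension $d'-1$, so its $1$-skeleton contains no clique on $d'+1$ vertices.
\item Tur\'an's theorem then gives $2\gamma_2\leqslant\frac{d'-1}{d'}\gamma_1^2$. This is exactly the Newton inequality you propose to violate, so it can never fail here.
\item For $d'=2$ this is Mantel's bound $\gamma_2\leqslant\gamma_1^2/4$. So a quadratic $\gamma$ always has real roots, and no flag nestohedron of dimension at most $5$ is a counterexample. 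This is why the paper's family starts in dimension $6$.
\item In the paper's example, $\gamma(t)=1+7t+5t^2+t^3$ satisfies both Newton inequalities, and so does $h$. Non-real-rootedness shows up only through the full cubic discriminant, $630-500+1225-1372-27=-44<0$, or by computing the roots.
\end{itemize}

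\textbf{Propagation.} Your propagation step is fine if you take direct sums with copies of the example, or with the interval building set $\{a,b,ab\}$, which contributes a factor $1+t$. It fails for direct sums with simplices of dimension $\geqslant 2$, because their boundary complexes are not flag. The paper instead adjoins $8,\ldots,k$ together with the chain $12345678\subset\cdots\subset 1\cdots k$. This keeps $\G_k$ connected and chordal and gives Chow polynomial $\H(\B_7,\G)(t)(1+t)^{k-7}$.
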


The minimal example contained in the infinite family is presented in Example~\ref{ex:non-RR-chordal-flag-nestohedra} and was discovered using the assistance of ChatGPT 5.5 Pro. Since $h$-polynomials of nested set complexes over Boolean lattices agree with Chow polynomials, our example settles in the negative a conjecture we posed in \cite[Conjecture~6.1]{coron2026structuralpropertiesnestedset}. We regard this construction as one of intrinsic and independent interest, as it provides a nestohedral counterexample to the real-rootedness version of Gal's conjecture (in his paper \cite{gal2005real}, Gal constructed a flag simplicial polytope whose $h$-polynomial is not real-rooted, but doing it among flag nestohedra was open).

\subsection*{Acknowledgments} We thank the organizers and participants of the special year on Algebraic and Geometric Combinatorics at the Institute for Advanced Study, during which this collaboration was initiated. We are indebted to Omid Amini and David Speyer for many insightful suggestions on the topic of tropical fans and tropical modifications. We also thank Jason McCullough for bringing to our attention relevant work on Koszul algebras. BC was supported at IAS by the Ambrose Monell Foundation. LF was supported at the IAS by the Minerva Research Foundation, and is currently a member of the INdAM research group GNSAGA. SL was supported by the NSF Grant DMS-1926686 at the IAS, and is currently supported by the Donald J. Lewis fellowship at the University of Michigan. 

\section{Built matroids and nested set complexes}
We begin by reviewing the main results on simplicial complexes, building sets, and nested set complexes. We extend several classical matroid operations to matroids with building sets. For background on matroids, we refer to \cite{welsh,oxley}.

\subsection{Simplicial complexes}
A \emph{simplicial complex} $\Delta$ on a finite vertex set $V=V(\Delta)$ is a collection of subsets of $V$, called \emph{faces}, which is closed under inclusion: if $F \in \Delta$ and $G \subseteq F$, then $G \in \Delta$. We also require that $\Delta$ contains every singleton in $V$. The \emph{dimension} of a face $F$ is $\dim F = |F| - 1$, and the dimension of $\Delta$ is the maximum dimension among its faces. A simplicial complex is \emph{pure} if all its maximal faces, called \emph{facets}, have the same dimension.  A \emph{subcomplex} $\Delta'$ of a simplicial complex $\Delta$ is a subset of $\Delta$ which is a simplicial complex. For a simplicial complex $\Delta$ of dimension $d$ and a nonempty face $F \in \Delta$, the \emph{deletion} of $F$ from $\Delta$ is \(
\Delta \setminus F \coloneqq \{G \in \Delta \mid F \not\subseteq G\}. 
\)
The \emph{restriction} of $\Delta$ to a subset $A\subseteq V$ is the simplicial complex $\Delta|_A = \{F\in \Delta: F\subseteq A\}$.
The \emph{link} of a nonempty face $F$ in $\Delta$ is the subcomplex $\lk_{\Delta}(F)$ defined by 
\[
\lk_{\Delta}(F) \coloneq \{G \in \Delta \mid F \cap G = \varnothing, F \cup G \in \Delta\}. 
\] 
The \emph{star} of a nonempty face is defined by
\[
\operatorname{star}_{\Delta}(F) \coloneq \{G \in \Delta \mid F \cup G \in \Delta\}. 
\] 
For simplicial complexes $\Delta$ and $\Delta'$ on disjoint vertex sets $V$ and $V'$, the \emph{join} of $\Delta$ and $\Delta'$ is the simplicial complex on the vertex set $V \sqcup V'$ defined by
\[
\Delta \ast \Delta' \coloneqq \{F \cup F' \mid F \in \Delta,\, F' \in \Delta'\}. 
\]
The \emph{cone} over a simplicial complex $\Delta$, denoted $\Cone(\Delta)$, is the join of $\Delta$ with a point. For more background on simplicial complexes, we refer to \cite{stanley-96}.

\subsection{Built matroids}
Geometric lattices are the lattices of flats of simple matroids, and building sets are distinguished subsets of geometric lattices. When the matroid $\M$ is realizable, a building set prescribes a collection of blowup loci in the construction of the wonderful variety $W_{\M, \G}$, whose boundary divisor has simple normal crossings. Throughout this section, we let $\L = \L(\M)$ be the lattice of flats of a matroid $\M$, with unique minimal element $\widehat{0}$ given by $\varnothing$, and unique maximal element $\widehat{1}$ given by $E$, and the partial order $\leqslant$ given by set containment. If $\M$ is simple, the ground set $E$ is identified with the set of elements of rank $1$ of $\L$, also called the \emph{atoms} of $\L(\M)$. For a subset $\calG\subseteq \calL \setminus \{\hat{0}\}$, and any element $F\in \calL$, 
\[
\calG_{\leqslant F} \coloneqq \{G \in \calG \mid G \leqslant F\}, 
\] and let $\max \calG_{\leqslant F}$ be the set of maximal elements in $\calG_{\leqslant F}$. 

\begin{definition}
    For a finite geometric lattice $\calL$, a subset $\calG$ in $\calL \setminus \{\hat{0}\}$ is a \textit{building set} of $\calL$ if, for any $F\in \L \setminus \{\zero\}$ the join map is an isomorphism of posets: 
    \[
    \psi_{F} \colon \prod_{G \in \max \G_{\leqslant F}}[\zero, G] \xrightarrow{\sim} [\zero, F], \quad (H_G)_G \mapsto \bigvee_G H_G. 
    \] The isomorphism $\psi_F$ is called the \emph{structural isomorphism} of $\G$ at $F$.  
\end{definition}
If $\G$ is a building set of $\L$, the pair $(\calL,\calG)$ of a geometric lattice $\calL$ and a building set $\calG$ will be referred to as a \emph{built lattice}, and the pair $(\M,\G)$ will be referred to as a \emph{built matroid}. For any $F \in \L$, the elements of $\max \calG_{\leqslant F}$ are called the $\G$-factors of $F$, denoted by $\fact_{\calG}(F)$. 

Every geometric lattice $\L$ admits two distinguished building sets, called the \emph{maximal building set} and the \emph{minimal building set} respectively:
\begin{align*}
    \G_{\max} &\coloneqq \L \setminus \{\zero\}, \text{ and }\\
    \G_{\min} &\coloneqq \{ F \in \L \setminus \{\zero\} \mid F \text{ is irreducible}\}, 
\end{align*}
where a flat $F$ is \emph{irreducible}, if the interval $[\zero, F]$ only admits trivial product decompositions. 

For any building set $\G$, we have the inclusion $\G_{\min} \subseteq \G \subseteq \G_{\max}$. In particular, $\G$ contains all the atoms of $\L$. If $E \in \G$, then $(\M, \G)$ is called an \emph{irreducible built matroid}. 
In particular, if $\M$ is not connected, then $E \notin \G_{\min}$ and $(\M, \G_{\min})$ is not irreducible; on the other hand, $(\M, \G_{\max})$ is always irreducible.  We will make use of the following useful characterization of building sets.  

\begin{proposition}[{\cite[Proposition~2.11]{backman2025convexgeometrybuildingsets}}]\label{prop:backman-char-building-set}
A subset $\G\subset \L\setminus \{\zero\}$ is a building set of $\L$ if and only if $\G_{\min} \subseteq \G$, and for any $G, G' \in \G$, the following implication holds: 
\[
G \wedge G' \neq \zero \implies G \vee G' \in \G.
\] 
\end{proposition}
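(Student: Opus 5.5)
I will prove the two implications separately, working directly from the definition via the structural isomorphisms $\psi_F$.

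\emph{Necessity.} Suppose $\G$ is a building set. Since the inclusion $\G_{\min}\subseteq\G$ was already recorded, it remains to verify the join condition. Let $G,G'\in\G$ with $G\wedge G'\neq\zero$, and put $F=G\vee G'$. Each of $G$ and $G'$ lies below a unique factor of $F$, because an element of $\G$ below $F$ is irreducible under $\psi_F$. I first check this irreducibility: if $G\leqslant F$ and $G\in\G$, then $G\leqslant F_1$ for some $F_1\in\fact_\G(F)$. Indeed, write $G=\bigvee H_i$ with $H_i\leqslant F_i$ via $\psi_F^{-1}$. Since $G\in\G$, we have $G\in\G_{\leqslant G}$, so $\fact_\G(G)=\{G\}$ and $[\zero,G]$ is the single factor for $\psi_G$. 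On the other hand, $[\zero,G]\cong\prod[\zero,H_i]$ as intervals inside the product. Comparing these requires care: $\G$ itself may contain reducible elements (for instance, when $\G=\G_{\max}$). So I argue instead that $G$ is comparable with some factor. The factors of $F$ are the maximal elements of $\G_{\leqslant F}$, and $G\in\G_{\leqslant F}$, so $G$ lies below some maximal element $F_1$. Likewise $G'\leqslant F_j$. If $F_1\neq F_j$, then under $\psi_F$ the elements $G$ and $G'$ live in different coordinates, so their meet is the image of $(\zero,\dots,\zero)$, which is $\zero$. This is a contradiction, so $F_1=F_j$. Then $F=G\vee G'\leqslant F_1\leqslant F$, giving $F=F_1\in\G$.

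\emph{Sufficiency.} Assume $\G_{\min}\subseteq\G$ and that $\G$ is closed under joins of pairs with nonzero meet. Fix $F$ and let $F_1,\dots,F_k$ be the maximal elements of $\G_{\leqslant F}$. The $F_i$ pairwise have meet $\zero$: otherwise $F_i\vee F_j\in\G_{\leqslant F}$ would strictly exceed one of them, contradicting maximality. Their join is $F$. To see this, decompose $F$ into irreducible factors $I_1,\dots,I_m$, so that $[\zero,F]\cong\prod[\zero,I_l]$ with each $I_l\in\G_{\min}\subseteq\G$. Each $I_l$ lies below some $F_i$, so $\bigvee F_i\geqslant\bigvee I_l=F$.

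The main obstacle is then to show that $\psi_F$ is an isomorphism, given elements $F_i$ with pairwise trivial meets and join $F$. My plan is to show that each $F_i$ is a join of a subset of the irreducible factors $I_l$, so that $\psi_F$ regroups the irreducible decomposition. Since $F_i\leqslant F$, under the irreducible decomposition $F_i$ corresponds to a tuple $(a_{l})$ with $a_l\leqslant I_l$. I then aim to show $a_l\in\{\zero,I_l\}$ and that the supports of different $F_i$ are disjoint.

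Disjointness is the first thing to establish. If $F_i$ and $F_j$ both had nonzero components at the same $l$, I would need their meet to be nonzero. The meet in a product is computed coordinatewise, but $a_l\wedge b_l$ could still be $\zero$. To handle this, use that $I_l$ lies below some single $F_p$, and that $I_l\wedge F_i\neq\zero$, so $F_p\vee F_i\in\G$. Maximality then forces $p=i$. The same argument applied to $F_j$ gives $p=j$, so $i=j$. Hence each $I_l$ lies under exactly the $F_i$ that touch coordinate $l$, and therefore $F_i\geqslant$ every $I_l$ in its support. It follows that $F_i=\bigvee_{l\in S_i}I_l$ with the sets $S_i$ disjoint and covering all coordinates. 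Consequently $\psi_F$ is the regrouping $\prod_i\prod_{l\in S_i}[\zero,I_l]\cong\prod_l[\zero,I_l]$, which is an isomorphism.

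The one nontrivial input I rely on is the standard fact that a geometric interval has a unique decomposition into irreducibles, with the meet and join computed coordinatewise.
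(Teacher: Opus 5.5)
The paper gives no proof of this proposition; it is quoted from Backman--Danner and used as a black box. So there is no internal argument to compare with, and your direct verification from the structural isomorphisms $\psi_F$ is correct.

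\textbf{Necessity.} The real content is that $\psi_F$ preserves meets. Hence $G\leqslant F_1$ and $G'\leqslant F_j$ with $F_1\neq F_j$ would force $G\wedge G'=\zero$. You can drop the abandoned remarks about ``irreducibility under $\psi_F$''. Instead of leaning on the recorded inclusion $\G_{\min}\subseteq\G$, you can prove it in one line: at an irreducible $F$, $\psi_F$ is an isomorphism only if $\max\G_{\leqslant F}=\{F\}$.

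\textbf{Sufficiency.} Make two points explicit. First, the inequality $F_p\wedge F_i\geqslant I_l\wedge F_i\neq\zero$ is what lets you apply the hypothesis to the pair $F_p,F_i$. Second, the final regrouping coincides with $\psi_F$ because every $x\leqslant F_i$ equals $\bigvee_{l\in S_i}(x\wedge I_l)$.

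\textbf{What your route buys.} Compared with the bare citation, it has two advantages. It uses only the existence, not the uniqueness, of a join-decomposition of $[\zero,F]$ into irreducible intervals. That existence holds in any finite lattice by induction on $|[\zero,F]|$: an abstract decomposition $P\times Q$ becomes an internal one via the images of $(\hat 1,\hat 0)$ and $(\hat 0,\hat 1)$. Consequently the geometric structure of $\L$ is never used. It also yields a structural by-product worth recording: the $\G$-factors of $F$ are exactly the joins of the blocks of a partition of the irreducible factors of $F$.
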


\subsection{Nested set complexes}
To a geometric lattice $\mathcal{L}$ and a building set $\mathcal{G}$, one can associate a simplicial complex called the \emph{nested set complex}, which reflects the combinatorial and intersection-theoretic properties of both the boundary strata of the wonderful compactification $W_{\M, \G}$ if $\M$ is a realizable matroid, and the torus-invariant subvarieties of the toric variety $X_{\M, \G}$ (see Section \ref{subsec:nested-set-fans}) for general built matroids. Recall that a collection of pairwise incomparable elements in a poset is called an \emph{antichain.}

\begin{definition}
\label{def:nested-set}
    A subset $\calS \subseteq \calG$ is \textit{nested}, if for every antichain $\calA \subseteq \S$ the following implication holds: 
    \[
    \forall \, \{S_1, \ldots, S_{k}\} \subseteq \calA, \, k \geqslant 2 \implies S_1 \vee \cdots \vee S_k \notin \G. 
    \] 
\end{definition}

\begin{example}\label{ex:nested-sets}
    Some examples of nested sets are as follows. 
    \begin{enumerate}[\normalfont(i), leftmargin = 20pt, itemsep = 3pt] 
        \item If $\calG$ is the maximal building set, then a subset $\calS \subseteq \calG$ is nested if and only if it is a chain in $\calL \setminus \{\hat{0}\}$. 
        \item \label{item:nested-sets-min} If $\calG$ is the minimal building set, then a subset of $\calS \subseteq \calG$ is nested if and only if for every antichain $\{S_1, \ldots, S_k\}$ in $\calS$ with size $k \geqslant 2$, the join of the antichain is reducible, i.e., 
        \[
        [\zero, S_1 \vee \cdots \vee S_k] \cong \prod_{i\in I} [\hat{0}, F_i]
        \] for some collection $I$ of flats $\{F_i\}_{i \in I}$ in $\L$. 
        For example, if $\L$ is the lattice of flats of a Boolean matroid, then the minimal building set $\G_{\min}$ consists of all the atoms of $\L$, and any subset of $\G_{\min}$ is nested, because every flat in $\L$ of rank at least $2$ is reducible. 
    \end{enumerate}
\end{example}

The collection of nested sets of $(\calL,\calG)$ forms an abstract simplicial complex with vertex set $\G$, called the \textit{nested set complex} of $(\L, \G)$, and denoted $c\mathcal{N}(\calL, \calG)$. We also denote $\N(\L,\G) = c\N(\L,\G)|_{\G\setminus \max \G}.$ For instance, if $\calG$ is the maximal building set, then $c\N(\L, \G)$ and $\N(\L,\G)$ can be identified with the order complexes of $\calL\setminus \{\hat{0}\}$ and $\L\setminus\{\zero, \un\}$ respectively. 

\subsection{Restriction and contraction}\label{sec:restriction-contraction}

The familiar matroid operations of restriction and contraction extend naturally to the setting of built matroids.
\begin{definition}
Let $(\L, \G)$ be a built lattice. For any $F \in \L\setminus \zero$, we define the \emph{restriction} of $(\L,\G)$ to $F$ as the built lattice $(\L^F,\G^F)$ where 
\[    \L^F \coloneqq [\zero, F] \qquad\text{ and }\qquad
    \G^F \coloneqq \G \cap [\zero, F].
\]
Analogously, if $F\neq \un$ we define the \emph{contraction} of $(\L,\G)$ at $F$ as the built lattice $(\L_F,\G_F)$, where
\[
\L_F \coloneqq [F, \un] \qquad \text{ and } \qquad
\G_F \coloneqq \{ F \vee G \, | \, G \in \G  \} \setminus \{ F \}.
\]
\end{definition}
If the flat $F$ is a singleton $\{e\}$ we will write $\G/e$ instead of $\G_{\{e\}}$. The following lemma will allow us to reduce our analysis to irreducible built lattices, simplifying many arguments.  
\begin{lemma}[{\cite[Lemma 2.17]{coron2026structuralpropertiesnestedset}}]\label{lem:reducible}

For any built lattice $(\L, \G)$, if $G_1, \ldots, G_k$ are the maximal elements of $\G,$ then we have an isomorphism of nested set complexes
\begin{equation}\label{eq:nested-set-cone}
c\N(\L,\G) \simeq \Cone(\N(\L^{G_1}, \G^{G_1}))* \cdots * \Cone(\N(\L^{G_k}, \G^{G_k})).
\end{equation}
\end{lemma}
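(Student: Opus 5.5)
The plan is to build an explicit bijection between nested sets of $(\L,\G)$ and faces of the right-hand join. First note that $\max\G=\{G_1,\ldots,G_k\}$ are exactly the $\G$-factors of $\un$. Indeed, every element of $\G$ lies below $\un$, so the maximal elements of $\G$ are the maximal elements of $\G_{\leqslant \un}$. The structural isomorphism $\psi_{\un}$ then gives $[\zero,\un]\cong\prod_i[\zero,G_i]$. In particular the $G_i$ are pairwise "independent" in the following sense: for $H_i\leqslant G_i$ and $H_j\leqslant G_j$ with $i\neq j$, we have $H_i\wedge H_j=\zero$, and any join of elements taken from different factors decomposes uniquely via $\psi_{\un}$.

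The first step is to show that $\G$ is the disjoint union of the sets $\G^{G_i}$. Every $G\in\G$ lies below some maximal element $G_i$. If $G$ were below two of them, say $G_i$ and $G_j$ with $i\neq j$, then $G\leqslant G_i\wedge G_j=\zero$, which is impossible. So the vertex sets on the two sides match, with $G_i$ playing the role of the cone point of the $i$-th factor.

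The second step is to show that $\S\subseteq\G$ is nested if and only if each $\S\cap\G^{G_i}$ is nested in $(\L^{G_i},\G^{G_i})$.

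For the forward direction, $\S\cap\G^{G_i}$ is a subset of $\S$, and nestedness is inherited by subsets. Moreover, nestedness inside $\G^{G_i}$ is the same condition as nestedness in $\G$, because joins inside $[\zero,G_i]$ agree with joins in $\L$.

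For the converse, take an antichain $\{S_1,\ldots,S_m\}\subseteq\S$ with $m\geqslant2$ and suppose $S_1\vee\cdots\vee S_m=G\in\G$. Then $G\leqslant G_i$ for a unique $i$, so every $S_j\leqslant G_i$. The antichain therefore lies entirely in $\S\cap\G^{G_i}$, which contradicts nestedness there.

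The third step is to check that adding or removing the top element $G_i$ does not affect nestedness within the $i$-th block. Every element of $\G^{G_i}$ is comparable to $G_i$, so $G_i$ never belongs to an antichain of size at least $2$ inside $\G^{G_i}$. Hence the nested sets of $\G^{G_i}$ are exactly $\mathcal{T}$ or $\mathcal{T}\cup\{G_i\}$ with $\mathcal{T}$ a nested set of $\G^{G_i}\setminus\{G_i\}$. In other words, $c\N(\L^{G_i},\G^{G_i})=\Cone(\N(\L^{G_i},\G^{G_i}))$, with cone point $G_i$. Here we use that $G_i$ is the unique maximal element of $\G^{G_i}$, because $G_i\in\G$.

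Combining the three steps, $\S\mapsto(\S\cap\G^{G_i})_i$ is the desired isomorphism of simplicial complexes onto the join. The only potentially delicate point is the disjointness in the first step, which relies on $G_i\wedge G_j=\zero$. This follows from $\psi_{\un}$ being an isomorphism: the elements $(G_i,\zero,\ldots)$ and $(\zero,G_j,\ldots)$ of the product have meet $\zero$.
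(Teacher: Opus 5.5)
Your argument is correct. You split $\G$ into the disjoint blocks $\G^{G_i}$ using $G_i\wedge G_j=\zero$ from the structural isomorphism at $\un$, check nestedness blockwise (an antichain whose join lies in $\G$ must sit below a single $G_i$), and note that each $G_i$ is comparable to everything in its block and hence is a cone point; the paper does not reprove the lemma but quotes it from the companion paper, and your direct verification is the natural proof of it.
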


The nested set complexes $\N(\L^F,\G^F)$ and $\N(\L_F,\G_F)$ appear naturally inside the link of the nested set complex $\N(\L, \G)$ at the vertex $F$. To describe these links, we recall the classical notion of local intervals of nested sets (see \cite{BDF_2022}, \cite{coron2024matroidsfeynmancategorieskoszul} for instance). 
\begin{definition}
Let $(\L, \G)$ be a built lattice and $\S \in \N(\L, \G)$ a nested set. For any $G \in \S \cup \un$, the \emph{local interval of $\S$ at $G$}, denoted $L^G({\S}),$ is the built lattice 
\[
L^G(\S) \coloneqq \left(\left[\bigvee \S_{< G}, G\right], \G_{\bigvee \S_{< G}}^G\right).
\]
In the sequel we will denote $J^G \coloneqq \bigvee \S_{< G}$ (the dependency in $\S$ being omitted). 
\end{definition}

\begin{proposition}\label{prop:composition-nested-sets}
    Let $(\M, \G)$ be an irreducible built matroid. 
    \begin{enumerate}[\normalfont(i), leftmargin = 20pt, itemsep = 3pt]
        \item\label{it:comp-nested-sets-1} For any $F \in \L(\M)\setminus \{\un\}$, every element $G \in \G_{F}$ has a unique $\G$-factor which is not a $\G$-factor of $F$, denoted by $G/F$.\footnote{In \cite{coron2024matroidsfeynmancategorieskoszul}, $G/F$ is denoted as $\mathrm{Comp}_{F}(G)$.}
        \item\label{it:comp-nested-sets-2}For any nested set $\S \in \Nest(\M, \G),$ and for any collection of nested sets $(\S_G)_{G \in \S \cup \un}$ with $\S_G \in \Nest(L^G(\S))$ for every $G \in \S\cup \un$,  the collection 
        \[ \S \circ (\S_G)_G \coloneqq \S \cup \bigcup_{G \in \S \cup \un} \{ \, H/J^G \, | \, H \in \S_G \, \} \]
        is a nested set of $(\M, \G)$. \footnote{If we only have a collection of nested sets $(\S_G)_{G\in I}$ for some proper subset $I \subset \S\cup \{\un\}$, then $\S\circ(\S_G)_G$ means the composition of nested sets where we tacitly set $\S_G = \emptyset$ for $G \notin I$.} \footnote{When computing the unique factor of each $H/J^{G}$ for each $G$, we compute the unique factor with respect to the global building set $\G$, rather than the local building set of $L^{G}(\S)$.} \footnote{The notation $\circ$ comes from the fact that this operation can be interpreted as some composition of morphisms in a certain Feynman category (see \cite[Section~3]{coron2024matroidsfeynmancategorieskoszul} for more details).}
        \item\label{it:comp-nested-sets-3} Using the same notations as above, for any $G \in \S\cup \un$, and any $G'\in \S_G$, we have an isomorphism of built lattices 
        \[
        L^{G'/J^G}(\S\circ(\S_G)_G)\simeq L^{G'}(\S_G),
        \]
        which is given by taking the join with all the factors of $G'$ which are not below $G'/J^G$. 
        \item\label{it:comp-nested-sets-4} For any nested set $\S \in \N(\M, \G)$, the map $\S \circ -$ establishes a bijection between the collections of nested sets $(\S_G)_{G\in \S \cup \{\un\}}$ where $\S_G \in \N(L^G(\S))$ for any $G \in \S \cup \un$, and the nested sets of $(\M, \G)$ containing $\S.$ This gives an isomorphism of simplicial complexes 
        \[
        \lk_{\N(\L, \G)}(\S) \simeq \bigast_{G \in \S \cup \un } \N(L^G(\S)).
        \] 
        \item \label{it:comp-nested-sets-5} The composition $\circ$ of nested sets is associative, meaning that in the situation of \ref{it:comp-nested-sets-2}, if in addition, for every $G\in \S\cup \un$ we have nested sets $(\S_G^F)_{F\in \S_{G} \cup \{G\}}$ such that $\S_G^F\in L^F(\S_G)$ for all $F\in \S_{G}$, then we have the equality of nested sets 
        \[
        \S\circ ((\S_G)_{G}\circ (\S^F_G)_F) = (\S\circ (\S_G)_G) \circ (\S^F_G)_F.
        \] 
    \end{enumerate}
\end{proposition}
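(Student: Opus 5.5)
The plan is to reduce everything to the structural isomorphisms $\psi_F$ of $\G$ and to the characterization of building sets in Proposition~\ref{prop:backman-char-building-set}, proving the five items in order. Each later item leans on the earlier ones.

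\textbf{Item (i).} Take $G\in\G_F$, so $G=F\vee H$ for some $H\in\G$ with $H\not\leqslant F$.
\begin{enumerate}[(a), leftmargin=20pt]
    \item Apply $\psi_G$ to the elements $F\leqslant G$ and $H\leqslant G$. This decomposes $F$ as a join of pieces, one in each interval $[\zero,K]$ for $K\in\fact_\G(G)$.
    \item Since $H\in\G$, the interval $[\zero,H]$ is irreducible relative to $\G$, so $H$ lies below a single factor $K_0$ of $G$.
    \item Every factor of $G$ other than $K_0$ receives no contribution from $H$. Its piece of $F$ must therefore be the whole factor, so it is below $F$, and being maximal in $\G_{\leqslant G}$ it is a factor of $F$.
    \item $K_0$ is not below $F$, since otherwise $G=F$. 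This gives existence.
    \item For uniqueness, the factors of $G$ below $F$ are exactly the factors of $F$ that stay maximal. Hence $K_0=G/F$ is the only factor of $G$ that is not a factor of $F$.
\end{enumerate}

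\textbf{Items (ii) and (iii).} For (ii) I verify the nested-set condition of Definition~\ref{def:nested-set} for $\S\circ(\S_G)_G$ directly.
\begin{enumerate}[(a), leftmargin=20pt]
    \item Take an antichain $\calA$ in the composite and suppose a subfamily of it joins to an element of $\G$. Split the subfamily by the index $G\in\S\cup\un$ from which each element comes.
    \item An element $H/J^G$ satisfies $H/J^G\leqslant G$ and $H/J^G\vee J^G=H$.
    \item Join with the relevant $J^G$'s and use Proposition~\ref{prop:backman-char-building-set}, which lets joins of intersecting building set elements stay in $\G$. This transports the offending join either to a non-nested antichain in some $\S_G$ (inside $L^G(\S)$) or to one in $\S$ itself. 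Either case is a contradiction.
\end{enumerate}
For (iii), use the same factor bookkeeping as in (i). Joining with the factors of $G'$ that are not below $G'/J^G$ identifies the interval $[\bigvee(\text{new elements below } G'/J^G),\,G'/J^G]$ with $[\bigvee(\S_G)_{<G'},\,G']$. Then check that the building sets correspond, via the restriction and contraction definitions.

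\textbf{Items (iv) and (v).} For (iv), injectivity comes from recovering $\S_G$ from the composite $\S'$: it is the set of $J^G\vee T$ for $T\in\S'\setminus\S$ whose smallest element of $\S\cup\un$ above $T$ is $G$. For surjectivity, take $\S'\supseteq\S$ nested and define each $\S_G$ this way. Then show $\S_G$ is nested in $L^G(\S)$ and that $(J^G\vee T)/J^G=T$; the latter follows from (i) because $T\in\G$ and $T\not\leqslant J^G$. The bijection preserves inclusions, which gives the join decomposition of the link. For (v), both sides are computed by unwinding $\circ$ twice. Using (iii), each $(H/J^F)/J^G$ is identified with a single element computed in the composite, so the two sides give the same set.

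\textbf{Main obstacle.} I expect the hardest step to be the nestedness verification in (ii). Mixed antichains combine elements from different local intervals, and their joins must be controlled. My first approach would be induction on $|\S|$, using associativity at the level of single-element compositions to add one vertex at a time, so that only one local interval is modified at each step.
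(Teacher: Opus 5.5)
The paper does not prove this proposition itself; it defers to \cite[Section~2.3]{coron2024matroidsfeynmancategorieskoszul}, \cite[Section~2]{BDF_2022} and \cite[Section~2.6]{coron2026structuralpropertiesnestedset}. Your outline therefore has to stand on its own. Item (i) is correct. Item (ii), which you rightly flag as the crux, is not actually proved, and the dichotomy you announce misses a case.

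Two ingredients are missing. The first is a fact: (a) for nested $\S$, the $\G$-factors of $J^G$ are exactly the maximal elements of $\S_{<G}$. To see this, group those maximal elements by the factor of $J^G$ containing them; a group of size $\geqslant 2$ would join to that factor, which lies in $\G$. The second is localization at a single level. Take an antichain $\{A_1,\dots,A_k\}\subseteq\S'=\S\circ(\S_G)_G$ with $k\geqslant 2$ and $X=\bigvee A_i\in\G$, and let $G_0$ be the smallest element of $\S\cup\un$ above $X$. Since $G$ is the smallest element of $\S\cup\un$ above $H/J^G$, each $A_i$ is either $\leqslant J^{G_0}$ or equal to $H_i/J^{G_0}$ with $H_i\in\S_{G_0}$. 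Let $I$ index the latter kind, so that $X\vee J^{G_0}=\bigvee_{i\in I}H_i$. By (a) and the minimality of $G_0$, $X\not\leqslant J^{G_0}$, so $I\neq\emptyset$.

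If $|I|\geqslant 2$, the $H_i$ form an antichain of $\S_{G_0}$: if $H_i\leqslant H_j$, then $A_i$ lies in the factor $A_j$ of $H_j$. This antichain joins into $\G^{G_0}_{J^{G_0}}$, contradicting nestedness in $L^{G_0}(\S)$. If $|I|=1$, however, no non-nested antichain appears anywhere. The contradiction is instead that $X\in\G$, $X\leqslant H_{i_0}$ and $X\not\leqslant J^{G_0}$ force $X\leqslant H_{i_0}/J^{G_0}=A_{i_0}$, hence $X=A_{i_0}$. Your fallback induction via associativity does not help: (v) presupposes (ii) and (iii), and this mixed case already occurs for $|\S|=1$.

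In (iv), the claim that $(J^G\vee T)/J^G=T$ ``follows from (i) because $T\in\G$ and $T\not\leqslant J^G$'' is false. Item (i) only says that $(J^G\vee T)/J^G$ is the factor of $J^G\vee T$ containing $T$. In $(\B_3,\G_{\max})$ with $\S=\{1\}$, $G=\un$ and $T=2$, one gets $(1\vee 2)/1=12\neq 2$. The equality genuinely needs the nestedness of $\S'$. Let $Q$ be the maximal elements of $\S_{<G}$ not below $T$. Then $\{T\}\cup Q$ is an antichain of $\S'$ with join $J^G\vee T$, so by the argument behind (a), $T$ is a factor of $J^G\vee T$.

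Similarly, the nestedness of $\S_G$ in $L^G(\S)$ is only asserted. It holds because an antichain $\{J^G\vee T_i\}$ joining to $Y\in\G^G_{J^G}$ yields an antichain of $\S'$ whose join is $Y/J^G\in\G$. That antichain consists of the $T_i$ together with the maximal elements of $\S_{<G}$ lying below $Y/J^G$ but below no $T_i$.

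Fact (a) is also what makes (iii) work. The bottom of $L^{G'/J^G}(\S')$ is the join of \emph{all} elements of $\S'$ below $G'/J^G$, including those of $\S$ and of lower levels. Its join with the other factors of $G'$ equals $\bigvee(\S_G)_{<G'}$ only because every maximal element of $\S_{<G}$ is either below $G'/J^G$ or is itself one of those other factors.
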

For the proof of the above proposition, see \cite[Section 2.3]{coron2024matroidsfeynmancategorieskoszul}, \cite[Section 2]{BDF_2022}, and \cite[Section 2.6]{coron2026structuralpropertiesnestedset}. In the setting of Proposition~\ref{prop:composition-nested-sets} \ref{it:comp-nested-sets-2}, when dealing with the maximal building set, nested sets are chains of elements and the composition of nested sets is simply the concatenation of chains. The reader is invited to check that Proposition~\ref{prop:composition-nested-sets}\ref{it:comp-nested-sets-5} is straightforward in that case.
\begin{example}[Composition of nested sets for maximal building sets]
    Let $\U_{4,4}$ be the Boolean matroid on $E = \{1, 2, 3, 4\}$, and $\G = \G_{\max}$. In this case, all nested sets are chains. Let $\S = \{1, 12\}$, and let $(\S_G)_{G\in \S\cup\{1234\}}$ be the collection of nested sets: 
    \begin{align*}
        \S_{1} = \emptyset \in \N([\zero, 1], \{1\}), \S_{12} = \varnothing \in \N([1, 12], \{12\}), \\
        \S_{1234} = \{123\} \in \N([12, 1234], \{123, 124, 1234\}). 
    \end{align*}
    Then the composition is 
    \[
    \S \cup \{123\} = \{1, 12, 123\}. 
    \]
\end{example}

We give below an example of composition of nested sets in a non-maximal building set. 
\begin{example}[Composition of nested sets]
    Let $\U_{3,3}$ be the Boolean matroid on $E = \{1,2,3\}$. Let $\G$ be the building set $\{1,2,3,12,123\}$. Let $\S = \{2\}, \S_{2} = \emptyset , \S_{123} = \{23\}$. 
    The only factor of $23$ which is not $2$ is $3$, so we get $23/2 = 3$ and thus
    \[
    \S \circ (\S_2 , \S_{123}) = \{2\}\circ \{23\} = \{2,3\},
    \] 
    which is indeed a nested set. 
\end{example}

\subsection{Single-element extension}\label{subsec:single-element-extension}
Single-element extension is a classical matroid operation that augments a matroid one element at a time. It was invented by Crapo \cite{crapo1964single} as a systematic technique to study all matroids on a fixed ground set. Every single element extension corresponds to a distinguished set of elements in the geometric lattice, called a modular cut. 

We extend these notions naturally to matroids with building sets. 
\begin{definition}
    A \emph{modular pair} of $\M$ is a pair of flats $F, G \in \L(\M)$ such that
    \[
    \rk(F \vee G) + \rk(F \wedge G) = \rk(F) + \rk(G). 
    \] 
    A \emph{modular cut} of $\M$ is a subset $\Mcut \subset \L(\M)$ which is upward closed and such that for every modular pair $F, G$ of $\M$, the following implication holds: 
    \[
    F, G \in \Mcut \implies  F \wedge G \in \Mcut.
    \]
\end{definition}

\begin{definition}
    The \emph{single-element extension} of $\M$ along a modular cut $\Mcut$, denoted $\M \cup_{\Mcut} e$, is the matroid on the ground set $E\sqcup e$ defined by the following rank function: for any $S \subseteq E$, 
    \[
    \rk_{\M\cup_{\Mcut} e}(S) \coloneqq \rk_{\M}(S),
    \quad \text{ and } \quad 
    \rk_{\M \cup_{\Mcut} e}(S\sqcup e) \coloneqq \begin{cases}
    \rk_{\M}(S) & \text{if } \cl_{\M}(S) \in \Mcut, \\
    \rk_{\M}(S) + 1  & \text{otherwise. }
    \end{cases}
    \]
\end{definition}

The lemma below immediately follows from the definition. 
\begin{lemma}\label{lem:e-membership-extension}
    For any $S \subseteq E$, $e \in \cl_{\M\cup_{\Mcut}e}(S)$ if and only if $\cl_{\M}(S) \in \Mcut$. 
\end{lemma}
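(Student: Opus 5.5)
The statement is Lemma~\ref{lem:e-membership-extension}: for $S \subseteq E$, we have $e \in \cl_{\M\cup_{\Mcut}e}(S)$ if and only if $\cl_{\M}(S) \in \Mcut$. The plan is to unwind the closure operator of the extension directly through its rank function.

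By definition of closure in any matroid $\mathrm{N}$, an element $x$ lies in $\cl_{\mathrm{N}}(S)$ exactly when $\rk_{\mathrm{N}}(S \cup x) = \rk_{\mathrm{N}}(S)$. I apply this with $\mathrm{N} = \M\cup_{\Mcut} e$ and $x = e$.

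Since $S \subseteq E$, the defining formula gives $\rk_{\M\cup_{\Mcut}e}(S) = \rk_{\M}(S)$. The same formula gives
\[
\rk_{\M\cup_{\Mcut}e}(S\sqcup e) = \rk_{\M}(S) \quad\text{if } \cl_{\M}(S)\in\Mcut,
\]
and $\rk_{\M}(S)+1$ otherwise. Hence the two ranks coincide precisely when $\cl_{\M}(S) \in \Mcut$, which is the claim.

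The only point needing care is that the formula really defines a matroid rank function, so that closure is meaningful. This is Crapo's classical theorem; it uses the upward-closedness and modular-pair conditions on $\Mcut$, and I take it as implicit in the definition of the single-element extension. No genuine obstacle arises beyond this.
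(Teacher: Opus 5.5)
Your proof is correct and follows the paper's argument exactly: both reduce $e \in \cl_{\M\cup_{\Mcut}e}(S)$ to the rank equality $\rk_{\M\cup_{\Mcut}e}(S\sqcup e)=\rk_{\M\cup_{\Mcut}e}(S)=\rk_{\M}(S)$, which the defining rank formula shows holds precisely when $\cl_{\M}(S)\in\Mcut$. Your caveat that the formula must first be known to define a matroid (Crapo's theorem) is one the paper also leaves implicit.
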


\begin{proof}
    The element $e$ is in $\cl_{\M\cup_{\Mcut}e}(S)$ if and only if $\rk_{\M\cup_{\Mcut}e}(S \sqcup e) = \rk_{\M\cup_{\Mcut}e}(S) = \rk_{\M}(S)$, if and only if $\cl_{\M}(S) \in \Mcut$. 
\end{proof}

\begin{lemma}[{\cite[Corollary 7.5]{oxley}}]\label{lem:extension-flats}
    The flats of $\M \cup_{\Mcut} e$ consist of three distinct classes: 
    \begin{enumerate}[\normalfont(i),leftmargin=15pt, itemsep=3pt]
        \item every flat $F$ of $\M$ not in $\Mcut$; 
        \item every set $F \cup e$ for every flat $F$ of $\M$ in $\Mcut$; and 
        \item every set $F \cup e$ for every flat $F$ of $\M$ not in $\Mcut$.
    \end{enumerate}
\end{lemma}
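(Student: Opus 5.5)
The statement to be proved is Lemma~\ref{lem:extension-flats}: the flats of $\M\cup_{\Mcut}e$ are the three listed classes. My plan is to characterize the flats of $\M\cup_{\Mcut}e$ by how they meet $E$ and whether they contain $e$. I will use Lemma~\ref{lem:e-membership-extension} and the fact that the rank function of $\M\cup_{\Mcut}e$ restricts to $\rk_\M$ on subsets of $E$. The first observation is that for any flat $X$ of $\M\cup_{\Mcut}e$, the set $X\setminus e$ is a flat of $\M$. Indeed, if $f\in E$ satisfies $\rk_\M((X\setminus e)\cup f)=\rk_\M(X\setminus e)$, then adding $f$ to $X$ does not raise the rank in the extension, so $f\in X$ by submodularity of the extended rank function. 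So every flat of $\M\cup_{\Mcut}e$ has the form $F$ or $F\cup e$ with $F\in\L(\M)$, and it remains to decide which of these sets are flats.

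\emph{Case $X=F$, with $e\notin X$.} Since $F$ is closed in $\M$, no element of $E\setminus F$ lies in the closure of $F$ in the extension. Hence $F$ is a flat of $\M\cup_{\Mcut}e$ exactly when $e\notin\cl_{\M\cup_{\Mcut}e}(F)$. By Lemma~\ref{lem:e-membership-extension}, this happens exactly when $F\notin\Mcut$, which gives class (i).

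\emph{Case $X=F\cup e$.} Here we must check that no $f\in E\setminus F$ satisfies $\rk(F\cup e\cup f)=\rk(F\cup e)$.
\begin{itemize}
\item If $F\in\Mcut$, then $\rk(F\cup e)=\rk_\M(F)$. Since $\Mcut$ is upward closed, $\cl_\M(F\cup f)\in\Mcut$ as well, so $\rk(F\cup f\cup e)=\rk_\M(F\cup f)=\rk_\M(F)+1$. Thus $F\cup e$ is closed, giving class (ii).
\item If $F\notin\Mcut$, then $\rk(F\cup e)=\rk_\M(F)+1$. Also $\rk(F\cup f\cup e)\geqslant\rk_\M(F\cup f)=\rk_\M(F)+1$, with equality exactly when $\cl_\M(F\cup f)\in\Mcut$.
\end{itemize}
So, as I read the definition, $F\cup e$ with $F\notin\Mcut$ is a flat iff no cover of $F$ in $\L(\M)$ lies in $\Mcut$, the standard condition in \cite[Corollary~7.5]{oxley}. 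Class (iii) as stated omits this proviso, so I would either add it or read it as intended there.

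The classes are disjoint because they are distinguished by whether $e$ belongs to the set and by $\Mcut$-membership of the trace on $E$. The main obstacle is the second bullet: case (iii) needs the extra covering condition, and the modular-pair axiom of $\Mcut$ is not even used. I would simply cite \cite{oxley} for the precise form.
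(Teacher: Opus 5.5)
Your verification is correct, and you are right not to accept the printed class (iii): as stated it is false. The paper gives no argument of its own beyond the citation. The standard form of this result (as in \cite{oxley}) restricts class (iii) to flats $F\notin\Mcut$ that are not covered by any member of $\Mcut$, i.e.\ $F\notin C_{\Mcut}$. That is exactly the condition your second bullet produces.

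The paper's own example already shows the failure. For $\M=\U_{3,3}$ and $\Mcut=\{123\}$ one gets $\M\cup_{\Mcut}4=\U_{3,4}$. Although $12\notin\Mcut$, the set $\{1,2,4\}$ has rank $3$ and closure $\{1,2,3,4\}$, so it is not a flat.

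Two small remarks. First, the modular-pair axiom is not actually absent from your argument. It is what guarantees that $\rk_{\M\cup_{\Mcut}e}$ is a matroid rank function, and your first step depends on this: it shows $X\setminus e$ is closed via submodularity and monotonicity of closure. Second, the misstatement does no damage later in the paper. The lemma is only invoked for class (i), in the proof of Lemma~\ref{lem:extension-irreducibles-in-building-sets}, and class (ii) is reproved directly as Lemma~\ref{lem:extension-of-F-is-flat}.
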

\begin{lemma}[{\cite[Lemma~7.2.2]{oxley}}]\label{lem:extension-to-modular-cut}
    The modular cut $\Mcut$ consists of the set of flats $F$ of $\M$ such that $F \sqcup e$ is a flat of $\M \cup_{\Mcut} e$ and 
    \[
    \rk_{\M \cup_{\Mcut} e}(F \sqcup e) = \rk_{\M \cup_{\Mcut} e}(F)
    \]
\end{lemma}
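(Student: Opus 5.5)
The plan is to prove the two inclusions directly from the rank function of $\M\cup_{\Mcut} e$ and Lemma~\ref{lem:e-membership-extension}. Throughout, $F$ ranges over flats of $\M$. Recall that $F\sqcup e$ is a flat of $\M\cup_{\Mcut}e$ exactly when adding any element outside $F\sqcup e$ strictly increases the rank. The only elements outside $F\sqcup e$ are elements $x\in E\setminus F$.

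\emph{Inclusion $\subseteq$.} Suppose $F\in\Mcut$.
\begin{enumerate}[\normalfont(i),leftmargin=20pt]
\item Since $\cl_\M(F)=F\in\Mcut$, the definition of the extension gives $\rk_{\M\cup_{\Mcut}e}(F\sqcup e)=\rk_\M(F)=\rk_{\M\cup_{\Mcut}e}(F)$. This is the rank equality.
\item For closedness, fix $x\in E\setminus F$ and set $F'=\cl_\M(F\cup x)$. Since $F$ is a flat of $\M$, we have $\rk_\M(F\cup x)=\rk_\M(F)+1$.
\item Because $F'\supseteq F$ and $\Mcut$ is upward closed, $F'\in\Mcut$.
\item Applying the definition again, $\rk_{\M\cup_{\Mcut}e}(F\cup x\sqcup e)=\rk_\M(F\cup x)=\rk_\M(F)+1$. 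This is strictly larger than $\rk_{\M\cup_{\Mcut}e}(F\sqcup e)=\rk_\M(F)$.
\end{enumerate}
So no $x$ lies in the closure of $F\sqcup e$, and $F\sqcup e$ is a flat of $\M\cup_{\Mcut}e$.

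\emph{Inclusion $\supseteq$.} Suppose $F\sqcup e$ is a flat of the extension and $\rk_{\M\cup_{\Mcut}e}(F\sqcup e)=\rk_{\M\cup_{\Mcut}e}(F)=\rk_\M(F)$. This equality says $e\in\cl_{\M\cup_{\Mcut}e}(F)$. By Lemma~\ref{lem:e-membership-extension}, $\cl_\M(F)\in\Mcut$. Since $F$ is a flat of $\M$, $\cl_\M(F)=F$, hence $F\in\Mcut$. Alternatively, one can read this off the case split in the definition: the rank stays equal rather than going up by one only in the case $\cl_\M(F)\in\Mcut$.

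The whole argument is a routine unwinding of definitions. The only point needing care is the closedness of $F\sqcup e$ in the first inclusion. That step is the one place where upward-closedness of $\Mcut$ is used, to guarantee that adding $x$ does not let $e$ absorb the rank increase. Notably, the modular-pair condition in the definition of a modular cut is not needed for this statement.
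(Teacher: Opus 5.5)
Your proof is correct. The paper gives no proof of its own here (it cites Oxley), but your argument is exactly what it does nearby: your closedness step, including the appeal to upward-closedness, is the paper's proof of Lemma~\ref{lem:extension-of-F-is-flat}, and your reverse inclusion is Lemma~\ref{lem:e-membership-extension}.

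One correction to your closing remark: upward-closedness is used but not actually needed. Both cases of the definition give $\rk_{\M\cup_{\Mcut}e}(F\cup x\sqcup e)\geqslant\rk_\M(F\cup x)=\rk_\M(F)+1$, so $e$ can never absorb the rank increase from $x$.
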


\begin{lemma}\label{lem:extension-of-F-is-flat}
    If $F \in \Mcut$ is a flat of $\M$, then $F \sqcup e = \cl_{\M \cup_{\Mcut} e}(F)$ is a flat of $\M \cup_{\Mcut} e$. 
\end{lemma}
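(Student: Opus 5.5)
The plan is to compute the closure $\cl_{\M\cup_{\Mcut} e}(F)$ directly and compare it with $F\sqcup e$, using only Lemma~\ref{lem:e-membership-extension} together with the fact that the new matroid agrees with $\M$ on subsets of $E$.

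First, I show that $F \sqcup e$ lies in the closure of $F$. Since $F$ is a flat of $\M$, we have $\cl_{\M}(F) = F \in \Mcut$. Lemma~\ref{lem:e-membership-extension} then gives $e \in \cl_{\M\cup_{\Mcut}e}(F)$. Trivially $F \subseteq \cl_{\M\cup_{\Mcut}e}(F)$, so $F\sqcup e \subseteq \cl_{\M\cup_{\Mcut}e}(F)$.

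Second, I show the reverse inclusion. Take $x \in E \setminus F$ and suppose $x$ lies in the new closure of $F$. Then
\[
\rk_{\M\cup_{\Mcut}e}(F\cup x) = \rk_{\M\cup_{\Mcut}e}(F).
\]
Both sets are subsets of $E$, so by the definition of the rank function this reads $\rk_\M(F\cup x)=\rk_\M(F)$. That would put $x \in \cl_\M(F)=F$, a contradiction. Hence $\cl_{\M\cup_{\Mcut}e}(F) \subseteq F\sqcup e$, and the two sets are equal.

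Finally, a closure is always a flat, so $F\sqcup e$ is a flat of $\M\cup_{\Mcut}e$. Alternatively, this is case (ii) of Lemma~\ref{lem:extension-flats}. As a byproduct, $\rk(F\sqcup e)=\rk_\M(F)$, which is consistent with Lemma~\ref{lem:extension-to-modular-cut}. No step is a genuine obstacle. The only point needing care is the second step: one must use that the extension's rank function restricts to $\rk_\M$ on subsets of $E$, so that no element of $E\setminus F$ enters the closure.
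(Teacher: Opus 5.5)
Your proof is correct and has the same two-inclusion structure as the paper's. The first half is identical to the paper's: $e\in\cl_{\M\cup_{\Mcut}e}(F)$ because $\cl_{\M}(F)=F\in\Mcut$.

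The second half differs slightly. The paper checks directly that $F\sqcup e$ is closed. It computes $\rk_{\M\cup_{\Mcut}e}(F\sqcup a\sqcup e)=\rk_{\M}(F\sqcup a)=\rk_{\M}(F)+1$ for $a\notin F\sqcup e$, which requires $\cl_{\M}(F\sqcup a)\in\Mcut$ and hence explicitly uses that $\Mcut$ is upward closed. It then concludes by minimality of the closure.

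You instead test each $x\in E\setminus F$ against $\cl_{\M\cup_{\Mcut}e}(F)$. This involves only ranks of subsets of $E$, where the extension agrees with $\M$. You then invoke the general fact that closures are flats. So you never appeal to upward-closure directly; the modular-cut axioms enter only through the fact that $\M\cup_{\Mcut}e$ is a matroid. This makes your version marginally more economical.
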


\begin{proof}
    First, we show that $F \sqcup e$ is a flat in $\M \cup_{\Mcut} e$. For any $a \in (E \sqcup e) \setminus (F \sqcup e)$, 
    \[
    \rk_{\M \cup_{\Mcut} e}(F \sqcup e \sqcup a) = \rk_{\M \cup_{\Mcut} e}((F \sqcup a) \sqcup e) = \rk_{\M}(F \sqcup a) = \rk_{\M}(F) + 1 > \rk_{\M \cup_{\Mcut} e}(F \sqcup e),
    \] where the second equality follows from definition together with the facts that $\Mcut$ is upward closed and $F \leqslant \cl_{\M}(F \sqcup a)$ in $\M$. Next, we show that $F \sqcup e$ is contained in the closure $\cl_{\M \cup_{\Mcut} e}(F)$. Indeed, since $F \in \Mcut$, $\rk_{\M \cup_{\Mcut} e}(F \sqcup e) = \rk_{\M}(F) = \rk_{\M \cup_{\Mcut} e}(F)$ by the definition of a single-element extension. Therefore, since $\cl_{\M \cup_{\Mcut} e}(F)$ is the smallest flat in $\M \cup_{\Mcut} e$ containing $F$, $F \sqcup e = \cl_{\M \cup_{\Mcut} e}(F)$. 
\end{proof}
A single-element extension is \emph{proper} if the underlying modular cut is proper. We have seen that one can construct a single-element extension from a modular cut. The reciprocal is true; every single-element extension gives rise to a modular cut, by the following classical proposition.  
\begin{proposition}[{\cite[Section 7.2]{oxley}}]\label{prop:ext-del}
The deletion of $e$ from any single-element extension $(\M\cup_{\Mcut} e)$ is $\M$: 
\[
\M = (\M\cup_{\Mcut}e) \setminus e.
\] 
Moreover, for any $e \in E$, together with the modular cut of $\M \setminus e$ 
\[
\Mcut_e \coloneqq \{ F \in \L(\M\setminus e) \, | \, e \in \cl_{\M}(F) \},
\] we have the equality of matroids
\[
\M = (\M\setminus e)\cup_{\Mcut_e} e. 
\]  
\end{proposition}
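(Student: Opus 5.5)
Both statements of Proposition~\ref{prop:ext-del} reduce to comparing rank functions on subsets of the common ground set. Two matroids on the same ground set with the same rank function coincide, and deletion is restriction of the rank function.

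For the first assertion, we use that the ground set of $(\M\cup_{\Mcut}e)\setminus e$ is $E$ and its rank function is the restriction of $\rk_{\M\cup_{\Mcut} e}$ to subsets of $E$. By the definition of the single-element extension, $\rk_{\M\cup_{\Mcut}e}(S)=\rk_\M(S)$ for every $S\subseteq E$. The two rank functions therefore agree, which gives $\M=(\M\cup_{\Mcut}e)\setminus e$.

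For the second assertion, we first check that $\Mcut_e$ is a modular cut of $\M\setminus e$; this is the main obstacle.
\begin{itemize}
\item Upward closure is immediate: if $F\subseteq F'$ are flats of $\M\setminus e$ and $e\in\cl_\M(F)$, then $e\in\cl_\M(F')$.
\item For the modular-pair condition, take $F,G\in\Mcut_e$ forming a modular pair in $\M\setminus e$, so that $\rk(F\vee G)+\rk(F\wedge G)=\rk(F)+\rk(G)$ with ranks computed in $\M\setminus e$, equivalently in $\M$ on subsets of $E\setminus e$.
\item Since $e\in\cl_\M(F)\cap\cl_\M(G)$, the sets $F\cup e$ and $G\cup e$ have the same $\M$-ranks as $F$ and $G$. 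Apply submodularity of $\rk_\M$ to them:
\[
\rk_\M((F\cap G)\cup e)\leqslant \rk_\M(F\cup e)+\rk_\M(G\cup e)-\rk_\M(F\cup G\cup e) = \rk(F)+\rk(G)-\rk(F\vee G)=\rk(F\wedge G).
\]
Here we use that $\rk_\M(F\cup G\cup e)=\rk_\M(F\cup G)=\rk(F\vee G)$, and that the meet of flats is their intersection.
\item Hence $\rk_\M((F\wedge G)\cup e)=\rk_\M(F\wedge G)$, i.e.\ $e\in\cl_\M(F\wedge G)$, so $F\wedge G\in\Mcut_e$.
\end{itemize}

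Finally, we compare rank functions of $\M$ and $(\M\setminus e)\cup_{\Mcut_e}e$ on the ground set $E$.
\begin{itemize}
\item For $S\subseteq E\setminus e$, both ranks equal $\rk_\M(S)$.
\item For $S\sqcup e$, we use $\cl_{\M\setminus e}(S)=\cl_\M(S)\setminus e$. Then $e\in\cl_\M(S)$ holds if and only if $e\in\cl_\M(\cl_{\M\setminus e}(S))$, i.e.\ if and only if $\cl_{\M\setminus e}(S)\in\Mcut_e$.
\item In that case $\rk_\M(S\sqcup e)=\rk_\M(S)$; otherwise $\rk_\M(S\sqcup e)=\rk_\M(S)+1$. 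This matches the defining formula for the extension exactly, so $\M=(\M\setminus e)\cup_{\Mcut_e}e$.
\end{itemize}

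If $e$ is a coloop, $\Mcut_e$ is empty and the extension adds $e$ freely as a coloop, so this case is consistent.
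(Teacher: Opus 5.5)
Your proof is correct: the first part is immediate from the defining rank function, and for the second part your submodularity inequality on $F\cup e$ and $G\cup e$ shows that $\Mcut_e$ is a modular cut, while the identity $\cl_{\M}(\cl_{\M\setminus e}(S))=\cl_{\M}(S)$ matches the two rank functions. The paper gives no proof of its own and simply cites Oxley, Section~7.2, where the argument is essentially this same rank-function comparison plus the submodularity check.
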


If a modular cut is proper and nonempty, the single-element extension has interesting properties, described below. We henceforth focus on only proper and nonempty modular cuts, unless specified otherwise. 

\begin{lemma}\label{lem:e-not-loop-nor-coloop}
    The element $e$ is neither a loop nor a coloop in $\M \cup_{\Mcut} e$ if and only if the modular cut $\calM$ is proper and nonempty. 
\end{lemma}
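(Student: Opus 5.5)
We prove the two directions separately, working directly with the rank function of $\M\cup_{\Mcut} e$ from the definition and using Lemma~\ref{lem:e-membership-extension}. Recall the conventions: $e$ is a loop iff $\rk_{\M\cup_{\Mcut}e}(\{e\})=0$. It is a coloop iff $\rk_{\M\cup_{\Mcut}e}(E)=\rk_{\M\cup_{\Mcut}e}(E\sqcup e)-1$, equivalently iff $e\notin\cl_{\M\cup_{\Mcut}e}(E)$. A modular cut is \emph{proper} if it does not contain $\zero=\cl_\M(\varnothing)$, which by upward closure means $\Mcut\neq\L(\M)$.

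\emph{Loops.} Apply the definition with $S=\varnothing$. We have $\rk_{\M\cup_{\Mcut}e}(\{e\})=\rk_\M(\varnothing)=0$ exactly when $\cl_\M(\varnothing)\in\Mcut$, and the rank is $1$ otherwise. Hence $e$ is a loop iff $\zero\in\Mcut$. Since $\Mcut$ is upward closed, this happens iff $\Mcut=\L(\M)$, i.e.\ iff $\Mcut$ is not proper.

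\emph{Coloops.} Apply Lemma~\ref{lem:e-membership-extension} with $S=E$: we get $e\in\cl_{\M\cup_{\Mcut}e}(E)$ iff $\cl_\M(E)=E=\un\in\Mcut$. Again by upward closure, $\un\in\Mcut$ iff $\Mcut\neq\varnothing$. Hence $e$ is a coloop iff $\Mcut=\varnothing$.

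Combining the two equivalences, $e$ is neither a loop nor a coloop iff $\Mcut$ is proper and nonempty. There is no real obstacle. The only point needing care is using the right characterizations of loop and coloop (the closure or rank drop over the whole ground set) and invoking upward closure, which reduces properness and nonemptiness to membership of $\zero$ and $\un$ respectively.
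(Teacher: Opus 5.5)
Your proof is correct and follows essentially the same route as the paper: both detect a loop by evaluating the extension's rank at $S=\varnothing$ and a coloop at $S=E$, then use upward closure to turn $\zero\in\Mcut$ and $\un\in\Mcut$ into non-properness and nonemptiness respectively. The only cosmetic difference is that you phrase the coloop case through Lemma~\ref{lem:e-membership-extension} rather than the rank formula directly, which is the same computation.
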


\begin{proof}
    The element $e$ is a loop in $\M \cup_{\Mcut} e$ if and only if $\rk_{\M \cup_{\Mcut} e}(\varnothing \sqcup e) = \rk_{\M}(\varnothing) = 0$, if and only if $\cl_{\M}(\varnothing) \in \Mcut$, if and only if $\Mcut$ is not proper. The element $e$ is a coloop in $\M \cup_{\Mcut} e$, if and only if $\rk_{\M \cup_{\Mcut} e}(E \sqcup e) = \rk_{\M}(E) + 1$, if and only if $\cl_{\M}(E) \notin \Mcut$, if and only if $\Mcut = \varnothing$ by upward-closure. The desired statement follows by combining these two conditions. 
\end{proof}

\begin{lemma}\label{lem:min-mcut-is-irreducible}
    If $F$ is a minimal element in a proper modular cut $\Mcut$, then $F \sqcup e$ is an irreducible element in $\M \cup_{\Mcut} e$. 
\end{lemma}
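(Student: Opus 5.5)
The plan is to argue by contradiction, translating a nontrivial product decomposition of the interval $[\zero, F\sqcup e]$ in $\L(\M\cup_{\Mcut} e)$ into a direct sum decomposition of the restriction matroid $\N := (\M\cup_{\Mcut} e)|_{F\sqcup e}$. Then I locate the element $e$ in one of the summands and use the minimality of $F$ in $\Mcut$. Throughout, $F \sqcup e$ is a flat of the extension by Lemma~\ref{lem:extension-of-F-is-flat}, and $e$ is not a loop by Lemma~\ref{lem:e-not-loop-nor-coloop}, since $\Mcut$ is proper (and nonempty, because it contains $F$).

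First, suppose $[\zero, F\sqcup e] \cong [\zero, A]\times[\zero, B]$ via the join map, with $A, B$ nonzero flats of the extension, $A\wedge B = \zero$ and $A \vee B = F\sqcup e$. For geometric lattices this means the following.
\begin{itemize}
\item Every atom below $F\sqcup e$ lies below exactly one of $A$ and $B$.
\item $\rk(A)+\rk(B) = \rk(F\sqcup e)$.
\item $\N = \N|_A \oplus \N|_B$ as matroids, so $A$ and $B$ are separators.
\end{itemize}
Since $e$ is not a loop, $e$ lies in exactly one of them, say $e \in A$. Note also that $A\cap E$ and $B \cap E$ are flats of $\M$, because they are flats of the extension intersected with $E$, i.e.\ flats of the deletion, which equals $\M$ by Proposition~\ref{prop:ext-del}.

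The key step is to show that $e$ is not a coloop of $\N|_A$. If it were, then, since $A$ is a separator, $e$ would be a coloop of $\N$. That would give $\rk(F\sqcup e) = \rk_{\M}(F)+1$, which contradicts $F \in \Mcut$ by the definition of the extension rank function. Hence $e \in \cl_{\M\cup_\Mcut e}(A\cap E)$. By Lemma~\ref{lem:e-membership-extension}, it follows that $\cl_{\M}(A\cap E) = A\cap E \in \Mcut$. Properness of $\Mcut$ rules out $A\cap E = \zero$. Since $A \cap E \leqslant F$ and $F$ is minimal in $\Mcut$, we get $A \cap E = F$.

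Finally, $B$ is disjoint from $A$, so $B \subseteq (F\sqcup e)\setminus A = \varnothing$. This contradicts $B \neq \zero$, so no nontrivial decomposition exists and $F\sqcup e$ is irreducible.

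The main obstacle is the careful identification of lattice product decompositions of $[\zero, F\sqcup e]$ with separators of the restriction matroid, which is what allows the coloop argument to go through. I expect this to be standard: since the extension may be non-simple, one passes to its simplification or works with parallel classes, and products of intervals in geometric lattices correspond to matroid direct sums.
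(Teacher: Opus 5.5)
Your proof is correct and is essentially the paper's argument. The paper also writes $F \sqcup e$ as a disjoint union of nonempty flats $G \sqcup H$ with additive rank, puts $e$ in $G$, and rules out $e$ being a coloop of $G$ because $F$ lies in the modular cut. It then places $G \setminus e$ in the modular cut (via Lemma~\ref{lem:extension-to-modular-cut} rather than your Lemma~\ref{lem:e-membership-extension}) and contradicts minimality by observing that $G \setminus e$ is a proper nonempty subset of $F$, where you instead conclude $A \cap E = F$ and hence $B = \varnothing$.
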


\begin{proof}
    By Lemma ~\ref{lem:extension-of-F-is-flat}, $F \sqcup e$ is indeed a flat in $\M \cup_{\Mcut} e$. Suppose for contradiction that $F \sqcup e$ is reducible. Then by definition, there exist disjoint and nonempty flats $G, H$ in $\M \cup_{\Mcut} e$ such that $F \sqcup e = G \sqcup H$ with 
    \[
    \rk_{\M \cup_{\Mcut}e }(F \sqcup e) = \rk_{\M \cup_{\Mcut} e}(G) + \rk_{\M \cup_{\Mcut} e}(H).
    \] Assume $e \in G$, without loss of generality. If $e$ is a coloop in $G$, then $e$ is a coloop in $F \sqcup e = G \sqcup H$. However, since $F \in \Mcut$, $\rk_{\M \cup_{\Mcut} e}(F \sqcup e) = \rk_{\M}(F)$ implies that $e$ is not a coloop in $F \sqcup {e}$, a contradiction. If $e$ is not a coloop in $G$, then $\rk_{\M \cup_{\Mcut} e}((G \setminus e) \sqcup e) = \rk_{\M \cup_{\Mcut} e}(G \setminus e)$. By Lemma~\ref{lem:extension-to-modular-cut}, $G \setminus e$ is an element in the modular cut $\Mcut$. Since both $G$ and $H$ are nonempty with $G \sqcup H = F \sqcup e$, $G \setminus e$ is a proper subset of $F$. Furthermore, since $e$ is not a coloop in $F\sqcup e$ we have $G\setminus e \neq \emptyset$. Therefore, $G \setminus e$ is a proper nonempty subset of $F$ which is also in $\Mcut$, contradicting the minimality of $F$. 
\end{proof}

The notion of single-element extension naturally extends to built matroids. Let $(\M, \G)$ be a built matroid, together with its built lattice $(\L, \G)$.  
\begin{definition}
A modular cut $\Mcut \subset \L(\M)$ is \emph{$\G$-compatible} if $\min \Mcut \subset \G.$ For any modular cut $\Mcut,$ the \emph{single-element extension} $\G\cup_{\Mcut}e$ of $\G$ is 
\[
\G \cup_{\Mcut}e \coloneqq \{\cl_{\M \cup_{\Mcut} e}(G) \mid G \in \calG \} \sqcup \{\{e\}\} \subset \L(\M\cup_{\Mcut} e). 
\]
\end{definition}

\begin{example}
Let $\M = \U_{3,3}$ be the Boolean matroid over three elements $1,2,3$, and set $\Mcut \coloneqq \{123\}$, which is a modular cut of $\U_{3,3}.$ The single-element extension $\M\cup_{\Mcut}4$ is the corank $1$ uniform matroid over four elements $1,2,3,4$. Consider the two building sets $\G \coloneqq \{1,2,3\}$ and $\G' \coloneqq \{1,2,3, 123\}$ of $\U_{3,3}.$ We have 
\[
\G\cup_{\Mcut}4 = \{1,2,3,4\} \quad \textrm{    and    } \quad \G'\cup_{\Mcut} 4 = \{1,2,3,4, 1234\}.
\]
\end{example}

Note that in the above example, the subset $\G\cup_{\Mcut} 4$ is not a building set of $\M\cup_{\Mcut} e$, whereas $\G'\cup_{\Mcut}e$ is a building set of $\M\cup_{\Mcut}e$. The rest of this section is devoted to providing a characterization of modular cuts for which the set $\G \cup_{\Mcut} e$ is indeed a building set of $\M \cup_{\Mcut} e$. We prepare this characterization by the following lemma, which may be viewed as a converse of Lemma~\ref{lem:min-mcut-is-irreducible}. 
\begin{lemma}\label{lem:extension-irreducibles-in-building-sets}
    Let $\Mcut$ be a proper, nonempty and $\G$-compatible modular cut, and let $F$ be a flat of $\M$. If $F \sqcup e$ is irreducible in $\M \cup_{\Mcut} e$, then $F \in \G$ and $F \sqcup e \in \G \cup_{\Mcut} e$.  
\end{lemma}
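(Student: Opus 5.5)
The plan is to split according to whether $F$ lies in $\Mcut$, and to use that a flat is irreducible exactly when the restriction of the matroid to it is connected. For this to apply we need the product decompositions of $[\zero,F]$ to correspond to direct-sum decompositions of $\M|_F$, with each factor a flat carrying the corresponding rank. I will also assume $F\neq\emptyset$. If $F=\emptyset$, then $F\sqcup e=\{e\}$ is trivially irreducible and lies in $\G\cup_{\Mcut}e$, but $\emptyset\notin\G$, so the statement needs $F\neq\zero$ as an implicit hypothesis.

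\emph{Step 1: $F\in\Mcut$.} Suppose instead that $F\notin\Mcut$. By definition $\rk_{\M\cup_{\Mcut}e}(F\sqcup e)=\rk_\M(F)+1$, so $e$ is a coloop of the restriction to $F\sqcup e$. This gives a nontrivial decomposition $[\zero,F\sqcup e]\cong[\zero,F]\times[\zero,\{e\}]$, with $\{e\}$ a flat because $e$ is not a loop (Lemma~\ref{lem:e-not-loop-nor-coloop}). This contradicts irreducibility. Hence $F\in\Mcut$, and by Lemma~\ref{lem:extension-of-F-is-flat} we have $F\sqcup e=\cl_{\M\cup_{\Mcut}e}(F)$ and $\rk(F\sqcup e)=\rk(F)$.

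\emph{Step 2: $F\in\G$.} Suppose not, and let $G_1,\dots,G_k$ with $k\geqslant 2$ be the $\G$-factors of $F$. The structural isomorphism $\psi_F$ identifies $\M|_F$ with the direct sum $\bigoplus_j \M|_{G_j}$, so the sets $G_j$ partition $F$ and $\rk F=\sum_j\rk G_j$.

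Since $F\in\Mcut$, there is a minimal element $F'\leqslant F$ of $\Mcut$. By $\G$-compatibility $F'\in\G$, so $F'\in\G_{\leqslant F}$ lies below some factor, say $F'\leqslant G_1$. Upward closure of $\Mcut$ then gives $G_1\in\Mcut$, and hence $\rk(G_1\sqcup e)=\rk G_1$.

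Now set $A=G_1\sqcup e$ and $B=F\setminus G_1=G_2\sqcup\cdots\sqcup G_k$. Both are nonempty and they partition $F\sqcup e$. Their ranks satisfy
\[
\rk A+\rk B=\rk G_1+\sum_{j\geqslant 2}\rk G_j=\rk F=\rk(F\sqcup e),
\]
so $A$ is a separator of the restriction to $F\sqcup e$. That restriction is therefore disconnected, i.e.\ $F\sqcup e$ is reducible, which is a contradiction. Hence $F\in\G$.

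\emph{Step 3: $F\sqcup e\in\G\cup_{\Mcut}e$.} Since $F\in\G$ and $F\sqcup e=\cl_{\M\cup_{\Mcut}e}(F)$ by Step 1, the flat $F\sqcup e$ is in $\G\cup_{\Mcut}e$ by the definition of the extended building set.

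The step I expect to need the most care is the translation used in Steps 1 and 2 between irreducibility of intervals and connectivity of restrictions. In particular, one must check that the pieces of the decompositions are genuinely flats of $\M\cup_{\Mcut}e$ and not merely subsets; with $A$ and $B$ this is not automatic, so I would phrase the argument purely through connectivity and separators, where flatness of the pieces is not needed. The argument with the minimal element of $\Mcut$ in Step 2 is the conceptual core of the proof.
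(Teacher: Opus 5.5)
Your proof is correct and follows essentially the same route as the paper. You show $F\in\Mcut$, find a $\G$-factor of $F$ lying in $\Mcut$ (your minimal-element argument spells out what the paper attributes directly to $\G$-compatibility), and observe that this factor together with $e$, set against the remaining factors, gives a rank-additive splitting of $F\sqcup e$ that contradicts irreducibility. The only difference is that the paper also proves this factor is unique, using modularity and properness, so that the other factors stay flats of $\M\cup_{\Mcut}e$; your two-block separator formulation avoids needing that step, and your remark that $F\neq\zero$ is an implicit hypothesis is correct.
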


\begin{proof}
Since $F \sqcup e$ is irreducible in $\M \cup_{\Mcut} e$, $e$ is not a coloop in $F$, and thus $F \sqcup e = \cl_{\M \cup_{\Mcut} e}(F)$. Therefore, $F \in \G$ is equivalent to $F \sqcup e \in \G \cup_{\Mcut} e$, and we want to prove that $F \in \G$. By Lemma~\ref{lem:e-membership-extension}, $e \in F \sqcup e = \cl_{\M \cup_{\Mcut} e}(F)$ implies that $F \in \Mcut$. Let $G_1, \ldots, G_k$ be the factors of $F$ in $\G$. Since $\Mcut$ is $\G$-compatible, at least one of the factors $G_1, \ldots, G_k$ is in $\Mcut$. We claim that exactly one of the factors is in $\Mcut$. Indeed, by the definition of $\G$-factors, for every $1 \leqslant i < j \leqslant k$, the pair $G_{i}, G_j$ is a modular pair, with $G_i \wedge G_{j} = \zero$. By the definition of a modular cut, if any pair $G_i, G_j$ were in $\Mcut$, then $\zero \in \Mcut$, contradicting $\Mcut$ is proper. Therefore, there exists exactly a unique factor $G_\ell$ such that $G_{\ell} \in \Mcut$. By Lemma ~\ref{lem:extension-flats}, every $G_{i}$ for $i \ne \ell$ is a flat in $\M \cup_{\Mcut} e$. Therefore, as a set $F$ can be written as a disjoint union 
\[
F \sqcup e= (G_{\ell} \sqcup e) \sqcup \bigg(\bigsqcup_{i \ne \ell} G_i \bigg) 
\] with rank in the matroid $\M \cup_{\Mcut} e$ equal to the sum of the ranks of the $G_i$'s
\[
\rk_{\M \cup_{\Mcut} e}(F \sqcup e) = \rk_{\M \cup_{\Mcut} e}(G_{\ell} \sqcup e) + \sum_{i \ne \ell} \rk_{\M \cup_{\Mcut} e}(G_i). 
\]
By irreducibility of $F \sqcup e$, this decomposition cannot exist, unless $F \sqcup e = G_{\ell} \sqcup e$ and $F \in \G$. 
\end{proof}
The following proposition is the main new result of this section. 
\begin{proposition}\label{lem:built-see}
Let $\Mcut$ be a proper nonempty modular cut. The single-element extension $\G\cup_{\Mcut}e$ is a building set of $\M\cup_{\Mcut}e$ if and only if $\Mcut$ is $\G$-compatible.
\end{proposition}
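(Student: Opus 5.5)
We use the characterization of building sets in Proposition~\ref{prop:backman-char-building-set}, applied in the lattice $\L' \coloneqq \L(\M\cup_{\Mcut}e)$. Write $\G' \coloneqq \G\cup_{\Mcut}e$. For each $G\in\G$, its image in $\G'$ is $G$ if $G\notin\Mcut$ and $G\sqcup e$ if $G\in\Mcut$, by Lemma~\ref{lem:extension-of-F-is-flat}. Besides these, $\G'$ contains only the flat $\{e\}$. The proof has two directions.

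\emph{Necessity.} Suppose $\G'$ is a building set. Let $F\in\min\Mcut$. Since $\Mcut$ is proper, Lemma~\ref{lem:min-mcut-is-irreducible} shows that $F\sqcup e$ is irreducible in $\M\cup_{\Mcut}e$, so $F\sqcup e\in\G'_{\min}\subseteq\G'$. Now $F\sqcup e\neq\{e\}$, because $F\neq\zero$ as $\Mcut$ is proper. Hence $F\sqcup e=\cl_{\M\cup_{\Mcut}e}(G)$ for some $G\in\G$. Intersecting with $E$ gives $G\subseteq F$ and $\cl(G)\ni e$. By Lemma~\ref{lem:e-membership-extension}, $G\in\Mcut$. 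Minimality of $F$ then forces $G=F$, so $F\in\G$. This shows $\Mcut$ is $\G$-compatible.

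\emph{Sufficiency, first condition.} Assume $\min\Mcut\subseteq\G$. We must show $\G'_{\min}\subseteq\G'$. Every irreducible flat of $\M\cup_{\Mcut}e$ falls into one of the three classes of Lemma~\ref{lem:extension-flats}.
\begin{itemize}
\item If it has the form $F\sqcup e$, the class (iii) case with $F\notin\Mcut$ and $F\neq\emptyset$ is impossible: there $e$ is a coloop of $F\sqcup e$, so the flat splits as $F\sqcup\{e\}$. The remaining case is settled by Lemma~\ref{lem:extension-irreducibles-in-building-sets}.
\item If it is $\{e\}$, it lies in $\G'$ by definition.
\item If it is a flat $F\notin\Mcut$, then $[\zero,F]$ is the same interval in both lattices. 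So $F$ is irreducible in $\L$, hence $F\in\G$, and it stays $F$ in $\G'$.
\end{itemize}

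\emph{Sufficiency, second condition.} Take $A,B\in\G'$ with $A\wedge B\neq\zero$; we must show $A\vee B\in\G'$. Neither can be $\{e\}$, since $\{e\}$ is an atom and the case $A=B=\{e\}$ is trivial. So $A=\cl'(G)$ and $B=\cl'(H)$ with $G,H\in\G$, where $\cl'$ denotes closure in $\M\cup_{\Mcut}e$. Then $A\vee B=\cl'(G\vee H)$, since closure commutes with joins. It therefore suffices to show $G\wedge H\neq\zero$ in $\L$, for then $G\vee H\in\G$ and we are done. Here $A\wedge B=A\cap B$ and $G\wedge H=G\cap H$. If $G\cap H=\emptyset$, then $A\cap B\subseteq\{e\}$, so $A\cap B=\{e\}$ and both $G,H\in\Mcut$.

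\emph{Main obstacle.} The key step is excluding this last configuration. Since $G\cap H=\emptyset$, we have $G\wedge H=\zero$. Consider the factors of $G\vee H$, which are $G$ and $H$ if $G\vee H\notin\G$. In that case $G,H$ form a modular pair by the structural isomorphism, and the modular cut axiom forces $\zero\in\Mcut$, contradicting properness. If instead $G\vee H\in\G$, then $A\vee B\in\G'$ directly. In either case $A\vee B\in\G'$, which completes the proof.
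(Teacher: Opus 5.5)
Your proof is correct and follows the paper's route: Backman's characterization (Proposition~\ref{prop:backman-char-building-set}), Lemma~\ref{lem:min-mcut-is-irreducible} for necessity, and Lemma~\ref{lem:extension-irreducibles-in-building-sets} for the irreducible flats. Your treatment of the join condition is more careful than the paper's one-line ``follows because $\G$ satisfies that property'': you isolate the case $\cl_{\M\cup_{\Mcut}e}(G)\cap\cl_{\M\cup_{\Mcut}e}(H)=\{e\}$ with $G\cap H=\emptyset$ and exclude it using properness of $\Mcut$. Two small repairs are needed. First, $\{e\}$ \emph{can} be one of $A,B$, but then $\{e\}\leqslant B$ and $A\vee B=B\in\G\cup_{\Mcut}e$, so that case is trivial rather than impossible. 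Second, the claim that $G,H$ are the $\G$-factors of $G\vee H$ when $G\vee H\notin\G$ needs a line of justification; alternatively, note that $G,H$ lie under two distinct factors of $G\vee H$, that both factors lie in $\Mcut$ by upward closure, and that they form a modular pair with meet $\zero$.
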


\begin{proof}
Suppose $\Mcut$ is a modular cut of $\M$ such that $\G\cup_{\Mcut}e$ is a building set of $\M\cup_{\Mcut}e$. For every flat $F$ which is minimal in $\Mcut$, we want to show that $F \in \G$. By Lemma ~\ref{lem:min-mcut-is-irreducible} and Proposition~\ref{prop:backman-char-building-set}, $F \sqcup e \in \G \cup_{\Mcut} e$. Then the definition of $\G \cup_{\Mcut} e$ implies that $F \sqcup e$ is a flat of the form $\cl_{\M \cup_{\Mcut} e}(G)$ for some $G \in \G$. By Lemma~\ref{lem:extension-of-F-is-flat}, $F \sqcup e = \cl_{\M \cup_{\Mcut} e}(F)$. Therefore, $F = G$, and $F \in \G$.

Suppose $\Mcut$ is a $\G$-compatible modular cut. To prove that $\G\cup_{\Mcut}e$ is a building set, by Proposition \ref{prop:backman-char-building-set}, it suffices to prove that $\G\cup_{\Mcut}e$ contains the irreducible flats of $\M\cup_{\Mcut}e$, and that for every $G, G' \in \G\cup_{\Mcut}e$, the following implication holds
\[
G \wedge G' \neq \zero \implies G \vee G' \in \G\cup_{\Mcut}e.
\] 
For the first property, let $F$ be an irreducible flat of $\M\cup_{\Mcut}e$ and we want to show that $F \in \G\cup_{\Mcut} e$. There are $2$ cases depending on whether $F$ contains $e$. If $F$ contains $e$, then Lemma ~\ref{lem:extension-irreducibles-in-building-sets} implies that $F \in \G \cup_{\Mcut} e$. If $F$ does not contain $e$, then $F$ is also irreducible in $\M$. Proposition ~\ref{prop:backman-char-building-set} implies $F \in \G$ and $F = \cl_{\M \cup_{\Mcut}e}(F) \in \G \cup_{\Mcut} e$. 
For the second property, it immediately follows because $\G$ also satisfies that property.  
\end{proof}

In the situation of the above lemma, we denote by $(\M, \G) \cup_{\Mcut} e$ the single-element extension $(\M\cup_{\Mcut}e, \G\cup_{\Mcut}e)$ of the built matroid $(\M, \G)$ along the $\G$-compatible modular cut $\Mcut.$

\subsection{Deletion}
There are two types of deletion operations on a built matroid $(\M, \G)$. The first type is induced by a single-element deletion from the ground set.

\begin{lemma}\label{lem:built-del-e}
    Let $(\M, \G)$ be a simple built matroid, and let $e\in E(\M)$. The subset 
    \[
    \G\setminus e \coloneqq \{ S \subseteq E(\M)\setminus e  \, | \, \cl_{\M}(S) \in \G\} \subset \L(\M\setminus e)
    \] 
    is a building set of $\M\setminus e$. 
\end{lemma}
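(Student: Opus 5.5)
We use Proposition~\ref{prop:backman-char-building-set}. First we rewrite $\G\setminus e$ in terms of flats. Since $\M$ is simple, the flats of $\M\setminus e$ are exactly the sets $F\setminus e$ for $F\in\L(\M)$. If $S\subseteq E\setminus e$ is such a flat, then $\cl_\M(S)$ is either $S$ or $S\sqcup e$. So the relevant part of $\G\setminus e$ is $\{F\in \L(\M\setminus e)\mid \cl_\M(F)\in\G\}$, and we regard it as a subset of $\L(\M\setminus e)\setminus\{\zero\}$.

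The basic tool is the map $\varphi\colon \L(\M\setminus e)\to\L(\M)$, $F\mapsto \cl_\M(F)$. It is injective, since $\varphi(F)\setminus e=F$. It preserves joins, because
\[
\cl_\M\bigl(\cl_{\M\setminus e}(F\cup F')\bigr)=\cl_\M(F\cup F')=\cl_\M(F)\vee\cl_\M(F').
\]
It also reflects nontrivial meets: if $F\wedge F'\neq\zero$ in $\L(\M\setminus e)$, then $F\cap F'\neq\varnothing$, so $\varphi(F)\cap\varphi(F')\neq\varnothing$, i.e.\ $\varphi(F)\wedge\varphi(F')\neq\zero$ in $\L(\M)$.

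With these facts the closure condition of Proposition~\ref{prop:backman-char-building-set} is immediate. Take $G,G'\in\G\setminus e$ with $G\wedge G'\neq\zero$. Then $\varphi(G),\varphi(G')\in\G$ have nontrivial meet, so $\varphi(G)\vee\varphi(G')=\varphi(G\vee G')\in\G$ because $\G$ is a building set. Hence $G\vee G'\in\G\setminus e$.

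The main step is showing $\G_{\min}(\M\setminus e)\subseteq\G\setminus e$. We use the standard fact that for a loopless matroid, a flat $F$ is irreducible if and only if the restriction to $F$ is connected; this follows because product decompositions of $[\zero,F]$ correspond to separators of the restriction. So let $F$ be a flat of $\M\setminus e$ with $(\M\setminus e)|F=\M|F$ connected. There are two cases.
\begin{enumerate}[(a), leftmargin=20pt]
\item If $\cl_\M(F)=F$, then $F$ is an irreducible flat of $\M$, so $F\in\G_{\min}(\M)\subseteq\G$.
\item If $\cl_\M(F)=F\sqcup e$, then $e$ lies in the closure of $F$ and so is not a coloop of $\M|(F\sqcup e)$. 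Since $\M$ is simple, $e$ lies in a circuit of $\M|(F\sqcup e)$ meeting $F$. Adding an element that lies in a circuit with elements of a connected set keeps it connected, so $\M|(F\sqcup e)$ is connected. Thus $F\sqcup e$ is irreducible in $\M$, hence lies in $\G$, and again $F\in\G\setminus e$.
\end{enumerate}

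The only delicate point is the second case: checking that irreducibility passes from $F$ in $\M\setminus e$ to $F\sqcup e$ in $\M$. That is where the connectivity reformulation is used. If one prefers to stay lattice-theoretic, the alternative is to take a putative decomposition $F\sqcup e=A\sqcup B$ with additive ranks and $e\in A$. Submodularity then forces $\rk(F)=\rk(A\setminus e)+\rk(B)$. The case $A=\{e\}$ is ruled out by ranks, and otherwise we obtain a nontrivial decomposition of $F$ in $\M\setminus e$, contradicting irreducibility.
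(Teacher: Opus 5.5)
Your proof is correct and follows the paper's route. You verify the two conditions of Proposition~\ref{prop:backman-char-building-set}, splitting irreducible flats $F$ of $\M\setminus e$ according to whether $e\in\cl_\M(F)$, and you use $\cl_\M(G\vee G')=\cl_\M(G)\vee\cl_\M(G')$ for the join condition; the only variation is that you settle the case $e\in\cl_\M(F)$ through connectivity of $\M|(F\sqcup e)$ (a circuit through $e$ meeting $F$), whereas the paper reuses the rank-decomposition argument of Lemma~\ref{lem:min-mcut-is-irreducible}, which is essentially your lattice-theoretic alternative.
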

\begin{proof}
Let $F$ be an irreducible flat of $\M\setminus e.$ If $e\notin \cl_{\M}(F),$ then $\cl_{\M}(F)=F $ is irreducible in $\M$, and so $\cl_{\M}(F)$ belongs to $\G$, which implies that $F$ belongs to $\G\setminus e$. On the other hand, if $e \in \cl_{\M}(F)$, using the same arguments as in the proof of Lemma~\ref{lem:min-mcut-is-irreducible} $\cl_{\M}(F)$ is irreducible in $\M$, which implies that $\cl_{\M}(F)$ belongs to $\G,$ and so $F$ belongs to $\G\setminus e.$

If $G$ and $G'$ are two flats of $\M\setminus e$ in $\G\setminus e$ such that $G \wedge G' \neq \zero$, then $\cl_{\M}(G)$ and $\cl_{\M}(G')$ are two flats of $\M$ in $\G$ with $\cl_{\M}(G) \wedge \cl_{\M}(G') \neq \zero$. By Proposition~\ref{prop:backman-char-building-set}, this implies that $\cl_{\M}(G)\vee \cl_{\M}(G) \in \G$. One can check that $\cl_{\M}(G\vee G') = \cl_{\M}(G) \vee \cl_{\M}(G')$, which implies $G \vee G' \in \G\setminus e$. 

By Proposition \ref{prop:backman-char-building-set} this implies that $\G\setminus e$ is a building set of $\M\setminus e.$
\end{proof}
We denote the single-element deletion from a built matroid $(\M, \G)$ by 
\[
(\M, \G)\setminus e \coloneqq (\M\setminus e, \G\setminus e).
\] 
We have the following result, relating the deletion and the extension of a built matroid. 
\begin{proposition}\label{prop:built-del-ext}
    If $\G$ is a building set of $\M$, then the modular cut of $\M \setminus e$ 
    \[
    \Mcut_e \coloneqq \{ F \in \L(\M \setminus e) \, | \, e\in \cl_{\M}(F)\}
    \] 
    is a $(\G\setminus e)$-compatible modular cut. Moreover,  if we write 
    \[
    S_e \coloneqq \{G \in \L(\M) \mid e \text{ is a coloop in } G \text{ and } G \ne \{e\}\}, 
    \]
    then there is an equality of building sets on $\M = (\M \setminus e) \cup_{\Mcut_e} e$, 
    \[
    (\G\setminus e)\cup_{\Mcut_e} e = \G \setminus S_e. 
    \]  
\end{proposition}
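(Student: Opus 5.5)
The proof has two parts. First we check that $\Mcut_e$ is a $(\G\setminus e)$-compatible modular cut of $\M\setminus e$. Then we compare the two building sets on $\M=(\M\setminus e)\cup_{\Mcut_e}e$ element by element.

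\textbf{Compatibility.} By Proposition~\ref{prop:ext-del}, $\Mcut_e$ is a modular cut and $\M=(\M\setminus e)\cup_{\Mcut_e}e$. We may assume $\M$ simple, as in Lemma~\ref{lem:built-del-e}.

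\begin{itemize}
\item Suppose $e$ is a coloop of $\M$. Then $\Mcut_e$ is empty and compatibility is vacuous.
\item Otherwise, $\Mcut_e$ is proper and nonempty by Lemma~\ref{lem:e-not-loop-nor-coloop}. Let $F$ be minimal in $\Mcut_e$. Lemma~\ref{lem:min-mcut-is-irreducible} says $F\sqcup e=\cl_\M(F)$ is irreducible in $\M$. Hence $\cl_\M(F)\in\G_{\min}\subseteq\G$, so $F\in\G\setminus e$ by definition.
\end{itemize}

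\textbf{Equality of building sets.} By definition, $(\G\setminus e)\cup_{\Mcut_e}e$ consists of $\{e\}$ together with the closures $\cl_\M(S)$ for $S\in\G\setminus e$. Since $\cl_\M(S)\in\G$ for such $S$, this set is exactly $\{e\}$ together with those $G\in\G$ of the form $\cl_\M(S)$ for some flat $S$ of $\M\setminus e$. The singleton $\{e\}$ lies in $\G$ (as an atom) and not in $S_e$. So it remains to show, for $G\in\G$ with $G\neq\{e\}$, that $G=\cl_\M(S)$ for some flat $S$ of $\M\setminus e$ exactly when $e$ is not a coloop of $G$. Split into three cases using Lemma~\ref{lem:extension-flats}.

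\begin{itemize}
\item If $e\notin G$, then $G$ is itself a flat of $\M\setminus e$ with $\cl_\M(G)=G$. Also $G\notin S_e$.
\item If $e\in G$ and $e$ is not a coloop of $G$, put $S=G\setminus e$. Then $\rk(S)=\rk(G)$, so $e\in\cl_\M(S)$, and hence $\cl_\M(S)=G$. Moreover $S$ is a flat of $\M\setminus e$, being the restriction of the flat $G$. So $G$ lies in the extended set.
\item If $e\in G$ is a coloop of $G$ and $G\neq\{e\}$, then any $S\subseteq E\setminus e$ with $\cl_\M(S)=G$ would satisfy $S\subseteq G\setminus e$. This forces $\rk(S)\le\rk(G)-1$, a contradiction. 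So $G$ is not a closure and is excluded, matching $G\in S_e$.
\end{itemize}

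The only potential subtlety is the bookkeeping of identifying flats of $\M\setminus e$ with flats of the extension via Lemma~\ref{lem:extension-flats}, for example ensuring $G\setminus e$ is genuinely a flat of $\M\setminus e$. Once that identification is in place, each case is routine.
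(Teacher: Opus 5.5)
Your proof is correct and follows essentially the same route as the paper: you get compatibility by applying Lemma~\ref{lem:min-mcut-is-irreducible} to the minimal elements of $\Mcut_e$, and you get the equality by checking that a flat $G \in \G$ is the closure of a flat of $\M\setminus e$ exactly when $e$ is not a coloop of $G$. You are slightly more careful than the paper, since you treat separately the case where $e$ is a coloop of $\M$ (so that $\Mcut_e=\emptyset$) and the extra singleton $\{e\}$ in the definition of the extended building set.
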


\begin{proof}
If $F$ is a flat of $\M\setminus e$ which is a minimal element of $\Mcut_e$, then by Lemma \ref{lem:min-mcut-is-irreducible} we have that $F\sqcup e$ is irreducible in $\G$, which implies that $F\sqcup e = \cl_{\M}(F)$ belongs to $\G$. By definition of $\G\setminus e$ this implies that $F$ belongs to $\G\setminus e$. 

By definition we have 
\begin{align*}
    (\G\setminus e) \cup_{\Mcut} e &= \{\cl_{\M}(G) \, | \, G\in \L(\M\setminus e) \textrm{ s.t. } \cl_{M}(G) \in \G\} \subset \G.
\end{align*}
If a flat $F$ of $\M$ contains $e$ as a coloop, then it is not the closure of a flat of $\M\setminus e$. Therefore,
\[
(\G\setminus e) \cup_{\Mcut} e \subset \G\setminus S_e.
\] 
On the other hand, every flat $F$ of $\M$ not containing $e$ as coloop is the closure of the flat $F\setminus e$ of $\M\setminus e$, and in that case, if $F$ belongs to $\G$, then $F\setminus e$ belongs to $\G\setminus e$. This implies that 
\[
\G\setminus S_e \subset (\G\setminus e) \cup_{\Mcut} e.
\]
\end{proof}

The second type of deletion operation is to delete an element in the building set. The resulting set is not necessarily a building set, and we provide a characterization for when such deletions indeed result in a building set.

\begin{lemma}\label{lem:building-set-deletion-of-one-element}
Let $(\L, \G)$ be a built lattice. For any element $G$ in $\G$, the set $\G\setminus \{G\}$ is a building set of $\L$ if and only if the join gives an isomorphism of posets
\begin{equation}
\label{eq:product-over-factors-after-deletion}
 \prod_{F \in \max (\calG \setminus G)_{\leqslant G}}[\zero, F] \cong [\zero, G]. 
\end{equation}
\end{lemma}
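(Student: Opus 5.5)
The forward implication is immediate from the definition. If $\G' \coloneqq \G\setminus\{G\}$ is a building set, then its structural isomorphism $\psi_G$ at the flat $G$ is precisely the isomorphism \eqref{eq:product-over-factors-after-deletion}, because $\max \G'_{\leqslant G} = \max(\G\setminus G)_{\leqslant G}$.

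For the converse, we verify the structural isomorphism of $\G'$ at every flat $F\in\L\setminus\{\zero\}$. The plan is to compare $\max\G'_{\leqslant F}$ with $\max\G_{\leqslant F}$.

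If $G\notin\max\G_{\leqslant F}$, then the two sets of maximal elements agree. This is clear when $G\not\leqslant F$. When $G\leqslant F$ but $G$ is not maximal, $G$ lies below some other element of $\G_{\leqslant F}$, and that element is not $G$; so deleting $G$ removes no maximal element. In either case $\psi_F$ for $\G'$ coincides with $\psi_F$ for $\G$, which is an isomorphism.

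If $G\in\max\G_{\leqslant F}$, write $\max\G_{\leqslant F}=\{G,G_2,\dots,G_k\}$ and let $F_1,\dots,F_m$ be the maximal elements of $(\G\setminus G)_{\leqslant G}$. The key claim is
\[
\max \G'_{\leqslant F} = \{F_1,\dots,F_m,G_2,\dots,G_k\}.
\]
Every element of $\G'_{\leqslant F}$ lies below some element of $\max\G_{\leqslant F}$. If it is not below any $G_i$, it is below $G$, hence below some $F_j$. It remains to check that these elements are pairwise incomparable. The $F_j$ are incomparable among themselves by construction, and the $G_i$ likewise. An $F_j$ cannot lie below some $G_i$: since $F_j\leqslant G$, this would give $F_j\leqslant G\wedge G_i=\zero$, using that distinct factors meet at $\zero$ by the structural isomorphism of $\G$ at $F$. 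Conversely, $G_i\leqslant F_j$ is impossible because $F_j\leqslant G$ and $G_i\not\leqslant G$.

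With this description, the join map for $\G'$ at $F$ factors as the composition
\[
\prod_j[\zero,F_j]\times\prod_{i\geqslant 2}[\zero,G_i]\xrightarrow{\ \eqref{eq:product-over-factors-after-deletion}\times\mathrm{id}\ }[\zero,G]\times\prod_{i\geqslant2}[\zero,G_i]\xrightarrow{\ \psi_F\ }[\zero,F].
\]
Here we use associativity of joins: joining the $F_j$-components first gives an element of $[\zero,G]$. Both maps are isomorphisms, so the composite is too. Hence $\G'$ is a building set.

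The only delicate point is the description of $\max\G'_{\leqslant F}$ in the second case, and in particular its incomparability part. That part rests on the fact that distinct $\G$-factors have meet $\zero$, which follows from the structural isomorphism of $\G$ at $F$, as used in the proof of Lemma~\ref{lem:extension-irreducibles-in-building-sets}. Nonemptiness of the products is harmless, since $G$ is not an atom: otherwise $(\G\setminus G)_{\leqslant G}$ would be empty and \eqref{eq:product-over-factors-after-deletion} would fail.
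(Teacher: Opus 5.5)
Your proof is correct, but it takes a different route from the paper for the converse.

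The paper never checks the structural isomorphisms directly. It invokes the Backman--Danner characterization (Proposition~\ref{prop:backman-char-building-set}) and verifies its two conditions:
\begin{enumerate}
\item[(1)] $\G\setminus\{G\}$ still contains every irreducible flat, because \eqref{eq:product-over-factors-after-deletion} shows that $G$ is reducible;
\item[(2)] $\G\setminus\{G\}$ is closed under joins of pairs with nonzero meet. If $H\vee H'=G$ with $H,H'\in\G\setminus\{G\}$, then $H$ and $H'$ lie below two distinct elements of $\max(\G\setminus G)_{\leqslant G}$, so \eqref{eq:product-over-factors-after-deletion} forces $H\wedge H'=\zero$.
\end{enumerate}

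You instead work straight from the definition. You compute $\fact_{\G\setminus G}(F)$ for every flat $F$. It is unchanged unless $G$ is a $\G$-factor of $F$. In that case $G$ is replaced by the maximal elements of $(\G\setminus G)_{\leqslant G}$. Incomparability of the new factors follows from distinct $\G$-factors meeting at $\zero$. You then factor the new join map as \eqref{eq:product-over-factors-after-deletion}$\times\mathrm{id}$ followed by the old $\psi_F$.

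The paper's argument is shorter, but it relies on an external characterization of building sets. Yours is self-contained. It also gives, as a by-product, an explicit description of how the factors of every flat change when a single element is removed from a building set.
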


\begin{proof}
If $\G\setminus \{G\}$ is a building set, then the structural isomorphism at $G$ gives the isomorphism \eqref{eq:product-over-factors-after-deletion}. If we have the isomorphism in \eqref{eq:product-over-factors-after-deletion}, then we will use Proposition \ref{prop:backman-char-building-set} to prove that $\G\setminus \{G\}$ is a building set. We first check that $\G \setminus \{G\}$ contains all the irreducible flats of $\L$. Since $\G$ is a building set, it must contain the irreducible flats of $\L$. Furthermore, the isomorphism \eqref{eq:product-over-factors-after-deletion} implies that $G$ is not irreducible in $\L$. Therefore, the deletion $\G \setminus \{G\}$ still contains all the irreducible flats of $\L$. We now check that for every pair $H, H' \in \G \setminus \{G\}$ with $H \wedge H' \ne \zero$, we have $H \vee H' \in \G \setminus \{G\}$. Indeed, Proposition \ref{prop:backman-char-building-set} implies that $H \vee H' \in \G$. Moreover, if $H \vee H'$ was equal to $G$, then $H$ and $H'$ would be below two different elements of $\max (\calG \setminus G)_{\leqslant G}$, and the isomorphism \eqref{eq:product-over-factors-after-deletion} would imply $H \wedge H' = \zero$, a contradiction. 
\end{proof}

\subsection{Truncation}
We have seen in Section \ref{subsec:single-element-extension} that the deletion of a single-element extension $\M\cup_{\Mcut} e$ at $e$ is simply the original matroid $\M$. One can also describe the contraction of $\M\cup_{\Mcut} e$ at $e$, via the notion of truncation. 
\begin{definition}
Let $\M$ be a matroid, and $\Mcut$ a modular cut of $\M.$ The truncation $\Tr_{\Mcut}(\M)$ of $\M$ along $\Mcut$  is the matroid on the ground set $E(\M)$ defined by the rank function 
\begin{equation*}
    \rk_{\Tr_{\Mcut}(\M)}(S) = \begin{cases}
        \rk_{\M}(S) -1 &\textrm{ if } \cl_{\M}(S) \in \Mcut\\
        \rk_{\M}(S) &\textrm{ otherwise. } 
    \end{cases}
\end{equation*}
\end{definition}
It is a standard fact of matroid theory that the above rank function does define a matroid. In terms of the lattice of flats, we have the following lattice-theoretic description of the truncation along a modular cut. Let the \emph{collar} of $\Mcut$ be the set of all flats of $\M$ not in $\Mcut$, but which are covered by members of $\Mcut$: 
    \[
    C_{\Mcut} \coloneqq \{ F \in \L(\M) \, | \, F \notin \Mcut \textrm{ and there exists }  e \in E(\M) \textrm{ such that } \cl_{\M}(F\cup e) \in \Mcut \}, 
    \] 
\begin{proposition}[{\cite[Section 7.3]{oxley}}]\label{prop:truncation-description}
    The lattice of flats of $\Tr_{\Mcut}(\M)$ is the lattice of $\M$ without the collar of $\Mcut$: 
    \[
    \L(\Tr_{\Mcut}(\M)) = \L(\M) \setminus C_{\Mcut}.
    \] 
\end{proposition}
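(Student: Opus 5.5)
The plan is to compare closures in $\M$ and in $\Tr_{\Mcut}(\M)$ directly from the rank function; the argument is elementary and needs only that $\Mcut$ is upward closed. Since both lattices are ordered by inclusion, it suffices to show that a subset $S \subseteq E(\M)$ is a flat of $\Tr_{\Mcut}(\M)$ if and only if $S$ is a flat of $\M$ not lying in $C_{\Mcut}$. Throughout, I write $\rk' \coloneqq \rk_{\Tr_{\Mcut}(\M)}$. The key observation is that $\rk'(S)$ depends only on $\rk_{\M}(S)$ and on whether $\cl_{\M}(S) \in \Mcut$.

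\emph{Step 1: every flat of $\Tr_{\Mcut}(\M)$ is a flat of $\M$.} Suppose $S$ is not a flat of $\M$. Then there is some $a \notin S$ with $\rk_{\M}(S\cup a) = \rk_{\M}(S)$. This gives $\cl_{\M}(S \cup a) = \cl_{\M}(S)$, so the two sets fall into the same case of the definition of $\rk'$. Hence $\rk'(S\cup a) = \rk'(S)$, and $S$ is not a flat of $\Tr_{\Mcut}(\M)$.

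\emph{Step 2: a flat of $\M$ is a flat of the truncation exactly when it is not in the collar.} Let $F$ be a flat of $\M$ and let $a \notin F$. Then $\rk_{\M}(F\cup a) = \rk_{\M}(F)+1$, and I compute $\rk'$ by cases.
\begin{itemize}
\item If $F \in \Mcut$, then $\cl_{\M}(F \cup a) \in \Mcut$ by upward closure. Both ranks drop by one, so $\rk'(F\cup a) = \rk'(F)+1$.
\item If $F\notin\Mcut$ and $\cl_{\M}(F\cup a)\notin \Mcut$, neither rank drops, and again $\rk'(F\cup a) = \rk'(F)+1$.
\item If $F \notin \Mcut$ but $\cl_{\M}(F\cup a) \in \Mcut$, only the larger rank drops, so $\rk'(F \cup a) = \rk'(F)$.
\end{itemize}
Therefore $F$ fails to be a flat of $\Tr_{\Mcut}(\M)$ if and only if some $a$ falls in the third case. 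That is precisely the condition $F \in C_{\Mcut}$.

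Combining Steps 1 and 2 gives $\L(\Tr_{\Mcut}(\M)) = \L(\M)\setminus C_{\Mcut}$ as sets. Since both orders are inclusion, the equality also holds as posets.

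There is no serious obstacle here. The only point that needs care is the case $F \in \Mcut$ in Step 2, where upward closure of $\Mcut$ is used to see that the rank drops cancel. I take as given, as the paper does, that $\rk'$ is the rank function of a matroid.
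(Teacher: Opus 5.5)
Your proof is correct and complete. The paper gives no argument of its own here; it only cites Oxley. So the useful comparison is with the route suggested by the surrounding material. That route identifies $\Tr_{\Mcut}(\M)$ with $(\M\cup_{\Mcut}e)/e$ (Proposition~\ref{prop:ext-contract-is-trun}), uses that the flats of $N/e$ are the sets $X$ with $X\cup e$ a flat of $N$, and reads these off from the classification of flats of the single-element extension.

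You instead compare $\rk'$ with $\rk_{\M}$ directly. Step~1 shows that a non-flat of $\M$ is a non-flat of the truncation, and the three-case analysis in Step~2 isolates exactly the collar. The collar is defined with $e$ ranging over all of $E(\M)$ rather than $E(\M)\setminus F$, but this is harmless, since $e\in F$ gives $\cl_{\M}(F\cup e)=F\notin\Mcut$.

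What each route buys:
\begin{itemize}
\item Your route is shorter and uses only upward closure of $\Mcut$. The price is that you take for granted that $\rk'$ is a matroid rank function, which is where properness and the modular-pair condition are actually used.
\item The extension--contraction route gets the matroid structure of $\Tr_{\Mcut}(\M)$ for free, from the fact that single-element extensions along modular cuts are matroids.
\end{itemize}

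Your direct computation has one further advantage: it does not depend on Lemma~\ref{lem:extension-flats}. Item (iii) of that lemma, as stated in the paper, omits the requirement that $F$ not be covered by a member of $\Mcut$, i.e.\ that $F\notin C_{\Mcut}$. For example, take $\M=\U_{2,2}$ on $\{1,2\}$ and $\Mcut=\{12\}$. The extension is $\U_{2,3}$, and $\{1,e\}$ is not a flat of it. With (iii) taken literally, the extension route would wrongly return all of $\L(\M)$ as the flats of the truncation.
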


\begin{example}[Examples of modular cuts]\leavevmode
    \begin{enumerate}[\normalfont(i), leftmargin = 20pt, itemsep = 3pt] 
        \item If $\Mcut = \varnothing$, then $\Tr_{\Mcut}(\M) = \M$.
        \item If $\Mcut = \{E(\M)\}$, then $\Tr_{\Mcut}(\M)$ is the usual truncation of $\M$, whose lattice of flats consists of all but the corank $1$ flats of $\M$. 
        \item If $\Mcut$ is atom-free, then $\Tr_{\Mcut}(\M)$ is a loopless matroid. If $\Mcut$ contains an atom $a$, then $\Tr_{\Mcut}(\M)$ contains a loop $a$. 
    \end{enumerate} 
\end{example}

\begin{example}
    Let $\M$ be the uniform matroid $\U_{r, n}$ on ground set $[n]$ of rank $r$. Then $C_{\Mcut} = \{F \mid F \notin \Mcut, \text{ and there exists } G \in \Mcut, \text{ with } \rk(G) = \rk(F) + 1\}$. For example, let $\M = \U_{3, 4}$. If the modular cut $\Mcut = \{12, 1234\}$, then $\Tr_{\Mcut}(\U_{3, 4}) \cong \U_{2, 3}$. If the modular cut $\Mcut = \{1, 12, 13, 14, 1234\}$, then $\Tr_{\Mcut}(\U_{3, 4}) \cong \U_{2, 3}$. However, the underlying flats of these two truncations are different. 
\end{example}

The truncation is classically known to be the contraction of the single-element extension at the newly added element, also called the elementary quotient of $\M$ with respect to $\Mcut$. 
\begin{proposition}[{\cite[Proposition 7.3.9]{oxley}}]\label{prop:ext-contract-is-trun}
There is an equality of matroids 
\begin{equation}\label{eq:contraction:see}
\left(\M\cup_{\Mcut}e\right)/e = \Tr_{\Mcut}(\M).
\end{equation}
\end{proposition}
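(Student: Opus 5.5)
The plan is to compare rank functions on the common ground set $E = E(\M)$. Since $e$ is not a loop of $\M\cup_{\Mcut} e$ (Lemma~\ref{lem:e-not-loop-nor-coloop}, using that $\Mcut$ is proper), the contraction $(\M\cup_{\Mcut}e)/e$ is the matroid on $E$ with rank function
\[
\rk_{(\M\cup_{\Mcut}e)/e}(S) = \rk_{\M\cup_{\Mcut}e}(S\sqcup e) - \rk_{\M\cup_{\Mcut}e}(\{e\}) = \rk_{\M\cup_{\Mcut}e}(S\sqcup e) - 1
\]
for every $S \subseteq E$. It therefore suffices to show that this equals $\rk_{\Tr_{\Mcut}(\M)}(S)$ for every $S$. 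Two matroids on the same ground set with the same rank function are equal, so this gives \eqref{eq:contraction:see}.

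To compute the right-hand side, I will use the definition of the single-element extension. This gives $\rk_{\M\cup_{\Mcut}e}(S\sqcup e) = \rk_\M(S)$ if $\cl_\M(S)\in\Mcut$, and $\rk_\M(S)+1$ otherwise. Subtracting $1$ yields $\rk_\M(S)-1$ in the first case and $\rk_\M(S)$ in the second. This is verbatim the defining rank function of $\Tr_{\Mcut}(\M)$, which finishes the argument.

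The only delicate point is the degenerate cases. If $\Mcut$ is not proper, then $e$ is a loop. In that case contraction by $e$ coincides with deletion, so the formula must be checked separately. Since $\Mcut = \L(\M)$ there, one gets $\rk_{\M\cup_{\Mcut}e}(S\sqcup e) = \rk_\M(S)$, whereas the truncation would subtract $1$ everywhere, including $\rk(\emptyset)=-1$. So the statement should be read under the standing assumption that $\Mcut$ is proper, which I will state explicitly. The empty modular cut causes no problem: then $e$ is a coloop, and both sides equal $\M$. I expect no real obstacle beyond this bookkeeping.
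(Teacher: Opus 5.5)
Your proof is correct. For proper $\Mcut$ the element $e$ is not a loop, so $\rk_{(\M\cup_{\Mcut}e)/e}(S)=\rk_{\M\cup_{\Mcut}e}(S\sqcup e)-1$. The defining formula of the single-element extension then turns this verbatim into the rank function of $\Tr_{\Mcut}(\M)$.

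The paper gives no argument of its own here; it only cites \cite[Proposition~7.3.9]{oxley}. So your direct comparison of rank functions is a self-contained replacement for that citation rather than a variant of an argument in the text. It also gives you something for free: it proves the ``standard fact'', which the paper invokes without proof, that the formula defining $\Tr_{\Mcut}(\M)$ really is a matroid rank function, because you exhibit it as the rank function of a contraction.

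Your caveat about $\Mcut=\L(\M)$ is right: there the truncation formula would give $\rk(\emptyset)=-1$, while the contraction is just $\M$. This matches the paper's standing convention, stated just before Lemma~\ref{lem:e-not-loop-nor-coloop}, that modular cuts are proper and nonempty. The empty cut needs no separate treatment, since your main computation already covers it.
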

We now extend the notion of truncation to built matroids. Let $(\M, \G)$ be a built matroid with its built lattice $(\L, \G)$. We will say that a modular cut $\Mcut$ is \emph{atom-free} if it does not contain any atom. 
\begin{definition}
Let $\Mcut$ be an atom-free $\G$-compatible modular cut of $\M$. The truncation of $\G$ along $\Mcut$, denoted $\Tr_{\Mcut}(\G)$, is the subset of $\L(\Tr_{\Mcut}(\M))$ defined by 
\[
\Tr_{\Mcut}(\G) \coloneqq \G \cap \L(\Tr_{\Mcut}(\M)).
\] 
\end{definition}

We have the following analog of equality \eqref{eq:contraction:see}.
\begin{lemma}\label{lem:built-truncation-equality}
For any atom-free $\G$-compatible modular cut $\Mcut$, the following holds:
\[
\Tr_{\Mcut}(\G) = (\G\cup_{\Mcut}e)/e.
\] 
In particular, $\Tr_{\Mcut}(\G)$ is a building set of $\Tr_{\Mcut}(\M)$. 
\end{lemma}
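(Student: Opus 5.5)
We compare the two sides as subsets of the same lattice. By Proposition~\ref{prop:ext-contract-is-trun}, $(\M\cup_{\Mcut}e)/e = \Tr_{\Mcut}(\M)$. Concretely, the flats of the contraction are the flats of $\M\cup_{\Mcut}e$ containing $e$, with $e$ removed. By Lemma~\ref{lem:extension-flats}, these are the sets $F$ with $F\in \L(\M)$ such that either $F\in\Mcut$, or $F\notin\Mcut$ and $F\sqcup e$ is a flat. Proposition~\ref{prop:truncation-description} identifies this family with $\L(\M)\setminus C_{\Mcut}$. So both sides of the claimed equality are subsets of $\L(\M)\setminus C_{\Mcut}$, and we compute them there.

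By definition, $(\G\cup_{\Mcut}e)/e$ consists of the sets $\cl_{\M\cup_{\Mcut}e}(H\cup e)\setminus e$ for $H\in \G\cup_{\Mcut}e$, excluding the bottom element $\{e\}$ itself. The element $H=\{e\}$ contributes only $\{e\}$, which is removed. For the other elements, $H=\cl_{\M\cup_{\Mcut}e}(G)$ with $G\in\G$, and there are two cases.

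\emph{Case $G\in\Mcut$.} By Lemma~\ref{lem:extension-of-F-is-flat}, $H=G\sqcup e$, so the contracted flat is $G$.

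\emph{Case $G\notin\Mcut$.} Then $H=G$, and $\cl(G\cup e)\setminus e = \cl_{\M}(G)$ if $G\sqcup e$ is a flat. If $G\sqcup e$ is not a flat, the contracted flat is the smallest flat $F\supseteq G$ lying in $\Mcut$, and for $G$ in the collar this is some $F=\cl_{\M}(G\cup a)\in\Mcut$.

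Consequently $(\G\cup_{\Mcut}e)/e \supseteq \G\cap(\L(\M)\setminus C_{\Mcut})$. The reverse inclusion requires that when $G\in\G\cap C_{\Mcut}$, the resulting flat $F=\cl_{\M\cup e}(G\cup e)\setminus e$ still lies in $\G$.

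This reverse inclusion is the main obstacle. I would attack it using $\G$-compatibility and atom-freeness. Since $G\in C_{\Mcut}$ and $\Mcut$ is atom-free, $G\neq\zero$. The flat $F\in\Mcut$ contains some minimal element $M\in\min\Mcut\subseteq\G$. I would then show that $G\wedge M\neq\zero$ and $G\vee M = F$. Given that, Proposition~\ref{prop:backman-char-building-set} yields $F\in\G$, so $F$ lies in the truncated set.

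To establish these two facts, note that $F$ covers $G$ and $M\not\le G$, so $G\vee M=F$. If instead $G\wedge M=\zero$, then since $F$ covers $G$ we have $\rk(G\vee M)=\rk G+1$. Combining with semimodularity, $\rk(G\vee M)+\rk(G\wedge M)\le \rk G+\rk M$, this forces $\rk M\ge 1$, which gives no contradiction on its own. I would therefore expect to need a more careful argument, for instance choosing the atom $a$ with $F=\cl(G\cup a)$ and using modular-cut axioms on the pair $(M,\cl(G\cup a))$.

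If this approach fails, the fallback is to prove directly that $\Tr_{\Mcut}(\G)$ is a building set by checking the criterion of Proposition~\ref{prop:backman-char-building-set} on $\L(\Tr_{\Mcut}(\M))$. In that lattice, joins are $\M$-joins followed by the skipping of collar elements, and this difficulty is exactly the one identified above.

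Once the set equality holds, the ``in particular'' follows at once. By Proposition~\ref{lem:built-see}, $\G\cup_{\Mcut}e$ is a building set. Contraction of a building set at the flat $\{e\}$ is again a building set, because intervals $[e,\un]$ inherit the structural isomorphisms. Hence $\Tr_{\Mcut}(\G)$ is a building set of $\Tr_{\Mcut}(\M)$.
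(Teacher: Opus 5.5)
Your computation of $(\G\cup_{\Mcut}e)/e$ is correct. Each $G\in\G$ contributes $G$ itself, unless $G\in C_{\Mcut}$, in which case it contributes the unique cover $F$ of $G$ lying in $\Mcut$. This gives $\Tr_{\Mcut}(\G)\subseteq(\G\cup_{\Mcut}e)/e$, and your treatment of the ``in particular'' is fine.

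The reverse inclusion is the real content of the lemma, and you leave it unproven. Moreover, the route you propose cannot work, because the claim $G\wedge M\neq\zero$ is false in general. Take $\M=\U_{3,4}$ on $\{1,2,3,4\}$, $\G=\G_{\max}$, and $\Mcut=\{12,1234\}$, which is atom-free and $\G$-compatible. Then $G=34$ lies in $\G\cap C_{\Mcut}$ with $F=1234$. The only minimal element of $\Mcut$ is $M=12$, yet $G\wedge M=\zero$. So Proposition~\ref{prop:backman-char-building-set}, which only concerns pairs with nonzero meet, gives you nothing here.

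The missing idea is to use the structural isomorphism $\psi_F$ rather than the Backman criterion. For $G,M\in\G$ with $G\vee M=F$, $\psi_F$ forces a dichotomy:
\begin{itemize}
\item either $G$ and $M$ lie below the same $\G$-factor of $F$, and then $F\in\G$;
\item or $\fact_{\G}(F)=\{G,M\}$, and then $[\zero,F]\cong[\zero,G]\times[\zero,M]$, so $\rk F=\rk G+\rk M$.
\end{itemize}
In other words, when $F\notin\G$ the structural isomorphism upgrades your semimodular inequality to an equality. In your situation $\rk F=\rk G+1$, while $\rk M\geqslant 2$ by atom-freeness, so the second alternative is impossible and $F\in\G$.

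The paper runs the same dichotomy one level up, in the building set $\G\cup_{\Mcut}e$ (Proposition~\ref{lem:built-see}), applied to $F\sqcup e=G\vee\{e\}$. If the factors were $G$ and $\{e\}$, then $F\sqcup e=G\sqcup e$ with $e$ a coloop, contradicting $F\in\Mcut$. Hence $F\sqcup e\in\G\cup_{\Mcut}e$, which means $F\in\G$. Your variant with $M\in\min\Mcut$, once repaired this way, is a valid and slightly more self-contained fix, since it stays inside $\L(\M)$ and uses only $\G$-compatibility and atom-freeness.
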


\begin{proof}
Recall that the contraction of building set is defined as
\[
(\G \cup_{\Mcut} e)/e := \{F \subseteq E(\M \cup_{\Mcut} e) \setminus e \mid F \cup e \in (\G \cup_{\Mcut} e) \vee e\}.
\] 

Let $F \subseteq E(M)$ be a flat of $\Tr_{\Mcut}(\M)$ such that $F \cup e = G \vee e$ for some $G \in \G \cup_{\Mcut} e$. If $F \cup e$ contains $e$ as a coloop in $\M\cup_{\Mcut}e$, then we necessarily have $F = G \in \G$, so we must have $F \in \G \cap \mathcal{L}(\operatorname{Tr}_{\Mcut}(M)) = \operatorname{Tr}_{\Mcut}(\G)$. On the other hand if $e$ is not a coloop of $F \cup e$, then $G$ and $e$ are not the factors of $F \cup e$ in the building set $\G \cup_{\Mcut} e$ (Lemma 2.25), and so we must have $F \cup e \in \G \cup_{\Mcut} e$. This shows that $F$ belongs to $\G$, and so $F$ belongs to $\operatorname{Tr}_{\Mcut}(\G)$.

Let $F \subseteq E(M)$ be an element of $\operatorname{Tr}_{\Mcut}(\G)$. If $F \notin \Mcut$, then $F = \cl_{\M\cup_{\Mcut}e}(F)$ so $F \in \G \cup_{\Mcut} e$ and we have that $F \cup e = F \vee e \in (\G \cup_{\Mcut} e) \vee e$, which shows that $F$ belongs to $(\G \cup_{\Mcut} e)/e$. If $F \in \Mcut$, then we have $F \cup e = \cl_{M \cup_{\Mcut} e}(F) \in \G \cup_{\Mcut} e$, and so $F \cup e$ belongs to $(\G \cup_{\Mcut} e) \vee e$, which proves that $F$ belongs to $(\G \cup_{\Mcut} e)/e$.
\end{proof}

We denote the truncation of a built matroid $(\M, \G)$ along a modular cut $\Mcut$ by $\Tr_{\Mcut}(\M, \G)$. 

\section{Complete and flag built matroids}
In this section, we introduce complete built matroids and flag built matroids. We study their behaviors under the built matroid operations introduced in the previous section. 

\subsection{Complete built matroids}
\label{subsec:complete}

Let $(\M, \G)$ be a loopless built matroid on a ground set $E$. Consider a total order $\vartriangleleft$ on $E$. Let us fix $F \in \L(\M)$ and $G \in \G$ such that $F \leqslant G$. If we write the complement $G \setminus F = \{e_1 \vartriangleleft e_2 \vartriangleleft \cdots \vartriangleleft e_k\}$, and set 
\[
F_i \coloneqq \cl_{\M}(F\cup \{e_1, \ldots, e_i)\}), 
\] then we define the \emph{$\vartriangleleft$-chain from $F$ to $G$} by  
\[ 
\ch_{\vartriangleleft}(F,G)\coloneqq \{F \leqslant F_1 \leqslant F_2 \leqslant \cdots \leqslant F_k =  G\}. 
\]
By the axioms of the closure operator of a matroid, this is a saturated chain from $F$ to $G.$
\begin{definition}\label{def:complete-built-matroid}
Given a total order $\vartriangleleft$ on $E$, the loopless built matroid $(\M, \G)$ is \emph{$\vartriangleleft$-complete}, if for any $F \in \L(\M)$, and any $G \in \G$ with $F \leqslant G$, the $\vartriangleleft$-chain from $F$ to $G$ is contained in $\G_{F}^G\cup \{F\}$. A loopless built matroid $(\M, \G)$ is \emph{complete}, if it is $\vartriangleleft$-complete for some total order $\vartriangleleft$ on $E$.  
\end{definition}

\begin{example}[Matroids with maximal building sets]
    By definition, any matroid $\M$ with the maximal building set $\G_{\max}$ is complete, with respect to any total order on $E$. 
\end{example}

\begin{example}[Augmented nested set complexes of matroids]\label{ex:complete-ex-aug}
    \emph{Augmented nested set complexes} of matroids, introduced by Braden, Huh, Matherne, Proudfoot, and Wang in \cite{braden-huh-matherne-proudfoot-wang} are examples of nested set complexes of complete built matroids. Indeed, by an observation of Chris Eur written in \cite[Section~5.1]{mastroeni-mccullough} the augmented nested set complex of a matroid $\M$ is isomorphic to the nested set complex of the built matroid $(\M', \G_{\aug})$, where $\M' = \Tr((\M^*\cup_{\emptyset} e)^*))$ is the \emph{free coextension} of $\M$ and $\G_{\aug}$ is the building set of $\L(\M')$ consisting of the flats of the form $F\cup e$ with $F\in \L(\M)$, together with all the atoms of $\L(\M').$ The built matroid $(\M', \G_{\aug})$ is complete with respect to any total order on $E(\M')$ such that $e$ is the smallest atom with respect to that order. 
\end{example}

The next lemma describes how $\vartriangleleft$-chains behave with joins, and interval restrictions. 
\begin{lemma}\label{lem:first-chain-restriction-contraction}
    Let $\M$ be a matroid and $\vartriangleleft$ a total order on $E(\M)$. For any $F \in \L(\M)$,  
    \begin{enumerate}[\normalfont(i), leftmargin = 20pt, itemsep = 3pt]
        \item \label{item:contraction} 
        there is an equality of sets $\ch_{\vartriangleleft}(F, \un) = F\vee \ch_{\vartriangleleft}(\zero, \un)$, and 
        \item \label{item:restriction} 
        there is an inclusion of sets $\ch_{\vartriangleleft}(\zero, \un)\cap [\zero, F] \subset \ch_{\vartriangleleft}(\zero, F)$.
    \end{enumerate}
\end{lemma}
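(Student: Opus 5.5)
Both parts will be proved by direct computation with closures, after fixing the enumeration $E = \{a_1 \vartriangleleft a_2 \vartriangleleft \cdots \vartriangleleft a_n\}$. With this enumeration,
\[
\ch_{\vartriangleleft}(\zero,\un) = \{A_0 \leqslant A_1 \leqslant \cdots \leqslant A_n\}, \qquad A_i \coloneqq \cl_{\M}(\{a_1,\ldots,a_i\}),
\]
where $A_0 = \cl_{\M}(\varnothing) = \zero$ because $\M$ is loopless. All equalities and inclusions are of sets of flats, so repetitions in the chains can be ignored.

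For \ref{item:contraction}, I write $E\setminus F = \{e_1 \vartriangleleft \cdots \vartriangleleft e_k\}$ and $F_j = \cl_{\M}(F\cup\{e_1,\ldots,e_j\})$, with $F_0 = F$. The key observation is that a join with $F$ ignores the elements of $F$. For each $i$, let $j(i)$ be the number of indices $\ell \leqslant i$ with $a_\ell \notin F$. Because the $e$'s inherit the order $\vartriangleleft$, the set $\{a_1,\ldots,a_i\}\setminus F$ is exactly $\{e_1,\ldots,e_{j(i)}\}$. Using $\cl(F\cup \cl(S)) = \cl(F\cup S)$, this gives
\[
F\vee A_i = \cl_{\M}(F\cup\{a_1,\ldots,a_i\}) = \cl_{\M}(F\cup\{e_1,\ldots,e_{j(i)}\}) = F_{j(i)} .
\]
As $i$ runs from $0$ to $n$, the function $j(i)$ runs from $0$ to $k$ in steps of $0$ or $1$. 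It is therefore surjective onto $\{0,\ldots,k\}$, and hence $F\vee \ch_{\vartriangleleft}(\zero,\un) = \{F_0,\ldots,F_k\} = \ch_{\vartriangleleft}(F,\un)$.

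For \ref{item:restriction}, let $A_i \leqslant F$; I need to show $A_i \in \ch_{\vartriangleleft}(\zero,F)$. Since $a_1,\ldots,a_i \in A_i \subseteq F$, and these are the $i$ smallest elements of all of $E$, they are also the $i$ smallest elements of $F$. Writing $F = \{f_1 \vartriangleleft \cdots \vartriangleleft f_m\}$, this means $f_\ell = a_\ell$ for $\ell \leqslant i$. Hence the $i$-th member of $\ch_{\vartriangleleft}(\zero,F)$ is $\cl_{\M}(\{f_1,\ldots,f_i\}) = A_i$, which proves the inclusion. The case $i=0$ holds because both chains contain $\zero$.

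I do not expect a real obstacle in either part. The only points needing care are the loopless hypothesis, which gives $A_0=\zero$, and the convention that $F$ itself belongs to $\ch_{\vartriangleleft}(F,G)$.
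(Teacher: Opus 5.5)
Your proof is correct. It rests on the same mechanism as the paper's: a $\vartriangleleft$-chain is built greedily by adjoining elements in order. The organization, however, is different.

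\textbf{The paper's route.} The paper treats the chains as sequences of distinct flats and argues by induction along them. It uses the successor rule: the flat following $G$ is $G\vee\min_{\vartriangleleft}(E\setminus G)$.
\begin{enumerate}
\item[(i)] It first notes that $F\vee\ch_{\vartriangleleft}(\zero,\un)$ is a maximal chain of $[F,\un]$, using that $\L$ is geometric. To match the $(k+1)$-st elements, it then takes the consecutive elements $G_0<G_0''$ of $\ch_{\vartriangleleft}(\zero,\un)$ whose joins with $F$ give the $k$-th and $(k+1)$-st elements. It shows that $\min_{\vartriangleleft}(E\setminus G_0)$ coincides with $\min_{\vartriangleleft}(E\setminus G)$.
\item[(ii)] This is a similar step-by-step induction.
\end{enumerate}

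\textbf{Your route.} You keep the indexed sequence $A_i=\cl_{\M}(\{a_1,\ldots,a_i\})$ together with its repetitions. This lets you replace the induction in (i) by the closed formula $F\vee A_i=F_{j(i)}$. That formula rests only on $\{a_1,\ldots,a_i\}\setminus F$ being an initial segment of $E\setminus F$, and $j$ moves in steps of $0$ or $1$. Likewise, (ii) reduces to the remark that an initial segment of $E$ contained in $F$ is an initial segment of $F$.

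\textbf{Comparison.} Your argument is shorter and does not need to know in advance that $F\vee\ch_{\vartriangleleft}(\zero,\un)$ is saturated. It also gives slightly more information, since it says exactly which member of $\ch_{\vartriangleleft}(F,\un)$ each $F\vee A_i$ is.
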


\begin{proof}\leavevmode
    \begin{enumerate}[(i),leftmargin = 20pt, itemsep = 3pt]
    \item Since $\L(\M)$ is a geometric lattice, $F\vee \ch_{\vartriangleleft}(\zero, \un)$ is a maximal chain of $[F, \un]$. We prove by induction on $k$ that the maximal chains $F\vee \ch_{\vartriangleleft}(\zero, \un)$ and $\ch_{\vartriangleleft}(F, \un)$ have the same first $k$ elements for every $k \leqslant \rk \un - \rk F +1$. For $k = 1$, the first element of each chain is $F$. Assume that the $k$ first elements are the same for both maximal chains. We denote by $G$ the $k$-th element on both side, by $G'$ the $(k+1)$-th element of $\ch_{\vartriangleleft}(F, \un)$ and by $G''$ the $(k+1)$-th element of $F\vee \ch_{\vartriangleleft}(\zero, \un)$. We aim to show that $G'= G''$. By the definition of $\ch_{\vartriangleleft}(F, \un)$, we have
    \[
    G' = G\vee e \enspace \text{ with } e = \min (E(\M) \setminus G).
    \]
    On the other hand there is a maximal element $G_0$ of $\ch_{\vartriangleleft}(\zero,\un)$ such that $G = G_0 \vee F$ and a minimal element $G_0''$ of $\ch_{\vartriangleleft}(\zero,\un)$ such that we have $G'' = G''_0\vee F.$ The elements $G_0$ and $G''_0$ are consecutive in $\ch_{\vartriangleleft}(\zero, \un)$ so we have $G_0'' = G_0\vee e',$ where $e' = \min E(\M) \setminus G_0$. Since $G_0 < G$ we have $e' \leqslant e.$ Since $G'' = G_0''\vee F$ we have $e'\notin G$ and so $e \leqslant e',$ which gives $e= e'$. We then have 
    \[ G'' = G_0''\vee F = G_0\vee F \vee e = G\vee e = G'.\]
    \item Let us show by induction that the $k$ first elements of $\ch_{\vartriangleleft}(\zero, \un)\cap [\zero, F]$ belong to $\ch_{\vartriangleleft}(\zero, F)$ for all $k$ less than or equal to the size of $\ch_{\vartriangleleft}(\zero, \un)\cap [\zero, F]$. For $k = 1$ the first element of $\ch_{\vartriangleleft}(\zero, \un)\cap [\zero, F]$ is $\zero$ which belongs to $\ch_{\vartriangleleft}(\zero, F).$ Assume that the $k$ first elements of $\ch_{\vartriangleleft}(\zero, \un)\cap [\zero, F]$ belong to $\ch_{\vartriangleleft}(\zero, F).$ Denote by $G$ the $k$-th element of $\ch_{\vartriangleleft}(\zero, \un)$ and denote by $G'$ its $(k+1)-$th element. If $G' \nleqslant F$ we are done. Otherwise we have 
    \[ G' = G \vee (\min E(\M)\setminus G) = G\vee (\min F\setminus G) \in \ch_{\vartriangleleft}(\zero, F).\qedhere\]
    \end{enumerate}
\end{proof}



This has the following useful consequence. 
\begin{remark}\label{rmk:complete-simple-def}
Let $(\M,\G)$ be a built matroid, and let $\vartriangleleft$ be a total order on $E(\M).$ If for any $G \in \G$, we have the inclusion \[
\ch_{\vartriangleleft}(\zero, G) \subset \G\cup \{\zero\},\] 
then $(\M, \G)$ is $\vartriangleleft$-complete. Indeed, in that case, by Lemma \ref{lem:first-chain-restriction-contraction} for any $F\leqslant G$, we have 
\[
\ch_{\vartriangleleft}(F,G) = F\vee \ch_{\vartriangleleft}(\zero, G) \subset \G_F^G\cup \{F\}.
\] 
\end{remark}

\begin{example}[Supersolvable built matroids]\label{ex:supersolvable-built-matroid}
Supersolvable built matroids were introduced in \cite{Coron_2025}. A built matroid $(\M, \G)$ is \emph{supersolvable}, if there is a maximal chain $\zero = G_0 <  G_1 < \cdots < G_n = \un$ of modular elements in $\G$ such that for every $G \in \G$, the meet $G \wedge G_i$ is in $\G \cup \{\zero\}$ for every $0 \leqslant i \leqslant n$. In particular, this implies that $\L(\M)$ is  a supersolvable lattice (see \cite{stanley1972supersolvable} for the definition). Conversely, any supersolvable geometric lattice with its maximal building set is a supersolvable built lattice, and by \cite[Proposition 3.11]{Coron_2025}, any supersolvable geometric lattice with its minimal building set is also a supersolvable built lattice.

If $\vartriangleleft$ is a total order on $E(\M)$ satisfying the condition:  
\begin{equation}\label{eq:condition-ss-order}
\text{for all $e,e' \in E(\M)$, if there exists $i$ such that $e \in G_i$ and $e' \notin G_i$ then $e \vartriangleleft e'$,}
\end{equation}
then 
\[
\ch_{\vartriangleleft}(\zero, \un) = \{ G_0 = \zero \leqslant  G_1 \leqslant \cdots \leqslant G_n = \un \}.
\] 
More generally, for any $G \in \G$, 
\[
\ch_{\vartriangleleft}(\zero, G) = \{G_0 = \zero \leqslant G_1 \wedge G \leqslant \cdots \leqslant  G_n\wedge G = G \},
\] 
and the modularity of the $G_i$'s implies that the above chain is saturated. By Remark~\ref{rmk:complete-simple-def}, this shows that supersolvable built matroids are $\vartriangleleft$-complete for any total order $\vartriangleleft$ satisfying~\eqref{eq:condition-ss-order}. 
\end{example}

If $\M$ is a Boolean matroid, then complete building sets agree with a notion introduced by Postnikov, Reiner, and Williams \cite{Postnikov2008}.

\begin{example}[Complete built Boolean matroids]\label{ex:boolean-chordal}
    Let $\B_{n}$ be the Boolean matroid on the ground set $[n]$. The Chow polynomial $\H(\B_n,\mathcal{G})(t)$ equals the $h$-polynomial of the nested set complex $\N(\L,\G)$. In turn, the $h$-polynomial of $\N(\L,\G)$ agrees with the $h$-polynomial of a generalized permutohedron \cite{postnikov-generalized-perm} called a \emph{nestohedron}.
    
    In \cite[Section~9]{Postnikov2008}, a building set $\G\subset \B_n$ is \emph{chordal} if for any $I = \{i_1 < \cdots < i _k\} \in \G$,
    \[ \{i_1, \ldots, i_p \} \in \G, \quad 1\leqslant p \leqslant n.\]
    By Remark~\ref{rmk:complete-simple-def} this is equivalent to saying that $(\B_n, \G)$ is $<$-complete for the natural order $<$ on $[n]$. Equivalently, this means that the built matroid $(\B_n,\G)$ is a supersolvable built matroid, witnessed by the maximal chain of modular elements 
    $\emptyset \prec \{1\} \prec \cdots \prec \{1,\ldots,n\}$. Indeed, in the Boolean lattice, every flat is modular. 
\end{example}

Complete built matroids are stable under all standard operations on built matroids. 
\begin{proposition}\label{prop:complete-stability}
    For any $F \in \L(\M)$, any $\G$-compatible modular cut $\Mcut$, and $e \notin E$, the following statements hold for a $\vartriangleleft$-complete built matroid $(\M, \G)$: 
    \begin{enumerate}[\normalfont(i), leftmargin = 20pt, itemsep = 3pt]
        \item \label{item:complete-restriction} The restriction $(\M^{F}, \G^F)$ is a complete built matroid with ground set $F$.
        \item \label{item:complete-contraction} The contraction $(\M_{F}, \G_F)$ is a complete built matroid with ground set $E \setminus F$. 
        \item \label{item:complete-truncation} The truncation $\Tr_{\Mcut}(\M, \G)$ along $\Mcut$ is a complete built matroid on $E$. 
        \item \label{item:complete-extension} The single-element extension $(\M \cup_{\Mcut} e, \G \cup_{\Mcut} e)$ is a complete built matroid on $E \cup e$. 
        \item \label{item:complete-deletion} If $m = \max_{\vartriangleleft} (E)$, the deletion $(\M \setminus m, \G \setminus m)$ is a complete built matroid on $E \setminus m$. 
    \end{enumerate}
\end{proposition}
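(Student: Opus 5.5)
We will work throughout with the criterion of Remark~\ref{rmk:complete-simple-def}: to show that a built matroid is $\vartriangleleft'$-complete it suffices to check that $\ch_{\vartriangleleft'}(\zero,G)\subset \G'\cup\{\zero\}$ for every $G$ in the new building set $\G'$. In each case the order $\vartriangleleft'$ is the restriction of $\vartriangleleft$ to the new ground set, extended in case~\ref{item:complete-extension} by declaring $e$ to be the maximal element.

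For~\ref{item:complete-restriction}, let $G\in\G^F$. The chain $\ch_{\vartriangleleft}(\zero,G)$ computed in $\M^F$ coincides with the one computed in $\M$, since closures inside $F$ agree. By completeness it lies in $\G\cap[\zero,G]\cup\{\zero\}\subset \G^F\cup\{\zero\}$.

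For~\ref{item:complete-contraction}, let $H\in \G_F$, so $H=F\vee G$ with $G\in\G$. The first step is to note that we may replace $G$ by its unique factor $H/F$ from Proposition~\ref{prop:composition-nested-sets}\ref{it:comp-nested-sets-1}, so we may assume $G\in\G$ and $H=F\vee G$. Applying Lemma~\ref{lem:first-chain-restriction-contraction}\ref{item:contraction} inside $[\zero,H]$ gives $\ch_\vartriangleleft(F,H)=F\vee\ch_\vartriangleleft(\zero,H)$. One needs instead the chain relative to $G$; it is cleaner to use directly that $\ch_\vartriangleleft(F,G\vee F)$ is obtained by adding the elements of $G\setminus F$ in order, so each $F_i$ equals $F\vee G_i$ with $G_i\in \ch_\vartriangleleft(F\wedge G, G)$, up to repetitions. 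By the definition of $\vartriangleleft$-completeness (the version with arbitrary $F\le G$) these $G_i$ lie in $\G$, so $F\vee G_i\in\G_F\cup\{F\}$. The ground set is identified with $E\setminus F$ with the induced order. Nonloopness is automatic for flats of the contraction.

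For~\ref{item:complete-truncation}, by Lemma~\ref{lem:built-truncation-equality}, $\Tr_\Mcut(\M,\G)=((\M,\G)\cup_\Mcut e)/e$, so this follows from~\ref{item:complete-extension} and~\ref{item:complete-contraction}. This holds provided that the contraction order restricted to $E$ is the original one, which is true since $e$ is removed.

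For~\ref{item:complete-deletion}, write $G\in\G\setminus m$, so $\cl_\M(G)\in\G$. Deleting the maximal element $m$ does not change the chain: $\ch_\vartriangleleft(\zero,\cl_\M(G))$ adds elements in order, and $m$ comes last, where it is redundant if $m\in\cl_\M(G)\setminus G$. Hence the chain in $\M\setminus m$ is the trace of the chain in $\M$, each term $F_i\setminus m$ having closure $F_i\in\G$. This is exactly why maximality of $m$ is needed.

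The main obstacle is~\ref{item:complete-extension}. Let $\G'=\G\cup_\Mcut e$ with $e$ maximal. Elements of $\G'$ are $\{e\}$ and $\cl'(G)$ for $G\in\G$, which equals $G\sqcup e$ if $G\in\Mcut$ and $G$ otherwise. For $G\notin\Mcut$, the chain in the extension equals the chain in $\M$: all proper subflats are outside $\Mcut$ by upward closure, and these flats remain flats in $\G'$. For $G\in\Mcut$, the chain adds elements of $G$ in the order $\vartriangleleft$ and finally $e$. The closures are $\cl'(G_i)$, which lie in $\G'$, except that $e$ gets absorbed at the first $G_i$ in $\Mcut$. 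From that point on the chain is $\cl'(G_i)=G_i\sqcup e$, and adding $e$ last is redundant. Thus every term is $\cl'(G_i)$ with $G_i\in\G$. The delicate point to verify is that there are no repeated or skipped ranks, i.e.\ that $\rk'(\cl'(G_i))$ increases by exactly one at each step. This holds because rank in the extension equals rank in $\M$ on subsets of $E$. It remains to check looplessness of the extension, which follows from properness of $\Mcut$ (Lemma~\ref{lem:e-not-loop-nor-coloop}).
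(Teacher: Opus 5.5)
Your arguments for (i) and (v) match the paper's. Part (ii), however, has a gap that matters, because your (iii) is derived from it.

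You assert that $\ch_{\vartriangleleft}(F,F\vee G)$ is obtained by adding the elements of $G\setminus F$ in order. For a general $G\in\G$ with $F\vee G=H$ this is false. Take $\U_{3,4}$ with $\G=\G_{\max}$, the natural order, $F=\{4\}$ and $G=\{2,3\}$. Then $\ch_{\vartriangleleft}(4,1234)=\{4,14,1234\}$, whereas adding $2,3$ to $F$ gives $\{4,24,1234\}$, because $H\setminus F$ contains $1\notin G$.

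The claim is rescued exactly by your replacement $G:=H/F$, but you never say why. By the structural isomorphism $\psi_H$, the atoms of $H$ are partitioned among the $\G$-factors of $H$. The relation $H=F\vee G$ forces every factor other than $G$ to lie below $F$, so $H\setminus F=G\setminus F$. With this in hand, $\ch_{\vartriangleleft}(F,H)=F\vee\ch_{\vartriangleleft}(\zero,G)$ (compare initial segments of $G\setminus F$). Since $\ch_{\vartriangleleft}(\zero,G)\subset\G\cup\{\zero\}$, you get $\ch_{\vartriangleleft}(F,H)\subset\G_F\cup\{F\}$.

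Note also that completeness does not place the $G_i\in\ch_{\vartriangleleft}(F\wedge G,G)$ in $\G$. It only places them in $\G^G_{F\wedge G}$, i.e.\ $G_i=(F\wedge G)\vee G''$ with $G''\in\G$. This is harmless since $F\vee G_i=F\vee G''$, but passing through $\ch_{\vartriangleleft}(\zero,G)$ avoids it. Also, Proposition~\ref{prop:composition-nested-sets}(i) is stated only for irreducible built matroids; you may instead take the $\G$-factor of $H$ above $G$. The paper simply asserts that (ii) follows from the definition, and this factor argument is what makes that true.

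With (ii) repaired, the proposal is correct, and for (iii)--(iv) it takes a different route from the paper.

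\emph{The paper's route.} It proves (iii) directly by a case analysis on the collar $C_{\Mcut}$. It then proves (iv) by checking only $\ch_{\vartriangleleft'}(\zero,\un)$ and inducting over restrictions and contractions of the extension, invoking (iii).

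\emph{Your route.} You apply Remark~\ref{rmk:complete-simple-def} to every $G'\in\G\cup_{\Mcut}e$. You observe that $\ch_{\vartriangleleft'}(\zero,\cl'(G))$ is the closure $\cl'$ in $\M\cup_{\Mcut}e$ applied termwise to $\ch_{\vartriangleleft}(\zero,G)$, the final step adding $e$ being redundant. Then (iii) follows from Lemma~\ref{lem:built-truncation-equality} together with (iv) and (ii).

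This is non-inductive and avoids collar bookkeeping altogether, which is a real gain. The paper describes $\ch^{\Tr_{\Mcut}(\M)}_{\vartriangleleft}(F,G)$ as the $\M$-chain minus one collar element, and that description breaks down when the $\M$-chain meets the collar twice. For example, take $\B_4$ with $\G=\G_{\max}$, the natural order, and $\Mcut$ the flats containing $24$. The chain $\ch_{\vartriangleleft}(\zero,1234)=\{\zero,1,12,123,1234\}$ passes through the collar flats $12$ and $123$, while the truncated chain is $\{\zero,1,124,1234\}$.

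The cost of your route is that (iii) now depends on (ii), on Lemma~\ref{lem:built-truncation-equality}, and on $\Mcut$ being atom-free so that $\{e\}$ is a flat of the extension. Finally, the ``delicate point'' about ranks in (iv) is a non-issue: $\vartriangleleft$-chains are saturated by construction, and completeness only asks for containment.
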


\begin{proof}
    The first and second statements follow directly from the definition. 
    
    For \ref{item:complete-truncation}, let $\vartriangleleft$ be a total order on $E(\M)$ such that $(\M, \G)$ is $\vartriangleleft$-complete. Let us show that the built matroid $(\Tr_{\Mcut}(\M), \Tr_{\Mcut}(\G))$ is $\vartriangleleft$-complete. Let $F\leqslant G \in \L(\Tr_{\Mcut}(\M))$ be such that $G\in \Tr_{\Mcut}(\G),$ and denote $\{e_1 \vartriangleleft \cdots \vartriangleleft e_k\} = G\setminus F.$ In this proof we will write $\ch^{\Tr_{\Mcut}(\M)}_{\vartriangleleft}(F,G)$ and $\ch^{\M}_{\vartriangleleft}(F,G)$ for the chain $\ch_{\vartriangleleft}(F,G)$ in $\M$ and $\Tr_{\Mcut}(\M)$ respectively. If $G\notin \Mcut,$ we have the equality 
    \[
    \ch^{\Tr_{\Mcut}(\M)}_{\vartriangleleft}(F,G) = \ch^{\M}_{\vartriangleleft}(F,G),
    \] which gives the inclusion $\ch^{\Tr_{\Mcut}(\M)}_{\vartriangleleft}(F,G) \subset \Tr(\G)^G_F$ since in that case $\Tr(\G)_F^G = \G_F^G$. On the other hand if $G\in \Mcut$ and $F\notin \Mcut$, then we have 
    \[ \ch^{\Tr_{\Mcut}(\M)}_{\vartriangleleft}(F,G) = \ch^{\M}_{\vartriangleleft}(F,G) \setminus \{F\vee \sigma(\{e_1, \ldots, e_p\})\},\]
    where $F\vee \sigma(\{e_1, \ldots, e_p\})$ is the biggest element of $\ch^{\M}_{\vartriangleleft}(F,G)$ not in $\Mcut$. This also gives the inclusion $ \ch^{\Tr_{\Mcut}(\M)}_{\vartriangleleft}(F,G) \subset \Tr_{\Mcut}(\G)_F^G.$ Finally, if both $F$ and $G$ belong to $\Mcut$ then we again have 
    \[ \ch^{\Tr_{\Mcut}(\M)}_{\vartriangleleft}(F,G) = \ch^{\M}_{\vartriangleleft}(F,G),\]
    which also gives the inclusion $ \ch^{\Tr_{\Mcut}(\M)}_{\vartriangleleft}(F,G) \subset \Tr_{\Mcut}(\G)_F^G.$ 

    For \ref{item:complete-extension}, we denote by $\vartriangleleft'$ the total order on $E(\M)\cup e$ which is $\vartriangleleft$ on $E(\M)$ and such that $e$ is greater than all the elements of $E(\M)$. If $e$ is not a coloop of $\M\cup_{\Mcut}e$ then, if $\un \in \G$ we have 
    \[ \ch^{\M \cup_{\Mcut} e}_{\vartriangleleft'}(\zero, \un) = \cl_{\M\cup_{\Mcut}e}(\ch_{\vartriangleleft}^{\M}(\zero, \un)),\]
    which implies $\ch^{\M \cup_{\Mcut} e}_{\vartriangleleft'}(\zero, \un)\subset \G\cup_{\Mcut}e.$ Moreover, a proper restriction or contraction of $(\M\cup_{\Mcut} e, \G \cup_{\Mcut}e)$ is either a single-element extension of a proper restriction/contraction of $(\M, \G)$, or a restriction or contraction of $(\Tr_{\Mcut}(\M), \Tr_{\Mcut}(\G))$ so we can conclude by induction and the previous result. 

    For \ref{item:complete-deletion}, let us denote by $\vartriangleleft_m$ the restriction of $\vartriangleleft$ to $E(\M)\setminus m$. If $F\leqslant G \in \L(\M\setminus m)$ with $G\in \G\setminus m$, then by definition of $\G\setminus m$ we have $\cl_{\M}(G) \in \G$ and since $m = \max_{\vartriangleleft} E(\M)$ we have 
    \[\ch^{\M}_{\vartriangleleft}(\cl_{\M}(F),\cl_{\M}(G)) = \cl_{\M}(\ch^{\M\setminus m}_{\vartriangleleft_m}(F,G)),\]
    which proves the inclusion $\ch_{\vartriangleleft_m}^{\M\setminus m}(F,G) \subset (\G\setminus m)_F^G.$ 
\end{proof}

\subsection{Flag built matroids}
  
Recall that a simplicial complex $\Delta$ is said to be a \emph{flag} simplicial complex, if the minimal nonfaces of $\Delta$ have cardinality $2$.\footnote{A vertex set $S \subseteq V(\Delta)$ is a nonface if $S \notin \Delta$. Furthermore, $S$ is a minimal nonface if all proper subsets of $S$ are faces of $\Delta$.} Flag simplicial complexes are preserved under taking restrictions, links, joins, and cones. 
\begin{lemma}\label{lem:restriction-of-flag-is-flag}
If $\Delta$ is a flag simplicial complex, and $F \subset V(\Delta)$, then $\Delta|_{F}$ is flag. 
\end{lemma}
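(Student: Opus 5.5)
The plan is to verify the definition of flagness directly: I will show that every minimal nonface of $\Delta|_F$ is also a minimal nonface of $\Delta$, and hence has cardinality $2$.

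First I pin down the objects. The vertex set of $\Delta|_F$ is $F$; I intersect with $V(\Delta)$ if necessary, which is harmless since $F\subset V(\Delta)$ by hypothesis. Its faces are exactly the faces of $\Delta$ contained in $F$. In particular every singleton $\{v\}$ with $v\in F$ is a face of $\Delta|_F$, so the convention that a simplicial complex contains all singletons of its vertex set is respected.

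Now let $S\subseteq F$ be a minimal nonface of $\Delta|_F$. Then $S\notin \Delta|_F$, and since $S\subseteq F$ this means $S\notin\Delta$. Every proper subset $T\subsetneq S$ lies in $\Delta|_F$, hence in $\Delta$. Therefore $S$ is a minimal nonface of $\Delta$. Since $\Delta$ is flag, $|S|=2$, so every minimal nonface of $\Delta|_F$ has cardinality $2$ and $\Delta|_F$ is flag.

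There is no real obstacle. The only point needing care is that $S\subseteq F$ is used to turn nonmembership in $\Delta|_F$ into nonmembership in $\Delta$. This step is a formality, and the argument is otherwise immediate from the definitions.
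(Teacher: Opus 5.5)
Your proof is correct and is essentially the paper's argument. The paper phrases flagness through the equivalent clique condition (a vertex set that is pairwise joined by edges is a face), while you work directly with minimal nonfaces, which matches the definition the paper actually states; the content is the same one-line observation that faces of $\Delta|_F$ are exactly the faces of $\Delta$ contained in $F$.
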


\begin{proof}
Let $G \subset F$ be a set of vertices of $\Delta|_{F}$ such that for all $u,v \in G$, $\{u, v\} \in \Delta|_{F}$. Then the flagness of $\Delta$ implies that $G$ is a simplex of $\Delta,$ and so it is a simplex of $\Delta_{F}$.
\end{proof}

\begin{lemma}\label{lem:link-of-flag-is-flag}
    If $\Delta$ is a flag simplicial complex and $F \in \Delta$, then the link $\lk_{\Delta}(F)$ is flag.
\end{lemma}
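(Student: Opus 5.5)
We need to show that if $\Delta$ is flag and $F \in \Delta$, then every minimal nonface of $\lk_{\Delta}(F)$ has cardinality $2$. Equivalently, we show that any vertex set of the link whose elements are pairwise adjacent in the link is itself a face of the link. The plan is to reduce this to flagness of $\Delta$ by adjoining $F$.

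Let $G \subseteq V(\lk_{\Delta}(F))$ be a set of vertices such that $\{u,v\} \in \lk_{\Delta}(F)$ for all $u,v \in G$. We aim to show $G \in \lk_{\Delta}(F)$, i.e.\ $G \cap F = \varnothing$ and $F \cup G \in \Delta$.

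Disjointness is immediate. Each vertex $v \in G$ satisfies $\{v\} \in \lk_{\Delta}(F)$, so $v \notin F$.

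For $F \cup G \in \Delta$, it suffices by flagness of $\Delta$ to check that every pair of elements of $F \cup G$ forms an edge of $\Delta$. There are three cases.
\begin{enumerate}[\normalfont(i), leftmargin = 20pt, itemsep = 3pt]
    \item Two elements of $F$: the pair is a subset of the face $F$, hence a face.
    \item One element $f \in F$ and one $v \in G$: since $\{v\} \in \lk_{\Delta}(F)$, we have $F \cup \{v\} \in \Delta$, so $\{f,v\}$ is a face by closure under inclusion.
    \item Two elements $u,v \in G$: by hypothesis $\{u,v\} \in \lk_{\Delta}(F)$, so $F \cup \{u,v\} \in \Delta$, which contains $\{u,v\}$.
\end{enumerate}
Hence all pairs in $F \cup G$ are edges of $\Delta$, and flagness gives $F \cup G \in \Delta$. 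Therefore $G \in \lk_{\Delta}(F)$.

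Finally, the pairwise-adjacency characterization is equivalent to the minimal-nonface definition. If some minimal nonface $S$ had $|S| \geqslant 3$, then all its $2$-subsets would be faces while $S$ is not, contradicting the above. Minimal nonfaces of size $1$ do not occur, since the link is taken on its own vertex set.

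There is no real obstacle here. The only point needing care is case (ii), the mixed pairs between $F$ and $G$, which uses the singleton condition $F \cup \{v\} \in \Delta$ rather than the hypothesis on pairs in $G$.
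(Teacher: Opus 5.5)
Your proof is correct and follows the same route as the paper. The paper also checks that every pair in $F \cup G$ is an edge of $\Delta$, then uses flagness of $\Delta$ to get $F \cup G \in \Delta$ and hence $G \in \lk_{\Delta}(F)$; your case split, including the mixed pairs handled via $F \cup \{v\} \in \Delta$, simply spells out the step the paper compresses into one line.
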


\begin{proof}
    Let $G \subset V(\Delta)\setminus F$ be a subset of vertices such that for every pair of vertices $u, v \in G$ we have $\{u, v\} \in \lk_{\Delta}(F)$. This implies that for any pair of vertices $u, v \in F\cup G$ we have $\{u, v\} \in \Delta,$ which shows that $G \in \lk_{\Delta}(F)$.    
\end{proof}

\begin{lemma}\label{lem:join-of-flag-is-flag}
   Two simplicial complexes $\Delta$ and $\Delta'$ are flag if and only if their join $\Delta \ast \Delta'$ is flag.
\end{lemma}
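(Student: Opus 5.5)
The plan is to prove both directions directly from the definition of the join, using the fact that the vertex sets $V$ and $V'$ are disjoint. The key tool is that a subset $S \subseteq V \sqcup V'$ is a face of $\Delta \ast \Delta'$ if and only if $S \cap V \in \Delta$ and $S \cap V' \in \Delta'$. This holds because any decomposition $S = F \cup F'$ with $F \in \Delta$ and $F' \in \Delta'$ is forced to be $F = S\cap V$ and $F' = S \cap V'$.

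For the forward direction, assume $\Delta$ and $\Delta'$ are flag. Take $S \subseteq V \sqcup V'$ such that every pair of vertices of $S$ is a face of $\Delta \ast \Delta'$. In particular, every pair inside $S \cap V$ is an edge of $\Delta\ast\Delta'$ lying in $V$. Such an edge is then an edge of $\Delta$, since the faces of the join contained in $V$ are exactly the faces of $\Delta$; this is the case $F' = \emptyset$. By flagness of $\Delta$, the set $S\cap V \in \Delta$. Similarly $S \cap V' \in \Delta'$, so $S \in \Delta\ast\Delta'$. Hence every set whose pairs are all faces is a face, which is equivalent to all minimal nonfaces having size $2$, with singletons always faces by convention.

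For the converse, assume $\Delta \ast \Delta'$ is flag. Observe that $\Delta = (\Delta\ast\Delta')|_{V}$, since a face of the join contained in $V$ has empty $V'$-part. Then Lemma~\ref{lem:restriction-of-flag-is-flag} gives that $\Delta$ is flag, and likewise $\Delta'$ is flag.

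There is no real obstacle here. The only point needing care is the convention that every singleton is a face, so nonfaces of size $1$ never occur. With that convention, being flag is equivalent to the clique condition (every set whose pairs are all faces is a face) used in the proofs of Lemmas~\ref{lem:restriction-of-flag-is-flag} and \ref{lem:link-of-flag-is-flag}.
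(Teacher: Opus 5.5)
Your proof is correct and follows the same route as the paper. The implication from the join to the factors uses the restriction lemma via $\Delta = (\Delta\ast\Delta')|_{V}$, and the other implication checks the clique condition separately on $S\cap V$ and $S\cap V'$; your observation that edges of the join lying in $V$ are exactly the edges of $\Delta$ just makes explicit a step the paper leaves implicit.
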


\begin{proof}
If $\Delta \ast \Delta'$ is flag, then $\Delta$ and $\Delta'$ are flag by Lemma \ref{lem:restriction-of-flag-is-flag}. For the converse, assume $\Delta$ and $\Delta'$ are flag. Let $F \subset V(\Delta)$, $F' \subset V(\Delta')$ be two sets of vertices such that for any $v, v' \in F \cup F'$, $\{v,v'\} \in \Delta \ast \Delta'$. Since $\Delta$ and $\Delta'$ are flag, $F \in \Delta$ and $F' \in \Delta'$, which implies that $F \cup F' \in \Delta \ast \Delta'$.
\end{proof}

\begin{lemma}\label{lem:cone-of-flag-is-flag}
    A simplicial complex $\Delta$ is flag if and only if $\Cone(\Delta)$ is flag.
\end{lemma}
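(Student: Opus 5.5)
The plan is to deduce this from Lemma~\ref{lem:join-of-flag-is-flag}. By definition, $\Cone(\Delta)=\Delta\ast\{\varnothing,\{v\}\}$, where $v$ is a new vertex not in $V(\Delta)$. It therefore suffices to observe that the one-vertex complex $\{\varnothing,\{v\}\}$ is flag. It has no nonfaces on its vertex set $\{v\}$ at all, so the condition on minimal nonfaces holds vacuously. Lemma~\ref{lem:join-of-flag-is-flag} then says that $\Delta\ast\{\varnothing,\{v\}\}$ is flag if and only if both $\Delta$ and the point are flag. Since the point is flag, this is equivalent to $\Delta$ being flag.

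As a sanity check, and in case one prefers a self-contained argument, I would also verify both directions directly.

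If $\Cone(\Delta)$ is flag, then $\Delta=\Cone(\Delta)|_{V(\Delta)}$ is flag by Lemma~\ref{lem:restriction-of-flag-is-flag}.

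Conversely, suppose $\Delta$ is flag, and let $S\subseteq V(\Delta)\sqcup\{v\}$ be a set whose pairs are all faces of $\Cone(\Delta)$. Then every pair inside $S\setminus\{v\}$ is a face of $\Delta$, so $S\setminus\{v\}\in\Delta$ by flagness. Adding $v$ keeps it a face of the cone, so $S\in\Cone(\Delta)$.

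There is no real obstacle here. The only point requiring care is the convention that the single point is a simplicial complex which is (vacuously) flag, so that Lemma~\ref{lem:join-of-flag-is-flag} applies.
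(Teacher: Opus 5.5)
Your proposal is correct and follows the paper's own one-line proof, which deduces the statement from Lemma~\ref{lem:join-of-flag-is-flag} by viewing $\Cone(\Delta)$ as the join of $\Delta$ with a single (vacuously flag) point. Your additional direct verification of both directions is also sound but not needed.
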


\begin{proof}
    This follows from Lemma \ref{lem:join-of-flag-is-flag}.
\end{proof}

\begin{definition}\label{def:flag-built-matroid}
    A building set $\calG$ is \textit{flag}, if the nested set complex $\calN(\calL, \calG)$ is a flag simplicial complex. A built lattice $(\L, \G)$ or a built matroid $(\M, \G)$ is \emph{flag}, if $\G$ is flag. 
\end{definition}

The class of flag built matroids neither contains nor is contained in the class of complete built matroids, as is shown by the following example.

\begin{example}
Let $\B_4$ denote the Boolean matroid on the ground set $E = \{1, 2, 3, 4\}$. If $\G$ is the building set $\{1,2,3,4, 12, 34, 1234\}$, then $(\B_4, \G)$ is flag, but it is not complete because $\G$ does not contain any maximal chain of flats. 

On the other hand, if $\U_{3,4}$ is the uniform matroid of rank $3$ on the ground set $E = \{1,2,3,4\}$, and $\G$ is the building set $\{1,2,3,4, 12,1234\}$, then $(\U_{3,4},\G)$ is $\vartriangleleft$-complete for the order $1\vartriangleleft 2 \vartriangleleft 3 \vartriangleleft 4$, but $(\U_{3,4}, \G)$ is not flag because we have $2\vee 3 \vee 4 = 1234 \in \G$ and $2\vee 3, 2\vee 4, 3\vee 4 \notin \G$.
\end{example}
We have the following characterization of a flag built lattice, by directly reformulating the condition that the minimal nonfaces of $\N(\L, \G)$ have cardinality $2$.

\begin{lemma}
    A built lattice $(\L, \G)$ is flag if and only if, for any antichain $\{G_1, \ldots, G_k\}\subseteq \calG$ with $k \geqslant 2$, the following implication  holds:
    \[
    \text{$G_1 \vee \cdots \vee G_k \in \G$ $\implies$ there exist $i \ne j$ such that $G_i \vee G_j \in \calG$.}
    \]
\end{lemma}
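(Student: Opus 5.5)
The plan is to translate flagness of $\N(\L,\G)$ into a statement about antichains, using the description of nested sets in Definition~\ref{def:nested-set}. I will first treat the case of $c\N(\L,\G)$, or assume $(\L,\G)$ is irreducible. Lemma~\ref{lem:reducible} together with Lemmas~\ref{lem:join-of-flag-is-flag} and~\ref{lem:cone-of-flag-is-flag} shows that flagness of $\N(\L,\G)$ is equivalent to flagness of $c\N(\L,\G)$. The antichain condition also splits along the maximal elements of $\G$. Indeed, if an antichain has join in $\G$, then its join lies below a single maximal element, so all its members do too. Hence I may work with $c\N(\L,\G)$, whose vertex set is all of $\G$.

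The key observation is that a subset $\S\subseteq\G$ is a minimal nonface exactly when three things hold. First, $\S$ is an antichain. Second, the join of $\S$ lies in $\G$. Third, every proper subset of $\S$ is nested. To see this, suppose $\S$ is a minimal nonface. By definition, there is an antichain $\calA\subseteq \S$ and a subset $\{S_1,\dots,S_k\}\subseteq\calA$ with $k\ge 2$ whose join lies in $\G$. Then $\{S_1,\dots,S_k\}$ is itself a nonface, so minimality forces $\S=\{S_1,\dots,S_k\}$. Conversely, any antichain of size at least $2$ whose join is in $\G$ is a nonface. Two-element subsets $\{G,G'\}$ deserve a remark. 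If $G,G'$ are comparable, the pair is always nested, since the only antichains in it are singletons. If they are incomparable, the pair is a nonface precisely when $G\vee G'\in\G$.

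For the forward direction, assume flagness and let $\{G_1,\dots,G_k\}$ be an antichain with $k\ge2$ and join in $\G$. This set is a nonface, so it contains a minimal nonface. By flagness that minimal nonface has size $2$, say $\{G_i,G_j\}$. By the remark above, $G_i$ and $G_j$ are incomparable and $G_i\vee G_j\in\G$.

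For the converse, let $\S$ be a minimal nonface. By the key observation, $\S$ is an antichain whose join is in $\G$. Suppose its size is at least $3$. The hypothesis gives $i\ne j$ with $G_i\vee G_j\in\G$. Then $\{G_i,G_j\}$ is a nonface properly contained in $\S$, contradicting minimality. Hence every minimal nonface has size $2$, which is flagness.

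The only delicate point is the reduction from $\N$ to $c\N$, that is, handling the removal of the maximal elements of $\G$. I expect it to go through cleanly via the cone decomposition in Lemma~\ref{lem:reducible}. Deleting the cone points does not change which minimal nonfaces have size greater than $2$, since a cone point lies in no minimal nonface. The other minimal nonfaces are the same in both complexes, because removing vertices only deletes faces that contain them.
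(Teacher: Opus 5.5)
Your proposal is correct and is the same direct reformulation the paper relies on; the paper gives no separate proof. You characterize minimal nonfaces as antichains with join in $\G$ all of whose proper subsets are nested, and your treatment of $\N$ versus $c\N$ (no such antichain contains an element of $\max\G$, so the two complexes have the same minimal nonfaces) supplies the one detail the paper leaves implicit. One small fix: your three-condition characterization should also require $|\S|\geqslant 2$, since a singleton satisfies all three conditions but is a face. This does not affect the rest of the argument.
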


\begin{example}\label{ex:flag-built-lattices}
    Some examples of flag building sets are as follows. 
    \begin{enumerate}[\normalfont(i), leftmargin = 20pt, itemsep = 3pt] 
        \item For any loopless matroid $\M$, the maximal building set $\calG_{\max}$ of $\calL(\M)$ is flag. Indeed, order complexes of posets are flag.
        \item For $n \geqslant 1$ and the Boolean matroid $\U_{n,n}$, the minimal building set $\calG_{\min}$ consisting of all the flats of rank $1$ is flag. 
        \item For $n \geqslant 3$ and the braid matroid $\mathrm{K}_n$, which is the graphic matroid of the complete graph on $n$ vertices, the minimal building set $\calG_{\min}$ of $\mathrm{K}_n$ is flag. 
        \item More generally, by the proof of \cite[Proposition 2.32]{Coron_2025}, any supersolvable built lattice is flag. 
        \item For the Boolean matroid $\U_{4,4}$, the following building set is flag: 
        \[
        \calG \coloneq \{1, 2, 3, 4, 12, 34, 1234\}. 
        \]
    \end{enumerate}
\end{example}

Flag built matroids are preserved under taking restriction, contraction. 
\begin{proposition}\label{prop:res-cont-preserves-flag}
If $(\L, \G)$ is a flag built lattice and $F \in \L \setminus\{\zero, \un\}$, then the restriction $(\L^F, \G^F)$ and the contraction $(\L_F, \G_F)$ are flag. 
\end{proposition}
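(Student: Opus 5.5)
The approach is to realize both $\N(\L^F,\G^F)$ and $\N(\L_F,\G_F)$ inside links of $\N(\L,\G)$, and then use that links and joins of flag complexes are flag (Lemmas~\ref{lem:link-of-flag-is-flag} and \ref{lem:join-of-flag-is-flag}). The difficulty is that $F$ need not lie in $\G$, so it is not itself a vertex whose link we can take. Instead we take a nested set $\S$ whose local intervals include the restriction and contraction we want.

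First we reduce to irreducible $(\L,\G)$. By Lemma~\ref{lem:reducible}, $c\N(\L,\G)$ is a join of cones over $\N(\L^{G_i},\G^{G_i})$. Flagness of $\N(\L,\G)$ forces flagness of each $\N(\L^{G_i},\G^{G_i})$: each is a full restriction of $\N(\L,\G)$ to a vertex subset, so Lemma~\ref{lem:restriction-of-flag-is-flag} applies. The structural isomorphism splits both $[\zero,F]$ and $[F,\un]$ as products over the factors $G_i$. Restrictions and contractions of a product built lattice are products of the restrictions and contractions of the factors. Their nested set complexes are then joins of cones of the factor complexes, and Lemmas~\ref{lem:join-of-flag-is-flag} and \ref{lem:cone-of-flag-is-flag} let us assume $\un\in\G$.

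Next, with $\un\in\G$, let $\S=\fact_\G(F)=\{H_1,\dots,H_m\}$. This is an antichain in $\G\setminus\{\un\}$ whose join $F$ does not lie in $\G$ when $m\geqslant 2$. More generally, any subantichain of $\S$ of size at least $2$ has join a product flat not in $\G$: by Proposition~\ref{prop:backman-char-building-set} and the structural isomorphism, if a partial join lay in $\G$, the factor decomposition of $F$ would be coarser. Hence $\S$ is nested.

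We then compute the local intervals. At each $H_i$ we have $J^{H_i}=\zero$, since nothing in $\S$ lies below $H_i$, so $L^{H_i}(\S)=(\L^{H_i},\G^{H_i})$. At $\un$ we have $J^{\un}=F$, so $L^{\un}(\S)=(\L_F,\G_F)$. Proposition~\ref{prop:composition-nested-sets}\ref{it:comp-nested-sets-4} then gives
\[
\lk_{\N(\L,\G)}(\S)\simeq \N(\L_F,\G_F)\ast \bigast_{i}\N(\L^{H_i},\G^{H_i}).
\]
By Lemma~\ref{lem:link-of-flag-is-flag} the link is flag, so Lemma~\ref{lem:join-of-flag-is-flag} gives flagness of $\N(\L_F,\G_F)$ and of every $\N(\L^{H_i},\G^{H_i})$. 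Since $\max\G^F=\{H_i\}$, Lemma~\ref{lem:reducible} expresses $c\N(\L^F,\G^F)$ as the join of the cones over these complexes, which is flag. The complex $\N(\L^F,\G^F)$ is the restriction of this join to $\G^F\setminus\max\G^F$, hence flag by Lemma~\ref{lem:restriction-of-flag-is-flag}.

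The main obstacle is the bookkeeping in the reduction step. In particular, the reducible $\un$ case needs a check that contraction $\G_F$ of a product built lattice is the product of the componentwise contractions. The join/meet factorization through the structural isomorphism $\psi_{\un}$ should give this, once one verifies that $\G=\bigsqcup_i \G^{G_i}$. A smaller point is whether $\S$ must avoid $\un$; if $F\in\G$ this is automatic because $F\neq\un$. If $F=H_1\in\G$, the argument simplifies to taking the link of a single vertex.
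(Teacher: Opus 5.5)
Your proof is correct and follows the paper's argument. You reduce to the irreducible case via Lemma~\ref{lem:reducible}, realize the contraction and restriction complexes as join factors of a link in $\N(\L,\G)$ via Proposition~\ref{prop:composition-nested-sets}\ref{it:comp-nested-sets-4}, and conclude with Lemmas~\ref{lem:link-of-flag-is-flag} and \ref{lem:join-of-flag-is-flag}; the only difference is that the paper first reduces to $F\in\G$ ``by successive restrictions and contractions'' and takes the link of the single vertex $F$, whereas you take the link of the face $\fact_\G(F)$ directly, which makes that reduction step explicit.
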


\begin{proof}
By Lemma \ref{lem:reducible} and Lemma \ref{lem:cone-of-flag-is-flag},  we can assume that $(\L, \G)$ is irreducible. By successive restrictions and contractions, we can further assume $F \in \G$. If $\N(\L,\G)$ is flag, then by Lemma \ref{lem:link-of-flag-is-flag}, the link $\lk_{\N(\L, \G)}(\{F\}) \simeq \N(\L_F, \G_F)\ast\N(\L^F, \G^F)$ is also flag (the isomorphism being given by Proposition \ref{prop:composition-nested-sets}), and so by Lemma \ref{lem:join-of-flag-is-flag}, both $\N(\L_F,\G_F)$ and $\N(\L^F, \G^F)$ are flag. 
\end{proof}

\begin{lemma}\label{lem:flag-deletion}
If $(\M, \G)$ is a simple flag built matroid, and $e \in E$, then $(\M, \G) \setminus e$ is flag. 
\end{lemma}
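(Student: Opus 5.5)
We will use the flag criterion stated above: a built lattice is flag if and only if, for every antichain $\{G_1,\ldots,G_k\}$ of building-set elements with $k\geqslant 2$ whose join lies in the building set, some pairwise join $G_i\vee G_j$ also lies in the building set. The plan is to transport antichains of $\G\setminus e$ to antichains of $\G$ via the closure map $\cl_\M$, apply flagness of $\G$ there, and transport the conclusion back. This is essentially the argument in the proof of Lemma~\ref{lem:built-del-e}.

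\emph{Step 1: properties of the closure map.} Let $\phi\colon \L(\M\setminus e)\to\L(\M)$, $F\mapsto \cl_\M(F)$. By definition, $F\in\G\setminus e$ if and only if $\phi(F)\in\G$. As in the proof of Lemma~\ref{lem:built-del-e}, $\phi$ preserves joins, so $\phi(F\vee F')=\phi(F)\vee\phi(F')$. Moreover, $\phi$ is injective and order-reflecting, because $\phi(F)\setminus e=F$. Hence $\phi(F)\leqslant\phi(F')$ implies $F\leqslant F'$, and an antichain is sent to an antichain.

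\emph{Step 2: transport the antichain and apply flagness.} Take an antichain $\{G_1,\ldots,G_k\}\subseteq\G\setminus e$ with $k\geqslant 2$ and $G_1\vee\cdots\vee G_k\in\G\setminus e$. Then $\{\phi(G_1),\ldots,\phi(G_k)\}$ is an antichain in $\G$ of size $k$, and its join is $\phi(G_1\vee\cdots\vee G_k)\in\G$. Flagness of $\G$ gives indices $i\neq j$ with $\phi(G_i)\vee\phi(G_j)=\phi(G_i\vee G_j)\in\G$, that is, $G_i\vee G_j\in\G\setminus e$. So $\G\setminus e$ is flag.

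\emph{Main obstacle: maximal elements.} The criterion concerns the complex $\N$, which omits the maximal elements of the building set, whereas the argument above is really about $c\N$. Two points need care. First, if the join of the antichain is a maximal element of $\G\setminus e$, the antichain is a nonface of $c\N$ but is not a set of vertices of $\N$ at all. Second, the maximal elements of $\G\setminus e$ need not be sent by $\phi$ to maximal elements of $\G$. I will handle this by proving flagness of $c\N$ for both built lattices and then comparing. By Lemma~\ref{lem:reducible}, $c\N$ is a join of cones over the $\N$'s of the restrictions to the maximal elements. So by Lemmas~\ref{lem:join-of-flag-is-flag} and~\ref{lem:cone-of-flag-is-flag}, $c\N$ is flag if and only if each of these $\N$'s is flag. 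On the $\G$ side, the restrictions to the maximal elements of $\G$ are flag by Proposition~\ref{prop:res-cont-preserves-flag}, so $c\N(\L,\G)$ is flag. The same equivalence then converts flagness of $c\N(\L(\M\setminus e),\G\setminus e)$, obtained in Step 2, into flagness of $\N(\L(\M\setminus e),\G\setminus e)$.
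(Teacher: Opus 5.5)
Your proof is correct and is essentially the paper's argument. The paper identifies $\N(\M\setminus e,\G\setminus e)$, via $G\mapsto \cl_\M(G)$, with an induced subcomplex of $\N(\M,\G)$ and applies Lemma~\ref{lem:restriction-of-flag-is-flag}; your transport of antichains, joins and building-set membership along the join-preserving order embedding $\cl_\M$ is that argument unpacked.

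The detour through $c\N$ is harmless but unnecessary. Contrary to your first point, the members of an antichain of size at least two lie strictly below their join, so they are never maximal and $c\N$ and $\N$ have the same minimal nonfaces. Your second point is accurate, though: maximal elements of $\G\setminus e$ need not close to maximal elements of $\G$. This exposes a small imprecision in the paper's stated vertex set $\cl_\M(\G\setminus e)$, which should be restricted to closures of non-maximal elements.
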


\begin{proof}
The simplicial complexes $\N(\L\setminus e, \G\setminus e)$ and $\N(\L, \G)|_{\cl_{\M}(\G\setminus e)}$ are isomorphic, via the isomorphism given by sending a vertex $G \in \G\setminus e$ to $\cl_{\M}(G).$ Hence, the result follows from Lemma~\ref{lem:restriction-of-flag-is-flag}.
\end{proof}

As we shall now prove, among nested set complexes of building sets, those arising from flag building sets are distinguished by the property that they can be obtained from the nested set complex of any building subset through successive stellar subdivisions along edges. Equivalently, in the language of building sets, a flag building set can be obtained from any building subset by successively adjoining elements that are joins of exactly two existing elements. This motivates the following definition. 

\begin{definition}
Let $\L$ be a geometric lattice and $\G$ a building set of $\L$. An \emph{augmentation} of $\G$ is a building set $\G' \supset \G$ such that $\G' \setminus \G$ is a singleton $\{G\}$. If $G$ has exactly two factors in $\G\setminus \{G\}$, then we say that $\G'\supset \G$ is a \emph{binary augmentation}. A \emph{filtration of $\G'$ from $\G$} is a sequence of building sets 
\[
\G = \G_p \subset \G_{p-1} \subset \cdots \subset \G_{1} \subset \G_0 = \G'
\] such that for each $i$, $\G_i \subset \G_{i+1}$ is an augmentation. A filtration is called binary if all its augmentations are binary. 
\end{definition}

One has the following classical result. For completeness we provide a short proof.
\begin{proposition}[{\cite[Theorem~4.2]{FM_2005}}]\label{prop:building-sets-augmentation}
Let $\G \subseteq \G'$ be two building sets of the same matroid. There exists a filtration of $\G'$ from $\G.$
\end{proposition}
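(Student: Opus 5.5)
We argue by induction on $|\G'\setminus\G|$. If $\G=\G'$ there is nothing to prove. Otherwise it suffices to find a single element $G\in\G'\setminus\G$ such that $\G'\setminus\{G\}$ is still a building set. Then $\G\subseteq\G'\setminus\{G\}\subset\G'$, where the last inclusion is an augmentation. Applying the induction hypothesis to the pair $\G\subseteq\G'\setminus\{G\}$ gives a filtration of $\G'\setminus\{G\}$ from $\G$, and appending $\G'$ at the end gives the desired filtration of $\G'$ from $\G$.

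To find $G$, I would take $G$ to be a \emph{minimal} element of $\G'\setminus\G$ with respect to the order of $\L$, and verify the criterion of Lemma~\ref{lem:building-set-deletion-of-one-element}. That criterion asks that the join map
\[
\prod_{F\in\max(\G'\setminus G)_{\leqslant G}}[\zero,F]\to[\zero,G]
\]
be an isomorphism. By minimality, every element of $\G'$ strictly below $G$ lies in $\G$, so $(\G'\setminus G)_{\leqslant G}=\G_{\leqslant G}$. Moreover $\max\G_{\leqslant G}=\fact_\G(G)$, since $G\notin\G$. The structural isomorphism $\psi_G$ of the building set $\G$ at $G$ is exactly the required isomorphism. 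Lemma~\ref{lem:building-set-deletion-of-one-element} then shows that $\G'\setminus\{G\}$ is a building set.

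Alternatively, one can check the conditions of Proposition~\ref{prop:backman-char-building-set} directly. The set $\G'\setminus\{G\}$ contains $\G_{\min}$, since $\G_{\min}\subseteq\G$. For closure under joins, take $H,H'\in\G'\setminus\{G\}$ with $H\wedge H'\neq\zero$. Then $H\vee H'\in\G'$, and we must rule out $H\vee H'=G$. If $H\vee H'=G$, then $H,H'<G$, so by minimality both lie in $\G$. Since $\G$ is a building set, $H\vee H'\in\G$, which contradicts $G\notin\G$.

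The main point to get right is the choice of $G$. A maximal element of $\G'\setminus\G$ would not work in general, because the elements below it need not form a building-set-compatible decomposition. With the minimal choice, both verifications above are short.
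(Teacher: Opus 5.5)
Your proof is correct and is the paper's argument. The paper removes a minimal element $G$ of $\G'\setminus\G$ and invokes Lemma~\ref{lem:building-set-deletion-of-one-element}, whose criterion is exactly the structural isomorphism of $\G$ at $G$, because $(\G'\setminus G)_{\leqslant G}=\G_{\leqslant G}$ by minimality; it then inducts on $|\G'\setminus\G|$, and your write-up, including the alternative check via Proposition~\ref{prop:backman-char-building-set}, supplies the details the paper's two-line proof leaves implicit.
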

\begin{proof}
By Lemma \ref{lem:building-set-deletion-of-one-element}, for any minimal element $G$ of  $\G'\setminus \G$ we have that $\G'\setminus G$ is a building set. We can then conclude by induction. 
\end{proof}
The following proposition shows that if the bigger building set is flag, then one can further ask for the filtration to be binary. This generalizes \cite[Theorem~1(1)]{volodin} from the case of Boolean matroids to arbitrary loopless matroids. 
\begin{proposition}\label{lem:flag-binary-filtration}
Let $\G\subset \G'$ be two building sets of the same matroid. If $\G'$ is flag, then there exists a binary filtration of $\G'$ from $\G$. 
\end{proposition}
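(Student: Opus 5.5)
The plan is to induct on $|\G'\setminus\G|$ and build the filtration from the top. Concretely, I will find an element $G\in\G'\setminus\G$ with three properties: $\G'\setminus\{G\}$ is a building set containing $\G$; the augmentation $\G'\setminus\{G\}\subset\G'$ is binary; and $\G'\setminus\{G\}$ is again flag. The inductive hypothesis applied to $\G\subset\G'\setminus\{G\}$ then produces a binary filtration, and appending $\G'$ finishes the proof. Following the proof of Proposition~\ref{prop:building-sets-augmentation}, I take $G$ to be a minimal element of $\G'\setminus\G$. Lemma~\ref{lem:building-set-deletion-of-one-element} then shows that $\G'\setminus\{G\}$ is a building set, and it clearly contains $\G$.

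\emph{Binarity.} Let $F_1,\dots,F_k$ be the maximal elements of $(\G'\setminus\{G\})_{\leqslant G}$. The structural isomorphism of Lemma~\ref{lem:building-set-deletion-of-one-element} forces $k\geqslant 2$: if $k=1$ we would have $F_1=G$, which is absurd. The $F_i$ form an antichain whose join is $G\in\G'$.
\begin{itemize}
\item By the flag characterization lemma applied to $\G'$, some pair satisfies $F_i\vee F_j\in\G'$.
\item Since $F_i\vee F_j\leqslant G$, maximality of the $F_i$ inside $(\G'\setminus\{G\})_{\leqslant G}$ forces $F_i\vee F_j=G$.
\item If $k\geqslant 3$, the two tuples with entries $(F_i,F_j,\zero,\dots)$ and $(F_i,F_j,F_\ell,\dots)$ in $\prod_{F}[\zero,F]$ would both have join $G$. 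This contradicts injectivity of the join map. Hence $k=2$.
\end{itemize}

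\emph{Flagness of $\G'\setminus\{G\}$.} This is the step I expect to be the main obstacle. I will use the lattice-theoretic flag criterion. Take an antichain $\{H_1,\dots,H_m\}\subseteq\G'\setminus\{G\}$ with $m\geqslant 3$ and $H:=\bigvee H_i\in\G'\setminus\{G\}$. I must show that some pairwise join $H_a\vee H_b$ lies in $\G'\setminus\{G\}$.
\begin{itemize}
\item By flagness of $\G'$, some $H_a\vee H_b\in\G'$. If this join is not $G$, we are done.
\item The problem case is $H_a\vee H_b=G$. Then $H_a,H_b$ lie below $G$, so each sits below one of $F_1,F_2$. Since $F_1\wedge F_2=\zero$ and $H_a\vee H_b=G$, they must sit below different ones, say $H_a\leqslant F_1$ and $H_b\leqslant F_2$.
\item I then want to replace $H_a,H_b$ in the antichain by $F_1,F_2$, or argue with $G$ itself, and reapply flagness of $\G'$ to the enlarged family. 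Since $G<H$ (as $G\neq H$), Proposition~\ref{prop:backman-char-building-set} applied to intersecting elements should produce a join of two of the $H$'s (or of some $H_c$ with $H_a$ or $H_b$) lying strictly between, and hence in $\G'\setminus\{G\}$. For instance, if $H_c\wedge G\neq\zero$ then $H_c\vee G\in\G'$.
\end{itemize}
I would try to make this precise by induction on $m$.

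If the direct lattice argument becomes unwieldy, the fallback is geometric. By the classical Feichtner--M\"uller description, $\N(\L,\G')$ is the stellar subdivision of $\N(\L,\G'\setminus\{G\})$ along the edge $\{F_1,F_2\}$. I would then check, using the link description of Proposition~\ref{prop:composition-nested-sets}, that a minimal nonface of size at least $3$ in the coarser complex either survives as a minimal nonface in the subdivision, or yields one after replacing $\{F_1,F_2\}$ by the new vertex $G$.
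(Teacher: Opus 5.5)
Your binarity step is correct, and it proves more than you use: whenever $\G'$ is flag and $\G'\setminus\{G\}$ is a building set, the augmentation $\G'\setminus\{G\}\subset\G'$ is binary. The gap is the step you flagged as the main obstacle, and it cannot be closed, because $\G'\setminus\{G\}$ need not be flag. With $G$ chosen minimal in $\G'\setminus\G$, your recipe can even output a non-binary filtration.

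Take the Boolean matroid $\B_3$ on $\{1,2,3\}$, with $\G=\{1,2,3\}$ and $\G'=\{1,2,3,12,123\}$. The complex $\N(\L,\G')$ is the $4$-cycle with consecutive vertices $1,12,2,3$. Its minimal nonfaces are $\{1,2\}$ and $\{12,3\}$, so $\G'$ is flag. The unique minimal element of $\G'\setminus\G$ is $12$. The set $\G'\setminus\{12\}=\{1,2,3,123\}$ has nested set complex the boundary of the triangle on $\{1,2,3\}$, with minimal nonface $\{1,2,3\}$, so it is not flag.

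This is exactly your problem case: $1\vee 2=G$, while $13,23\notin\G'$ and $3\wedge 12=\zero$, so no pairwise join can be found. Nothing rescues it. The only filtration from $\{1,2,3\}$ to $\{1,2,3,123\}$ adjoins $123$ with three factors, so your construction produces a non-binary filtration. A binary one does exist: $\{1,2,3\}\subset\{1,2,3,12\}\subset\G'$. Your geometric fallback fails for the same reason. Edge subdivision preserves flagness, but its inverse does not: the $4$-cycle is the subdivision of the triangle boundary along the edge $\{1,2\}$.

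So the induction cannot carry flagness along. Intermediate building sets of a binary filtration are in general not flag, even for better choices of $G$. For instance, on $\B_4$ take $\G=\{1,2,3,4,123\}$ and the flag building set $\G'=\{1,2,3,4,12,34,123,1234\}$. The element $12$ is maximal among the elements whose deletion leaves a building set, yet $\G'\setminus\{12\}$ is not flag because of the antichain $\{1,2,3\}$.

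The paper's proof avoids needing intermediate flagness. It builds the filtration downward, always deleting an element that is \emph{maximal} among those whose deletion leaves a building set. It then proves by contradiction that every step is binary, using only the flagness of $\G'$:
\begin{itemize}
\item If the step at $G$ had at least three factors, flagness of $\G'$ produces an element of $\G'$ below $G$ and not below any factor; this element must have been deleted at an earlier stage.
\item Take the last such deletion $F_m$, and decompose $F_m$ via the structural isomorphism at $G$.
\item This shows that $G$ was already deletable at stage $m$, contradicting the maximal choice of $F_m<G$.
\end{itemize}
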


\begin{proof}
    Let us construct the desired filtration by induction on the cardinality of $\G'\setminus \G$.
    Suppose we have constructed a filtration of $\G'$ of length $k$ 
    \[
     \G_k \subsetneq \G_{k-1} \subsetneq \cdots \subsetneq \G_1 \subsetneq \G_0 = \G'
    \] 
    for some $\G_k \supset \G$. By Lemma~\ref{lem:building-set-deletion-of-one-element}, the set 
    \[
    \mathcal{R}_k = \{G \in \G_k \setminus \G \mid (\G_k \setminus \{G\}) \text{ is a building set of $\L$}\}
    \] is nonempty, as it contains the minimal elements of $\G_k \setminus \G$. Let $G$ be any maximal element of $\mathcal{R}_k$, and set $\G_{k+1} \coloneqq \G_{k} \setminus G$. By construction, this process terminates at some integer $p$ such that $\G_{p} = \G$. It remains to show that the resulting filtration 
    \[
    \G = \G_{p} \subsetneq \G_{p-1} \subsetneq \cdots \subsetneq \G_{1} \subsetneq \G_{0} = \G'
    \]
    is a binary filtration, using the flagness of $\G'$.
    
    For all $0 \leqslant i \leqslant p-1$, let us denote by $F_i$ the unique element in the singleton $\G_i \setminus \G_{i+1}.$ Suppose for contradiction that $q$ is the minimal index such that $\G_{q+1} \subset \G_{q}$ is not a binary augmentation. Then the unique element $G \in \G_{q} \setminus \G_{q+1}$ has at least $3$ factors $G_1, \ldots, G_k$ in $\G_{q+1}$, and we have the structural isomorphism 
    \begin{equation}\label{eq:structural-iso-in-proof-filtration}
    \varphi_G: \prod_{1\leqslant i \leqslant k}[\zero, G_i] \xrightarrow{\sim} [\zero, G].
    \end{equation}
    
    The factors $G_1, \ldots, G_{k}$ form an antichain in $\G'$ of size $\geqslant 3$. By flagness of $\G'$, the set 
    \[ \{0 \leqslant i < q  \, | \, F_i \leqslant G \textrm{ and } F_i \nleqslant G_j \textrm{ for all } 1 \leqslant j \leqslant k\}
    \]
    is nonempty. Let $m$ be the maximum of that set and denote 
    \[
    (F_{m}^1, \ldots, F_{m}^k) = \varphi_G^{-1}(F_{m})
    \]
    the decomposition of $F_{m}$ via the isomorphism \eqref{eq:structural-iso-in-proof-filtration}. For every $1\leqslant i \leqslant k$, if $F_{m}^i \neq \zero$, then $F_{m} \wedge G_i \neq \zero$ which gives $F_{m}\vee G_i \in \G_{m}.$ By maximality of $m$ this implies the equality $F_{m}\vee G_i = G_{m}$. Therefore, there exists some subset $I \subsetneq [k]$ of cardinality at least $2$ such that 
    \[ 
    F_{m} = \bigvee_{i \in I} G_i.
    \]
    By the maximality of $m$, the factors of $G$ in $\G_{m}\setminus \{G\}$ are $G_{m}$ together with all the $G_i$'s with $i \in I$. By Lemma \ref{lem:building-set-deletion-of-one-element} this shows that $\G_{m}\setminus \{G\}$ is a building set. This contradicts the fact that $G_{m}< G$ is a maximal removable flat at step $m$.
\end{proof}

\section{Tropical fans and tropical modifications}
In this section, we recall key definitions in tropical intersection theory, and study tropical modifications in our setup. We refer the reader to \cite{gathmann2009tropical} for tropical intersection theory; for standard toric geometry and toric intersection theory, we use \cite{fulton1993introduction, fulton1997intersection}. 

Let $\mathbf{N}$ be a lattice of finite rank and $\Mlatt$ its dual. We denote by $\langle-,-\rangle$ the pairing
\[
\langle -, - \rangle \colon \Nlatt \times \Mlatt \to \Z, \quad (f, v) \mapsto f(v). 
\]
Let us also denote $\Nlatt_{\R} \coloneqq \Nlatt \otimes \R$ and $\Mlatt_{\R} = \Mlatt\otimes \R.$ For any $f \in \Mlatt_{\R}$, there is an associated hyperplane and halfspace in $\Nlatt_{\R}$ given by 
\[
H_{f} \coloneqq \{v \in V \mid \langle f, v \rangle  = 0\}, \quad H_{f}^{\geqslant 0} \coloneqq \{v \in V \mid \langle f, v\rangle  \geqslant 0\},
\]
A \emph{polyhedral cone} is a finite intersection of halfspaces. A hyperplane or a halfspace is \emph{rational} if $f$ is in the lattice $\Mlatt$. A polyhedral cone is \emph{rational} if it is a finite intersection of rational halfspaces. 

Given a polyhedral cone $\sigma$ in $\Nlatt_{\R}$, let us denote by $\Nlatt_{\sigma}$ the sublattice $\Nlatt\cap \langle \sigma \rangle$. A \emph{face} of $\sigma$ is a set of the form $\sigma \cap H_f$ such that $\sigma \subseteq H_f^{\geqslant 0}$. 

A collection $\Sigma$ of polyhedral cones in $\Nlatt_{\R}$ is a \emph{fan} if 
\begin{enumerate}[leftmargin = 20pt, itemsep = 3pt] 
    \item[(i)] (closed under taking faces) for any $\sigma \in \Sigma$ and $\tau$ a face of $\sigma$, $\tau$ is in $\Sigma$, and  
    \item[(ii)] (closed under intersection) for any $\sigma, \sigma' \in \Sigma$, $\sigma \cap \sigma'$ is a face of both $\sigma$ and $\sigma'$. 
\end{enumerate}
A fan is \emph{rational} if all its cones are rational.

The linearity space $\mathbf{L}(\sigma)$ of a polyhedral cone $\sigma$ is the maximal linear subspace contained in $\sigma$. The dimension of $\sigma$ is the dimension of the linear span $\langle \sigma \rangle$. 
A polyhedral cone $\sigma$ is \emph{unimodular} if there exists a subset $\{v_1, \ldots, v_{k}\}$ of a basis of $\Nlatt$ such that 
\[
\sigma = \operatorname{cone}(v_1,\ldots,v_p) + \langle v_{p+1}, \ldots, v_k\rangle. 
\]
A fan $\Sigma$ is \emph{unimodular} if every cone in $\Sigma$ is unimodular. The linearity space $\mathbf{L}(\Sigma)$ of $\Sigma$ is the maximal linear subspace contained in every cone in $\Sigma$, and its dimension is denoted by $\ell$. By closedness under intersection we have $\mathbf{L}(\Sigma) = \mathbf{L}(\sigma)$ for all $\sigma \subset \Sigma$. The support $\abs{\Sigma}$ of a fan $\Sigma$ is the union of all the cones of $\Sigma$. Throughout this work, we assume all fans are rational and unimodular. 

\subsection{Minkowski weights}
The free abelian group of Minkowski weights of a rational unimodular fan is a tropical analogue of the homology groups of an algebraic variety.  
Let $\Sigma$ be a rational unimodular fan of pure dimension $d$, meaning that all the maximal cones of $\Sigma$ have dimension $d$, and let $\Sigma_k$ denote the set of $k$-dimensional cones in $\Sigma$. For every pair of cones $\tau \subset \sigma$ with $\tau$ of codimension $1$ in $\sigma$, we denote by $H_{\sigma,\tau}$ the open half-space of $\langle\sigma \rangle \setminus \langle\tau \rangle$ containing $\sigma\setminus \tau$. A primitive generator $v$ of the pair $\tau\subset \sigma$ is a vector of $\Nlatt_{\sigma}\cap H_{\sigma,\tau}$ such that we have the isomorphism of abelian groups 
\[ \mathbf{N}_{\sigma} \simeq \mathbf{N}_{\tau} \oplus \Z v.\]
We choose such a collection of primitive generators $\mathbf{e}_{\sigma/\tau}$ for all pairs $\tau \subset \sigma$ with $\tau$ of codimension $1$ in $\sigma$. 

A \emph{$k$-dimensional Minkowski weight} on $\Sigma$ is a function $\omega \colon \Sigma_{k+\ell} \to \Z$ satisfying the \emph{balancing condition}: For every $(k-1+ \ell)$-dimensional cone $\tau$ in $\Sigma$, we have
\[
\sum_{\tau \subset \sigma} \omega(\sigma) \mathbf{e}_{\sigma/\tau} \in \langle\tau \rangle. 
\] 
The balancing condition does not depend on the choice of primitive generators. Let $\MW_{k}(\Sigma)$ be the abelian group of all $k$-dimensional Minkowski weights on $\Sigma$ and denote
\[
\MW_{\bullet}(\Sigma) \coloneqq \bigoplus_{i= 0}^{d} \MW_k(\Sigma). 
\] 
Elements of $\MW_{k}(\Sigma)$ are called \emph{tropical $k$-cycles} of $\Sigma$. We also denote, for all $k \geqslant 0$, 
\[
\MW_k(\Sigma,\Q) \coloneqq \MW_k(\Sigma) \otimes \Q, \quad \text{and } \quad \MW_{\bullet}(\Sigma, \Q) \coloneqq \bigoplus_{k \geqslant 0} \MW_k(\Sigma, \Q).
\]

\subsection{Chow rings}
Let $\e_{\Sigma}$ denote the collection of primitive generators $\e_{\sigma/\tau}$ with $\tau = \mathbf{L}(\Sigma)$. Let $R_{\Sigma}$ be the polynomial ring over $\Z$ with variables indexed by elements of $\e_\Sigma$: 
\[
R_{\Sigma} \coloneqq \Z[x_{\mathbf{e}}]_{\mathbf{e} \in \mathbf{e}_{\Sigma}}. 
\]
The \emph{Chow ring} of $\Sigma$ is the graded commutative ring
\[
\uCH^{\bullet}(\Sigma) \coloneqq R_{\Sigma} / (I_{\Sigma} + J_{\Sigma})
\] where the generators have degree $1$, and $I_{\Sigma}$ and $J_{\Sigma}$ are the ideals given by 
\begin{align}
    I_{\Sigma} &\coloneqq \langle x_{\mathbf{e}_1} \cdots x_{\mathbf{e}_k} \mid \{\mathbf{e}_1, \ldots, \mathbf{e}_k\} \text{ is not in any } \sigma \in \Sigma\rangle, \\
    J_{\Sigma} &\coloneqq \bigg\langle \sum_{\mathbf{e} \in \mathbf{e}_{\Sigma}} \langle \mathbf{e}, m\rangle x_{\mathbf{e}} \mid m \in \Mlatt\bigg\rangle.
\end{align}
It is a classical result that the degree $k$ piece $\uCH^{k}(\Sigma)$ is generated by square-free monomials 
\[x_{\sigma} \coloneqq \prod_{\mathbf{e} \in \mathbf{e}_{\Sigma}, \mathbf{e} \in \sigma} x_{\mathbf{e}},
\] where $\sigma$ ranges over all cones of dimension $k + \ell$  \cite[Proposition 5.5]{adiprasito-huh-katz}. By \cite{brion1996piecewise}, we have an isomorphism $\CH^{\bullet}(\Sigma) \cong \CH^{\bullet}(X(\Sigma))$, where $X(\Sigma)$ is the  toric variety associated to $\Sigma$.  

\begin{thm}[{\cite[Proposition 5.6]{adiprasito-huh-katz}}]
    Let $\Sigma$ be a rational unimodular fan of pure dimension $d$. For $0 \leqslant k \leqslant d$, the map 
    \[
    \MW_{k}(\Sigma) \rightarrow \Hom(\uCH^{k}(\Sigma), \Z), \quad \omega \mapsto \bigg(x_{\sigma} \mapsto \omega(\sigma) \bigg).
    \]
    is a well-defined isomorphism. 
\end{thm}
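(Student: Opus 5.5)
We first construct the map, then show it is well defined and bijective. The map takes $\omega$ to the functional $x_\sigma\mapsto\omega(\sigma)$, so the whole question is which functions $\omega\colon\Sigma_{k+\ell}\to\Z$ descend to $\uCH^k(\Sigma)$. By \cite[Proposition 5.5]{adiprasito-huh-katz}, the squarefree monomials $x_\sigma$ with $\sigma\in\Sigma_{k+\ell}$ span $\uCH^k(\Sigma)$. Hence there is a surjection $\pi\colon\Z^{\Sigma_{k+\ell}}\twoheadrightarrow\uCH^k(\Sigma)$, and $\Hom(\uCH^k(\Sigma),\Z)$ is identified with the functions $\omega$ vanishing on $\ker\pi$. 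Injectivity of the map is then automatic, since a functional on $\uCH^k$ is determined by its values on the generators $x_\sigma$. It remains to show two things: every Minkowski weight kills $\ker\pi$, and every $\omega$ killing $\ker\pi$ is balanced.

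\textbf{Necessity of balancing.} Fix $\tau\in\Sigma_{k-1+\ell}$ and $m\in\Mlatt\cap\tau^{\perp}$. Consider the element
\[
x_\tau\cdot\sum_{\mathbf e\in\mathbf e_\Sigma}\langle\mathbf e,m\rangle x_{\mathbf e}\in (I_\Sigma+J_\Sigma),
\]
which is zero in $\uCH^k(\Sigma)$. The rays $\mathbf e$ lying in $\tau$ contribute nothing, because $\langle\mathbf e,m\rangle=0$. A ray $\mathbf e\notin\tau$ gives a squarefree monomial, and this monomial lies in $I_\Sigma$ unless $\tau$ and $\mathbf e$ span a cone $\sigma\supset\tau$, in which case it equals $x_\sigma$. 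Moreover, $\langle\mathbf e,m\rangle=\langle\mathbf e_{\sigma/\tau},m\rangle$: by unimodularity the ray generator of $\sigma$ not in $\tau$ is a legitimate choice of $\mathbf e_{\sigma/\tau}$, and any other choice differs from it by an element of $\langle\tau\rangle$, which $m$ kills. Thus
\[
\sum_{\sigma\supset\tau}\langle\mathbf e_{\sigma/\tau},m\rangle\,x_\sigma=0 \quad\text{in } \uCH^k(\Sigma).
\]
Applying a functional $\omega$ gives $\langle\sum_\sigma\omega(\sigma)\mathbf e_{\sigma/\tau},m\rangle=0$ for all such $m$. Since $\Nlatt_\tau$ is saturated, this is exactly the statement that $\sum_\sigma\omega(\sigma)\mathbf e_{\sigma/\tau}\in\langle\tau\rangle$, so every element of $\Hom(\uCH^k(\Sigma),\Z)$ comes from a Minkowski weight.

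\textbf{Sufficiency of balancing (main obstacle).} We must show that the relations displayed above, for all $\tau$ and $m\in\tau^\perp$, generate $\ker\pi$. In degree $k$, the ideal $I_\Sigma+J_\Sigma$ is generated by monomials not supported on a cone, together with products $x^{a}\cdot\ell_m$, where $x^a$ is a monomial of degree $k-1$ and $\ell_m=\sum_{\mathbf e}\langle\mathbf e,m\rangle x_{\mathbf e}$. Working modulo $I_\Sigma$, we may assume $x^a$ is supported on a cone $\rho$, but $x^a$ need not be squarefree. The plan is to define a straightening procedure. For $x^a=x_{\mathbf e}^{2}x^{b}$ with $\mathbf e$ in a cone $\rho$, unimodularity of $\rho$ supplies $m_0\in\Mlatt$ with $\langle\mathbf e,m_0\rangle=1$ and $\langle\mathbf e',m_0\rangle=0$ for the other rays of $\rho$. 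Then
\[
x_{\mathbf e}^2x^b\equiv-\sum_{\mathbf e'\notin\rho}\langle\mathbf e',m_0\rangle\,x_{\mathbf e}x_{\mathbf e'}x^{b},
\]
which expresses each monomial as a $\Z$-combination of squarefree monomials $x_\sigma$. Call the resulting map $s\colon (R_\Sigma/I_\Sigma)_k\to\Z^{\Sigma_{k+\ell}}$.

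We then show that $s(x^a\ell_m)$ lies in the span $B$ of the balancing relations. We proceed by induction on the total excess multiplicity $\sum_{\mathbf e}\max(a_{\mathbf e}-1,0)$. When $x^a$ is squarefree, the product is the relation attached to $\tau=\rho$ with $m$ replaced by $m-\sum_{\mathbf e\in\rho}\langle\mathbf e,m\rangle m_{\mathbf e}$, a combination of $\tau^\perp$-relations plus straightened terms of lower excess. The delicate point is that $s$ must be well defined, independent of the choices made during straightening. The differences between choices are themselves of the form $x^{b}\ell_{m'}$ with $m'$ vanishing on $\rho$, so the same induction handles them simultaneously. This bookkeeping is the step we expect to be hardest.

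Once $\ker\pi=B$ is established, the surjectivity of the map follows. A balanced $\omega$ kills $B$ by the balancing condition, hence factors through $\uCH^k(\Sigma)=\Z^{\Sigma_{k+\ell}}/B$. Together with injectivity, this proves that the map is an isomorphism.
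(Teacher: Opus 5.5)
Your injectivity argument and the computation with $x_\tau\,\ell_m$ for $m\in\Mlatt\cap\tau^\perp$ are correct. They show $B\subseteq\ker\pi$, so every functional on $\uCH^k(\Sigma)$ pulls back to a balanced function; this is \emph{surjectivity}. What you call surjectivity at the end is really \emph{well-definedness}: a balanced $\omega$ must vanish on $\ker\pi$. That rests entirely on the reverse inclusion $\ker\pi\subseteq B$, or at least on $\ker\pi$ lying in the saturation of $B$. This inclusion is the whole substance of the theorem, and the proposal does not establish it.

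\textbf{The excess induction does not close.} Take a non-squarefree $x^a=x_{\mathbf e}^2x^c$ supported on $\rho$. Substituting $x_{\mathbf e}^2x^c\equiv x_{\mathbf e}x^c\ell_{m_0}-\sum_{\mathbf e'\notin\rho}\langle\mathbf e',m_0\rangle x_{\mathbf e}x^cx_{\mathbf e'}$ into $x^a\ell_m$ leaves the term $x_{\mathbf e}x^c\,\ell_m\ell_{m_0}$. Expanding $\ell_m$ there returns $\langle\mathbf e,m\rangle\,x^a\ell_{m_0}$, together with terms $x_{\mathbf e}x^cx_{\mathbf g}\ell_{m_0}$ for $\mathbf g\in\rho$. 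All of these have the \emph{same} excess, so nothing decreases.

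\textbf{The claim about straightening choices is only partly right.} Differences of the form $x^b\ell_{m'}$ with $m'\in\rho^\perp$ arise only when you change $m_0$. Changing which repeated ray is reduced first gives something else. Reducing $\mathbf e$ before $\mathbf f$ in $x_{\mathbf e}^2x_{\mathbf f}^2x^c$, or the other way round, gives expressions differing by $x_{\mathbf e}x_{\mathbf f}x^c\bigl(x_{\mathbf e}\ell_{m_{\rho,\mathbf f}}-x_{\mathbf f}\ell_{m_{\rho,\mathbf e}}\bigr)$, whose linear forms do not vanish on $\rho$.

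\textbf{What is missing.} You need a genuine order-independence lemma for cutting successively by two linear forms. In tropical language this is the commutativity of intersecting a balanced cycle with two Cartier divisors (Allermann--Rau). Alternatively, an induction over star fans showing that relations of $\operatorname{star}_\rho\Sigma$ push forward into $B$ would do. The step can be carried out: in the smallest case, $x_{\mathbf e}^2\ell_m$ in degree $3$, the cross terms cancel by symmetry after rewriting with balancing relations at the cones spanned by $\mathbf e$ and one further ray. But it is a real argument, not bookkeeping.

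\textbf{Comparison with the cited proof.} The paper does not prove this statement; it quotes Adiprasito--Huh--Katz. Their argument, following Fulton--Sturmfels, takes the exactness of $\bigoplus_{\tau\in\Sigma_{k-1+\ell}}(\Mlatt\cap\tau^\perp)\to\Z^{\Sigma_{k+\ell}}\to\uCH^k(\Sigma)\to0$ from geometry, as follows.
\begin{enumerate}[(i)]
\item $\uCH^\bullet(\Sigma)$ is the Chow ring of the smooth toric variety $X(\Sigma)$.
\item Poincar\'e duality on a smooth variety identifies $\uCH^k$ with the Chow group of cycles of complementary dimension.
\item By Fulton--MacPherson--Sottile--Sturmfels, rational equivalence on a toric variety is generated by the divisors of the characters $\chi^u$, $u\in\Mlatt\cap\tau^\perp$, on the orbit closures $V(\tau)$. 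These are precisely your balancing relations.
\end{enumerate}
Applying the left-exact functor $\Hom(-,\Z)$ then gives the theorem immediately. Your outline has the right shape, and a fully combinatorial proof would be a genuinely different, self-contained route. As written, though, the half of exactness that carries the content is missing. Either import that geometric presentation or prove the commutativity lemma.
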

The isomorphism above induces an analogue of the Kronecker duality pairing in classical algebraic topology. Under this duality, the Chow ring and the group of Minkowski weights enjoy the following properties: 
\begin{enumerate}[(1), leftmargin = 20pt, itemsep = 3pt] 
\item (Cap product) The analogue of Kronecker duality above induces the cap product 
\[
\uCH^{k}(\Sigma) \times \MW_{\bullet}(\Sigma) \to \MW_{\bullet - k}(\Sigma), \quad (\xi, \omega ) \mapsto \xi \cap \omega \coloneqq \left( \sigma \mapsto \omega(\xi \cdot x_{\sigma})\right).
\]  
which endows the graded $\Z$-module $\MW_k(\Sigma)$ with the structure of a graded $\CH^{\bullet}(\Sigma)$-module. 

\item (Degree map) For any Minkowski weight $\omega$ of top dimension, the cap product above induces the degree map 
\[
\deg_{\omega} \colon \uCH^{d}(\Sigma) \to \Z \quad \xi \mapsto \xi \cap \omega. 
\] 

\item (Poincar\'{e} pairing) The degree map induces an analogue of the Poincar\'{e} pairing
\[
\uCH^{k}(\Sigma) \times \MW_{d-k}(\Sigma) \to \Z, \quad (\xi, \omega) \mapsto \langle \xi, \omega \rangle \coloneqq \sum_{\sigma \in \Sigma_{d-k}} \omega(\sigma) \cdot \deg_{\omega}(\xi \cdot x_{\sigma}) \in \Z. 
\]
\end{enumerate}

\subsection{Divisors}
Let $\PL(\Sigma)$ denote the abelian group of piecewise linear integral functions on $\Sigma$, i.e., continuous functions $f \colon \abs{\Sigma} \to \R$ satisfying that, for every $\sigma \in \Sigma$, $f$ restricted to $|\sigma|$ coincides with the linear extension of a function in $\Mlatt$ restricted to $|\sigma|$. Let $\Mlatt_\Sigma$ denote the abelian group of functions on $|\Sigma|$ which are the restriction of an element of $\Mlatt$. The abelian group of divisors on $\Sigma$ is defined as 
\[
\textrm{D}(\Sigma) \coloneqq \PL(\Sigma)/\textrm{L}(\Sigma).
\] Elements of $\textrm{D}(\Sigma)$ are called \emph{tropical Cartier divisors}. 

When $\Sigma$ is complete i.e., $\abs{\Sigma} = \Nlatt_\R$, a Cartier divisor $D$ can be defined as 
\[
\sum_{\mathbf{e} \in \mathbf{e}_{\Sigma}}  c_{\mathbf{e}} x_{\mathbf{e}} \in \uCH^{1}(\Sigma)
\] by setting $D(\mathbf{e}) = c_{\mathbf{e}}$. The vector space of Cartier divisors $\textrm{D}(\Sigma)$ on $\Sigma$ satisfies the following properties: there is an isomorphism of abelian groups 
\[
\textrm{D}(\Sigma) \xrightarrow{\sim} \uCH^{1}(\Sigma), \quad D_{\rho} \mapsto x_{\rho}, 
\]
which induces a pairing given by the Poincar\'{e} pairing of $\textrm{D}(\Sigma)$ on $\MW_{\bullet}(\Sigma)$ 
\[
\textrm{D}(\Sigma) \times \MW_{k}(\Sigma) \to \MW_{k -1}(\Sigma), \quad (D, \omega) \mapsto D \cap \omega. 
\] 
The image of this action is called the \emph{tropical Weil divisor} of $D$ on the support of $\omega$, first introduced in \cite[Definition 3.4]{allermann2010first}. As a Minkowski weight, $D \cap \omega$ has the following concrete description, which is a standard result in tropical intersection theory. 
\begin{proposition}\label{prop:tropical-divisor-as-cap-product}
    Let $\Sigma$ be a complete fan with an $\ell$-dimensional linearity space, let $\omega \in \MW_k(\Sigma)$, and let $D \in \operatorname{D}(\Sigma) \cong \uCH^{1}(\Sigma)$ be a class represented by a function $\varphi$. For every $(k + \ell -1)$-dimensional cone $\tau$, the tropical Weil divisor $D\cap \omega$ has weight 
    \[
    (D \cap \omega)(\tau) = \left(\sum_{\substack{\sigma \in \Sigma_{k + \ell} \\ \sigma \supsetneq \tau}} \omega_{\varphi}(\sigma) \varphi_{\sigma} \left( \mathbf{e}_{\sigma/\tau} \right)\right) - \varphi_{\tau}\left( \sum_{\substack{\sigma \in \Sigma_{k + \ell} \\ \sigma \supsetneq \tau}} \omega_{\varphi}(\sigma) \mathbf{e}_{\sigma/\tau}\right), 
    \] where $\varphi_{\sigma}$ is the linear extension to $\langle \sigma \rangle$ of $\varphi_{|\sigma}$ for every $\sigma$, and $\varphi_{\tau}$ is the linear extension to $\langle \tau \rangle$ of $\varphi_{|\tau}$.
\end{proposition}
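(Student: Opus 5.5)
Since $D\cap\omega$ is the Minkowski weight $\sigma\mapsto \omega(x_D\cdot x_\sigma)$, my plan is to compute $x_D\cdot x_\tau$ in $\uCH^\bullet(\Sigma)$ directly from the presentation $R_\Sigma/(I_\Sigma+J_\Sigma)$. I will then pair the result against $\omega$ using the isomorphism $\MW_k(\Sigma)\cong\Hom(\uCH^k(\Sigma),\Z)$ of \cite[Proposition 5.6]{adiprasito-huh-katz}. Fix a $(k+\ell-1)$-dimensional cone $\tau$.

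\textbf{Step 1 (reduction).} Everything is linear in $\varphi$, and adding a global linear function $m\in\Mlatt$ changes neither side. Indeed, the right-hand side changes by
\[
\sum_\sigma \omega(\sigma)\langle m,\e_{\sigma/\tau}\rangle - \Big\langle m, \sum_\sigma \omega(\sigma)\e_{\sigma/\tau}\Big\rangle = 0,
\]
and the left-hand side is unchanged because $D$ is a class modulo $\Mlatt_\Sigma$. Since $\Sigma$ is unimodular, there is $m\in\Mlatt$ agreeing with $\varphi$ on $\tau$. Replacing $\varphi$ by $\varphi-m$, we may assume $\varphi|_\tau=0$, so $\varphi_\tau=0$ and the second term of the formula vanishes.

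\textbf{Step 2 (expanding the product).} Write $\varphi=\sum_{\e}\varphi(\e)\,x_\e$ over the rays $\e\in\e_\Sigma$. Then
\[
x_D\cdot x_\tau=\sum_{\e\notin\tau}\varphi(\e)\,x_\e x_\tau .
\]
The terms with $\e\in\tau$ vanish because $\varphi|_\tau=0$. By the ideal $I_\Sigma$, a term $x_\e x_\tau$ with $\e\notin\tau$ is nonzero only when $\e$ and $\tau$ span a cone $\sigma\supsetneq\tau$ of dimension $k+\ell$; in that case $x_\e x_\tau=x_\sigma$ and $\e=\e_{\sigma/\tau}$ up to a lattice vector of $\Nlatt_\tau$. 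Pairing with $\omega$ gives
\[
(D\cap\omega)(\tau)=\sum_{\sigma\supsetneq\tau}\omega(\sigma)\,\varphi(\e_\sigma),
\]
where $\e_\sigma$ is the ray of $\sigma$ not in $\tau$.

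\textbf{Step 3 (choice of generators).} It remains to show $\varphi(\e_\sigma)=\varphi_\sigma(\e_{\sigma/\tau})$. Both $\e_\sigma$ and $\e_{\sigma/\tau}$ are primitive generators of $\sigma$ relative to $\tau$, so $\e_{\sigma/\tau}-\e_\sigma\in\Nlatt_\tau$. Since $\varphi_\sigma$ is linear on $\langle\sigma\rangle$ and vanishes on $\langle\tau\rangle$, the two values agree. The linearity-space directions are absorbed the same way, because $\mathbf{L}(\Sigma)\subset\langle\tau\rangle$. This also shows the formula is independent of the chosen generators, as the balancing condition guarantees.

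\textbf{Main obstacle.} The delicate step is the bookkeeping in Step 2 when $\ell>0$. Rays are taken relative to $\mathbf{L}(\Sigma)$, so I must check that $x_\tau$ corresponds exactly to the cone $\tau$ and that "$\e$ and $\tau$ span a cone" matches the description of $(k+\ell)$-dimensional cones in $\Sigma$. I would handle this by passing to the quotient fan $\Sigma/\mathbf{L}(\Sigma)$ in $\Nlatt/\Nlatt_{\mathbf{L}}$, where the fan is pointed, and then applying the argument there.
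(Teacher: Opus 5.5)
Your argument is correct. The paper gives no proof of this proposition. It quotes it as a standard fact of tropical intersection theory: the right-hand side is the Allermann--Rau formula for the Weil divisor of a rational function \cite[Definition~3.4]{allermann2010first]}. So the real content of the statement is that this formula agrees with the Chow-theoretic cap product $\omega(x_D\cdot x_\tau)$ coming from the duality $\MW_k(\Sigma)\cong\Hom(\uCH^k(\Sigma),\Z)$, and your proof establishes exactly this compatibility directly from the presentation $R_\Sigma/(I_\Sigma+J_\Sigma)$.

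The decisive step is the normalization $\varphi|_\tau=0$. It kills every term $\varphi(\e)\,x_\e x_\tau$ with $\e\in\tau$, so you never have to rewrite a non-square-free monomial $x_\e^2\cdots$ using the linear relations. The surviving square-free terms are then read off from $I_\Sigma$ and the duality. Step~3 correctly matches $\varphi(\e_\sigma)$ with $\varphi_\sigma(\e_{\sigma/\tau})$, since the two generators agree modulo $\Nlatt_\tau$, where $\varphi_\sigma$ vanishes. As a by-product, you never use completeness of $\Sigma$, so your argument gives the formula for any unimodular fan. You also rightly read the $\omega_\varphi(\sigma)$ in the statement as $\omega(\sigma)$.

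The point you flag as the main obstacle is genuine, and your fix is the right one. As literally written, $J_\Sigma$ ranges over all $m\in\Mlatt$. The class $\sum_{\e}\varphi(\e)x_\e$ then depends on the representatives $\e$ chosen modulo $\Nlatt_{\mathbf{L}(\Sigma)}$, unless $\varphi$ vanishes on $\mathbf{L}(\Sigma)$. So you should first normalize $\varphi|_{\mathbf{L}(\Sigma)}=0$ and work on $\Sigma/\mathbf{L}(\Sigma)$, that is, use only $m\in\mathbf{L}(\Sigma)^{\perp}$. Your Step~1 is compatible with this. Once $\varphi|_{\mathbf{L}(\Sigma)}=0$, any $m$ agreeing with $\varphi$ on $\tau\supseteq\mathbf{L}(\Sigma)$ automatically lies in $\mathbf{L}(\Sigma)^{\perp}$, so subtracting it does not change the class $D$.
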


\subsection{Nested set fans}
\label{subsec:nested-set-fans}
Let $(\M, \G)$ be a built matroid on ground set $E$. Let $\max \G = \{G_1, \ldots, G_{k}\}$ be the maximal elements of $\G$. Let $\mathbf{N} \coloneqq \Z^E$, and $\Nlatt_{\R}$ be endowed with the standard basis $\{\mathbf{e}_i\}_{i \in E}$.
For any subset $S \subseteq E$, we set 
\[
\mathbf{e}_S \coloneqq \sum_{i \in S} \mathbf{e}_{i} \in \Nlatt_{\R}.
\] 
The equality of matroids 
\[
\M = \bigoplus_{G_i \in \max \G} \M^{G_i}. 
\]
induces the identification 
\[
\Nlatt_{\R} = \prod_{G_i \in \max \G} \R^{G_i} = \R^{G_1} \times \R^{G_2} \times \cdots \times \R^{G_k}, 
\]
together with the \emph{linearity space} in $\Nlatt_{\R}$ defined by 
\[
\mathbf{L}_{\M, \G} \coloneqq \langle \mathbf{e}_{G_1}, \mathbf{e}_{G_2}, \ldots, \mathbf{e}_{G_k}\rangle. 
\] 

For every nested set $\S \in \N(\L, \G)$, we define the cone of $\S$ to be the Minkowski sum of the linearity space and the cone spanned by vectors given by $\S$: 
\[
\sigma_{\S} \coloneqq \mathbf{L}_{\M, \G} + \textrm{cone}\{\mathbf{e}_{S} \mid S \in \S\} \subseteq \R^{E}. 
\] 

\begin{definition}\label{def:nested-set-fan}
The \textit{nested set fan} $\Sigma_{\M, \calG}$ of $(\M, \calG)$ is the fan in $\R^{E}$ consisting of the cones corresponding to nested sets in $\mathcal{N}(\calL, \calG)$: 
\[
    \Sigma_{\M, \calG} \coloneqq \{\sigma_{\calS} \mid \calS \in \mathcal{N}(\M, \calG)\}. 
\]
\end{definition}
The following properties follow fr. 
\begin{proposition}
    The nested set fan $\Sigma_{\M, \G}$ enjoys the following properties: 
    \begin{enumerate}[\normalfont(i), leftmargin = 20pt, itemsep = 3pt] 
        \item $\Sigma_{\M, \G}$ is \textit{unimodular} (\cite[Proposition 2]{feichtner2004chow}). 
        \item $\Sigma_{\M, \G}$ is balanced, that is, the assignment on each maximal cone $\sigma \mapsto 1$ is a Minkowski weight. 
        \item $\Sigma_{\M, \G}$ contains a unique minimal cone $\sigma_{\varnothing} = \mathbf{L}_{\M, \G}$. 
        \item $\Sigma_{\M, \G}$ is pure of dimension $\rk(\M)$, that is, every maximal cone is of equal dimension $\rk(\M)$. 
    \end{enumerate}
\end{proposition}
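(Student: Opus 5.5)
Since items (i) and (iv) are credited to Feichtner--Yuzvinsky and are otherwise immediate, the plan is to check each property directly from Definition~\ref{def:nested-set-fan}, reducing to the irreducible case where convenient via Lemma~\ref{lem:reducible}.

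\emph{Item (iii).} The empty set is nested, so $\sigma_{\varnothing}=\mathbf{L}_{\M,\G}$ belongs to $\Sigma_{\M,\G}$. By construction every $\sigma_{\S}$ contains $\mathbf{L}_{\M,\G}$. Hence $\sigma_\varnothing$ is contained in every cone and is the unique minimal cone.

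\emph{Item (iv).} I would argue combinatorially. By Lemma~\ref{lem:reducible}, $c\N(\L,\G)$ is a join of cones over the $\N(\L^{G_i},\G^{G_i})$, so it suffices to treat one irreducible summand. There I will show that every maximal nested set of $\N(\L,\G)$ has exactly $\rk(\M)-1$ elements, by induction on rank using Proposition~\ref{prop:composition-nested-sets}\ref{it:comp-nested-sets-4}. A maximal nested set $\S$ has all local intervals $L^G(\S)$ with empty nested set complex, which forces each $L^G(\S)$ to have rank $1$ (any rank $\geqslant 2$ interval with a building set contains an atom as a proper nonmaximal element, giving a vertex). Counting ranks along the forest of $\S\cup\{\un\}$ then gives $|\S|+1=\rk$ in the irreducible case. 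With the $k$ linearity directions $\mathbf{e}_{G_i}$, the total dimension is $\sum_i(\rk G_i-1)+k=\rk(\M)$. One must also verify that the vectors $\mathbf{e}_S$ for $S\in\S$, together with the $\mathbf{e}_{G_i}$, are linearly independent. This follows from (i).

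\emph{Item (i).} For unimodularity I would show that $\{\mathbf{e}_S\}_{S\in\S}\cup\{\mathbf{e}_{G_i}\}$ extends to a $\Z$-basis of $\Z^E$ for a maximal nested set. The strategy is to order the elements of $\S\cup\max\G$ by a linear extension of inclusion. For each $G$, choose an element $a_G\in G\setminus\bigcup\S_{<G}$; such an element exists because the join of the children is strictly below $G$ by the nested condition. The transition matrix to $\{\mathbf{e}_{a_G}\}$ is then unitriangular. Adding the remaining $\mathbf{e}_j$ completes this to a basis.

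\emph{Item (ii).} Balancing is the main obstacle. Let $\tau=\sigma_{\S}$ with $|\S|$ one less than maximal. By Proposition~\ref{prop:composition-nested-sets}\ref{it:comp-nested-sets-4}, the link of $\S$ is a join of complexes $\N(L^G(\S))$, all empty except one local interval $[J,G]$ of rank $2$. In that interval, the nested set complex is a finite set of points, namely the atoms $H$ of $[J,G]$ lying in the local building set. The maximal cones containing $\tau$ correspond to the flats $H/J$, and their primitive generators are $\mathbf{e}_{H/J}$ modulo $\langle\tau\rangle$. I would compute $\sum_H \mathbf{e}_{H/J}$ modulo the span of $\mathbf{e}_S$ for $S\in\S_{<G}$, where $H$ ranges over the atoms of $[J,G]$. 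Each atom of $[J,G]$ is $H=J\vee a$; the sets $H\setminus J$ partition $G\setminus J$, and $\mathbf{e}_{H/J}$ differs from $\mathbf{e}_H$ by sums of $\mathbf{e}_S$ with $S\in\S$. Thus $\sum_H\mathbf{e}_{H/J}\equiv \mathbf{e}_G+(m-1)\mathbf{e}_J$, where $m$ is the number of atoms, and this lies in $\langle\tau\rangle$.

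The difficulty is that not every atom of $[J,G]$ need lie in the local building set. When the local building set is not maximal, the non-nested atoms must be handled by checking that $\G^G_J$ on a rank-$2$ interval is either all atoms plus top, or exactly two atoms whose join is not in it. In the second case the link is two points, and $\mathbf{e}_{H_1/J}+\mathbf{e}_{H_2/J}\equiv\mathbf{e}_G$ directly. This case split is where I expect the care to be needed.
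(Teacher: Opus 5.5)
Your argument is correct, and it does more than the paper does. The paper gives no proof of this proposition: it cites Feichtner--Yuzvinsky for (i) and treats (ii)--(iv) as standard facts about nested set fans.

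You verify everything directly from the combinatorics of nested sets:
\begin{itemize}
\item a unitriangular change of coordinates to $\{\mathbf{e}_{a_G}\}$ for (i);
\item rank counting along the forest $\S\cup\{\un\}$ for (iv);
\item an explicit computation of $\sum_H\mathbf{e}_{H/J}$ for (ii).
\end{itemize}
That last identity is exactly the one the paper uses later, in the proof of Proposition~\ref{prop:truncation-alpha-cap-product}, where it writes $\sum_j\mathbf{e}_{F_j/J^G}=\mathbf{e}_G+\mathbf{v}$ with $\mathbf{v}$ a sum of $\mathbf{e}_H$ over factors $H$ of $J^G$. So your route makes the proposition self-contained at little extra cost. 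It also makes explicit that the $\mathbf{e}_{H/J}$ are legitimate primitive generators $\mathbf{e}_{\sigma/\tau}$, which is what your unimodularity argument guarantees. The citation route is simply shorter.

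The difficulty you anticipate in (ii) does not exist, and the case split should be dropped.
\begin{itemize}
\item Every atom of $[J,G]$ has the form $J\vee a$ with $a$ an atom of $\L$. Since $a\in\G$ and $\G_J=\{J\vee H\mid H\in\G\}\setminus\{J\}$, every atom of $[J,G]$ lies in $\G^G_J$, and so does the top $G=J\vee G$.
\item Hence a rank-$2$ local interval always carries the maximal building set. Its nested set complex is the discrete set of all its atoms, and the maximal cones through $\tau$ are $\sigma_{\S\cup\{H/J\}}$ for all atoms $H$. The paper asserts the same in the proof just cited.
\item Your second alternative (two atoms whose join is not in the building set) never occurs. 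Had it occurred, the two atoms would be the maximal elements of that built lattice, so its nested set complex would be $\{\varnothing\}$, not two points.
\end{itemize}

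The same observation, $G\in\G^G_J$, is what justifies your parenthetical in (iv). As stated for an arbitrary built lattice it is false: $\B_2$ with $\G_{\min}$ has no vertices.

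Two facts you use implicitly should be stated:
\begin{itemize}
\item Incomparable elements of a nested set are disjoint, by Proposition~\ref{prop:backman-char-building-set}, since their join is not in $\G$. Your triangularity argument in (i) needs this.
\item The join of a nested antichain is the disjoint union of its members, with additive rank. This gives the rank count in (iv). It also shows that $\mathbf{e}_J$ and $\mathbf{e}_H-\mathbf{e}_{H/J}$ lie in $\langle\tau\rangle$ in (ii).
\end{itemize}
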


\begin{remark}
    The crucial difference between the definition above and a typical nested set fan in literature is the existence of linearity spaces coming from each component of the building set, which destroys strong convexity. This enables the fan to be balanced, which is required for tropical intersection theory, while maintaining the isomorphism classes of toric varieties. 
\end{remark}

\begin{example}[Nested set fans of Boolean matroids with minimal building sets]\label{ex:fan-Boolean-min}
Let $n \geqslant 1$ and let $(\U_{n, n}, \G_{\min})$ be the Boolean matroid with the minimal building set. The nested set complex $\N(\U_{n, n}, \G_{\min})$ is described in Example \ref{ex:nested-sets} \ref{item:nested-sets-min}. The only cone of $\Sigma_{\U_{n, n}, \G_{\min}}$ is the linearity space, which is the whole space $\R^E$: 
\[
\mathbf{L}_{\U_{n, n}, \G_{\min}} = \langle \mathbf{e}_i \rangle_{i \in E} = \R^n.   
\]
The associated toric variety $X(\Sigma_{\U_{n, n}, \G_{\min}})$ is isomorphic to a point. 
\end{example}

\subsection{Stellar subdivisions}
Stellar subdivisions on fans induce blowups of toric varieties on the associated toric varieties. 
Due to the existence of linearity spaces in $\Sigma_{\M, \G}$, our definition of stellar subdivisions depends on whether the subdivision center is in the linearity space. 
\begin{definition}
    Let $\Sigma$ be a fan in $\mathbf{N}^E_{\R}$ with linearity space $\mathbf{L}$, and $v \in \abs{\Sigma} \cap \mathbf{N}^{E}$.
    \begin{enumerate}[(a), itemsep = 5pt, leftmargin = 20pt] 
    \item If $v = \sum_{a \in E} c_a \mathbf{e}_{a} \in \mathbf{L}$ with $c_{a} \ne 0$ and we denote the support of $v$ by
    \[
    \textrm{supp}(v) \coloneqq \{a \in E \mid c_a \ne 0\} \subseteq E, 
    \] then the \emph{stellar subdivision (within linearity space)} $\Sigma^{\ast}_{v}$ consists of cones 
    \begin{align*}
        \mathbf{L}^{\ast}_v &\coloneqq \R \langle v \rangle \oplus \left(\mathbf{L} \bigg/\bigoplus_{a \in \supp(v)} \R \langle \mathbf{e}_{a} \rangle\right) \subseteq \mathbf{N}_{\R}^E, \\
    \end{align*}
    together with the cones generated by primitive generators in $\mathbf{L}$
    \[
        \{\sigma_a \coloneqq \mathbf{L}_v^{\ast} + \textrm{cone}(\mathbf{e}_a) \mid a \in \supp(v)\}
    \] and cones in $\Sigma$ properly containing $\mathbf{L}$ 
    \[
        \{\sigma^{\ast} \coloneqq \mathbf{L}^{\ast} + \textrm{image of $\sigma$ under } \R^E \to \R^E/\mathbf{L} \mid \sigma \in \Sigma \}.
    \]
    \item If $v \notin \mathbf{L}$, then the \emph{stellar subdivision} $\Sigma^{\ast}_{v}$ consists of cones 
    \[
        \{\sigma \mid v \notin \sigma \in \Sigma\} \cup \{\textrm{Cone}(\tau, v) \mid v \notin \tau \in \Sigma, \{v\} \cup \tau \subseteq \tau' \in \Sigma\}. 
    \]
    \end{enumerate}
\end{definition}

\begin{example}[Stellar subdivision of $\Sigma_{\U_{n, n}, \G_{\min}}$]
    Let $(\M, \G) = (\U_{n, n}, \G_{\min})$. We perform stellar subdivision of the nested set fan $\Sigma$ at $v = \mathbf{e}_{ij}$ for $i \ne j \in [n]$. 
    Since $v \in \mathbf{L} = \R^n$ with support $\supp(v) = \{i, j\}$, the stellar subdivision $\Sigma^{\ast}_{v}$ consists of cones 
    \begin{align*}
        \mathbf{L}_{\mathbf{e}_{ij}}^{\ast} &= \R\langle \mathbf{e}_{ij}\rangle \oplus \bigg(\R^n \big/ (\R\langle \mathbf{e}_{i} \rangle \oplus \R \langle \mathbf{e}_{j}\rangle ) \bigg) = \R \langle \mathbf{e}_{ij}\rangle \oplus \R^{[n] \setminus \{i, j\}}, \\
        \sigma^{\ast}_{i} &= \R \langle \mathbf{e}_{ij}\rangle \oplus \R^{[n] \setminus \{i, j\}} + \textrm{cone}(\mathbf{e}_{i}), \\
        \sigma^{\ast}_{j} &= \R \langle \mathbf{e}_{ij}\rangle \oplus \R^{[n] \setminus \{i, j\}} + \textrm{cone}(\mathbf{e}_{j}).
    \end{align*}
\end{example}

The following is immediate from construction, and classical facts about stellar subdivision. 
\begin{proposition}\label{prop:stellar-toric-morphisms}
    Let $\pi \colon X(\Sigma_{v}^{\ast}) \to X(\Sigma)$ be the toric morphism induced by fan refinement $\Sigma^{\ast}_{v} \to \Sigma$. 
    \begin{enumerate}[\normalfont(i), leftmargin=15pt]
        \item If $v \in \mathbf{L}$, and let $r = \dim \mathbf{L} - \dim \mathbf{L}_v^{\ast}$, then $\pi \colon X(\Sigma^{\ast}_{v}) \to X(\Sigma)$ is a $\P^r$-bundle of toric varieties. 
        \item If $v \notin \mathbf{L}$, then $\pi \colon X(\Sigma^{\ast}_{v}) \to X(\Sigma)$ is a proper birational toric morphism. 
    \end{enumerate}
\end{proposition}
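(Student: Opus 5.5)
We prove the two parts separately, in each case by describing the refined fan $\Sigma^{\ast}_v$ explicitly and matching it with a standard toric construction.

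\emph{Case (ii), $v\notin\mathbf{L}$.} Since every cone contains $\mathbf{L}$, we quotient by it: $\Sigma$ is the preimage of a strongly convex unimodular fan $\overline{\Sigma}$ in $\Nlatt_{\R}/\mathbf{L}$, and $X(\Sigma)\cong X(\overline{\Sigma})$ as toric varieties, the torus factor corresponding to $\mathbf{L}$ acting trivially on orbit structure. Let $\bar v\neq 0$ be the image of $v$. Then $\Sigma^{\ast}_v$ is the preimage of the classical stellar subdivision $\overline{\Sigma}^{\ast}_{\bar v}$: the cones not containing $v$ are kept, and each cone $\tau'$ containing $v$ is replaced by the cones $\mathrm{Cone}(\tau,v)$ with $\tau$ a face of $\tau'$ not containing $v$.

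This is a refinement with the same support. Every cone of $\Sigma^{\ast}_v$ lies in a cone of $\Sigma$, which gives a toric morphism. The refinement is proper because the supports coincide; this is the standard criterion of \cite{fulton1993introduction}. It is birational because the lattice and the torus are unchanged. When $v$ is the sum of the primitive generators of a cone, this morphism is the blowup along the corresponding orbit closure. That finer statement is not needed here; we only need properness and birationality.

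\emph{Case (i), $v\in\mathbf{L}$.} This case is less standard, and I expect the main obstacle to be making the definition precise. We read $\mathbf{L}^{\ast}_v$ as $\R v\oplus \mathbf{L}'$, where $\mathbf{L}'$ is a complement of $\bigoplus_{a\in\supp(v)}\R\mathbf{e}_a$ in $\mathbf{L}$. The plan is to recognise $\Sigma^{\ast}_v$ as a fan fibred over $\Sigma$ in the sense of toric bundles (Fulton, §2.4, or Oda's split fibrations).

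The key steps are as follows.
\begin{enumerate}[(1), leftmargin=20pt, itemsep=3pt]
\item Consider the sublattice $N' = \Z\langle \mathbf{e}_a : a\in\supp(v)\rangle\cap \mathbf{L}$. Its image modulo $\Z v$ has rank $r$, with $r = \dim\mathbf{L}-\dim\mathbf{L}^{\ast}_v$.
\item The cones $\mathbf{L}^{\ast}_v+\mathrm{cone}(\mathbf{e}_a)$, for $a\in\supp(v)$, project in $N'/\Z v$ onto the rays of a complete fan. Because $v=\sum_a c_a\mathbf{e}_a$ is the unique relation among these rays, and unimodularity forces the $c_a$ to be units, this fan is the fan of $\P^{r}$. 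Its maximal cones are generated by all but one of the $\mathbf{e}_a$.
\item The cones $\sigma^{\ast}$ lifted from $\Sigma$ are generated over $\mathbf{L}^{\ast}_v$ by chosen lifts of the generators of $\sigma/\mathbf{L}$. Every cone of $\Sigma^{\ast}_v$ is then a sum of a lifted cone and a fibre cone.
\item Apply the criterion for a fan to define a toric fibre bundle. The projection $\Nlatt\to\Nlatt/N'$ must send $\Sigma^{\ast}_v$ onto the quotient fan of $\Sigma$, which is $X(\Sigma)$ up to torus factors. The fibre fan from step (2) must be the fan of $\P^r$. Each cone must split as the sum of a lifted cone and a fibre cone.
\end{enumerate}
Together these steps give that $\pi$ is a Zariski-locally trivial $\P^r$-bundle.

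The delicate point is step (2): checking that the lifts and the quotient by $\mathbf{L}$ are compatible with the lattice, so that the bundle is honestly locally trivial. I would handle this by choosing a basis of $\Nlatt$ adapted to $\{\mathbf{e}_a\}_{a\in\supp(v)}$ and computing directly in coordinates, as in the Boolean example preceding the statement. That example serves as the model case: there $v=\mathbf{e}_{ij}$, $r=1$, and the result is $\P^1$ over a point.
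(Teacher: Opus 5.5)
Your route is the one the paper intends; its only justification is that the statement is immediate from classical facts about stellar subdivision. Part (ii) is fine: equal support gives properness, and equal lattice gives birationality.

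Part (i), however, does not go through as written, starting with step (2). You conclude that the fibre fan is the complete fan of $\P^r$ because $v=\sum_a c_a\mathbf{e}_a$ is the only relation and the $c_a$ are units. Completeness actually needs the $c_a$ to have a common sign, and the definition does not impose this. Take $v=\mathbf{e}_1-\mathbf{e}_2$ in the Boolean example. Then $\mathbf{e}_2=\mathbf{e}_1-v$, so $\sigma_1=\sigma_2$ is a single half-space. The fibre fan is $\{0,\R_{\geqslant 0}\}$ and $X(\Sigma^{\ast}_v)=\mathbb{A}^1$, not $\P^1$. Units are also not the relevant condition: for $r=1$, $v=2\mathbf{e}_1+\mathbf{e}_2$ gives unimodular cones and still yields $\P^1$. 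So, as literally stated, (i) needs the extra hypothesis that the $c_a$ have a common sign, say $c_a>0$. This holds in every use in the paper, since one always subdivides at some $\mathbf{e}_G$; you should state the hypothesis and use it in step (2).

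Two further points must be made explicit.

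First, your step (4) checks only half of the toric fibre-bundle criterion (e.g.\ Cox--Little--Schenck, Section~3.3). Every cone must be a sum of a lifted cone and a fibre cone, but also every such sum $\sigma^{\ast}+\mathrm{cone}(\mathbf{e}_a : a\in A)$, with $A\subsetneq\supp(v)$, must itself be a cone. The paper's literal list consists of $\mathbf{L}^{\ast}_v$, the $\sigma_a$, and the $\sigma^{\ast}$. It contains none of the mixed sums, and for $r\geqslant 2$ none of the higher fibre cones. With that list, $\pi$ is not even proper once $\Sigma\neq\{\mathbf{L}\}$. So you are redefining $\Sigma^{\ast}_v$ as the set of all such sums, and you should say so.

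Second, your reading takes $\mathbf{L}'$ to be a complement of $\bigoplus_{a\in\supp(v)}\R\mathbf{e}_a$ in $\mathbf{L}$ and uses the $\mathbf{e}_a$ as fibre rays. This tacitly assumes $\mathbf{e}_a\in\mathbf{L}$ for all $a\in\supp(v)$. That fails in the case the paper actually needs: a binary augmentation by $G=G_1\vee G_2$ with $G_1,G_2\in\max\G$, such as $123=12\vee 3$ in the $\B_3$ example. There $\mathbf{L}=\langle\mathbf{e}_{G_1},\dots,\mathbf{e}_{G_k}\rangle$ and $v=\mathbf{e}_{G_1}+\mathbf{e}_{G_2}$. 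Since $\mathbf{e}_1\notin N'_{\R}$, step (2) has nothing to project. Run the argument with the basis $\{\mathbf{e}_{G_l}\}$ of $\mathbf{L}$ instead; this gives fibre rays $\mathbf{e}_{G_1},\mathbf{e}_{G_2}$ and $r=1$.

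Finally, the projection in step (4) should be $\Nlatt/(\Nlatt\cap\mathbf{L}^{\ast}_v)\to\Nlatt/(\Nlatt\cap\mathbf{L})$, whose kernel is the fibre lattice. A linearity space is quotiented out, so no torus factor appears. With these repairs, your steps (1)--(4) work.
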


\subsection{Tropical modifications}
Tropical modification is a tropical analogue of blowing up inµ classical algebraic geometry. We refer to \cite[Section 2.3]{Shaw_2013}, and \cite[Section 5]{amini2023hodgetheorytropicalfans} for more details.
Given a piecewise linear function $f$ on $\Sigma$,  $\Gamma_f$ denotes the \emph{graph} of $f$ on $\abs{\Sigma}$
\[
\Gamma_f \colon \abs{\Sigma} \to V \times \R, \quad v \mapsto (v, f(v)).
\]
If we denote the zero vector in $V$ as $0$, then for every cone $\delta$ in the subfan $\Sigma_f$, the \emph{lift} $\Lambda(\delta)$ of $\delta$ is defined as the Minkowski sum of polyhedral cones
\[
\Lambda_f(\delta) \coloneqq \Gamma_f(\delta) + \R_{\geqslant 0}(0, 1) \subseteq V \times \R. 
\] 
\begin{definition}
Let $\Sigma\subset V$ be a tropical fan with weight $\omega$, with a piecewise affine function $f$ on $\Sigma.$ The \emph{tropical modification} of $\Sigma$ along $f$ is the tropical fan $\widetilde{\Sigma}_f$ in $V \times \R$ given by cones in the graph and the lift of $f$ 
\[
\widetilde{\Sigma}_f \coloneqq \{\Gamma_f(\sigma) \mid \sigma \in \Sigma\} \cup \{\Lambda_f(\sigma) \mid \sigma \in \Sigma_f\},
\] together with the weight $\widetilde{\omega}$ defined as follows: for any cone $\sigma \in \widetilde{\Sigma}$,  
\[
\widetilde{\omega}(\sigma) \coloneqq \begin{cases}
    \omega(\sigma) & \text{ if $\sigma \in \Gamma_f(\Sigma)$}, \\
    \omega_{\Sigma_f}(\sigma) & \text{ if $\sigma \in \Lambda_f(\Sigma_f)$}. 
\end{cases}
\]
\end{definition}
By construction, there is a linear projection of tropical fans 
\[
\pi_{f} \colon \widetilde{\Sigma}_f \twoheadrightarrow \Sigma
\] 
A tropical modification is an isomorphism of tropical fans away from the divisor $\Delta_f$. If $f$ is a linear function on $\Sigma$, then $\Delta_f$ is the empty fan, and the tropical modification is an isomorphism. Therefore, the tropical modification of $\Sigma$ along any tropical Cartier divisor $f \in \mathrm{D}(\Sigma)$ is well-defined.

For nested set fans of built matroids, tropical modifications relate three operations on built matroids: 
If $\Mcut$ is a nonempty, atom-free, $\G$-compatible modular cut of a built matroid $(\M, \G)$, and $e$ not in $E$,  the fans 
\[
\overline{\Sigma} \coloneqq \Sigma_{\Tr_{\Mcut}(\M, \G)}, \quad  \Sigma \coloneqq \Sigma_{\M, \G}, \quad \Sigma_e \coloneqq \Sigma_{\M\cup_{\Mcut_e} e, \G\cup_{\Mcut_e}e}
\] are related via tropical intersection theory, and tropical modification. 

By \cite[Corollary 4.6]{backman2025convexgeometrybuildingsets}, there exists a complete nestohedral fan $\Sigma_{\B} \subset \R^{E}$ associated to the Boolean matroid $\B$ on $E$ such that $\Sigma \hookrightarrow \Sigma'$. The nested set fan $\overline{\Sigma}$ is a tropical Weil divisor on $\Sigma$, if and only if $\overline{\Sigma}$ defines a $(r-1)$-dimensional Minkowski weight on $\Sigma$, if and only if there exists a piecewise linear function $\varphi$ on $\Sigma_{\B}$ such that 
\[
[\varphi] \cap \Sigma = \overline{\Sigma} \in \MW_{r-1}(\Sigma_{\B}).
\]
For $i\in E$, let $\alpha_{\Mcut} = \alpha_{\Mcut,i}$ be the piecewise linear function on $\Sigma_{\B}$ defined by its values on rays
\[
\alpha_{\Mcut, i}(\mathbf{e}_{S}) \coloneqq \begin{cases}
    1 & \text{ if } i \notin S \in \G \cap \Mcut, \\
    -1  & \text{ if } i \in S \in \G \setminus \Mcut, \\
    0 & \text{ otherwise. }
\end{cases} 
\] 
By unimodularity of $\Sigma_{\B}$, this defines a piecewise-linear function on $\Sigma_{\B}$, independent on $i$. We have the following new result, describing the truncation of a built matroid as a tropical Weil divisor of $\Sigma$, extending the classical result \cite[Theorem 3.2.3]{backman2023simplicial} for principal truncation of maximal built matroids. 
\begin{proposition}\label{prop:truncation-alpha-cap-product}
    There is an equality of $(r-1)$-dimensional Minkowski weights
    \[
    \alpha_{\Mcut} \cap \Sigma = \overline{\Sigma}.
    \]
\end{proposition}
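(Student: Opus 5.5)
We compute the weight $(\alpha_{\Mcut}\cap\Sigma)(\tau)$ cone by cone using Proposition~\ref{prop:tropical-divisor-as-cap-product}, with $\omega\equiv 1$ on the maximal cones of $\Sigma=\Sigma_{\M,\G}$. The target is to show that this weight is $1$ when $\tau=\sigma_{\S}$ for $\S$ a maximal nested set of $\Tr_{\Mcut}(\M,\G)$, and $0$ for every other codimension-one cone.

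\emph{Step 1: reduction to one-dimensional fans.} Fix a codimension-one cone $\tau=\sigma_{\S}$, with $\S$ a nested set of size $\rk(\M)-2$. Its star in $\Sigma$ is controlled by the link of $\S$. By Proposition~\ref{prop:composition-nested-sets}\ref{it:comp-nested-sets-4}, this link is a join of the complexes $\N(L^G(\S))$. Exactly one local interval $L^{G_0}(\S)$ has rank $2$; all the others have rank $1$. So the star of $\tau$ is $\tau$ plus the rank-$2$ fan of the built lattice $L^{G_0}(\S)=([J,G_0],\G^{G_0}_J)$. The rays of this rank-$2$ fan are the atoms $H$ of $[J,G_0]$ that lie in $\G_J$, lifted to $\mathbf{e}_{H/J}$.

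The displayed formula is linear in $\varphi$ and vanishes on linear functions. We may therefore change $\alpha_{\Mcut}$ by a linear function on $\langle\tau\rangle$ so that it vanishes on the rays of $\tau$. The weight then reduces to
\[\sum_{H}\alpha_{\Mcut}(\mathbf{e}_{H/J})-\alpha_{\Mcut,\tau}\Bigl(\sum_H \mathbf{e}_{H/J}\Bigr).\]
Since $\sum_H\mathbf{e}_{H/J}$ is a multiple of $\mathbf{e}_{G_0}$ modulo $\langle\tau\rangle$, this is a rank-$2$ computation. It is the standard computation of the degree of a truncation on a rank-$2$ matroid, here in the form of its local intervals. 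The independence of $\alpha_{\Mcut,i}$ from $i$ (up to linear functions) lets us choose $i\in G_0\setminus J$ conveniently at this point.

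\emph{Step 2: case analysis by the position of $\Mcut$ relative to $J<G_0$.} Since $\Mcut$ is upward closed, there are three cases.
\begin{enumerate}[(a), leftmargin=20pt]
\item $J\in\Mcut$. Then every $H$ and $G_0$ lie in $\Mcut$, and $\alpha$ is linear on the star. The weight is $0$, consistent with $J$ (or a factor of it) lying in the collar-free region where $\S$ is not nested after truncation, since the rank drops inside.
\item $G_0\notin\Mcut$. Then $\alpha$ is again linear near $\tau$ and the weight is $0$. Correspondingly, $\S$ is not maximal in the truncation, because the truncation has smaller rank but the whole chain survives outside $\Mcut$.
\item $J\notin\Mcut\ni G_0$. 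Here we use the modular-cut axiom on the rank-$2$ interval $[J,G_0]$: either exactly one atom $H$ lies in $\Mcut$, or none does. We compute that the weight is $1$ in both subcases. On the truncation side, $J$ (if no atom is in $\Mcut$) or the unique atom $H$ lies in the collar. The $\Tr_{\Mcut}(\G)$-nested sets of maximal size are exactly those $\S$ arising this way, because truncation removes the collar, by Proposition~\ref{prop:truncation-description}.
\end{enumerate}
$\G$-compatibility is used to ensure that $\min\Mcut\subset\G$. This guarantees that the relevant flats in $\Mcut$ actually appear as rays, so that the sign conventions of $\alpha$ apply. Lemma~\ref{lem:built-truncation-equality} is used to identify the nested sets of $\Tr_{\Mcut}(\M,\G)$ with those of $\Sigma$ of the relevant shape.

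\emph{Step 3: main obstacle.} The hardest part is the bijection in case (c) between codimension-one cones of weight $1$ and maximal nested sets of $\Tr_{\Mcut}(\M,\G)$. It also has to be checked that the cone $\sigma_{\S}$ computed in $\R^E$ for the truncation coincides with the cone in $\Sigma$, including the linearity spaces, which may change when $\un\in\Mcut$ changes irreducibility. I would first settle the irreducible case using Lemma~\ref{lem:reducible}, and then check the nested condition via Proposition~\ref{prop:backman-char-building-set} applied to $\Tr_{\Mcut}(\G)=\G\cap\L(\Tr_{\Mcut}(\M))$.
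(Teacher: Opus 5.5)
Your strategy is the paper's: compute $(\alpha_{\Mcut}\cap\Sigma)(\tau_{\S})$ from the unique rank-$2$ local interval $[J,G_0]$, and split according to where $\Mcut$ sits relative to $J<G_0$.

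\textbf{The error in case (c).} When $J\notin\Mcut\ni G_0$ and exactly one atom $F$ of $[J,G_0]$ lies in $\Mcut$, the weight is $0$, not $1$.
\begin{itemize}
\item First, $F/J\in\Mcut$. By $\G$-compatibility, a minimal element of $\Mcut$ below $F$ lies in $\G$, so it lies below a single factor of $F$. It cannot lie below a factor of $J$, since $J\notin\Mcut$. Hence it lies below $F/J$.
\item Take $i\in F_{j_0}\setminus J$ for an atom $F_{j_0}\notin\Mcut$. Then $\alpha_{\Mcut,i}$ vanishes on $\e_{G_0}$ and on $\e_H$ for every factor $H$ of $J$, hence on $\sum_j\e_{F_j/J}$.
\item Moreover $\alpha_{\Mcut,i}$ equals $-1$ on $\e_{F_{j_0}/J}$, $+1$ on $\e_{F/J}$, and $0$ on the remaining $\e_{F_j/J}$. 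The two nonzero contributions cancel.
\end{itemize}
So in case (c) the weight is $\pm\bigl(1-\#\{\text{atoms of }[J,G_0]\text{ lying in }\Mcut\}\bigr)$.

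Concretely, take $(\B_3,\G_{\max})$, $\Mcut=\{12,123\}$ and $\S=\{1\}$, so $J=1$ and $G_0=123$. Then $\alpha_{\Mcut,3}(\e_{12})=1$, $\alpha_{\Mcut,3}(\e_{13})=-1$ and $\alpha_{\Mcut,3}(\e_1)=\alpha_{\Mcut,3}(\e_{123})=0$, giving weight $0$. This is as it must be: $1$ lies in the collar, and the truncation has only the flats $\emptyset,3,12,123$.

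Your collar bookkeeping is also reversed.
\begin{itemize}
\item In the subcase with one atom in $\Mcut$, it is $J$ that lies in the collar.
\item If no atom of $[J,G_0]$ is in $\Mcut$, then no member of $\Mcut$ covers $J$. A cover $K\in\Mcut$ with $K\not\leqslant G_0$ would make $(K,G_0)$ a modular pair with meet $J$, forcing $J\in\Mcut$. In this subcase it is the atoms, which are not in $\S$, that are removed.
\end{itemize}

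The paper's written first case has the same oversight. It asserts $-\sum_j\alpha_{\Mcut,i}(\e_{F_j/J^G})=1$ without checking that $F_j/J^G\notin\Mcut$ for $j\neq j_0$. The statement to prove is: the weight is nonzero if and only if $J\notin\Mcut$, $G_0\in\Mcut$, and no atom of $[J,G_0]$ lies in $\Mcut$. With your version, the bijection postponed to Step 3 would be false: it would count cones such as $\{1\}$ above.

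\textbf{Smaller points.}
\begin{itemize}
\item In case (a), ``every $H$ and $G_0$ lie in $\Mcut$'' is not what makes the weight vanish. The rays that enter are $\e_{F_j/J}$ and $\e_H$ for the factors $H$ of $J$, and these are mostly not in $\Mcut$. What works, and what the paper does, is the following. Exactly one factor $H_0$ of $J$ lies in $\Mcut$, and $F_j/J\in\Mcut$ if and only if $H_0\leqslant F_j/J$. Then $\alpha_{\Mcut,i}$ with $i\in H_0$ vanishes on $\e_{G_0}$, on all $\e_H$, and on all $\e_{F_j/J}$, so both terms of the formula are $0$.
\item With the formula of Proposition~\ref{prop:tropical-divisor-as-cap-product} exactly as you wrote it, the surviving weights are $-1$. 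For example, $\S=\{3\}$ with $i=1$ gives $(-1+0)-0$. The paper's proof silently uses the opposite sign, so you must fix a convention.
\item It remains to prove that the corrected set of cones is exactly the set of maximal cones of $\Sigma_{\Tr_{\Mcut}(\M,\G)}$, with matching linearity spaces. Your plan only asserts this, and so does the paper.
\end{itemize}
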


\begin{proof}
    Consider a codimension-$1$ cone $\tau_S$ of $\Sigma$, corresponding to a nested set $\S$ of $(\M, \G)$. The nested set $\S$ has a unique local interval $L^G(\S)$ of rank different from $1$, and the rank of $L^G(\S)$ is $2.$ Denote by $F_1, \ldots, F_k$ the flats strictly between $J^G$ and $G$. The maximal nested sets of $(\M, \G)$ containing $\S$ are the nested sets $\S \cup \{F_j/J^G\}$ where $1\leqslant j \leqslant k.$
    
    First consider the case where $G$ belongs to $\Mcut$ and there exists an index $j_0$ such that $F_{j_0}\notin \Mcut$. This is exactly when $\S$ is a maximal nested set of $\Tr(\M, \G).$ In this situation, let $i\in E(\M)$ be any element such that $\cl_{\M}(J^G\cup i) = F_{j_0}.$ By Proposition ~\ref{prop:tropical-divisor-as-cap-product} we have 
    \begin{align*}
    (\alpha_{\Mcut,i} \cap \Sigma)(\tau_{\S}) = \alpha_{\Mcut, i} \left(\sum_{1\leqslant j \leqslant k} \mathbf{e}_{F_j/J^G} \right) - \sum_{1 \leqslant j \leqslant k} \alpha_{\Mcut, i}(\mathbf{e}_{F_j/J^G}).
    \end{align*}
    Since $F_{j_0}/J^G$ is the unique factor of $F_{j_0}$ not below $J^G$, we must have $i \in F_{j_0}/J^G$. Moreover, there is no other index $j\neq j_0$ such that $i \in F_j/J^G$. This gives the equality 
    \[
    - \sum_{1 \leqslant j \leqslant k} \alpha_{\Mcut, i}(\mathbf{e}_{F_j/J^G}) = 1.
    \] 
    On the other hand, notice that the sets of atoms $(F_j/J^G)\setminus J^G$ with $1\leqslant j \leqslant k$ form a partition of the set of atoms $G\setminus J^G$, so one can write 
    \[
    \sum_{1\leqslant  j\leqslant k} \mathbf{e}_{F_j/J^G} = \mathbf{e}_G + \mathbf{v},
    \] 
    where $\mathbf{v}$ is a sum of elements of the form $\mathbf{e}_{H}$ where $H$ is a factor of $J^G.$ For those elements we have $\alpha_{\Mcut,i}(\e_H)= 0,$ and we also have $\alpha_{\Mcut,i}(\e_G) = 0$, which leads to the equality 
    $$ \alpha_{\Mcut, i}\left(\sum_{1\leqslant  j\leqslant k} \mathbf{e}_{F_j/J^G}\right) = 0.$$
    From this we obtain the desired equality 
    $$ (\alpha_{\Mcut,i} \cap \Sigma)(\tau_{\S}) = 1.$$
    Let us now consider the complementary cases, for which we are looking to prove that $(\alpha_{\Mcut,i} \cap \Sigma)(\tau_{\S})$ vanishes. 
    
    First, consider the case where $G$ (the top element of the dimension $2$ local inverval) does not belong to $\Mcut.$ If $i \in E(\M)$ is any element not contained in $G,$ we immediately have 
    $$ \alpha_{\Mcut, i}(\mathbf{e}_{F_j/J^G}) = 0$$
    for all $1\leqslant j \leqslant k,$ as well as 
    $$ \alpha_{\Mcut, i}\left(\sum_{1\leqslant  j\leqslant k} \mathbf{e}_{F_j/J^G}\right) = 0,$$
    by the same argument as in the previous case. 

    Next consider the case where $J^G$ belongs to $\Mcut.$ Note that in that case, by the fact that $\Mcut$ is $\G$-compatible we have that at least one of the factors of $J^G$ belongs to $\Mcut$, and by modularity of $\Mcut$ we have that at most one the factors of $J^G$ belong to $\Mcut.$ Let $i\in E$ be any element which belongs to the unique factor $H_0$ of $J^G$ in $\Mcut$. We see that $\alpha_{\Mcut, i}$ vanishes on $\e_{H_0}$, because $H_0$ contains $i$ and belongs to $\Mcut,$ and $\alpha_{\Mcut, i}$ vanishes on $\e_H$ for every other factor $H$, because those factors do not contain $i$ and are not contained in $\Mcut$. Furthermore, $\alpha_{\Mcut,i}$ vanishes on $\e_G$, so we obtain 
    $$ \alpha_{\Mcut, i}\left(\sum_{1\leqslant  j\leqslant k} \mathbf{e}_{F_j/J^G}\right) = 0.$$
    For all $1\leqslant j \leqslant k,$ we either have that $F_i/J^G$ contains $H_0$, or $\{F_i/J^G, H_0\}$ is a nested antichain. In the first scenario we immediately obtain $\alpha_{\Mcut,i }(e_{F_i/J^G}) = 0$ and in the second one, by modularity of $\Mcut$ we see that $F_i/J^G$ is not in $\Mcut$ and does not contain $i$, which leads to the same conclusion. 

    Finally, the case where $G \in \Mcut$, $J^G\notin \Mcut$ and $F_i\in \Mcut$ for all $1\leqslant i \leqslant k$ is not possible because by modularity of $\Mcut$, having $F_1, F_2 \in \Mcut$ implies $J^G \in \Mcut.$
\end{proof}

\begin{remark}
    When the matroid $\M$ is realizable, the vanishing locus of a generic section of the nef line bundle associated to $\alpha_{\Mcut}$ on the wonderful variety of $W_{\M, \G}$ is the wonderful variety associated to the truncation $W_{\Tr_{\Mcut}(\M,\G)}$ along $\Mcut$. 
\end{remark}

Let $\widetilde{\Sigma}$ be the tropical modification of $\Sigma$ along $\overline{\Sigma}$.  
\begin{proposition}
    There is an isomorphism of tropical fans $\Sigma_e \cong \widetilde{\Sigma}$. 
\end{proposition}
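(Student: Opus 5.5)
We identify both fans inside $\R^{E}\times\R=\R^{E\sqcup e}$, the last coordinate being $e$, and compare them cone by cone. We work with the single-element extension $(\M,\G)\cup_{\Mcut}e$ (the subscript $\Mcut_e$ in the statement is the modular cut $\Mcut$ read as $\Mcut_e$ of $(\M\cup_\Mcut e)\setminus e=\M$, by Propositions~\ref{prop:ext-del} and~\ref{prop:built-del-ext}). The tropical modification $\widetilde\Sigma$ is taken along a piecewise linear function $f$ whose divisor is $\overline{\Sigma}$. By Proposition~\ref{prop:truncation-alpha-cap-product}, we take $f=\alpha_{\Mcut,i}$ up to sign and modulo linear functions. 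After adding a suitable linear function, $f$ becomes the function with $f(\mathbf{e}_S)=1$ for $S\in\G\cap\Mcut$, $f(\mathbf{e}_S)=0$ for $S\in\G\setminus\Mcut$, and $f=1$ on the linearity vectors $\mathbf{e}_{G_j}$ of components lying in $\Mcut$.

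This choice is forced by the rays of $\Sigma_e$. The rays of $\Sigma_e$ are $\mathbf{e}_{\cl(G)}$ for $G\in\G$. When $G\in\Mcut$, Lemma~\ref{lem:extension-of-F-is-flat} gives $\cl(G)=G\sqcup e$, so the ray is $(\mathbf{e}_G,1)=\Gamma_f(\mathbf{e}_G)$. When $G\notin\Mcut$, the ray is $(\mathbf{e}_G,0)=\Gamma_f(\mathbf{e}_G)$. There is one extra ray, $\mathbf{e}_{\{e\}}=(0,1)$, which is exactly the lift direction. The linearity spaces also match: since $\Mcut$ is nonempty, $e$ lies in the top component, so $\mathbf{L}_e$ is the graph of $f$ on $\mathbf{L}_{\M,\G}$.

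The key step is a bijection of nested sets. For $\S\in\N(\M,\G)$, we claim that $\cl(\S)$ is nested in $\G\cup_\Mcut e$, and that $\cl(\S)\cup\{\{e\}\}$ is nested if and only if $\sigma_\S$ lies in the locus $\Sigma_f$ supporting $\overline\Sigma$. We check this second condition via Proposition~\ref{prop:composition-nested-sets} and Lemma~\ref{lem:extension-irreducibles-in-building-sets}: $\{e\}$ is compatible with $\S$ exactly when, in the local interval containing $e$, the element $e$ is not forced to join with an antichain into a member of $\G\cup_\Mcut e$. This matches the condition that $\S$ (after closing) is nested in $\Tr_\Mcut(\M,\G)=(\G\cup_\Mcut e)/e$, by Lemma~\ref{lem:built-truncation-equality}. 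We also show that every nested set of the extension has one of these two forms, because all of its non-$\{e\}$ elements are closures of elements of $\G$. Together these give the bijection:
\[\{\text{cones of }\Sigma_e\}=\{\Gamma_f(\sigma_\S)\}\sqcup\{\Lambda_f(\sigma_\S):\sigma_\S\in\Sigma_f\}.\]

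Finally, $f$ is linear on each cone, so $\Gamma_f(\sigma_\S)$ is the cone spanned by the lifted rays and therefore equals $\sigma_{\cl(\S)}$. Likewise $\Lambda_f(\sigma_\S)=\sigma_{\cl(\S)\cup\{\{e\}\}}$. The weights agree because all weights are $1$ on both sides, $\overline\Sigma$ having weight $1$ by Proposition~\ref{prop:truncation-alpha-cap-product}.

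The main obstacle is the nestedness equivalence for $\cl(\S)\cup\{\{e\}\}$. The contraction isomorphism $(\G\cup_\Mcut e)/e\cong\Tr_\Mcut(\G)$ must be matched with the link of the vertex $\{e\}$ via Proposition~\ref{prop:composition-nested-sets}\ref{it:comp-nested-sets-4}. This requires care when $\{e\}$ is a factor in a local interval whose top lies in $\Mcut$ while the bottom does not, which is the same case analysis as in the proof of Proposition~\ref{prop:truncation-alpha-cap-product}. I would reuse that case split there.
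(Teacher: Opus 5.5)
Your proposal is correct and follows the paper's own proof. The paper likewise modifies along $\sum_{G\in\Mcut}x_G$ (your indicator of $\Mcut$) and matches rays via $\e_G+\e_e=\e_{\cl(G)}$. It then identifies the star of $\e_e$ in both fans with $\overline{\Sigma}$ via Lemma~\ref{lem:built-truncation-equality}, and obtains the remaining cones by applying $\cl_{\M\cup_{\Mcut}e}$ to the indexing flats; your nested-set bijection spells this out in more detail.

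The final case split you anticipate is not needed. Proposition~\ref{prop:composition-nested-sets}\ref{it:comp-nested-sets-4} at the vertex $\{e\}$, together with Lemma~\ref{lem:built-truncation-equality} and the observation that $(F\cup e)/\{e\}=\cl_{\M\cup_{\Mcut}e}(F)$ for $F\in\Tr_{\Mcut}(\G)$, already identifies the link of $\{e\}$ with the closures of the nested sets of $\Tr_{\Mcut}(\M,\G)$.
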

\begin{proof}
For simplicity let us assume that $(\M, \G)$ is irreducible. When viewing the nested set fans $\overline{\Sigma}$ and $\Sigma$ in $\R^E$ instead of $\R^E/\e_E$, the class $\alpha_{\Mcut}$ lifts to the sum $\sum_{G\in \Mcut}x_G$. When performing the tropical modification we end up with a fan living in $\R^E\times \R\e_e$. The rays corresponding to flats $G\in \Mcut$ are lifted in the graph of $\alpha_{\Mcut}$ and become $\e_{G} + \e_e = \e_{\cl_{\M\cup_{\Mcut} e}(G)}$. The rays corresponding to flats $G\notin \Mcut$ are left unchanged. This shows that when viewed in $\R^E\times \R\e_e$, the fans $\Sigma_e$ and $\widetilde{\Sigma}$ have the same rays. The star around $\e_e$ in the tropical modification $\widetilde{\Sigma}$ can be identified with $\overline{\Sigma}$, which is also the case in the nested set fan $\Sigma_e$ by Proposition \ref{lem:built-truncation-equality}. The cones not containing $\e_e$ in the tropical modification $\widetilde{\Sigma}$ are lifts of cones of $\Sigma$ in the graph of $\alpha_{\Mcut}$, that is, they are obtained from those latter cones by applying $\cl_{\M\cup_{\Mcut} e}$ to each indexing flat. This is also the case in $\Sigma_e.$
\end{proof}

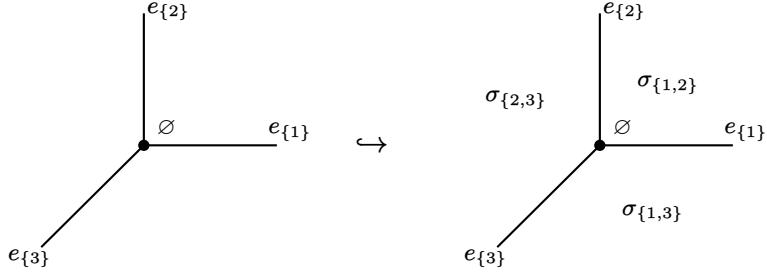
\begin{figure}[h!]
\begin{tikzpicture}[scale=0.5]
\node[circle,fill,inner sep=1.5pt] at (0,0) {};
\draw[thick] (0.125,0) -- (3.5,0);
\draw[thick] (0,0.125) -- (0,3.5);
\draw[thick] (-0.09,-0.09) -- (-2.7,-2.7);
\node at (0.6,0.45) {\small$\varnothing$};
\node at (3.85,0.35) {\small$e_{\{1\}}$};
\node at (0.65,3.55) {\small$e_{\{2\}}$};
\node at (-3,-3) {\small$e_{\{3\}}$};
\node at (6,0) {\large$\hookrightarrow$};
\begin{scope}[shift={(12,0)}]
\node[circle,fill,inner sep=1.5pt] at (0,0) {};
\draw[thick] (0.125,0) -- (3.5,0);
\draw[thick] (0,0.125) -- (0,3.5);
\draw[thick] (-0.09,-0.09) -- (-2.7,-2.7);
\node at (0.6,0.45) {\small$\varnothing$};
\node at (3.85,0.35) {\small$e_{\{1\}}$};
\node at (0.65,3.55) {\small$e_{\{2\}}$};
\node at (-3,-3) {\small$e_{\{3\}}$};
\node at (1.8,1.6) {\small$\sigma_{\{1,2\}}$};
\node at (-2.2,1.2) {\small$\sigma_{\{2,3\}}$};
\node at (1.4,-1.8) {\small$\sigma_{\{1,3\}}$};
\end{scope}
\end{tikzpicture}
\caption{The fan of $(\U_{2, 3}, \G_{\max})$ is a tropical Weil divisor of the fan of $(\U_{3, 3}, \G_{\max})$ in $\R^{[3]}/\R\langle \mathbf{e}_{123} \rangle$ defined  by the piecewise linear function $f = - \min_{i \in [3]}(x_i)$.}
\end{figure}

\subsection{Pullback maps}\label{subsec:pullback-maps}
We now describe several basic pullback homomorphisms of Chow rings given by the operations studied above. 
\begin{enumerate}[(1), leftmargin = 20pt, itemsep = 3pt] 
    \item (Pullback under stellar subdivision within linearity space) Let $X(\Sigma)$ be a smooth toric variety associated with the unimodular fan $\Sigma$ with linearity space $\mathbf{L}$ of dimension $\dim \mathbf{L}$. If $v \in \mathbf{L}$ is the stellar subdivision center and let $r = \dim \mathbf{L} 
    - \dim \mathbf{L}_{v}^{\ast}$, the fan refinement $\Sigma_{\sigma}^{\ast} \to \Sigma$ induces a $\P^r$-bundle by Proposition~\ref{prop:stellar-toric-morphisms}. 
    The pullback induces an isomorphism of Chow groups,  with the class of a hyperplane $x \in \uCH^1(\P^r)$ \cite[Theorem 3.3]{fulton2013intersection}
    \[
    \uCH(\Sigma^{\ast}_v) \cong \bigoplus_{i=0}^{r} x^i \uCH(\Sigma). 
    \]
    As a result, the Chow polynomial admits the decomposition 
    \[
    \uH(\Sigma^{\ast}_{v})(t) = (1 + t + \cdots + t^{r}) \cdot \uH(\Sigma)(t). 
    \]
    In the special case when $r = \dim \mathbf{L} - \dim \mathbf{L}_v^{\ast} =1$, we have isomorphism of Chow groups
    \[
    \uCH(\Sigma^{\ast}_v) \cong \uCH(\Sigma) \oplus x \uCH(\Sigma), 
    \] and the Chow polynomial is multiplied with $(1+t)$ while its $\gamma$-transform remains
    \[
    \uH(\Sigma^{\ast}_{v})(t) = (1+t) \uH(\Sigma)(t), \quad \gamma(\Sigma_v^{\ast})(t) = \gamma(\Sigma)(t). 
    \]
    \item (Pullback under stellar subdivision outside of linearity spaces) Let $X(\Sigma)$ be the smooth toric variety associated with the unimodular fan $\Sigma$ with rational coefficients.
    For a cone $\sigma \in \Sigma$ of dimension $k$, the torus orbit closure corresponding to $\sigma$ in $X(\Sigma)$ is identified with the smooth toric variety associated with the star fan $\textrm{star}_{\sigma}(\Sigma)$. The embedding of 
    \[
    \iota_{\sigma} \colon X(\textrm{star}_{\sigma}(\Sigma))\hookrightarrow X(\Sigma)
    \] is regular, and the restriction map $\iota^{\ast}$ is a surjective ring morphism. 
    Let $\Sigma^{\ast}_{\sigma}$ be the stellar subdivision of $\Sigma$ at $\sigma$, and the subdivision of fans induces a toric morphism:
    \[
    \pi_{\sigma} \colon X(\Sigma^{\ast}_{\sigma}) \to X(\Sigma). 
    \]  
    The pullback induces an isomorphism of groups \cite[Proposition 6.7]{fulton1997intersection} \cite[Theorem 1, Appendix]{keel1992intersection}
    \[
    \uCH^\bullet(\Sigma^{\ast}_{\sigma}) \cong \pi_{\sigma}^{\ast}\uCH^\bullet(\Sigma) \oplus \bigoplus_{i=1}^{k-1} \uCH^\bullet(\textrm{star}_\sigma(\Sigma))[-i].
    \]
    In the special case when $\dim \sigma = 2$, we have the decomposition of graded Chow groups
    \[
    \uCH^\bullet(\Sigma^{\ast}_{\sigma}) \cong \uCH^\bullet(\Sigma) \oplus x_{\sigma } \uCH^\bullet(\textrm{star}_\sigma(\Sigma)) \cong \uCH^\bullet(\Sigma) \oplus \uCH^\bullet(\textrm{star}_\sigma(\Sigma))[-1].
    \]
    As a result, the Chow polynomial admits the decomposition 
    \[
    \uH^\bullet(\Sigma^{\ast}_{\sigma})(t) =  \uH^\bullet(\Sigma)(t) + t \cdot  \uH^\bullet(\textrm{star}_\sigma(\Sigma))(t).
    \]
    \item (Pullback under tropical modification)
    Let $\widetilde{\Sigma}_{f}$ be the tropical modification of $\Sigma$ along a divisor associated to a rational function $f$. There is a injective morphism of fans 
    \[
    \iota_f \colon \Sigma \hookrightarrow \widetilde{\Sigma}_{f},  
    \]
    which induces is a surjective morphism of graded algebras 
    \[
    \iota_f^{\ast} \colon \uCH(\widetilde{\Sigma}_{f}) \twoheadrightarrow \uCH(\Sigma)
    \]
    Under the assumption that every piecewise linear function on the star fan $\textrm{star}_{\sigma}(\Sigma)$ is uniquely determined by its divisor class in $\uCH^{1}(\textrm{star}_{\sigma}(\Sigma))$, that is, the morphism 
    \[
    \textrm{D}(\textrm{star}_{\sigma}(\Sigma)) \to \uCH^{1}(\textrm{star}_{\sigma}(\Sigma)) 
    \] is injective for every $\sigma \in \Sigma$, the pullback $\pi^{\ast}$ is an isomorphism \cite[Theorem 1.11]{amini2023hodgetheorytropicalfans}
    \[
    \iota_f^{\ast} \colon \uCH(\widetilde{\Sigma}_{f}) \cong \uCH(\Sigma). 
    \]
\end{enumerate}

\begin{corollary}\label{lem:proper-ext-chow}
For any loopless built matroid $(\M, \G)$, any $\G$-compatible modular cut $\Mcut$, and any $e \notin E$, there is an isomorphism of rings 
 \[ \uCH(\M , \G) \cong \uCH(\M\cup_{\Mcut} e, \G\cup_{\Mcut} e).\]
\end{corollary}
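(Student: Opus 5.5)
The plan is to realize the extension as a tropical modification and then invoke the pullback isomorphism of Section~\ref{subsec:pullback-maps}(3).

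\textbf{Reductions.} Write $\Sigma = \Sigma_{\M,\G}$ and $\Sigma_e = \Sigma_{\M\cup_{\Mcut}e,\G\cup_{\Mcut}e}$. Two degenerate cases are handled separately.
\begin{itemize}
\item If $\Mcut=\varnothing$, then $e$ is a coloop. The new building set is $\G\sqcup\{\{e\}\}$ and $(\M\cup_\Mcut e,\G\cup_\Mcut e)=(\M,\G)\oplus(\U_{1,1},\{e\})$. By Lemma~\ref{lem:reducible}, the nested set fan is $\Sigma\times\R\mathbf{e}_e$, whose extra coordinate lies in the linearity space. Hence the Chow rings agree.
\item If $\Mcut$ contains an atom $a$, then $e$ is parallel to $a$. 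The nested set complexes coincide after identifying $a$ with $\{a,e\}$, and the fan is the pullback of $\Sigma$ along the projection forgetting $\mathbf{e}_e-\mathbf{e}_a$, which is a linearity direction. Again the Chow rings agree.
\end{itemize}
We may also assume $\Mcut$ is proper, which is implicit since $\G$-compatibility requires $\min\Mcut\subset\G$. This leaves the case where $\Mcut$ is nonempty, proper and atom-free.

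\textbf{Identifying the fans.} In the remaining case, Proposition~\ref{prop:truncation-alpha-cap-product} shows that $\overline{\Sigma}=\Sigma_{\Tr_\Mcut(\M,\G)}$ equals $\alpha_\Mcut\cap\Sigma$, the tropical Weil divisor of the piecewise linear function $\alpha_\Mcut$. The proposition following it identifies $\Sigma_e$ with the tropical modification $\widetilde\Sigma$ of $\Sigma$ along $\alpha_\Mcut$. Section~\ref{subsec:pullback-maps}(3) then gives $\CH(\widetilde\Sigma)\cong\CH(\Sigma)$ through $\iota_f^\ast$, provided its hypothesis holds: $\mathrm{D}(\mathrm{star}_\sigma\Sigma)\to\CH^1(\mathrm{star}_\sigma\Sigma)$ must be injective for every cone $\sigma$.

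\textbf{Verifying the hypothesis (main obstacle).} By Proposition~\ref{prop:composition-nested-sets}\ref{it:comp-nested-sets-4}, each star fan is a product of nested set fans of the local intervals $L^G(\S)$, and these are again loopless built matroids. So it suffices to prove injectivity for an arbitrary nested set fan $\Sigma_{\M',\G'}$. The argument has three steps.
\begin{enumerate}
\item Piecewise linear functions on the unimodular fan are determined by their values on the rays $\mathbf{e}_G$ for $G\in\G'$ not maximal, together with a linear function on $\mathbf{L}$. This gives $\PL(\Sigma)\cong\Z^{\G'\setminus\max\G'}\oplus\mathbf{L}^\vee$.
\item Quotienting by global linear functions $\Mlatt$ yields $\mathrm{D}(\Sigma)=\Z^{\G'\setminus\max\G'}/\{(\langle m,\mathbf{e}_G\rangle)_G : m\in \Mlatt\}$.
\item The Chow ring presentation gives $\CH^1=\Z^{\G'\setminus\max\G'}/J_\Sigma$, where the linear relations $J_\Sigma$ are indexed by $m\in \Mlatt$ in exactly the same way. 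So the map is literally an isomorphism.
\end{enumerate}
The delicate point is that the support is not all of $\R^E$. One must check that the linear relations on the support are only those coming from $\Mlatt$, i.e.\ that no extra function is linear on $|\Sigma|$. This holds because the rays $\mathbf{e}_G$, together with $\mathbf{L}$, span $\R^E$: all atoms lie in $\G'$. Once this is checked, Section~\ref{subsec:pullback-maps}(3) yields the ring isomorphism $\CH(\M,\G)\cong\CH(\M\cup_\Mcut e,\G\cup_\Mcut e)$.
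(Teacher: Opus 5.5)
Your route is the paper's: the corollary is read off from Proposition~\ref{prop:truncation-alpha-cap-product}, the identification $\Sigma_e\cong\widetilde{\Sigma}$, and the pullback isomorphism of Section~\ref{subsec:pullback-maps}(3). Your separate treatment of $\Mcut=\varnothing$ and of cuts containing an atom is a real addition, since the modification picture is only set up for nonempty atom-free cuts.

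One small correction in the parallel case. There, $\mathbf{e}_e-\mathbf{e}_a$ is not in the linearity space, which is spanned by the $\mathbf{e}_{\cl(G_i)}$. Rather, every ray $\mathbf{e}_{\cl(G)}$ lies in the hyperplane where the $a$- and $e$-coordinates agree. Forgetting the $e$-coordinate identifies that hyperplane with $\R^E$ and carries the new fan onto $\Sigma$, and the functional $\mathbf{e}_a^{*}-\mathbf{e}_e^{*}$ only contributes the zero relation. The conclusion there stands.

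The genuine gap is in the step you call the main obstacle. Injectivity of $\mathrm{D}(\mathrm{star}_\sigma\Sigma)\to\CH^1(\mathrm{star}_\sigma\Sigma)$ is automatic for unimodular fans: both sides are ray values modulo those of global linear functions, and the paper itself records $\mathrm{D}(\Sigma)\cong\CH^1(\Sigma)$. So this condition cannot be what makes a tropical modification preserve Chow rings. For example, take the balanced fan $\R\mathbf{e}_1\cup\R\mathbf{e}_2\subset\R^2$ with its four rays of weight $1$, and $f$ with values $1,-1,0,0$ on $\mathbf{e}_1,\mathbf{e}_2,-\mathbf{e}_1,-\mathbf{e}_2$. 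Then $\mathrm{div}(f)=0$ and $\mathrm{D}\cong\CH^1\cong\Z^2$, yet the modification has $\CH^1\cong\Z$.

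What is really needed is visible from the presentation. Let $x_\infty$ be the class of the new ray $\R_{\geqslant 0}(0,1)$. The linear relation attached to the new coordinate gives $x_\infty=-\alpha_\Mcut$, so $\CH(\Sigma)\to\CH(\widetilde{\Sigma})$ is surjective. Its kernel is generated by the products $\alpha_\Mcut\cdot x_\tau$ for cones $\tau$ of $\Sigma$ not in $\overline{\Sigma}$, which are exactly those whose lift $\Lambda(\tau)$ is absent from $\widetilde{\Sigma}$. Fix such a $\tau$ and subtract a global linear function agreeing with $\alpha_\Mcut$ on $\tau$. By Proposition~\ref{prop:truncation-alpha-cap-product}, $\alpha_\Mcut$ then induces on $\mathrm{star}_\tau\Sigma$ a piecewise linear function with zero tropical Weil divisor. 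The product $\alpha_\Mcut\cdot x_\tau$ vanishes as soon as that function is linear. This is the div-faithfulness condition of Amini--Piquerez, and your argument never addresses it.

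The condition does hold here, and you already have the ingredients.
\begin{itemize}
\item By Proposition~\ref{prop:composition-nested-sets}\ref{it:comp-nested-sets-4}, $\CH(\mathrm{star}_\tau\Sigma)$ is the tensor product of the Chow rings of the local intervals $L^G(\S)$.
\item Hence it satisfies Poincar\'e duality (the K\"ahler package recalled in the introduction), with degree map given by the fundamental class of weight $1$.
\item Suppose $D\in\CH^1(\mathrm{star}_\tau\Sigma)$ satisfies $D\cap[\mathrm{star}_\tau\Sigma]=0$. Then $\deg(D\cdot x_\sigma)=0$ for every codimension-one cone $\sigma$ of the star.
\item These $x_\sigma$ span the degree just below the top, so Poincar\'e duality forces $D=0$.
\item Your identification $\mathrm{D}\cong\CH^1$ then shows the induced function is linear.
\end{itemize}
This gives $\alpha_\Mcut\cdot x_\tau=0$ for every $\tau\notin\overline{\Sigma}$, hence $\CH(\M,\G)\cong\CH(\M\cup_\Mcut e,\G\cup_\Mcut e)$.
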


\begin{example}[Boolean matroid $\B_3$ from $\G_{\min}$ to $\G_{\max}$]
    For the minimal building set $\G_{\min}$, we have the following binary filtration from $\G_{\min}$ to the flag building set $\G_{\max}$
    \begin{align*}
    \underbrace{\{1, 2, 3\}}_{\mathcal{G}_0} \subseteq 
    \underbrace{\{1, 2, 3, 12\}}_{\mathcal{G}_1} \subseteq 
    \underbrace{\{1, 2, 3, 12, 123\}}_{\mathcal{G}_2} \subseteq 
    \underbrace{\{1, 2, 3, 12, 123, 13\}}_{\mathcal{G}_3} \subseteq 
    \underbrace{\{1, 2, 3, 123, 13, 23\}}_{\mathcal{G}_4}.
\end{align*}
    The Chow group admits a decomposition given by the iterated $\P^1$-bundles followed by iterated blowups at codimension-$2$ subvarieties, starting with 
    \begin{align*}
        \uCH(\U_{3, 3}, \G_{\min}) = \Q, 
    \end{align*}
    with Chow polynomial $\uH(\U_{3, 3}, \G_{\min}) = 1$. 
    
    For $\G_{1}$, stellar subdivision at $\mathbf{e}_{12}$ gives the Chow group decomposition:
    \begin{align*}
        \uCH(\U_{3, 3}, \G_{1}) \cong \uCH(\U_{3, 3}, \G_{0}) \oplus \uCH(\textrm{star}_{12}\Sigma_{\U_{3, 3}, \G_0})[-1] = \Q \oplus \Q[-1], 
    \end{align*}
    with Chow polynomial $\uH(\U_{3, 3}, \G_1) = 1 + t$. 
    
    For $\G_2$, stellar subdivision at $\mathbf{e}_{123}$ gives the Chow group decomposition: 
    \begin{align*}
        \uCH(\U_{3, 3}, \G_2) &\cong \uCH(\U_{3, 3}, \G_1) \oplus \uCH(\textrm{star}_{123}\Sigma_{\U_{3, 3}, \G_1})[-1] \\
        &= \left(\Q \oplus \Q[-1]\right) \oplus \big(\Q \oplus \Q[-1]\big)[-1], 
    \end{align*}
    with Chow polynomial $\uH(\U_{3, 3}, \G_{\min} \cup \{12, 123\}) = 1 + 2t + t^2$. 

    For $\G_3$ and $\G_4$, stellar subdivision at $\mathbf{e}_{13}, \mathbf{e}_{23}$ gives the Chow group decomposition: 
    \begin{align*}
        \uCH(\U_{3, 3}, \G_{\max}) &\cong \left(\Q \oplus \Q[-1]\right) \oplus\left( \Q \oplus \Q[-1]\right)[-1] \oplus \left(\Q[-1] \right) \oplus \left( \Q[-1]\right)
    \end{align*}
    with Chow polynomial $\uH(\U_{3, 3}, \G_{\max})(t) = 1 + 4t + t^2$. 
\end{example}

\begin{lemma}\label{lem:trop-modif-gamma-pos}
Let $\Sigma \subset \Sigma'$ be a pair of tropical rational fans such that 
    \begin{itemize}
        \item $\Sigma$ is a tropical divisor of $\Sigma'$ corresponding to a rational function $f$ on $\Sigma'$, 
        \item The toric varieties $X(\Sigma)$ and $X(\Sigma')$ have $\gamma$-positive Hilbert--Poincar\'{e} series, and 
        \item for every cone in $\sigma \in \Sigma$, $\sigma' \in \Sigma'$, the toric subvarieties of the star fans $X(\textrm{star}_{\sigma}\Sigma)$ and $X(\textrm{star}_{\sigma'} \Sigma')$ have $\gamma$-positive Hilbert--Poincar\'{e} series. 
    \end{itemize}
    Then the toric variety of the tropical modification $X(\widetilde{\Sigma}_f)$ has $\gamma$-positive Hilbert--Poincar\'{e} series and so does every star fan of $X(\widetilde{\Sigma}_f)$. 
\end{lemma}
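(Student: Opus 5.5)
The plan is to show that the Chow ring of the tropical modification is controlled by the Chow ring of $\Sigma'$, and that every star fan of $\widetilde{\Sigma}_f$ is either a star fan of $\Sigma'$ or a tropical modification of one. Then the hypotheses on $\Sigma$, $\Sigma'$ and their stars transfer directly.

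\textbf{Step 1: the total space.} I will use the pullback statement (3) of Section~\ref{subsec:pullback-maps}. The projection $\pi_f\colon \widetilde{\Sigma}_f \to \Sigma'$ identifies cones of the form $\Gamma_f(\sigma')$ with $\sigma'$. The cones $\Lambda_f(\sigma)$, for $\sigma\in\Sigma$, have $\sigma$ itself as their star. The pullback $\iota_f^\ast$ (or $\pi_f^\ast$) is then an isomorphism of graded rings
\[
\uCH(\widetilde{\Sigma}_f)\cong \uCH(\Sigma').
\]
This requires the injectivity of $\textrm{D}(\textrm{star}_{\sigma'}\Sigma')\to \uCH^1(\textrm{star}_{\sigma'}\Sigma')$ for all cones. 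It holds for the unimodular fans in our setting because these stars are again nested set fans with linearity space quotiented out, so piecewise linear functions modulo linear ones are exactly $\uCH^1$. Hence $\H(\widetilde{\Sigma}_f)(t)=\H(\Sigma')(t)$, which is $\gamma$-positive by hypothesis.

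\textbf{Step 2: star fans.} Every cone of $\widetilde{\Sigma}_f$ has one of two forms.

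If the cone is $\Gamma_f(\sigma')$, then its star is the tropical modification of $\textrm{star}_{\sigma'}\Sigma'$ along the restricted function $f_{\sigma'}$. The divisor of $f_{\sigma'}$ is $\textrm{star}_{\sigma'\cap|\Sigma|}\Sigma$ when $\sigma'$ is a cone of $\Sigma$, and is empty otherwise. In the empty case the modification is an isomorphism. In the other case Step 1, applied to this smaller pair, gives the Chow ring of $\textrm{star}_{\sigma'}\Sigma'$, which is $\gamma$-positive by the third hypothesis.

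If the cone is $\Lambda_f(\sigma)$ with $\sigma\in\Sigma$, then its star is $\textrm{star}_{\sigma}\Sigma$. Here I will use that quotienting by the vertical ray $\R_{\geqslant 0}(0,1)$ maps the lifted cones isomorphically onto the cones of $\Sigma$ containing $\sigma$. That star is $\gamma$-positive by hypothesis.

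I will organize this as an induction on $\dim\Sigma'$ so that Step 1 can be reused for the stars.

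\textbf{Main obstacle.} I expect the hard part to be justifying the hypothesis of \cite[Theorem 1.11]{amini2023hodgetheorytropicalfans}, namely the injectivity of divisors into $\uCH^1$ on every star, and checking that the divisor of $f_{\sigma'}$ on a star is exactly the star of $\Sigma$. For the first point I would try to argue from unimodularity together with the presence of the full linearity space: every piecewise linear function that is linear on all of $|\textrm{star}|$ comes from $\Mlatt$. For the second point I would use the local formula of Proposition~\ref{prop:tropical-divisor-as-cap-product}, whose weights depend only on $f$ near $\tau$ and therefore commute with taking stars.
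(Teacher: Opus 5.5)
Your proposal is the paper's proof. The total space is handled by the isomorphism $\uCH(\widetilde{\Sigma}_f)\cong\uCH(\Sigma')$ from \cite[Theorem~1.11]{amini2023hodgetheorytropicalfans}, the star of a graph cone $\Gamma_f(\sigma')$ is the tropical modification of $\textrm{star}_{\sigma'}\Sigma'$ along $\textrm{star}_{\sigma'}\Sigma$ (\cite[Proposition~5.3]{amini2023hodgetheorytropicalfans}), and the star of a lifted cone $\Lambda_f(\tau)$ is $\textrm{star}_{\tau}\Sigma$, which is $\Sigma$ itself for the vertical ray.

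You should drop your justification of that theorem's hypothesis, for three reasons. The lemma is stated for arbitrary fans. In its application (Step~3 of Theorem~\ref{thm:flag-gamma-pos}) the ambient fan $\Sigma^{\ast}_{(\M,\G)\setminus e}$ is explicitly not a nested set fan. And $\textrm{D}\cong\uCH^1$ holds for every unimodular fan anyway, whereas the real content, also used in your ``empty divisor'' case, is that a piecewise linear function with zero divisor on a star of $\Sigma'$ is linear there. The paper invokes \cite[Theorem~1.11]{amini2023hodgetheorytropicalfans} without verifying this either, so this is not a shortfall relative to its proof.
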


\begin{proof}
    By $\uCH(\widetilde{\Sigma'}_f) \cong \uCH(\Sigma')$, the Chow ring of $\widetilde{\Sigma}_f$ has $\gamma$-positive Poincar\'{e} polynomial. 

    Let $\sigma \in \widetilde{\Sigma}'_f$. There are $2$ cases: 
    \begin{enumerate}[(i), itemsep = 5pt, leftmargin = 20pt]
        \item If $\sigma \in \{\Gamma_f(\sigma') \mid \sigma' \in \Sigma'\}$, then by \cite[Proposition 5.3]{amini2023hodgetheorytropicalfans}, the star fan $\textrm{star}_{\sigma}(\widetilde{\Sigma}'_f)$ is identified with the tropicalization of $\textrm{star}_{\sigma'}(\Sigma')$ along the divisor $\textrm{star}_{\sigma'}(\Sigma)$.
        \item If $\sigma \in \{\Lambda_f(\tau) \mid \tau \in \Sigma\}$, then inducting on $\dim \sigma$, we may reduce to the following two cases 
        \begin{itemize}
            \item $\sigma = \Lambda_f(\tau)$ for $\tau \in \Sigma$ and $\textrm{star}_{\sigma}(\widetilde{\Sigma}'_f) = \textrm{star}_{\tau}\Sigma$, or 
            \item $\sigma = \R_{\geqslant 0}(0, 1)$ and $\textrm{star}_{\sigma}(\widetilde{\Sigma}'_f) = \Sigma$. 
        \end{itemize} 
    \end{enumerate}
    In any case, the Chow ring of $\textrm{star}_{\sigma}(\widetilde{\Sigma}'_f)$ has $\gamma$-positive Hilbert--Poincar\'{e} series. 
\end{proof}

\section{Chow polynomials of complete built matroids}\label{sec:complete-built-matroids}
In this section we introduce a new class of built matroids called complete built matroids, which strictly contains matroids with maximal building sets. We present several different proofs that the Chow polynomials of complete built matroids are $\gamma$-positive. The first proof is the most elementary, and is centered around the existence of filtrations of building sets given by Proposition \ref{prop:building-sets-augmentation}. The second proof is the simplest one and relies solely on the relative Lefschetz decomposition proved by Pagaria and Pezzoli \cite{pagaria2023hodge}. The third proof is more combinatorial and leads to an explicit formula for the $\gamma$-polynomial associated with the Chow polynomial of complete built matroids (Theorem~\ref{thm:complete-formula}). The formula appearing in that theorem generalizes results by Stump \cite[Theorem~1.1]{stump} and Ferroni--Matherne--Vecchi for maximal building sets \cite[Theorem~4.25]{ferroni-matherne-vecchi}. We want to emphasize that the framework needed for the third proof also yields the instruments to prove Theorem~\ref{thm:main-complete-implies-nevo-petersen+balanced} (see Section~\ref{sec:gamma-is-f-vector} below), and their various corollaries, via the construction of a suitable simplicial complex.

\begin{notation}
We will henceforth make use of the following notation. 
\begin{align*}
    \Flag &\coloneqq \{\Sigma(\M, \G) \mid (\M, \G) \text{ is a flag built matroid}\}, \\
    \Gpos &\coloneqq \{\Sigma \textrm{ tropical fan} \mid \text{the Hilbert--Poincar\'{e} series of $X(\Sigma)$ is $\gamma$-positive}\},\\
    \Gpos^{\ast} &\coloneqq \{ \Sigma \text{ tropical fan}\mid \text{the Hilbert--Poincar\'{e} series of $X(\textrm{star}_{\sigma}\Sigma)$ is $\gamma$-positive, for all } \sigma \in \Sigma\}.
\end{align*}
In particular, $\Gpos^{\ast} \subseteq \Gpos$. 
\end{notation}

\subsection{Gamma positivity for complete built matroids}

We are now ready to give the first and the second proof of the main theorem of this section. 

\begin{thm}\label{thm:complete-gamma-pos}
For any complete built matroid $(\M, \G)$, the Chow polynomial $\H(\M, \G)(t)$ is $\gamma$-positive. 
\end{thm}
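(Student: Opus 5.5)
We argue by induction on $\rk(\M)$ and, for fixed rank, on the size of the building set, using the filtration idea of Proposition~\ref{prop:building-sets-augmentation} combined with the stellar-subdivision formulas of Section~\ref{subsec:pullback-maps}. We prove the stronger statement that $\Sigma_{\M,\G}\in\Gposs$ for every complete $(\M,\G)$. Since the star fans of a nested set fan are products of nested set fans of local intervals (Proposition~\ref{prop:composition-nested-sets}\ref{it:comp-nested-sets-4}), which are iterated restrictions and contractions of $(\M,\G)$, these are again complete by Proposition~\ref{prop:complete-stability}\ref{item:complete-restriction},\ref{item:complete-contraction}. Products of $\gamma$-positive polynomials are $\gamma$-positive, so it suffices to prove $\gamma$-positivity of $\H(\M,\G)$ itself, assuming it for all complete built matroids of smaller rank. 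By Lemma~\ref{lem:reducible} we may also assume $(\M,\G)$ is irreducible, i.e.\ $E\in\G$.

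Fix a total order $\vartriangleleft$ for which $(\M,\G)$ is complete, and let $\ch_{\vartriangleleft}(\zero,\un)=\{\zero<C_1<\cdots<C_r=E\}$. By completeness, every $C_i$ lies in $\G$. Let $\G_0\subseteq\G$ be the smallest building set containing $\G_{\min}$ together with this chain. We then build a filtration $\G_0=\G_p\subset\cdots\subset\G_0'=\G$ with two properties. First, each added element $G$ is adjoined at a step where it has exactly two factors in the smaller building set. Second, its star fan (or, more precisely, the star of the 2-cone $\sigma_{\{G_1,G_2\}}$ being subdivided) is a nested set fan of a complete built matroid of lower rank.

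The natural choice is to add flats in order of increasing rank. Completeness should guarantee that when $G$ is added, the chain $\ch_{\vartriangleleft}(\zero,G)$ already lies in the current building set. In that case $G$ has exactly two factors: the penultimate chain element $G'$ and an atom. Each step is then a blowup along a codimension-$2$ orbit, or a $\P^1$-bundle if the center lies in the linearity space. Section~\ref{subsec:pullback-maps} gives
\[
\H(\G_{i})(t)=\H(\G_{i+1})(t)+t\,\H(\mathrm{star})(t)\quad\text{or}\quad \H(\G_i)(t)=(1+t)\,\H(\G_{i+1})(t).
\]
Both operations preserve $\gamma$-positivity. In the first case the degrees match: $\H(\mathrm{star})$ has degree two less, so $t\,\H(\mathrm{star})$ is palindromic with the same center of symmetry as $\H(\G_{i+1})$. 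The base case is $\G_0$. For it we aim to show that $\Sigma_{\M,\G_0}$ is an iterated $\P^1$-bundle over a point, using the chain $C_1<\cdots<C_r$ and the fact that each $C_{i+1}=C_i\vee e$. Its Chow polynomial would then be $(1+t)^{r-1}$, or a product of such factors over irreducible components.

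The main obstacle is making the binary filtration work without flagness, which is what Proposition~\ref{lem:flag-binary-filtration} relied on. We must show two things at each step. First, removing the current maximal element $G\in\G\setminus\G_0$ leaves a building set (Lemma~\ref{lem:building-set-deletion-of-one-element}) in which $G$ has two factors. Second, the relevant star corresponds to a complete built matroid, namely the product of a restriction and a contraction at $G'$, to which induction applies. The first thing to try is to remove the elements of $\G\setminus\G_0$ in decreasing order of rank, breaking ties by $\vartriangleleft$-lexicographic order of their chains. We would then verify, using Lemma~\ref{lem:first-chain-restriction-contraction}, that the maximal flat of $\ch_{\vartriangleleft}(\zero,G)\setminus\{G\}$ together with the $\vartriangleleft$-last atom of $G$ gives a decomposition of the required product form. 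If this fails, the fallback is the relative Lefschetz decomposition of Pagaria--Pezzoli: decompose $\uCH(\M,\G)$ along the ray $\mathbf{e}_{C_1}$ of the complete chain into pieces indexed by complete contractions and restrictions, each contributing a $\gamma$-positive summand shifted symmetrically.
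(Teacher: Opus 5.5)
\textbf{There is a genuine gap: the base case is false, and it is where the whole difficulty lies.} Your $\G_0$ contains $\G_{\min}$. For a fixed matroid, every building set contains $\G_{\min}$, so adding elements (codimension-$2$ blowups or $\P^1$-bundles) can never start below $(\M,\G_{\min})$. For non-Boolean $\M$ that starting point already has a nontrivial Chow polynomial.

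Concretely, take $(\Pi_4,\G_{\min})$ with the lexicographic order on atoms, which is complete (Corollary~\ref{cor:braid-min}). Here $\ch_{\vartriangleleft}(\zero,\un)=\{\zero<|12|3|4|<|123|4|<\un\}$ already lies in $\G_{\min}$. So $\G_0=\G$, your filtration is empty, and your claimed base value $(1+t)^{2}$ contradicts $\H(\Pi_4,\G_{\min})(t)=P_{\Moduli_{0,5}}(t)=1+5t+t^2$.

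The missing idea is to \emph{change the matroid}. The paper deletes $e=\max_{\vartriangleleft}E$. Then $(\M\setminus e,\G\setminus e)$ is complete by Proposition~\ref{prop:complete-stability}. Moreover $(\M,\G\setminus S_e)=(\M\setminus e,\G\setminus e)\cup_{\Mcut_e}e$ is a complete single-element extension, and it has the same Chow ring by tropical modification (Corollary~\ref{lem:proper-ext-chow}). Only after this does the paper run a filtration $\G\setminus S_e\subset\cdots\subset\G$. Every added $F\in S_e$ has exactly the two factors $F\setminus e$ and $\{e\}$. This is because $F\setminus e$ is the penultimate element of $\ch_{\vartriangleleft}(\zero,F)$ and is not in $S_e$, so it is present from the start, whatever order Proposition~\ref{prop:building-sets-augmentation} produces.

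\textbf{Your ordering also fails.} Adding flats in increasing rank does not pass through building sets. In $\B_4$ with $\G_{\max}$ and the natural order, $\G_0=\{1,2,3,4,12,123,1234\}$. Then $\G_0\cup\{14\}$ violates Proposition~\ref{prop:backman-char-building-set}, since $12\wedge 14=1$ while $12\vee 14=124$ is missing. Lemma~\ref{lem:building-set-deletion-of-one-element} lets you remove \emph{minimal} elements. That pulls against your requirement that $\ch_{\vartriangleleft}(\zero,G)$ be present before $G$. The paper's choice of $S_e$ sidesteps this conflict.

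\textbf{Your fallback is the paper's second proof, but at the wrong element.} The Pagaria--Pezzoli deletion formula must be applied at $e=\max_{\vartriangleleft}E$. There completeness forces $F\setminus e\in\G$, so $n_F=1$ for every $F\in S_e$. Each summand $t\,\H(\M^{F\setminus e},\G^{F\setminus e})\H(\M_F,\G_F)$ is then $\gamma$-positive with the common center of symmetry. At the minimal atom $C_1$ this breaks. For $\B_3$ with the complete building set $\{1,2,3,12,123\}$, deleting $1$ gives the term $F=123$ with $n_F=2$, since $23$ has factors $2$ and $3$. That term contributes $t+t^2$, which is not palindromic about the common center $1$.
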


\begin{proof}[First proof of Theorem \ref{thm:complete-gamma-pos}]
Let $(\M,\G)$ be a $\vartriangleleft$-complete built matroid for some total order $\vartriangleleft$ on $E(\M)$, and denote $e\coloneqq \max_{\vartriangleleft}E(\M).$ We will prove that $\Sigma_{\M,\G}$ belongs to $\Gposs$ by induction on the cardinality of $\G$ and $E(\M)$. By Proposition \ref{prop:complete-stability} (v), the deletion $(\M\setminus e, \G\setminus e)$ is complete, and so by Proposition \ref{prop:complete-stability} the built matroid $(\M, \G\setminus S_e)$ is complete, where  
\[
S_e \coloneqq \{ G\in \G \, | \, e \textrm{ is a coloop in } G \textrm{ and } G \neq \{e\}\}.
\] 
If $S_e$ is nonempty, our inductive hypothesis implies that $\Sigma_{\M, \G\setminus S_e} \in \Gposs$. By \cite[Theorem 4.2]{FM_2005} there exists a filtration
\[
\G\setminus S_e = \G_1\subset \cdots \subset \G_p = \G.
\] 
For every $1 \leqslant i \leqslant p-1$, let $F$ be the unique element of $\G_{i+1}\setminus \G_i$, and denote $F = \{e_1\vartriangleleft \cdots \vartriangleleft e_k\vartriangleleft e\}$. Since $\G$ is $\vartriangleleft$-complete, we have that $F\setminus e = \sigma(\{e_1, \ldots, e_k\})$ belongs to $\G$. This means that the factors of $F$ in $\G_i$ are $F\setminus e$ and $e$, and $\G_i\subset \G_{i+1}$ is a binary augmentation. Section~\ref{subsec:pullback-maps} shows that $\Sigma_{\M, \G}$ belongs to $\Gposs.$
\end{proof}

\begin{proof}[Second proof of Theorem \ref{thm:complete-gamma-pos}]
Let $(\M,\G)$ be a $\vartriangleleft$-complete built matroid, for some total order $\vartriangleleft$ on $E(\M).$ We will prove that $\Sigma_{\M, \G}$ belongs to $\Gposs$ by induction on the cardinality of $E(\M)$, and the rank of $\M$. By induction and Proposition \ref{prop:complete-stability} it is enough to prove that the Chow polynomial $\H(\M, \G)(t)$ is $\gamma$-positive. Moreover, we can assume that $(\M, \G)$ is irreducible, since a product of $\gamma$-positive polynomials is $\gamma$-positive. 

Let us denote $e\coloneqq \max_{\vartriangleleft}E(\M).$ By \cite[Theorem 5.4]{pagaria2023hodge} we have the formula
\begin{multline}\label{eq:formula-deletion}
    \H(\M, \G)(t) = \H(\M \setminus e, \G\setminus e)(t) + \\\sum_{F\in S_e}(t+\cdots + t^{n_F})\H(\M^{F\setminus e}, \G^{F\setminus e})(t)\H(\M_F, \G_F)(t),
\end{multline}
where $n_F$ denotes the number of factors of $F\setminus e$ in $\G$. For all $F \in S_e$, if we denote $F = \{e_1 \vartriangleleft \cdots \vartriangleleft e_k\vartriangleleft e\}$, we see that $\sigma(\{e_1, \ldots, e_k\}) = F\setminus e$ and so $F\setminus e$ belongs to $\G$ by completeness. This shows that we have $n_F= 1$ for all $F\in S_e.$ Let us now distinguish between two cases. 

If $e$ is a coloop in $\M$, then equation \eqref{eq:formula-deletion} can be rewritten as 
$$ \H(\M, \G)(t) = (1+t)\H(\M\setminus e, \G\setminus e)(t) + \sum_{F\in S_e\setminus \un}t\cdot\H(\M^{F\setminus e}, \G^{F\setminus e})(t)\H(\M_F, \G_F)(t).$$
By our induction hypothesis and Proposition \ref{prop:complete-stability}, all the summands on the right are $\gamma$-positive and moreover they have the same center of symmetry, which proves that $\H(\M, \G)(t)$ is $\gamma$-positive. 

If $e$ is not a coloop in $\M$, then equation \eqref{eq:formula-deletion} can be rewritten as
$$ \H(\M, \G)(t) = \H(\M\setminus e, \G\setminus e)(t) + \sum_{F\in S_e\setminus \un}t\cdot\H(\M^{F\setminus e}, \G^{F\setminus e})(t)\H(\M_F, \G_F)(t),$$
and we arrive at the same conclusion.
\end{proof}

\begin{remark}
    In the special case of maximal building sets, the second proof above is essentially the same as the one appearing in \cite[Theorem~3.25]{ferroni-matherne-stevens-vecchi}. That proof relies on the semi-small decomposition of Chow rings with maximal building sets \cite{braden-huh-matherne-proudfoot-wang}.
\end{remark}

\subsection{A formula for the $\gamma$-polynomial of complete built matroids}

In this subsection we aim to prove Theorem~\ref{thm:main-complete-des-formula}, which gives a combinatorial formula for the $\gamma$-vector of complete built matroids. This will constitute the third and last proof of the $\gamma$-positivity of the foregoing Chow polynomials. Having this formula will lead us to a proof of the analogue of the Nevo--Petersen conjecture for Chow polynomials of complete built matroids in the next section. 

We start by recalling the notion of a descent of a nested set, introduced in \cite[Section 3.2]{coron2026structuralpropertiesnestedset}.
Throughout, we assume that $(\M, \G)$ is an irreducible built matroid with a total order $\vartriangleleft$ on $E(\M)$. Let $\S \in c\N(\L, \G)$ be a nested set of $(\M,\G)$. 

For any $G\in \S$, if the local interval $L^G(\S)$ has rank $1$, then we set 
\[
\lambda_{\S}(G) \coloneqq \min_{\vartriangleleft} \left\{ e \in E(\M) \, | \, J^G \vee e = G \right\}.
\]
If $\S$ is a maximal nested set, then by \cite[Lemma 2.27]{coron2026structuralpropertiesnestedset} all the local intervals of $\S$ have rank $1$ and so $\lambda_{\S}(G)$ is defined for every $G\in \S$. 

It is a classical fact that any nested set $\S$ of $(\M, \G)$ is a tree in the order graph of $\L(\M)$, meaning that for every element $G\in \S$, the set $\S_{>G}$ is either empty or has a unique minimum (see for instance \cite[Lemma~2.19]{coron2026structuralpropertiesnestedset}). In the latter case,  we denote by $p(G)$ this unique minimum ($p$ standing for ``parent''). An element $G'$ with parent $p(G')=G$ is a \emph{child} of $G$; two elements sharing the same parent are \emph{siblings}.

\begin{definition}
    An element $G$ in a maximal nested set $\S$ of $(\M, \G)$ is a \emph{descent} of $\S$ with respect to $\vartriangleleft$, if $G$ is not maximal in $\S$, and we have the strict inequality
    \[ \lambda_{\S}(G) \vartriangleright \lambda_{\S}(p(G)).\]
    A descent $G$ of $\S$ is a \emph{bottom descent}, if $G \in \min \S$;  a descent $G$ of $\S$ is a \emph{double descent}, if $G$ is not minimal in $\S$ and all children of $G$ are descents of $\S$.
\end{definition}

\begin{notation}\label{notation:max-stable}
    For any maximal nested set $\S$, we denote 
    \[\Des(\S) \coloneqq \{ G\in \S \,| \, G \textrm{ descent of } \S\} \qquad \text{ and } \qquad  \des(\S) \coloneqq |\Des(\S)|.\]
    Let $\Nest(\M,\G)^{\max}_{\stable}$ be the set of maximal nested sets of $(\M, \G)$ with no double descents and no bottom descents. 
\end{notation}

\begin{example}\label{ex:descent-computation}
Let $\B_4$ be the Boolean matroid of rank $4$ on ground set $E=\{1,2,3,4\}$, and let $\G$ be the building set $\{1,2,3,4, 12,34, 1234\}$. The built matroid $(\B_4, \G)$ has $8$ maximal nested sets drawn in Figure~\ref{fig:nested-sets-EL-labeling}, with labels given by considering the natural order on $\{1,2,3,4\}$. The stable maximal nested sets in $\Nest(\B_4,\G)^{\max}_{\stable}$ are $\S_1$ and $\S_5$.  
\begin{center}
\begin{figure}
    \includegraphics[scale=0.75]{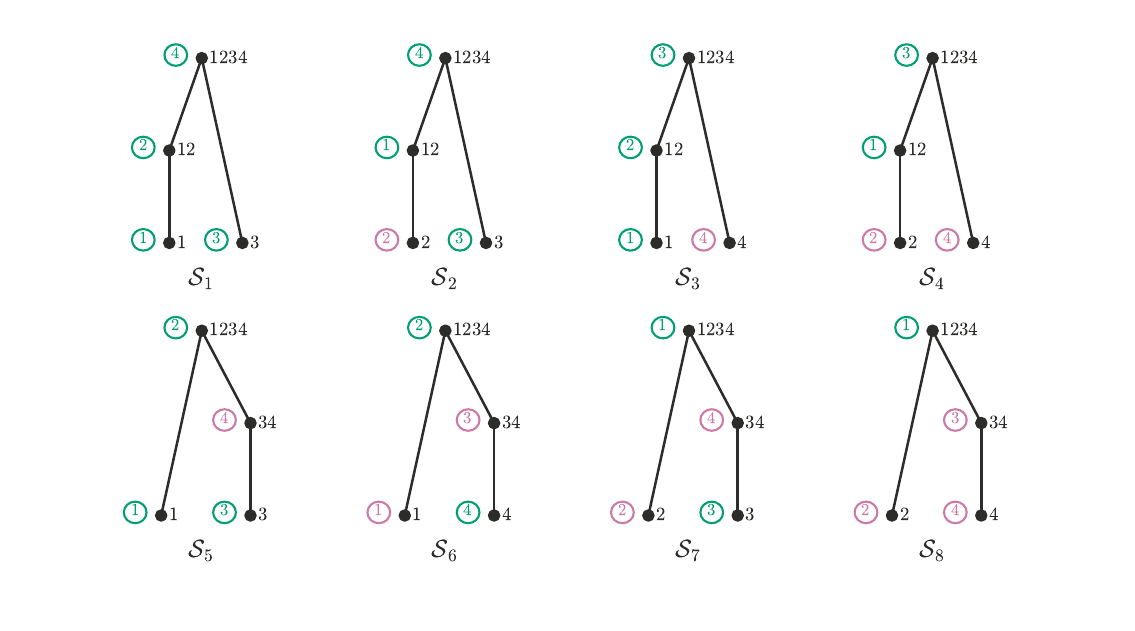}
    \caption{Maximal nested sets of $\B_4$ with $\G = \{1, 2, 3, 4, 12, 34, 1234\}$ with labels in blueish green, and descent labels in pink.}
    \label{fig:nested-sets-EL-labeling}
\end{figure}
\end{center}
\end{example}

\begin{example}
    For any $\vartriangleleft$-complete built matroid $(\M, \G)$, the maximal nested set $\ch_{\vartriangleleft}(\zero, \un)$ has no descents. 
\end{example}

\begin{remark}\label{rmk:nodd-rank1}
    Note that for a maximal nested set $\S$, having no double descent and no bottom descent is equivalent to $\Des(\S)$ having no rank $1$ local interval. 
\end{remark}

We need to prepare the proof of Theorem~\ref{thm:main-complete-des-formula} with additional background.

\begin{definition}
    The \emph{Feichtner--Yuzvinsky monomials} $\FY(\M, \G)$ are the set   
    \[ 
    \{m = x_{F_1}^{\alpha_1}\cdots x_{F_k}^{\alpha_k} \mid \{F_1, \ldots, F_k\} \in \N(\M, \G), 0 < \alpha_i < \rk(F_i) - \rk(J^{F_i}) \text{ for } 1 \leqslant i \leqslant k\} 
    \]
    The \emph{support} of $m$ is the nested set $\S$. 
\end{definition}
In the rest of this paper we will abbreviate $x_{F_1}^{\alpha_1}\cdots x_{F_k}^{\alpha_k}$ by $x_{\S}^\alpha$.

\begin{remark}\label{rmk:fy-monomial-no-rk1-local-interval}
It follows from the definition that the support of a Feichtner--Yuzvinsky monomial does not have any local interval of rank $1$.
\end{remark}

In \cite{feichtner2004chow}, the authors proved that $\CH(\M,\G)$ admits a Gröbner basis whose associated normal monomials are exactly the Feichtner--Yuzvinsky monomials of $(\M, \G).$ In particular, this gives the formula
$$ \H(\M, \G)(t) = \sum_{m \in \FY(\M, \G)} t^{\deg(m)}.$$
The proof strategy for Theorem \ref{thm:main-complete-des-formula} is to find a map $\Psi$ from $\FY(\L,\G)$ to $\Nest(\M,\G)^{\max}_{\stable}$ such that the fibers of $\Psi$ have a binomial distribution, meaning that their generating series are of the form $t^k(1+t)^{n-2k}$. The map $\Psi$ will be constructed using the composition of nested sets defined by Proposition~\ref{prop:composition-nested-sets}. First, the following lemma shows that the composition of nested sets behaves well with respect to the labels $\lambda_{\S}$ defined earlier.
\begin{lemma}[{\cite[Lemma 2.24]{coron2026structuralpropertiesnestedset}}]\label{lem:composition-descent}
Let $(\M,\G)$ be an irreducible built matroid, and let $\vartriangleleft$ be a total order on $E(\M)$. In the situation of Proposition~\ref{prop:composition-nested-sets} \ref{it:comp-nested-sets-3}, if the local interval $L^{G'}(\S_G)$ has rank $1$, then the composition of nested sets preserves the labels, meaning that we have the equality of labels 
\[
\lambda_{\S_G}(G') = \lambda_{\S\circ(\S_G)_G}(G'/J^G).\] 
\end{lemma}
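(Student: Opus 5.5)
The plan is to show that the two sets whose $\vartriangleleft$-minima define the labels are literally the same subset of $E(\M)$. Fix $G\in \S\cup\{\un\}$ and $G'\in\S_G$, and write $\S' \coloneqq \S\circ(\S_G)_G$. Let $J'$ be the bottom of the local interval $L^{G'}(\S_G)$, computed inside $[J^G,G]$, so that $J^G\leqslant J'$. Let $\widetilde J \coloneqq J^{G'/J^G}$ be the bottom of $L^{G'/J^G}(\S')$. Let $K$ be the join of the $\G$-factors of $G'$ other than $G'/J^G$; by Proposition~\ref{prop:composition-nested-sets}\ref{it:comp-nested-sets-1} these are exactly the $\G$-factors of $G'$ that are factors of $J^G$, so $K\leqslant J^G\leqslant J'$.

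By Proposition~\ref{prop:composition-nested-sets}\ref{it:comp-nested-sets-3}, the map
\[
\varphi\colon [\widetilde J,\, G'/J^G]\to [J',G'],\qquad H\mapsto H\vee K,
\]
is an isomorphism. In particular it sends bottom to bottom and top to top, so $\widetilde J\vee K = J'$ and $(G'/J^G)\vee K=G'$. Both intervals have rank $1$. We want
\[
\{e\in E(\M)\mid \widetilde J\vee e = G'/J^G\} \;=\; \{e\in E(\M)\mid J'\vee e = G'\}.
\]
Taking $\vartriangleleft$-minima of both sides then gives the equality of labels. For the local label we use that the atoms of $[J^G,G]$ are of the form $J^G\vee e$, and that $J'\vee(J^G\vee e)=J'\vee e$ because $J^G\leqslant J'$.

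For the inclusion $\subseteq$, take $e$ with $\widetilde J\vee e=G'/J^G$. Joining with $K$ gives $J'\vee e = \widetilde J\vee K\vee e = (G'/J^G)\vee K = G'$.

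For the inclusion $\supseteq$, take $e$ with $J'\vee e=G'$. Then $e\leqslant G'$ and $e\not\leqslant J'$. By the structural isomorphism of $\G$ at $G'$, the atom $e$ lies below exactly one $\G$-factor of $G'$. It cannot lie below any factor contained in $K$, since those are below $J'$. Hence $e\leqslant G'/J^G$, and so $\widetilde J\vee e\in[\widetilde J, G'/J^G]$. This interval has rank $1$, so $\widetilde J\vee e$ is either $\widetilde J$ or $G'/J^G$. The first option is impossible: applying $\varphi$ would give $J'\vee e=J'$, i.e. $e\leqslant J'$. Hence $\widetilde J\vee e=G'/J^G$.

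The step I expect to need the most care is pinning down the explicit form of the isomorphism in Proposition~\ref{prop:composition-nested-sets}\ref{it:comp-nested-sets-3}. Specifically, we need $\varphi(\widetilde J)=J'$ with $J'$ computed in $[J^G,G]$, and we need that the complementary factors joined in are all below $J^G$. I would verify this from the definition of $\widetilde J$ as the join of the elements of $\S'$ below $G'/J^G$, using the factor decomposition of Proposition~\ref{prop:composition-nested-sets}\ref{it:comp-nested-sets-1}. The rest of the argument is the two-line comparison above.
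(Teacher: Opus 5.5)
Your proof is correct. The paper gives no proof of this lemma to compare against: it is imported verbatim from \cite[Lemma~2.24]{coron2026structuralpropertiesnestedset}. What you give is a complete, self-contained argument using only Proposition~\ref{prop:composition-nested-sets}\ref{it:comp-nested-sets-1},\ref{it:comp-nested-sets-3} and the structural isomorphism of $\G$ at $G'$.

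Reducing the claim to the equality of the two sets $\{e \mid \widetilde J\vee e = G'/J^G\}$ and $\{e \mid J'\vee e = G'\}$ is exactly the right move, and both inclusions are sound. The first inclusion follows by joining with $K$. The second follows because an atom below $G'$ lies under a unique $\G$-factor of $G'$, combined with the fact that $[\widetilde J, G'/J^G]$ has rank $1$.

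Two small simplifications are available.
\begin{enumerate}
\item The point you flag as delicate is lighter than it looks. Once the isomorphism of \ref{it:comp-nested-sets-3} is the map $H\mapsto H\vee K$, sending bottom to bottom already gives $J'=\widetilde J\vee K\geqslant K$. So you do not need \ref{it:comp-nested-sets-1} to place the complementary factors below $J'$.
\item Identifying the local label with a minimum over all of $E(\M)$ needs no argument. Any $e$ with $J'\vee e=G'$ already lies in $G\setminus J^G$, which is the ground set of the local matroid on $[J^G,G]$.
\end{enumerate}
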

\begin{definition}
Let $\S \in \N(\L, \G)$ be a nested set. The \emph{completion of $\S$} (with respect to $\vartriangleleft$) is the nested set 
\[
\widehat{\S} \coloneqq \S\circ (\ch_{\vartriangleleft}(J^G,G))_{G\in \S\cup\un}.
\] 
\end{definition}

By Proposition~\ref{prop:composition-nested-sets} \ref{it:comp-nested-sets-3} the local intervals of $\widehat{\S}$ all have rank $1$ and so by \cite[Lemma~2.27]{coron2026structuralpropertiesnestedset} the completion $\widehat{\S}$ is a maximal nested set.

We are ready to prove Theorem~\ref{thm:main-complete-des-formula}, which for convenience we also restate below.
\begin{thm}\label{thm:complete-formula}
    Let $(\M,\G)$ be a complete built matroid, and let $\gamma(\M, \G)(t)$ be the $\gamma$-expansion of the Chow polynomial of $(\M, \G)$. We have 
    \[ \gamma(\M, \G)(t) = \sum_{\S \in \Nest(\M,\G)^{\max}_{\stable}} t^{\des(\S)}.\]
    In particular, the polynomial $\gamma(\M,\G)(t)$ has positive coefficients. 
\end{thm}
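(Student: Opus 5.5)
We follow the strategy announced just before the statement: construct a map $\Psi \colon \FY(\M,\G) \to \Nest(\M,\G)^{\max}_{\stable}$ whose fibres have generating functions $t^{\des(\S)}(1+t)^{r-1-2\des(\S)}$, where $r=\rk(\M)$. Since $\H(\M,\G)(t)=\sum_{m\in \FY(\M,\G)} t^{\deg m}$, summing over fibres yields the formula immediately. By Lemma~\ref{lem:reducible}, and since $\gamma$-expansions and stable nested sets are multiplicative over products, we may assume $(\M,\G)$ is irreducible and $\vartriangleleft$-complete. Completeness is used in one place only. It guarantees that for every $G\in \S\cup\{\un\}$ the chain $\ch_{\vartriangleleft}(J^G,G)$ lies in $\G^G_{J^G}\cup\{J^G\}$, so this chain is a nested set of the local interval $L^G(\S)$ (a chain in a local building set all of whose members lie in $\G$). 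Hence the completion $\widehat{\S}$ is well defined and, as noted above, is a maximal nested set.

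The map $\Psi$ is defined as follows. Given $m=x_\S^\alpha$ with support $\S$, we first complete $\S$ to $\widehat{\S}$. Then, inside each local chain $\ch_{\vartriangleleft}(J^G,G)$ of length $\rk G-\rk J^G$, we use the exponent $\alpha_G$ to ``rotate'' the chain. Concretely, we replace the first $\alpha_G$ steps of the chain by a modified segment so that the element $G/J^G$ becomes a descent with respect to its new parent. Each factor $x_G^{\alpha_G}$ should thus contribute exactly one descent, and this descent must be neither a bottom descent nor a double descent (cf. Remark~\ref{rmk:nodd-rank1} and Remark~\ref{rmk:fy-monomial-no-rk1-local-interval}).

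An equivalent and cleaner formulation works in reverse: define $\Phi$ from maximal nested sets to stable ones and read off the fibres. For a maximal $\S$, the descents of $\S$ are grouped by rank-one local intervals. For each descent $D$ one decides, via a binary choice, whether $D$ is kept as a descent or ``absorbed'' into a FY-monomial exponent. Lemma~\ref{lem:composition-descent} guarantees that labels $\lambda$ are preserved under composition, so descents of $\S\circ(\S_G)_G$ can be computed locally in each $L^G(\S)$. The chain $\ch_{\vartriangleleft}$ has no descents, so completing adds none.

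We then count degrees to check the fibre shape. For a stable $\S$ with $k=\des(\S)$ descents, the $r-1$ non-maximal positions split as follows. The $k$ descents are fixed. Each descent forces its parent step, contributing the factor $t^{k}$. The remaining $r-1-2k$ positions are free binary choices: each either contributes a degree-one variable or not, giving $(1+t)^{r-1-2k}$. So the fibre over $\S$ consists of the FY monomials whose support is obtained from $\S$ by deleting descents together with some subset of the free elements, with exponents recording the lengths of the deleted runs. The FY constraint $0<\alpha_i<\rk F_i-\rk J^{F_i}$ corresponds exactly to runs that start at a non-bottom descent and contain no double descent.

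The main obstacle is proving that $\Psi$ is well defined and bijective onto the claimed fibres, that is, that the decomposition of a maximal nested set into runs is unique. I would first check the maximal building set case against Stump's result, where nested sets are chains and composition is concatenation. Then I would extend to general complete $\G$ using associativity (Proposition~\ref{prop:composition-nested-sets}\ref{it:comp-nested-sets-5}) and the link decomposition (Proposition~\ref{prop:composition-nested-sets}\ref{it:comp-nested-sets-4}). An alternative fallback is to verify that the right-hand side satisfies the same recursion \eqref{eq:formula-deletion}, with $n_F=1$, that $\H$ satisfies; one would prove this by deleting the maximal element $e$ and tracking how stable nested sets and descents behave.
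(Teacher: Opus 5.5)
You have the right framework: FY monomials, completion by $\vartriangleleft$-chains, Lemma~\ref{lem:composition-descent}, and fibres of shape $t^{k}(1+t)^{r-1-2k}$. However, the map and fibre description you actually propose are wrong, and the step that carries the proof is left open. The correct map needs no ``rotation''; it is simply $x_{\S}^{\alpha}\mapsto\widehat{\S}$. Your own remark that $\ch_{\vartriangleleft}$ has no descents already gives $\Des(\widehat{\S})\subseteq\S$. Removing elements from a nested set only enlarges its local intervals, so $\Des(\widehat{\S})$ inherits from $\S$ the absence of rank-one local intervals. Hence $\widehat{\S}$ is stable by Remark~\ref{rmk:nodd-rank1}.

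Your claim that each factor $x_G^{\alpha_G}$ contributes exactly one descent cannot hold for any map with the required fibres. Take $(\B_3,\G_{\max})$ with the natural order, where $\H=1+4t+t^2$:
\begin{itemize}
\item There are four degree-one monomials $x_{12},x_{13},x_{23},x_{\un}$.
\item The only stable chains with one descent are $\{1,13,123\}$ and $\{2,23,123\}$, and each has fibre $t$.
\item $\un$ is never a descent, and neither is $12$, since $\lambda(12)\leqslant 2<3=\lambda(\un)$.
\end{itemize}
Under completion, $1,x_{12},x_{\un},x_{\un}^2$ all lie over $\{1,12,123\}$, and the fibre over $\{1,13,123\}$ is $\{x_{13}\}$. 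Its support is exactly the descent set. So supports in the fibre over $\S$ must \emph{contain} $\Des(\S)$, not have the descents deleted as you write. Your reverse map defined on maximal nested sets also cannot produce $\H$, because maximal nested sets and FY monomials are not equinumerous: $(\U_{2,3},\G_{\max})$ has three maximal nested sets but $\H=1+t$.

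The genuinely missing piece is the identification of the fibre, which you only flag as the main obstacle. The paper argues as follows. If $\widehat{\S'}=\S$, then $\Des(\S)\subseteq\S'$. By Proposition~\ref{prop:composition-nested-sets}\ref{it:comp-nested-sets-4} one can then write uniquely $\S'=\Des(\S)\circ(\S'_G)_G$, with local intervals taken with respect to $\Des(\S)$, not $\S$. Associativity \ref{it:comp-nested-sets-5} gives $\widehat{\S'}=\Des(\S)\circ(\widehat{\S'_G})_G$. Injectivity of $\Des(\S)\circ-$ then forces $\widehat{\S'_G}=\ch_{\vartriangleleft}(J^G,G)$, hence $\S'_G\subseteq\ch_{\vartriangleleft}(J^G,G)$.

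On each such chain, FY exponent vectors correspond bijectively, via run lengths, to subsets of the chain minus its rank-one bottom step. When $G\in\Des(\S)$, the top $G$ is forced into the subset. So the two positions lost per descent are the descent itself, which yields the factor $t^{\des(\S)}$, and the bottom step of its own local chain; it is not a ``parent step''. The chain at $\un$ loses only its bottom step, leaving $r-1-2\des(\S)$ free binary choices.

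Your fallback via the recursion \eqref{eq:formula-deletion} is likewise only a plan. Showing that the descent generating function of stable nested sets satisfies that recursion is the whole difficulty, and you do not address it.
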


\begin{proof}
We define a set map 
\[
\Psi:\FY(\M,\G) \rightarrow \Nest^{\max}(\M, \G), \quad m = \left(x_{\S}^{\alpha_{\S}} \right) \cdot \left(x_{\un}^{\alpha_{\un}}\right) \mapsto \widehat{\S}. 
\]

\medskip
\textbf{Claim 1.} The map $\Psi$ is well-defined. 
\smallskip

\emph{Proof of Claim 1.} Let $m$ be a Feichtner--Yuzvinsky monomial with support $\S$. By the definition of completion of a nested set and Lemma \ref{lem:composition-descent}, the descent set of $\widehat{\S}$ is a subset of 
\[
\S\cup \bigcup_{G\in \S\cup \un}\Des(\ch_{\vartriangleleft}(J^G,G))/J^G. 
\] However, for every $G \in \S \cup \un$, the descent set of $\ch_{\vartriangleleft}(J^G,G)$ is $\varnothing$. Therefore, $\Des(\widehat{\S}) \subset \S$. By Remark \ref{rmk:fy-monomial-no-rk1-local-interval} and Remark \ref{rmk:nodd-rank1}, we have $\widehat{\S} \subset \Nest^{\max}_{\stable}(\M, \G).$ 

\medskip

\textbf{Claim 2.} For every $\S \in \Nest^{\max}_{\stable}(\M, \G)$, we have the equality
\begin{equation}\label{eq:fiber-above-S}
\sum_{m \in \Psi^{-1}(\S)} t^{\deg(m)}= t^{\des(\S)}(1+t)^{n-1-2\des(\S)}.
\end{equation}
\smallskip
The general idea behind this claim is that the support of a monomial in $\Psi^{-1}(\S)$ will be of the form $\S\circ (\S_G)_G$ where each $\S_G$ is a subset of $\ch_{\vartriangleleft}(J^G,G)$. The question is then to check that for each $G\in\S$, the multidegrees on $\ch_{\vartriangleleft}(J^G,G)$ satisfying the defining condition for Feichtner-Yuzvinsky monomials, are in bijection with subsets of $\ch_{\vartriangleleft}(J^G,G)$ (thus giving the claimed binomial distribution). We start the proof below by this simple check. \\

\smallskip 
\emph{Proof of Claim 2.} 
For any finite totally ordered poset $(P,<)$ (e.g. $\ch_{\vartriangleleft}(J^G, G)$ for any $G\in \S$), one can associate to any subset $I \subseteq P$ a multidegree $\mult_I:P \rightarrow \Z_{\geq 0}$ defined by the formula 
\begin{equation*}
    \mult_I(p) \coloneqq \begin{cases}
        0 \textrm{ if } p+1 \in I, \\
        \max \{i \geq 0 \mid p-i+1, \ldots, p \in \S\} \textrm{ otherwise.}
    \end{cases}
\end{equation*}
The map $\mult$ gives a bijection between the subsets of $I$ and the multidegrees $m$ on $P$ satisfying the Feichtner-Yuzvinsky condition 
\begin{equation}\label{eq:FY-condition-chain}
    \forall p \in P, \quad m(p) < \rk(p) - \rk(\max_{<}(\supp(m)_{<p})),
\end{equation}
where $\supp(m)_{<p}$ denotes the set of elements in the support of $m$ which are strictly less than $p$. By convention if $\supp(m)_{<p}$ is empty we set $\rk(\max_{<}(\supp(m)_{<p})) \coloneqq -1.$ Moreover, the map $\mult$ sends the cardinality statistics to the total degree statistics. 

For any $G$ in $\S\cup \un$, let us denote 
\begin{align*}
    &\ch_{\vartriangleleft}^{>0}(J^G,G) \coloneqq \ch_{\vartriangleleft}(J^G,G)\setminus J^G, \\
    &\ch_{\vartriangleleft}^{>1}(J^G,G) \coloneqq \{F \in \ch_{\vartriangleleft}(J^G,G) \mid \rk(F) - \rk(J^G) > 1\}, \\
    &\ch_{\vartriangleleft}^{>1,\neq \textrm{top}}(J^G,G) \coloneqq \ch_{\vartriangleleft}^{>1}(J^G,G)\setminus G. 
\end{align*}
Recall from the definition of the composition of nested sets (Proposition \ref{prop:composition-nested-sets}) that there is a bijection between $\widehat{\S} = \S\circ(\ch_{\vartriangleleft}(J^G,G))_{G\in \S\cup \un}$ and $\bigsqcup_{G\in \S\cup \un}\ch_{\vartriangleleft}^{>0}(J^G,G)$, which we denote in this proof by $\varphi$. Let us now consider the set 
\[
U(\S) \coloneqq \ch_{\vartriangleleft}^{>1}(J^{\un},\un)\sqcup \bigsqcup_{G\in \S} \ch_{\vartriangleleft}^{>1, \neq\textrm{top}}(J^G,G), 
\]
and $\calP(U(\S))$ the power set of $U(\S)$. We prove the desired equality of generating functions \eqref{eq:fiber-above-S} by describing a bijection between $\calP(U(\S))$ and the preimage $\Psi^{-1}(\S)$
\[
\Psi_{\S} \colon \mathcal{P}\left(U(\S)\right) \xrightarrow{\sim } \Psi^{-1}(\S). 
\]
This bijection will satisfy $\deg(\Psi_{\S}(I)) = \des(\S) + \abs{I}$, for every $I \subset U(\S)$. By counting, we have
\[
\abs{U(\S)} = n-1-2 \: \des(\S),
\]
which will conclude the proof. For any $I = \sqcup_{G\in \S\cup\{\un\}}I_G \subset U(\S)$ where $I_G\in \ch_{\vartriangleleft}^{>1, \neq\textrm{top}}(J^G,G)$ for all $G\in \S$, and $I_{\un} \in \ch_{\vartriangleleft}^{>1}(J^{\un},\un)$, we set 
\[ \Psi_{\S}(I) \coloneqq x_{\widehat{S}}^{\mult_{I\cup\S}\circ \varphi},\]
where $\mult_{I\cup \S}$ is the multidegree on $\bigsqcup_{G\in \S\cup \un}\ch_{\vartriangleleft}^{>0}(J^G,G)$ defined as the disjoint union
\[ 
    \mult_{I\cup \S} \coloneqq \bigsqcup_{G\in \S\cup \{\un\}}\mult_{(I\cup \S)\cap \ch_{\vartriangleleft}^{>0}(J^G,G)}.
\]
In the above formula, the mult maps in the right hand side are taken with respect to the totally ordered sets $\ch_{\vartriangleleft}^{>0}(J^G,G)$.

See Example~\ref{ex:bijection-fiber} below for a complete computation of $\Psi_{\S}$. 

Since a multidegree given by a mult map satisfies the Feichtner-Yuzvinsky condition \eqref{eq:FY-condition-chain}, we see that $\Psi_{\S}$ lands in $\FY(\M,\G)$. Moreover, for all $I \subset U(\S)$ we see that the support of $\Psi_{\S}(I)$ contains $\S$ and is contained in $\widehat{S}$, so the completion of this support is $\widehat{\S}$. This shows that $\Psi_{\S}$ lands in $\Psi^{-1}(\S).$ We now show that $\Psi_{\S}$ is bijective. 

The map $\Psi_{\S}$ is injective by the injectivity of $\Des(\S)\circ -$ and the injectivity of the mult maps. By surjectivity of the mult maps, any Feichtner--Yuzvinsky monomial of the form $x_{\S'}^\alpha$ where $\S'$ can be written as $\Des(\S) \circ (\S'_G)_G$ (with each $\S'_G$ a subset of $\ch_{\vartriangleleft}(J^G,G)$), is in the image of $\Psi_{\S}$. For any $m$ in $\Phi^{-1}(\S),$ we have $m = x_{\S'}^{\alpha}$ for some nested set $\S'$ satisfying the equality $\widehat{\S'} = \S.$ By Lemma \ref{lem:composition-descent} we have $\Des(\S) = \Des(\widehat{\S'}) \subset \S'.$ By Proposition~\ref{prop:composition-nested-sets} \ref{it:comp-nested-sets-4} this implies that $\S'$ can be uniquely written as a composition $\Des(\S)\circ (\S'_G)_G$ where the $\S'_G$'s are some nested sets in the local intervals $L^{G}_{\Des(\S)}$. By associativity of the composition of nested sets (Proposition~\ref{prop:composition-nested-sets} \ref{it:comp-nested-sets-5}) we have the equality $$\widehat{\S'} = \Des(\S)\circ (\widehat{\S'_G})_G.$$ The equality $\widehat{\S'} = \S$ now gives $$\Des(\S) \circ (\widehat{\S'_G})_G= \Des(\S)\circ (\ch_{\vartriangleleft}(J^G,G))_G $$ and so by injectivity of $\Des(\S)\circ -$ we get the equality $$\widehat{\S'_G} = \ch_{\vartriangleleft}(J^G,G)$$ for all $G \in \Des(\S)\cup \un.$ This implies that we have $\S'_G \subset \ch_{\vartriangleleft}(J^G,G)$ for all $G\in \Des(\S)\cup \{\un\}$, which proves the surjectivity.
\end{proof}

\begin{remark}
    The formula in Theorem~\ref{thm:complete-formula} does not hold if one removes the completeness assumption, even if $(\M, \G)$ is flag. For instance, in Example~\ref{ex:descent-computation}, the built matroid $(\B_4, \G)$ is flag. One can compute $\H(\B_4, \G)(t) = (1+t)^3$ which gives $\gamma(\B_4,\G)(t) = 1.$ However, in that example we saw that $\Nest(\B_4,\G)^{\max}_{\stable}$ contains $\S_5$ which has descent number $1$. 
\end{remark}

\begin{example}\label{ex:bijection-fiber}
Let $\B_8$ denote the Boolean matroid over $\{1,\ldots,8\}$ with $\G_{\max}$ its maximal building set. The built matroid $(\B_8, \G_{\max})$ is complete with respect to any total order on $\{1,\ldots,8\}$, in particular also with respect to the natural order $<$. The nested sets of $(\B_8, \G_{\max})$ are the chains in $\B_8$ and the composition of nested sets is the concatenation of chains. Consider the maximal nested set 
\[\S = \{6,67,678,3678,34678,345678,1345678,12345678\}.\]
We have $\Des(\S) = \{678, 345678\}$ which has three local intervals, two of them having rank $3$ and the top one having rank $2$. Let us compute the bijection 
\[\Psi_{\S}: \mathcal{P}([2,\rk \un - \rk J^{\un}]\sqcup [2,\rk 678 - \rk \zero - 1]\sqcup [2,\rk 345678 - \rk 678 -1])\rightarrow \Psi^{-1}(\S).\]
The power set $\mathcal{P}([2,\rk \un - \rk J^{\un}]\sqcup [2,\rk 678 - \rk \zero - 1]\sqcup [2,\rk 345678 - \rk 678 -1])$ is identified with $\{\emptyset, 2\}\times \{\emptyset, 2\}\times \{\emptyset, 2\}.$
We have 
\[\Psi_{\S}(\emptyset, \emptyset, \emptyset) = x^{\alpha}_{(678, 345678)\circ (\{678\}, \{345678\})} = x_{678,345678}^{\alpha},\]
where $\alpha(678)$ is the height of $3$ in $\{3\}$ which is $1$, and $\alpha(345678)$ is the height of $3$ in $\{3\}$ which is $1.$
We have 
\[\Psi_{\S}(2, \emptyset, \emptyset) = x^{\alpha}_{(678, 345678)\circ (\{678\}, \{345678\}, \un )} = x_{678,345678, \un}^{\alpha},\]
where $\alpha(678) = 1, \alpha(345678) = 1$, and $\alpha(\un)$ is the height of $2$ in $[2]$ which is $1.$
We have 
\[\Psi_{\S}(\emptyset, 2, \emptyset) = x^{\alpha}_{(678, 345678)\circ (\{678\}, \{345678\})} = x_{678,345678}^{\alpha},\]
where $\alpha(345678)=1$, and $\alpha(345)$ is the height of $3$ in $\{2,3\}$ which is $2$. Similarly,
\begin{align*}
    \Psi_{\S}(\emptyset, \emptyset, 2) &= x_{678}x_{345678}^2, \\ 
    \Psi_{\S}(2, 2, \emptyset) &= x_{678}^2x_{345678}x_{\un}, \\ 
    \Psi_{\S}(2, \emptyset, 2) &= x_{678}x_{345678}^2x_{\un}, \\ 
    \Psi_{\S}(\emptyset, 2, 2) &= x_{678}^2x_{345678}^2, \\ 
    \Psi_{\S}(2, 2, 2) &= x_{678}^2x_{345678}^2x_{\un}.  
\end{align*}
\end{example}

\begin{remark}
    Recall from Example \ref{ex:boolean-chordal} that chordal building sets, in the sense of \cite{Postnikov2008}, are complete. In \cite[Theorem~11.6]{Postnikov2008} the authors give a combinatorial formula for the $\gamma$-transform of Chow polynomials of chordal building sets. It is unclear to the authors how \cite[Theorem 11.6]{Postnikov2008} and Theorem \ref{thm:complete-formula} (restricted to Boolean matroids) relate to each other. 

\end{remark}

\subsection{The Nevo--Petersen conjecture for complete built matroids}\label{sec:gamma-is-f-vector}
In this section we use the formula given in Theorem \ref{thm:complete-formula} to prove that the $\gamma$-transform of the Chow polynomial of a complete built matroid is the $f$-vector of a simplicial complex.

Let $(\M, \G)$ be a complete built matroid with respect to some total order $\vartriangleleft$ on $E(\M)$. Denote $V \coloneqq \G \setminus (\ch_{\vartriangleleft}(\zero,\un) \cup \At(\L))$ and define the \emph{$\Gamma$-complex} of the pair $(\M,\G)$ as the collection of sets:
\begin{equation} \label{eq:def-of-Gamma-complex}
\Gamma_{\vartriangleleft}(\M,\G) \coloneqq \{ \Des(\S) \, | \, \S \in \N^{\max}_{\stable}(\L,\G) \} \subset \mathcal{P}(V).
\end{equation}

We can now prove the analogue of Conjecture~\ref{conj:nevo-petersen} for Chow polynomials of complete built matroids, hence verifying Theorem~\ref{thm:main-complete-implies-nevo-petersen+balanced}.

\begin{thm}\label{thm:gamma-is-simplicial-complex}
    Let $(\M,\G)$ be a complete built matroid with respect to a total order $\vartriangleleft$. The sets in $\Gamma_{\vartriangleleft}(\M,\G)$ are the faces of a simplicial complex. Moreover, the $f$-vector of $\Gamma$ equals the $\gamma$-vector of the Chow polynomial $\H(\M,\G)(t)$.
\end{thm}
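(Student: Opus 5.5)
Two things have to be shown: that $\Gamma_{\vartriangleleft}(\M,\G)$ is closed under taking subsets, and that $f(\Gamma)$ is the $\gamma$-vector. The key preliminary is that the map $\S\mapsto \Des(\S)$ is injective on $\N^{\max}_{\stable}(\L,\G)$. Once that holds, the number of faces of size $i$ is the number of stable maximal nested sets with $\des(\S)=i$, and Theorem~\ref{thm:complete-formula} identifies this with $\gamma_i$. So the whole proof reduces to injectivity and downward closure.

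For injectivity, I would reuse the reconstruction from the proof of Theorem~\ref{thm:complete-formula}. Within each local interval of $\Des(\S)$ a stable $\S$ has no descents, so by Lemma~\ref{lem:composition-descent} and associativity (Proposition~\ref{prop:composition-nested-sets}\ref{it:comp-nested-sets-5}) one should have $\S=\Des(\S)\circ(\ch_{\vartriangleleft}(J^G,G))_{G\in\Des(\S)\cup\un}$, i.e.\ $\S=\widehat{\Des(\S)}$. The step needing care is that the unique descent-free maximal nested set of a complete interval $L^G$ is the $\vartriangleleft$-chain. I would prove this by induction on rank. If a maximal nested set has no descents, its top labels are increasing along every parent chain. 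Completeness guarantees $\ch_\vartriangleleft$ lies in $\G$, and the minimal-label choice $\lambda$ then forces the element below the top to be the one obtained by deleting the $\vartriangleleft$-largest atom. The same argument gives that $V$ contains all descents: a descent is neither an atom (no bottom descents) nor in $\ch_{\vartriangleleft}(\zero,\un)$. For the latter I would note that chain elements of $\widehat{\Des(\S)}$ are never descents.

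For downward closure, take $D=\Des(\S)$ with $\S$ stable and $D'\subset D$. Set $\S'\coloneqq\widehat{D'}$, the completion of the nested set $D'$. I claim $\Des(\S')=D'$ and that $\S'$ is stable. Descents of $\S'$ lie in $D'$ by the argument of Claim~1 in the proof of Theorem~\ref{thm:complete-formula}. Stability follows because $\widehat{D'}$ only creates rank-one local intervals at chain elements, not at elements of $D'$. Since the local intervals of $D'$ at its elements refine those of $D$, they still have rank $\ge 2$, so Remark~\ref{rmk:nodd-rank1} applies.

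The main obstacle is the reverse inclusion, that every $G\in D'$ is still a descent of $\S'$. Removing elements of $D$ changes parents and local intervals, so $\lambda_{\S'}(G)$ and $\lambda_{\S'}(p(G))$ must be recomputed. I would first try induction, removing one element $H\in D$ at a time. I would then compare $\widehat{D\setminus H}$ with $\widehat D$ only near $H$: the interval $L^{p(H)}$ merges with $L^H$, and the chain $\ch_\vartriangleleft(J,p(H))$ is recomputed. Using Lemma~\ref{lem:first-chain-restriction-contraction}, the labels of elements of $D$ other than $H$, and of their parents, should be unchanged or shift in a controlled way. The delicate case is when $G$'s parent changes from $H$ to an element of the new chain. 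There I would use that $H$ was a descent ($\lambda(H)\vartriangleright\lambda(p(H))$) to show that the new parent's label is still smaller than $\lambda(G)$.
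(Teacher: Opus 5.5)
Your architecture matches the paper's. The $f$-vector statement is read off from Theorem~\ref{thm:complete-formula} once $\S\mapsto\Des(\S)$ is injective; your argument that $\S=\widehat{\Des(\S)}$ makes explicit what the paper uses tacitly. Downward closure is proved by deleting one descent $H$ from $D=\Des(\S)$ and passing to $\widehat{D\setminus H}$. Claim~1 gives $\Des(\widehat{D\setminus H})\subseteq D\setminus H$, and stability holds because the local intervals of $D\setminus H$ \emph{contain} (not refine) those of $D$.

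The gap is the reverse inclusion. This is the real content of the theorem, and you leave it at ``should be unchanged or shift in a controlled way''. Moreover, the ingredient you single out for the delicate case is not the right one. In $\widehat D$ the parent of a $D$-child $G$ of $H$ is in general not $H$. It is the first element $F_j/J^H$ of $\ch_\vartriangleleft(J^H,H)$ that absorbs $G$. So the inequality $\lambda(H)\vartriangleright\lambda(p(H))$ does not in general control the label of $G$'s old or new parent. Already for $\G_{\max}$ and $D=\{G<H<P\}$ one needs $\lambda(G)\vartriangleright\min_\vartriangleleft(P\setminus G)$. This follows from $G$'s \emph{own} descent $\lambda(G)\vartriangleright\min_\vartriangleleft(H\setminus G)$ together with $H\subseteq P$; that $H$ is a descent plays no role. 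The parent $p_D(H)$, whose own label changes because its local interval grows, is also covered only by the vague phrase.

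The missing idea is to recast descents as a chain-membership condition. For $G\in D$ with $D$-parent $P$ and $F=\bigvee(D\setminus G)_{<P}$, the element $G$ is \emph{not} a descent of $\widehat D$ iff $F\vee G\in\ch_\vartriangleleft(F,P)$. Each case then reduces, via Lemma~\ref{lem:first-chain-restriction-contraction}, to the descent status of the same element in $\widehat D$:
\begin{itemize}
\item Parent $P=p(H)$: set $F=\bigvee(D\setminus\{H,P\})_{<p(P)}$. Part~(i) turns $F\vee P\in\ch_\vartriangleleft(F,p(P))$ into $F\vee P\in\ch_\vartriangleleft(F\vee H,p(P))$.
\item Sibling $G$ of $H$: set $F=\bigvee(D\setminus\{H,G\})_{<P}$. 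Part~(i) turns $F\vee G\in\ch_\vartriangleleft(F,P)$ into $F\vee H\vee G\in\ch_\vartriangleleft(F\vee H,P)$.
\item Child $G$ of $H$: with the same $F$, part~(ii) turns it into $F\vee G\in\ch_\vartriangleleft(F,F\vee H)$.
\end{itemize}
In each case the conclusion says the element was not a descent of $\widehat D$, a contradiction. For the child, this uses the identification of $[F,F\vee H]$ with the local interval of $D$ below $H$. Elements not adjacent to $H$ keep their labels and parents.

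The same criterion, via (ii) then (i), also shows that elements of $\ch_\vartriangleleft(\zero,\un)$ are never descents. Your stated reason for this, that completion-chain elements are not descents, does not cover elements of $D$ itself.
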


\begin{proof}
The claim on the $f$-vector follows automatically from Theorem~\ref{thm:complete-formula}, so what remains is proving that $\Gamma$ is a simplicial complex.  

For simplicity, we can assume that $(\M, \G)$ is irreducible. We denote $\Gamma \coloneqq \Gamma_{\vartriangleleft}(\M,\G)$. Let $\mathcal{D} = \{G_1, \ldots, G_k\}$ be the descent set of some stable maximal nested set $\S$. For any $i \in [k]$, let us prove that the descent set of the completion $\widehat{\mathcal{D}\setminus G_i}$ is $\mathcal{D}\setminus G_i$, which will show that $\mathcal{D}\setminus G_i$ belongs to $\Gamma$ and thus conclude the proof. 

By the proof of Claim 1 in the proof of Theorem \ref{thm:complete-formula}, we have the inclusion $\Des(\widehat{\mathcal{D}\setminus G_i})\subset \mathcal{D} \setminus G_i.$ Moreover, it is clear that every element of $\mathcal{D} \setminus G_i$ which is neither a sibling, nor a child nor the parent of $G_i$ belongs to $\Des(\widehat{\mathcal{D}\setminus G_i}),$ since the labels above and below those elements have not changed when passing from $\S$ to $\widehat{\mathcal{D}\setminus G_i}$. 

Let us first prove that if $G_i$ is not maximal in $\mathcal{D}$, then the parent $p(G_i)$ of $G_i$ belongs to $\Des(\widehat{\mathcal{D}\setminus G_i}).$ Assume by contradiction that the contrary is true. If we denote $F = \bigvee (\mathcal{D}\setminus\{G_i, p(G_i)\})_{< p(p(G_i))}$, this exactly means that $F\vee p(G_i)$ belongs to $\ch_{\vartriangleleft}(F, p(p(G_i)))$. However, by Lemma \ref{lem:first-chain-restriction-contraction}\ref{item:contraction} we have the equality of chains 
\[ \ch_{\vartriangleleft}(F\vee G_i, p(p(G_i))) = G_i\vee \ch_{\vartriangleleft}(F, p(p(G_i))),\]
which implies that $F\vee p(G_i)$ belongs to $\ch_{\vartriangleleft}(F\vee G_i, p(p(G_i)))$, which contradicts the fact that $p(G_i)$ is a descent of $\S$. 

Let us now prove that the siblings of $G_i$ are descents of $\Des(\widehat{\mathcal{D}\setminus G_i})$. Let $G$ be a sibling of $G_i$, and assume by contradiction that $G$ is not a descent. In that case, if we denote $F = \bigvee (\mathcal{D}\setminus \{G_i, G\})_{<p(G_i)}$, this means that $F\vee G$ belongs to $\ch_{\vartriangleleft}(F, p(G_i))$. By Lemma \ref{lem:first-chain-restriction-contraction}\ref{item:contraction} this implies that $F\vee G \vee G_i$ belongs to $\ch_{\vartriangleleft}(F\vee G_i, p(G_i))$, which contradicts the fact that $G$ is a descent of $\S.$

Finally, let us prove that the children of $G_i$ are descents of $\Des(\widehat{\mathcal{D}\setminus G_i})$. Let $G$ be a child of $G_i$, and assume by contradiction that $G$ is not a descent. In that case, if we denote $F = \bigvee (\mathcal{D}\setminus \{G_i, G\})_{<p(G_i)}$, this means that $F\vee G$ belongs to $\ch_{\vartriangleleft}(F, p(G_i))$. By Lemma \ref{lem:first-chain-restriction-contraction}\ref{item:restriction} this implies that $F\vee G$ belongs to $\ch_{\vartriangleleft}(F, F\vee G_i)$, which contradicts the fact that $G$ is a descent of $\S$. 
\end{proof}

In the special case of $\G = \G_{\max}$, we can deduce the following statement, which directly implies Theorem~\ref{thm:main-complete-implies-nevo-petersen+balanced}. Recall that a pure simplicial complex $\Delta$ of dimension $d$ is \emph{balanced} if there exists a coloring of the vertices of $\Delta$ with $d+1$ colors, such that no simplex of $\Delta$ has two vertices of the same color. 

\begin{thm}\label{thm:gamma-maximal-is-balanced}
    Let $\M$ be a matroid and $\G = \G_{\max}$. Fix any total ordering $\vartriangleleft$ on the ground set of $\M$. The $\Gamma$-complex $\Gamma_{\vartriangleleft}(\M,\G)$ is balanced. 
\end{thm}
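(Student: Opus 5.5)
For $\G=\G_{\max}$, nested sets are chains of $\L\setminus\{\zero\}$ and maximal nested sets are complete flags $\zero< F_1<\cdots<F_r=\un$. Every element then has rank-one local interval: $J^{F_j}=F_{j-1}$ and $\lambda_{\S}(F_j)=\min_{\vartriangleleft}(F_j\setminus F_{j-1})$. The parent of $F_j$ is $F_{j+1}$, so $F_j$ is a descent exactly when $\lambda(F_j)\vartriangleright\lambda(F_{j+1})$; this is the usual descent of the label word $\lambda_1\cdots\lambda_r$ at position $j$. A bottom descent is a descent at $j=1$. Each non-minimal element has exactly one child, so a double descent is two consecutive descents at positions $j-1,j$. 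Hence stable flags are those whose descent set $D\subseteq[r-1]$ avoids $1$ and has no two consecutive elements. The plan is to color each vertex $G\in V$ by its rank $\rk(G)$. Then a face $\Des(\S)$ uses distinct colors automatically, since its elements form a chain and so have distinct ranks.

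The number of available colors has to match. Balancedness requires $\Gamma$ to be pure of dimension $d$ and to admit $d+1$ colors. The largest stable descent sets are the subsets of $\{2,\dots,r-1\}$ with no two consecutive elements, so maximal faces have size at most $\lfloor (r-1)/2\rfloor=\lfloor d/2\rfloor$ with $d=r-1=\deg\H$. We must therefore compress the ranks into $\lfloor (r-1)/2\rfloor$ colors. The color classes will be pairs of consecutive ranks $\{2,3\},\{4,5\},\dots$. I expect instead a greedy coloring: in a face $\{G_1<\dots<G_k\}$ with no consecutive ranks and $\rk G_1\ge 2$, we have $\rk G_i\ge 2i$. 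The coloring must be a function of the vertex alone, though, so a cleaner choice is color $c(G)=\lfloor \rk(G)/2\rfloor$. Two descents in one chain with ranks $a<b$, $b\ge a+2$, get distinct colors. The values of $c$ range over $1,\dots,\lfloor (r-1)/2\rfloor$, since descents have rank between $2$ and $r-1$. This gives exactly $\lfloor (r-1)/2\rfloor$ colors, matching the maximal face size.

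The main obstacle is purity: every face must extend to a face of size $\lfloor (r-1)/2\rfloor$. By Theorem~\ref{thm:gamma-is-simplicial-complex}, $\Gamma$ is a simplicial complex. It then suffices to show that for any stable flag $\S$ whose descent set is not of maximal size, there is a rank $j$ with $j,j\pm1\notin\Des(\S)$, $j\ge2$, $j\le r-1$. For such $j$ we want a stable flag $\S'$ with $\Des(\S')=\Des(\S)\cup\{F'_j\}$.

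The construction works only inside the interval $[F_{j-1},F_{j+1}]$, which has rank $2$, by changing the middle flat. Since positions $j-1$ and $j$ are ascents, the labels satisfy $\lambda_{j-1}\vartriangleleft\lambda_j\vartriangleleft\lambda_{j+1}$. A rank-two interval in a geometric lattice contains at least two atoms. Choose another middle flat $F'_j$, which I claim can be taken so that $\min(F'_j\setminus F_{j-1})\vartriangleright\min(F_{j+1}\setminus F'_j)$. Take $a=\min_{\vartriangleleft}(F_{j+1}\setminus F_{j-1})$ and let $F'_j$ be any middle flat not containing $a$; such a flat exists because there are at least two middle flats and $a$ lies in only one. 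Then $a$ becomes the label of $F_{j+1}$, and the new label of $F'_j$ is $\vartriangleright a$, so $j$ becomes a descent.

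The labels at positions $j-1$ and $j+1$ could change. The label at $j-1$ is unchanged, while the comparison at position $j-1$ changes. We need position $j-1$ to stay an ascent, with $\lambda_{j-1}\vartriangleleft\lambda'_j$; otherwise a forbidden consecutive descent appears. This uses $\lambda'_j\vartriangleright a\trianglerighteq\lambda_j\vartriangleright\lambda_{j-1}$. At position $j+1$ the new label is $a$ versus the old $\lambda_{j+1}\trianglerighteq a$. The comparison with $\lambda_{j+2}$ may therefore flip from descent to ascent, but not the reverse. Since $j+1\notin\Des$ was already an ascent, with $\lambda_{j+1}\vartriangleleft\lambda_{j+2}$, and $a\trianglelefteq\lambda_{j+1}$, it remains an ascent. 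Other descents are unaffected, and the bottom and double-descent conditions are preserved, so $\S'$ is stable. Iterating gives purity, and combined with the rank-based coloring this proves $\Gamma$ is balanced.
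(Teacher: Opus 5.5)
Your coloring $c(G)=\lfloor \rk(G)/2\rfloor$ is exactly the paper's argument, and so is the reason it works: descent ranks lie in $\{2,\dots,r-1\}$, and two descents of a stable flag differ in rank by at least $2$.

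The purity part, however, is not merely unproved but false: $\Gamma$ is in general not pure. The ranks of $\Des(\S)$ form an independent set in the path graph on $\{2,\dots,r-1\}$. Your claim that a non-maximum such set always admits $j$ with $j,j\pm1\notin\Des(\S)$ confuses maximal with maximum independent sets; for instance, $\{3\}$ is maximal but not maximum in the path on $\{2,3,4\}$.

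Concretely, take $\B_5$ with the natural order.
\begin{itemize}
\item The flag $1<12<124<1234<12345$ has labels $1,2,4,3,5$, is stable, and has $\Des=\{124\}$.
\item Any stable flag with $124$ as a descent has it at rank $3$. A second descent would sit at rank $2$ or $4$ and create a double descent, so $\{124\}$ is a facet.
\item Yet $1<13<123<1235<12345$ gives the face $\{13,1235\}$ of size $2$.
\end{itemize}
The same happens for $r=7$ with descents at ranks $\{3,5\}$. So ``balanced'' can only mean what the paper's proof establishes, and what the FFK inequalities in Corollary~\ref{cor:rk6-RR} use: $\Gamma$ has dimension $d$ and a vertex coloring with $d+1$ colors in which every face is colorful. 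Read with the word ``pure'' from the stated definition, the theorem would be false.

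What you do need, and state only as an upper bound, is that the size $\lfloor (r-1)/2\rfloor$ is attained, i.e.\ some stable flag has $\lfloor (r-1)/2\rfloor$ descents. Otherwise $\dim\Gamma$ would be smaller and your coloring would use too many colors. The paper obtains this by checking, through the deletion recursion in the second proof of Theorem~\ref{thm:complete-gamma-pos}, that $\gamma(\M,\G_{\max})$ has degree $\lfloor (r-1)/2\rfloor$.

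Your local move in the rank-$2$ interval $[F_{j-1},F_{j+1}]$ is correct and gives this directly. Start from the descent-free flag $\ch_{\vartriangleleft}(\zero,\un)$ and apply the move successively at $j=2,4,6,\dots$; each time, positions $j-1,j,j+1$ are non-descents. This produces a stable flag with descents at ranks $2,4,\dots,2\lfloor (r-1)/2\rfloor$. Replacing ``purity'' by this attainment statement turns your proposal into a correct, and more explicit, version of the paper's proof.
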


\begin{proof}
If $\G$ is the maximal building set then one can check by induction (using the same recursion as in the second proof of Theorem \ref{thm:complete-gamma-pos}) that $\gamma(\M, \G_{\max})(t)$ has maximal possible degree which is $\lfloor \frac{\rk \M - 1}{2} \rfloor.$ If we color every vertex $F$ of $\Gamma$ by $\lfloor \frac{\rk(F)}{2}\rfloor$ then the number of colors we are using is $\lfloor \frac{\rk \M - 1}{2} \rfloor$. Two vertices $F_1,F_2$ of same color cannot both belong to the descent set of the same stable maximal nested set, which proves that $\Gamma$ is balanced. 
\end{proof}

\begin{example}\label{ex:failure-flagness}
    Let us focus on the case in which $\mathcal{G}=\mathcal{G}_{\max}$ is the maximal building set. Since any bipartite graph is triangle-free, then any $1$-dimensional balanced complex is flag. This means that the $\Gamma$-complex of $(\M,\G)$ is flag when $\G=\G^{\max}$ and $\rk(\L) \leqslant 6$. However, the complex $\Gamma$ constructed above may fail to be flag when $\rk(\L)\geqslant 7$.
    If $\L$ is the Boolean lattice over $7$ elements $1,2,3,4,5,6,7$ with the natural order, and $\G$ is the maximal building set, then $\{67, 234567\}$ is a simplex of $\Gamma$ because this set is the descent set of the maximal chain 
    \[ 6\leqslant 67 \leqslant 267\leqslant 2367 \leqslant 23467\leqslant 234567 \leqslant 1234567.\]
    Likewise, the set $\{2367, 234567\}$ is a simplex of $\Gamma$ because this is the descent set of the maximal chain 
    \[ 2\leqslant 23 \leqslant 236 \leqslant 2367 \leqslant 23467 \leqslant 234567 \leqslant 1234567.\]
    Finally, the set $\{67, 2367\}$ is a simplex of $\Gamma$ because this is the descent set of the maximal chain 
    \[ 6 \leqslant 67 \leqslant 267 \leqslant 2367 \leqslant 12367 \leqslant 123467 \leqslant 1234567.\]
    However, the set $\{67, 2367, 234567\}$ is not a simplex of $\Gamma$ because $2367$ belongs to $\ch_{\vartriangleleft}(67, 234567)$.
\end{example}

\section{Chow polynomials of flag built matroids}\label{sec:chow-flag}
In this section, we analyze the Chow ring of a flag built matroid, and prove that its Hilbert-Poincar\'{e} polynomial admits a $\gamma$-expansion with positive coefficients. The main result of this section is the following.

\begin{thm}\label{thm:flag-gamma-pos}
    If $(\M, \G)$ is a flag built matroid, then its Chow polynomial $\H(\M, \G)(t)$ is $\gamma$-positive. 
    In terms of tropical fans, 
    \[
    \Flag \, \subset \, \Gposs \subseteq \, \Gpos.
    \] 
\end{thm}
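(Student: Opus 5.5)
We will prove the stronger statement that $\Sigma_{\M,\G}\in\Gposs$ for every flag built matroid $(\M,\G)$. The argument is a double induction: first on $|E(\M)|$, then on $|\G\setminus\G_{\min}|$. Before starting, we make three reductions.
\begin{itemize}
\item By Lemma~\ref{lem:reducible} and Lemma~\ref{lem:cone-of-flag-is-flag}, we may assume $(\M,\G)$ is irreducible, since products of $\gamma$-positive polynomials with the appropriate centers of symmetry are $\gamma$-positive.
\item By Proposition~\ref{prop:composition-nested-sets}\ref{it:comp-nested-sets-4}, every star fan $\textrm{star}_{\sigma_\S}\Sigma_{\M,\G}$ is a product of nested set fans of the local intervals $L^G(\S)$. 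These are restrictions and contractions of $(\M,\G)$, hence flag by Proposition~\ref{prop:res-cont-preserves-flag}, and they are strictly smaller when $\S\neq\varnothing$. So membership in $\Gposs$ reduces to $\gamma$-positivity of $\H(\M,\G)$ itself, given the induction hypothesis.
\item We may assume $\M$ is simple, since passing to the simplification changes neither the lattice of flats nor the nested set fan up to isomorphism.
\end{itemize}

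The core step is a deletion/extension reduction. Choose $e\in E$ that is not a coloop (if every element is a coloop, $\M$ is Boolean; that case is treated below). By Lemma~\ref{lem:flag-deletion}, $(\M,\G)\setminus e$ is flag and has a smaller ground set, so by induction $\Sigma_{\M\setminus e,\G\setminus e}\in\Gposs$.

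By Proposition~\ref{prop:built-del-ext}, $\Mcut_e$ is a $(\G\setminus e)$-compatible modular cut, and $(\M,\G\setminus S_e)=(\M\setminus e,\G\setminus e)\cup_{\Mcut_e}e$. Corollary~\ref{lem:proper-ext-chow} then gives an isomorphism of Chow rings between $(\M,\G\setminus S_e)$ and $(\M\setminus e,\G\setminus e)$. To keep control of all star fans of the extension, we apply Lemma~\ref{lem:trop-modif-gamma-pos}, with the truncation $\Tr_{\Mcut_e}(\M\setminus e,\G\setminus e)$ playing the role of the divisor $\Sigma$ (Proposition~\ref{prop:truncation-alpha-cap-product}). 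This truncation is the contraction $(\M,\G\setminus S_e)/e$ by Lemma~\ref{lem:built-truncation-equality}. It has fewer elements, and we need it to be flag so the induction applies; this has to be checked, using that contractions of the flag built matroid $(\M,\G)$ are flag.

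This gives $\Sigma_{\M,\G\setminus S_e}\in\Gposs$. We must also check that $\G\setminus S_e$ is flag. Its nested set complex is the restriction of $\N(\L,\G)$ to the vertices $\G\setminus S_e$, because the nested condition only involves joins lying in the building set and $S_e$ consists of elements in which $e$ is a coloop. Flagness then follows from Lemma~\ref{lem:restriction-of-flag-is-flag}.

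Next we go from $\G\setminus S_e$ back up to $\G$. Proposition~\ref{lem:flag-binary-filtration} gives a binary filtration $\G\setminus S_e=\G_p\subset\cdots\subset\G_0=\G$. Each binary augmentation adds an element $G$ with exactly two factors $G_1,G_2$. Geometrically this is a stellar subdivision along the $2$-dimensional cone $\sigma_{\{G_1,G_2\}}$, or a $\P^1$-bundle when that cone lies in the linearity space, i.e.\ when $G$ becomes maximal. By Section~\ref{subsec:pullback-maps},
\[
\H(\Sigma^\ast)=\H(\Sigma)+t\,\H(\textrm{star}_\sigma\Sigma)\quad\text{or}\quad (1+t)\H(\Sigma).
\]
Both summands are $\gamma$-positive with the same center of symmetry, since $\textrm{star}_\sigma\Sigma$ has dimension two less. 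Hence $\gamma$-positivity propagates along the filtration. The star fans of each new fan are stars of the old fan or of $\textrm{star}_\sigma\Sigma$, or are blown up in the same way, so membership in $\Gposs$ propagates as well.

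The Boolean base case runs the same filtration argument starting from $\G_{\min}$, whose fan gives a point (Example~\ref{ex:fan-Boolean-min}).

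The main obstacle is that the intermediate building sets $\G_i$ need not be flag, so the inductive hypothesis cannot be applied to the star fans $\textrm{star}_\sigma\Sigma_{\M,\G_i}$ directly. To handle this, we will identify those stars with products of local intervals of $(\M,\G_i)$. We will then argue that these carry their own binary filtrations up to restrictions and contractions of $\G$, which are flag. Alternatively, we can invoke the $\Gposs$ statement for the previous stage and check that $\textrm{star}_\sigma$ of a $\Gposs$ fan is again in $\Gposs$, which holds by definition since stars of stars are stars. The second route seems cleaner, so we try it first. The remaining delicate point is verifying that the hypotheses of Lemma~\ref{lem:trop-modif-gamma-pos} hold, in particular that the truncation is flag, or at least lies in $\Gposs$ by a separate induction.
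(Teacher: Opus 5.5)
\textbf{There is a genuine gap, at the tropical modification step.} Your outline is the paper's: delete a non-coloop, re-extend by a tropical modification, climb back to $\G$ along a binary filtration, and treat the Boolean case starting from $\G_{\min}$. The difference is the order of the middle operations, and that is where the argument breaks.

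You modify $\Sigma_{(\M,\G)\setminus e}$ along the truncation divisor $\Sigma_{\M/e,\,\Tr_{\Mcut_e}(\G\setminus e)}$, where $\Tr_{\Mcut_e}(\G\setminus e)=(\G\setminus S_e)/e$. Lemma~\ref{lem:trop-modif-gamma-pos} requires this divisor, and all its stars, to be $\gamma$-positive. Your only proposed source for that is the inductive hypothesis via flagness. But $(\G\setminus S_e)/e$ is a contraction of $(\M,\G\setminus S_e)$, not of $(\M,\G)$, and it need not be flag.

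Here is a counterexample. Take $\M=\U_{4,5}$ on $\{1,2,3,e,f\}$ and $\G=\G_{\max}\setminus\{12,13,23,123\}$.
\begin{itemize}
\item $\G$ is a building set, since the removed flats have Boolean lower intervals.
\item $\G$ is flag. Any antichain with no pairwise join in $\G$ lies in $\{1,2,3\}$, and no join of such elements is in $\G$.
\item $S_e$ consists of all flats of size $2$ or $3$ containing $e$. Hence $(\G\setminus S_e)/e=\{1e,2e,3e,fe,1fe,2fe,3fe,E\}$ on $\M/e\cong\U_{3,4}$.
\item In this building set, $\{1e,2e,3e\}$ is a minimal nonface of size $3$: its join $E$ lies in the building set, but $12e,13e,23e$ do not.
\end{itemize}

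A ``separate induction'' is not available either. Along a binary filtration, $\H(\Sigma^\ast)=\H(\Sigma)+t\,\H(\textrm{star}_\sigma\Sigma)$ only pushes $\gamma$-positivity upward; it gives no control of $\H(\Sigma)$. Sub-building sets of flag building sets can also fail to be $\gamma$-positive, for instance $\G_{\min}\subset\G_{\max}$ on $\U_{n-1,n}$ with $n\geqslant 4$.

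\textbf{How the paper avoids this: subdivide first, then modify.}
\begin{itemize}
\item The binary filtration $\G\setminus S_e=\G_p\subset\cdots\subset\G_0=\G$ contracts to a binary filtration $(\G\setminus S_e)/e=\G_p/e\subset\cdots\subset\G_0/e=\G/e$.
\item The corresponding stellar subdivisions turn the divisor into $\Sigma_{(\M,\G)/e}$. This fan is flag by Proposition~\ref{prop:res-cont-preserves-flag} and has fewer elements, so it lies in $\Gposs$ by induction.
\item The same subdivisions, applied to the ambient fan $\Sigma_{(\M,\G)\setminus e}$, keep it in $\Gposs$.
\item Only then is Lemma~\ref{lem:trop-modif-gamma-pos} applied, to this refined pair. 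Further subdivisions then produce $\Sigma_{\M,\G}$.
\end{itemize}
The intermediate ambient fan is no longer a nested set fan, which is why the induction is phrased in terms of $\Gposs$.

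\textbf{A smaller error.} The nested set complex of $\G\setminus S_e$ is not the restriction of $\N(\L,\G)$. In the example above, $\{1,e\}$ is nested for $\G\setminus S_e$ but not for $\G$. Moreover, $\G\setminus S_e$ is not flag there, since $\{1,2,3,e\}$ is a minimal nonface. This claim is also unnecessary: Proposition~\ref{lem:flag-binary-filtration} only needs the larger building set $\G$ to be flag, so you can drop it.
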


\begin{proof}[{Proof of Theorem~\ref{thm:flag-gamma-pos}}]
Let $(\M, \G)$ be a flag built matroid. We will prove that $\Sigma_{\M, \G}$ belongs to $\Gposs$ by induction on the cardinality of $E(\M)$ and the cardinality of $\G$. There are $2$ cases depending on whether $\M$ is a Boolean matroid. 

If $\M$ is the Boolean matroid $\U_{n, n}$ on $n$ elements, then we induct on the size of the building set $\G$. The base case is when $\G = \G_{\min}$. Then explicit computation in Example~\ref{ex:fan-Boolean-min} shows that the Chow polynomial is $\H(\U_{n, n}, \G_{\min})(t) = 1$, which admits the $\gamma$-expansion $\gamma(\H(\U_{n, n}, \G_{\min}); t) = 1$.  
When $\G_{\min} \subsetneq \G$, Proposition~\ref{lem:flag-binary-filtration} implies that there exists a binary filtration from $\G_{\min}$ to $\G$. By Section~\ref{subsec:pullback-maps}, this binary filtration induces a sequence of iterated $\P^1$-bundles and iterated blowups at codimension-$2$ torus orbit closures on the toric variety $X(\Sigma_{\U_{n, n}, \G_{\min}})$ to yield the toric variety $X(\Sigma_{\U_{n, n}, \G})$. Therefore, the resultant tropical fan 
\[
\Sigma_{\M, \G} \in \Gpos^{\ast} \subseteq \Gpos.
\] 

If $\M$ is not a Boolean matroid, then there exists an element $e \in E$ of $\M$, which is not a coloop of $\M$. We induct on the cardinality of $E$ and $\G$ by deleting a non-coloop of $\M$. Since $(\M, \G)$ is a flag built matroid, then so is the single-element deletion $(\M, \G) \setminus e$ by Lemma \ref{lem:flag-deletion} and the contraction $\Sigma_{(\M, G)/e}$ by Lemma~\ref{lem:restriction-of-flag-is-flag}. 
Let our inductive hypothesis posit that 
\[
\Sigma_{(\M, \G)\setminus e}, \Sigma_{(\M, G)/e} \in \Gpos^{\ast} \subseteq \Gpos. 
\] 
We outline our strategy for proving that $\Sigma_{\M, \G} \in \Gpos^{\ast}$ using this hypothesis as follows. At the end of each step, we obtain a tropical fan in $\Gpos^{\ast}$ that is closer to $\Sigma_{\M, \G}$: 
\begin{enumerate}[label=\arabic*, leftmargin=36pt, itemsep=3pt]
    \item[Step 1:] \label{item:flag-step-1} $\Sigma_{(\M, \G) \setminus e}$ contains $\Sigma_{\M/e, \Tr_{\Mcut_e}(\G \setminus e)}$ as a tropical Weil divisor, which we refine via an iterated sequence of stellar subdivisions until we obtain the subfan $\Sigma_{\M/e, \G/e}$. 
    \item[Step 2:] We extend the same refinements to the ambient fan $\Sigma_{(\M, \G)\setminus e}$, producing a refined ambient fan $\Sigma^{\ast}_{(\M, \G) \setminus e}$, which contains the subfan $\Sigma_{(\M, \G)/e}$ as a tropical Weil divisor. 
    \item[Step 3:] We perform a tropical modification of the tropical fan $\Sigma^{\ast}_{(\M, \G) \setminus e}$ along the tropical Weil divisor $\Sigma_{(\M, \G)/e}$. 
    \item[Step 4:] We obtain $\Sigma_{\M, \G}$ via a further sequence of iterated stellar subdivisions on the tropical modification obtained after Step 3. 
\end{enumerate}
In summary, we have the following diagram of tropical fans, where all the surjections are given by iterated stellar subdivisions unless otherwise specified: 
\[
\begin{tikzcd}[column sep=large]
    \Sigma_{(\M, \G)/e} \arrow[r, hook] \arrow[d, two heads] & \Sigma^{\ast}_{(\M, \G)\setminus e} \arrow[d, two heads] & &  \widetilde{\Sigma}_{(\M, \G) \setminus e} \arrow[ll, two heads, "\textrm{tropical modification}"'] & \Sigma_{\M, \G} \arrow[l, two heads] \\
    \Sigma_{\M / e, \Tr_{\Mcut_e}(\G\setminus e)} \arrow[r, hook] & \Sigma_{(\M, \G)\setminus e} & &  & & 
\end{tikzcd}
\]

For Step 1:  Since $\G$ is a flag building set on $\M$, and $\G \setminus S_e \subsetneq \G$ is a building set on $\M$ by Proposition~\ref{prop:built-del-ext}, Lemma \ref{lem:flag-binary-filtration} produces a binary filtration of building sets on $\M$,
\begin{equation}\label{eq:M-filtration}
    \G\setminus S_e = \G_p \subsetneq \cdots \subsetneq \G_0 = \G
\end{equation}
where $S_e = \{ G\in \G \, | \, e \textrm{ is a coloop in } G \textrm{ and } G \neq \{e\}\}$. By Proposition~\ref{prop:truncation-alpha-cap-product}, the tropical fan $\Sigma_{\M/e, \Tr_{\Mcut_e}(\G \setminus e)}$ is a tropical Weil divisor of $\Sigma_{(\M, \G)\setminus e}$. Furthermore, on the matroid $\M/e$, the building set $\G/e$ is a flag building set, and $\Tr_{\Mcut_e}(\G \setminus e) = (\G \setminus S_e)/e \subsetneq \G /e$ is a building set on $\M/e$. The binary filtration in ~\eqref{eq:M-filtration} induces the binary filtration on $\M/e$: 
\begin{equation}\label{eq:M/e-filtration}
\Tr_{\Mcut_e}(\G \setminus e) = (\G\setminus S_e)/e = \G_p/e \subsetneq \cdots \subsetneq \G_0 /e = \G/e
\end{equation}
This binary filtration on $\M/e$ induces a sequence of stellar subdivisions on the tropical fan $\Sigma_{\M/e, \Tr_{\Mcut_e}(\G \setminus e)}$, to obtain the fan 
\[
\Sigma_{(\M, G)/e} \in \Gpos^{\ast}.
\]

For Step 2: We extend the same refinements to the ambient fan $\Sigma^{\ast}_{(\M, \G) \setminus e}$. This refinement is an iterated sequence of stellar subdivisions on the tropical fan $\Sigma_{(\M, \G) \setminus e}$. By Section~\ref{subsec:pullback-maps}, 
\[
\Sigma^{\ast}_{(\M, \G) \setminus e} \in \Gpos^{\ast}. 
\]
Furthermore, the fan $\Sigma^{\ast}_{(\M, \G) \setminus e}$ contains the subfan $\Sigma_{(\M, \G)/e}$ as a tropical Weil divisor by construction. Crucially, the tropical fan $\Sigma^{\ast}_{(\M, \G) \setminus e}$ is not a nested set fan of some built matroid. 

For Step 3: We perform tropical modification of the tropical fan $\Sigma^{\ast}_{(\M, \G) \setminus e}$ along $\Sigma_{(\M, \G)/e}$, and denote the resultant tropical fan $\widetilde{\Sigma}_{(\M, \G) \setminus e}$. Lemma~\ref{lem:trop-modif-gamma-pos} implies that 
\[
\widetilde{\Sigma}_{(\M, \G) \setminus e} \in \Gpos^{\ast}. 
\] 

For Step 4: The binary filtration in ~\eqref{eq:M-filtration} implies 
\[
\G \setminus S_e \subsetneq \G_{p} \subsetneq \cdots \subsetneq  \G_0  =\G 
\] 
induces an iterated sequence of stellar subdivisions on $\widetilde{\Sigma}_{(\M, \G) \setminus e}$, and the resultant fan is $\Sigma_{\M, \G}$. Therefore, the associated toric variety $X(\Sigma_{\M, \G})$ is an iterated blowup on an iterated $\P^1$-bundle of the toric variety $X(\widetilde{\Sigma}_{(\M, \G) \setminus e})$. By Section~\ref{subsec:pullback-maps}, its Poincar\'{e} polynomial $\H(\Sigma_{\M, \G})$ admits a $\gamma$-expansion with positive coefficients, and $\Sigma_{\M, \G} \in \Gpos^{\ast} \subseteq \Gpos$, as desired.  
\end{proof}

\section{Application I: the Poincar\'e polynomial of \texorpdfstring{$\Moduli_{0,n}$}{barM0n}} 

\noindent For all $n \geqslant 2$ we denote by $\Braid_n$ the $n$-th complex braid arrangement 
$$ \Braid_n \coloneqq \{ \{z_i = z_j\} \subset \C^n, 1\leqslant i< j \leqslant n\}.$$
The matroid associated to $\Braid_n$ is the simple matroid whose lattice of flats is the $n$-th partition lattice denoted $\Pi_n$. The elements of $\Pi_n$ are the set partitions of $[n]$, which are ordered by refinement. 

For all $n\geqslant 3$ let us denote by $\Moduli_{0,n}$ the Deligne--Mumford moduli space of stable rational curves of genus $0$ with $n$ marked points. It is a classical fact that for all $n\geqslant 2$ the algebraic variety $\Moduli_{0,n+1}$ is isomorphic to the wonderful compactification $\overline{Y}_{\Braid_n, \G_{\min}}$ of the arrangement complement of $\Braid_n$ with respect to the minimal building set $\G_{\min}$ of $\Pi_n$ (see \cite[Section 4]{Kapranov1993Chow} for instance). In particular, we have the isomorphisms of commutative algebras 
$$ \mathrm{CH}(\overline{Y}_{\Braid_n, \G_{\min}}; \Q) \simeq \mathrm{CH}(\Moduli_{0,n+1}; \Q) \simeq \CH(\Pi_n, \G_{\min}),$$
which give the equalities of polynomials 
$$ P_{\overline{Y}_{\Braid_n, \G_{\min}}}(t) = P_{\Moduli_{0,n+1}}(t) = \H(\Pi_n, \G_{\min})(t).$$ 

The lattice $\Pi_n$ is known to be supersolvable \cite[Example 2.6]{stanley1972supersolvable} with a maximal chain of modular flats witnessing this supersolvability given by the partitions 
\begin{equation}\label{eq:maximal-chain-partition}
\zero \prec |12|3|\cdots \prec |123|4|\cdots \prec \cdots \prec |1\cdots n| = \un.
\end{equation}
By \cite[Proposition 3.11]{Coron_2025} the pair $(\Pi_n, \G_{\min})$, with $\G_{\min}$ the minimal building set of $\Pi_n$, is a supersolvable built matroid (see Example \ref{ex:supersolvable-built-matroid}) and so it is complete. Hence, the results of Section \ref{sec:complete-built-matroids} apply to the study of the Poincaré polynomial $P_{\Moduli_{0,n+1}}(t)$ of $\Moduli_{0,n+1}$. Our Theorem \ref{thm:complete-gamma-pos} implies that $P_{\Moduli_{0,n+1}}(t)$ is $\gamma$-positive, which recovers a result of Aluffi, Chen and Marcolli \cite[Theorem 1.2]{aluffi-chen-marcolli}. Furthermore, Theorem \ref{thm:complete-formula} can be applied to give a combinatorial model for the entries of the $\gamma$-transform of $P_{\Moduli_{0,n+1}}(t)$. Let us make this model explicit. 

The minimal building set $\G_{\min}$ of $\Pi_n$ consists of the set of partitions of $[n]$ having exactly one non-singleton equivalence class. The nested set complex $\mathcal{N}(\Pi_n, \G_{\min})$ has been studied extensively and in particular in \cite{feichtner} it is shown to be isomorphic to the complex of trees $T_n$ (see \cite{Boardman1971HomotopyTrees} and \cite{trappmann-ziegler} for instance). The maximal simplices of $T_n$ are given by combinatorial types of binary rooted trees with leaves labeled from $1$ to $n.$ If $\mathcal{T}$ is one such tree, for all vertices or leaves $v \in \T$ we denote by $\Leaf(v)$ the set of values of the leaves descending from $v$. We decorate any vertex $v \in \T$ by the value 
$$ \ell(v) \coloneqq \max (\min \Leaf(v_1), \min \Leaf(v_2))$$
where $v_1,v_2$ are the two children of $v$. We say that a vertex $v \in \T$ is a descent if $v$ has a parent $p(v)$ (i.e. $v$ is not the root of $\T$), and we have the strict inequality 
$$ \ell(v) > \ell(p(v)).$$
We also say that $v$ is a bottom descent if $v$ is a descent and the two children of $v$ are leaves, and we say that $v$ is a double descent if $v$ is a descent and both children of $v$ are descents as well. We denote by $T_{n,\stable}^{\max}$ the set of all maximal simplices of $T_n$ with no double descent and no bottom descent. Theorem \ref{thm:complete-formula} has the following corollary. 

\begin{corollary}\label{cor:braid-min}
For all $n\geqslant 2$, we have the equality of polynomials
\[ \gamma(P_{\Moduli_{0,n+1}};t) = \sum_{\T \in T_{n,\stable}^{\max}}t^{\des(\T)}.\]
Furthermore, the coefficients of $\gamma(P_{\Moduli_{0,n+1}};t)$ form the $f$-vector of a simplicial complex.
\end{corollary}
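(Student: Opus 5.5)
The plan is to specialize Theorem~\ref{thm:complete-formula} and Theorem~\ref{thm:gamma-is-simplicial-complex} to $(\Pi_n,\G_{\min})$, and then translate nested sets, labels and descents into the language of binary trees. We already know that $(\Pi_n,\G_{\min})$ is a supersolvable built matroid witnessed by the chain \eqref{eq:maximal-chain-partition}. By Example~\ref{ex:supersolvable-built-matroid} it is therefore $\vartriangleleft$-complete for any total order on the atoms (the transpositions $ij$) satisfying \eqref{eq:condition-ss-order}. Concretely, we take $ij \vartriangleleft kl$ whenever $\max(i,j)<\max(k,l)$, and break ties arbitrarily, for instance by the smaller element. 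Applying Theorem~\ref{thm:complete-formula} then gives
\[
\gamma(P_{\Moduli_{0,n+1}};t)=\sum_{\S\in\Nest(\Pi_n,\G_{\min})^{\max}_{\stable}} t^{\des(\S)},
\]
and Theorem~\ref{thm:gamma-is-simplicial-complex} gives the $f$-vector statement directly. So all the content lies in matching the two descent statistics.

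Next, we use the isomorphism $\N(\Pi_n,\G_{\min})\cong T_n$ of \cite{feichtner}. In the full complex $c\N$, a maximal nested set corresponds to a binary rooted tree $\T$ with leaves $[n]$. Each element $G\in\S$ is a partition with a single block $B$, and it corresponds to the internal vertex $v$ with $\Leaf(v)=B$; the top element $\un$ corresponds to the root. We then compute the label $\lambda_\S(G)$. Here $J^G$ is the join of the children's blocks, namely the partition with blocks $\Leaf(v_1)$ and $\Leaf(v_2)$. The atoms $e=ij$ with $J^G\vee e=G$ are exactly the pairs with $i\in\Leaf(v_1)$ and $j\in\Leaf(v_2)$. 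The $\vartriangleleft$-minimal such pair minimizes $\max(i,j)$, so its maximum is $\max(\min\Leaf(v_1),\min\Leaf(v_2))=\ell(v)$. The tie-break only matters among pairs sharing the same maximum $m=\ell(v)$. In that case the other endpoint lies in the child not containing $m$, and the whole comparison of $\lambda_\S(G)$ against $\lambda_\S(p(G))$ is decided by the maxima unless $\ell(v)=\ell(p(v))$. This equality is impossible: $\ell(v)$ lies in $\Leaf(v)$ and is the minimum of one child's leaves, while $\ell(p(v))$ is the minimum of the sibling subtree or of $\Leaf(v)$, and one checks these differ. Hence $G$ is a descent of $\S$ exactly when $v$ is a descent of $\T$.

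It remains to match the stability conditions. Maximal elements of $\S$ correspond to the root, which is exactly where descents are excluded in both settings. A descent $G\in\min\S$ is a vertex all of whose children are leaves, since atoms are not among the vertices of $\N$, only of the cone; this is a bottom descent in the tree sense. For double descents we compare ``all children of $G$ in $\S$ are descents'' with the tree condition. Here one must be careful that leaves are not elements of $\N$, so a vertex with one leaf child and one descent child needs checking against Remark~\ref{rmk:nodd-rank1}. I expect this bookkeeping, together with the exclusion of the tie case above, to be the main obstacle. I would resolve it by reformulating both conditions via Remark~\ref{rmk:nodd-rank1} as ``$\Des(\S)$ has no rank $1$ local interval''. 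In tree language this says that contracting all non-descent edges leaves no vertex whose merged subtree has exactly two parts, and I would then check that this is equivalent to the stated tree conditions. Once the bijection $\T\leftrightarrow\S$ is shown to preserve $\des$ and stability, the formula follows.
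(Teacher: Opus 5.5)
Your route is the paper's: $(\Pi_n,\G_{\min})$ is supersolvable, hence $\vartriangleleft$-complete for an order on the atoms $|ij|$ that compares $\max(i,j)$ first. That is the order compatible with \eqref{eq:condition-ss-order}. From there you apply Theorem~\ref{thm:complete-formula}, transport along Feichtner's isomorphism $\mathcal{T}\mapsto\{\Leaf(v)\}$, and use Theorem~\ref{thm:gamma-is-simplicial-complex} for the $f$-vector. Your label computation is correct: $\lambda_{\S}(G)$ has larger entry $\ell(v)$, and $\ell(v)\neq\ell(p(v))$, so the tie-break never matters. This is exactly the ``one easily checks'' step of the paper. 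One small slip: atoms $|ij|$ \emph{are} vertices of $\N(\Pi_n,\G_{\min})$. What is true is that the minimal elements of a maximal nested set are atoms, i.e.\ cherries, so your conclusion about bottom descents still stands.

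\textbf{The gap is the step you postpone.} The nested-set stability condition is \emph{not} equivalent to the tree conditions as stated. A nested-set double descent at $G=\Leaf(v)$ requires all children of $G$ \emph{in $\S$} to be descents. If $v$ has one leaf child and one internal child $w$, this only asks that $w$ be a descent. The tree definition instead asks that both children of $v$ be descents, which never holds because a leaf is never a descent.

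Concretely, take $n=5$ and the tree where:
\begin{itemize}
\item the root has children $1$ and $v$;
\item $v$ has children $2$ and $w$;
\item $w$ has children $5$ and $u$;
\item $u$ has children $3,4$.
\end{itemize}
Then $\ell(u)=4$, $\ell(w)=5$, $\ell(v)=3$ and $\ell(\text{root})=2$, so the descents are exactly $v$ and $w$. There is no bottom descent and no tree double descent. So this tree lies in $T^{\max}_{5,\stable}$ with $\des=2$. However, $P_{\Moduli_{0,6}}=1+16t+16t^2+t^3$ gives $\gamma(P_{\Moduli_{0,6}};t)=1+13t$, which has no $t^2$ term. On the nested-set side, $\{2,3,4,5\}$ is a double descent, since its only child $\{3,4,5\}$ is a descent. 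Equivalently, the local interval of $\Des(\S)$ at $\{2,3,4,5\}$ has rank one.

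So your final ``check'' cannot succeed. The paper's one-line passage from descent preservation to a bijection of stable sets has the same hole. The tree notion of double descent must be read as ``$v$ is a descent and every non-leaf child of $v$ is a descent'' (equivalently, count leaves as descents). With that reading your contraction argument closes the proof immediately. The merged block at a descent $v$ has exactly two parts iff every child of $v$ is a leaf or a descent. That happens iff $v$ is a bottom descent or a (corrected) double descent.

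One further caution for that argument. The rank-one condition of Remark~\ref{rmk:nodd-rank1} must be imposed only at local intervals headed by elements of $\Des(\S)$, not at $\un$ (the root). The fiber computation in Theorem~\ref{thm:complete-formula} allows $x_{\un}$ to be absent from the monomial. For example, take $n=6$ and the tree whose root has children $((1,3),4)$ and $((2,5),6)$. Both root children are descents, and the tree is stable with $\des=2$; it corresponds to the Feichtner--Yuzvinsky monomial $x_{\{1,3,4\}}x_{\{2,5,6\}}$. Yet its root block splits into exactly two parts, so your ``no merged vertex with two parts'' condition, if applied at the root, would wrongly discard it.
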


\begin{proof}
Let us consider the total order $\vartriangleleft$ on the atoms of $\Pi_n$ defined by 
$$ |i < j| \vartriangleleft |p < q| \textrm{ if and only if } i < j, \textrm{ or } i = p \textrm{ and }j < q.$$
Here $|i < j|$ means the partition with unique non-singleton equivalence class $\{i,j\}$. Notice that $\vartriangleleft$ satisfies Condition \eqref{eq:condition-ss-order} with respect to the maximal chain of modular flats \eqref{eq:maximal-chain-partition}, which implies that $(\Pi_n, \G_{\min})$ is $\vartriangleleft$-complete (see Example \ref{ex:supersolvable-built-matroid}). By Theorem \ref{thm:complete-formula} we obtain the formula
\begin{equation}\label{eq:complete-formula-partition}
\gamma(P_{\Moduli_{0,n+1}};t) = \sum_{\S \in \N(\Pi_n, \G_{\min})_{n,\stable}^{\max}}t^{\des(\S)}.
\end{equation} 
The isomorphism between $T_n$ and $\N(\Pi_n, \G_{\min})$ described in the proof of \cite[Theorem 3.1]{feichtner} sends a maximal vertex $\T$ to the maximal nested set $\S(\T) = \{ \Leaf(v) \, | \, v \textrm{ vertex of } \T \}.$ One easily checks that a vertex $v$ of $\T$ is a descent of $\T$ if and only if $Leaf(v)$ is a descent of $\S(\T)$, which implies that $\S(-)$ induces a descent-preserving bijection between $T_{n,\stable}^{\max}$ and $\N(\Pi_n, \G_{\min})_{n,\stable}^{\max}$. Thus, from \eqref{eq:complete-formula-partition} we obtain the equality of polynomials 
\[ \gamma(P_{\Moduli_{0,n+1}};t) = \sum_{\T \in T_{n,\stable}^{\max}}t^{\des(\T)}.\]
The coefficients of this polynomial correspond to an $f$-vector thanks to Theorem~\ref{thm:gamma-is-simplicial-complex}.
\end{proof}

\section{Application II: matroids of low rank}

\subsection{Essentials on $f$-vectors of complexes}\label{subsec:f-vector-inequalities}

Let $\Delta$ be a finite simplicial complex of dimension $d-1$. For $i\in \{0,1,\ldots,d\}$, let $f_i(\Delta)$ denote the number of simplices of cardinality $i$ (equivalently, of dimension $i-1$). The \emph{$f$-vector} of $\Delta$ is
\[
(f_0(\Delta),f_1(\Delta),\ldots,f_d(\Delta)).
\]
The \emph{$h$-vector} $(h_0(\Delta),\ldots,h_d(\Delta))$ is defined by
\[
  \sum_{i=0}^d f_i(\Delta)(y-1)^{d-i}
  \;=\;
  \sum_{i=0}^d h_i(\Delta)y^{d-i}.
\]
Accordingly,
\[
  f(\Delta;x)\coloneqq \sum_{i=0}^d f_i\,x^i,
  \qquad
  h(\Delta;y)\coloneqq \sum_{i=0}^d h_i\,y^i.
\]

A sequence $(f_0,\ldots,f_d)$ is an \emph{$f$-vector}, or \emph{Kruskal--Katona vector}, if it is the $f$-vector of a simplicial complex. Likewise, a sequence $(h_0,\ldots,h_d)$ is an \emph{$M$-vector}, or \emph{$O$-sequence}, if there exists a multicomplex $\Xi$ whose degree-$i$ monomials are counted by $h_i$ for all $0\leqslant i\leqslant d$. Since simplicial complexes are special cases of multicomplexes, every $f$-vector is an $M$-vector, but not conversely. Both classes admit explicit characterizations by inequalities; see \cite[Chapter~II]{stanley-96}. If the underlying simplicial complex or multicomplex is pure, one speaks of a \emph{pure $f$-vector} or \emph{pure $M$-vector}, respectively.

The \emph{Frankl--F\"uredi--Kalai (FFK) inequalities} \cite[Theorem~1.2]{frankl-furedi-kalai} characterize the $f$-vectors of balanced complexes. For instance, a $1$-dimensional balanced complex has $f$-vector $(1,f_1,f_2)$ with
\[
4f_2\leqslant f_1^2,
\]
equivalently, the polynomial $1+f_1t+f_2t^2$ is real-rooted. This follows from the fact that a bipartite graph on $f_1$ vertices has at most $f_1^2/4$ edges.

There is no conjectural characterization of the $f$-vectors of flag simplicial complexes. Nevertheless, Fr\"ohmader \cite{frohmader} proved that every such $f$-vector is also the $f$-vector of a balanced complex, and hence satisfies the FFK inequalities.

By the $g$-theorem for simplicial polytopes, due to Billera--Lee and Stanley \cite{billera-lee,stanley-g-theorem}, one has the following characterization of the $h$-vectors of boundary complexes of simplicial polytopes.

\begin{thm}[$g$-theorem for polytopes]\label{thm:g-theorem-for-polytopes}
    A sequence of integers $(h_0,\ldots,h_d)\in \mathbb{Z}^{d+1}$ is the $h$-vector of a simplicial polytope if and only if it satisfies:
    \begin{enumerate}[\normalfont (i), leftmargin = 20pt, itemsep = 3pt] 
        \item \label{it:g-thm(i)} $h_0 = 1$.
        \item \label{it:g-thm(ii)} $h_i = h_{d-i}$ for each $0\leqslant i\leqslant d$.
        \item \label{it:g-thm(iii)} The $g$-vector, defined as $(g_0,\ldots,g_{\lfloor d/2\rfloor})$ where $g_0 = h_0 = 1$, and $g_i := h_i - h_{i-1}$ for each $1\leqslant i\leqslant \lfloor d/2\rfloor$, is an $M$-vector.
    \end{enumerate}
\end{thm}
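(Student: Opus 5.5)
This is the classical $g$-theorem, so I would not try to reprove it from the paper's tools. The plan is to treat it as two directions and cite the standard arguments.

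\emph{Necessity} (Stanley). Given a simplicial $d$-polytope, perturb its vertices to rational coordinates without changing the combinatorial type. The associated projective toric variety $X$ then has only finite quotient singularities, and its rational cohomology is the Stanley--Reisner ring of the boundary complex modulo a linear system of parameters. This ring is concentrated in even degrees with $\dim H^{2i}(X;\Q)=h_i$, so~\ref{it:g-thm(i)} is immediate. Poincar\'e duality for the rationally smooth variety $X$ gives~\ref{it:g-thm(ii)}. For~\ref{it:g-thm(iii)}, apply hard Lefschetz with respect to an ample class $\omega$ on $X$. Multiplication by $\omega$ is injective from $H^{2i-2}$ to $H^{2i}$ for $i\leqslant\lfloor d/2\rfloor$. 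Hence the quotient $H^\bullet/(\omega)$, truncated at degree $\lfloor d/2\rfloor$, is a standard graded $\Q$-algebra generated in degree one, with Hilbert function $(g_0,\ldots,g_{\lfloor d/2\rfloor})$. Macaulay's theorem characterizes Hilbert functions of such algebras as exactly the $M$-vectors, which gives~\ref{it:g-thm(iii)}. The only substantive input is hard Lefschetz for toric varieties, which I would quote. In the spirit of this paper one could also quote it via McMullen's polytope algebra or the Adiprasito-style Lefschetz results, but citation is the intent.

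\emph{Sufficiency} (Billera--Lee). Given an $M$-vector $g$, construct an explicit polytope. Choose an order ideal of monomials (a multicomplex) whose degree-$i$ part has size $g_i$. Use the compressed order ideal, so that Macaulay's shadow bounds hold with equality in the right places. Encode each monomial as a subset of a cyclic-polytope vertex set, and take the set of facets $\mathcal F$ of the cyclic polytope $C(n,d+1)$ associated to these monomials. Show that $\mathcal F$ forms a shellable ball visible from a suitable point beyond those facets. That is the key technical step: the chosen facets must be exactly those visible from a point near the moment curve, and their boundary is then the boundary of a simplicial $d$-polytope obtained by a ``beneath--beyond'' construction. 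A shelling computation then shows that the resulting boundary sphere has $h$-vector with $h_i-h_{i-1}=g_i$, and~\ref{it:g-thm(ii)} fixes the rest.

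The main obstacle is the realizability step in sufficiency, namely proving that the Billera--Lee facet family is exactly the visible region. I would handle it with Gale's evenness criterion and an explicit choice of points on the moment curve, following the original paper. Since the statement is classical and the paper invokes it as a black box \cite{billera-lee,stanley-g-theorem}, in practice I would record these two citations rather than reproduce either construction.
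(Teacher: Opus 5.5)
Your proposal is correct and matches the paper, which gives no proof of this theorem: it cites Stanley \cite{stanley-g-theorem} for necessity and Billera--Lee \cite{billera-lee} for sufficiency. Your outlines of both directions are the standard arguments. For necessity, that is hard Lefschetz on the rational projective toric variety combined with Macaulay's theorem. For sufficiency, it is the shellable Billera--Lee ball in $\partial C(n,d+1)$ visible from a single point, whose boundary is the boundary of the vertex figure at the new vertex; recording the two citations, as you intend, is exactly what the paper does.
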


Sequences satisfying these conditions are often called \emph{Stanley--Iarrobino sequences}, or \emph{SI-sequences}. In particular, SI-sequences are unimodal. A result announced by Adiprasito and Papadakis--Petrotou \cite{adiprasito,papadakis-petrotou,adiprasito-papadakis-petrotou} states that SI-sequences are exactly the $h$-vectors of simplicial spheres.

\subsection{Two useful results on $f$-vectors}

The next result is essentially due to Nevo, Petersen, and Tenner \cite{nevo-petersen-tenner}.

\begin{proposition}\label{prop:gamma-f-vector-g-pure-f-vector}
    Let $(h_0,\ldots,h_d)$ be a sequence of integers satisfying \ref{it:g-thm(i)} and \ref{it:g-thm(ii)} in \Cref{thm:g-theorem-for-polytopes}.  Consider the $\gamma$-expansion of the palindromic polynomial $h(x) = \sum_{i=0}^d h_i\, x^i$
    \[
    h(x) = \sum_{i=0}^{\lfloor d/2\rfloor} \gamma_i\,x^i(1+x)^{d-2i}.
    \]
    If $(\gamma_0,\ldots,\gamma_{\lfloor d/2\rfloor})$ is an $f$-vector, then the $g$-vector $(g_0,\ldots,g_{\lfloor d/2\rfloor})$ is a pure $f$-vector (and hence a pure $M$-vector as well). In particular, $(h_0,\ldots,h_d)$ is a SI-sequence. 
\end{proposition}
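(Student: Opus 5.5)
Write $m=\lfloor d/2\rfloor$ and $e=d-2m\in\{0,1\}$. The plan is to express the $g$-vector directly in terms of the $\gamma$-vector, and then build an explicit pure complex realizing it out of the complex $\Gamma$ with $f(\Gamma)=(\gamma_0,\ldots,\gamma_m)$.

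\emph{Step 1: a formula for $g$ in terms of $\gamma$.} Expanding $h(x)=\sum_j\gamma_j x^j(1+x)^{d-2j}$ gives $h_i=\sum_j \gamma_j\binom{d-2j}{i-j}$. Hence for $i\leqslant m$,
\[
g_i=h_i-h_{i-1}=\sum_{j\leqslant i}\gamma_j\left(\binom{d-2j}{i-j}-\binom{d-2j}{i-j-1}\right).
\]
The coefficient $c_{j,i}:=\binom{N}{k}-\binom{N}{k-1}$, with $N=d-2j$ and $k=i-j\leqslant N/2$, is the ballot number. It counts $k$-subsets of $[N]$ satisfying a ballot-type condition; equivalently, it is the number of standard Young tableaux of the two-row shape $(N-k,k)$.

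\emph{Step 2: a pure complex realizing $(c_{0,i})_{i\leqslant m}$.} We need a pure $(m-1)$-dimensional simplicial complex $B_N$ whose $f$-vector is $\left(\binom{N}{k}-\binom{N}{k-1}\right)_{0\leqslant k\leqslant\lfloor N/2\rfloor}$, and we want it to be compatible with shifting $N\mapsto N-2$. The natural candidate takes vertices $1,\ldots,N$ and lets the faces be the sets $\{a_1<\cdots<a_k\}$ with $a_t\geqslant 2t$. These are exactly the sets whose removal keeps the ballot condition, and their number is the ballot number $\binom{N}{k}-\binom{N}{k-1}$. The family is clearly closed under taking subsets, since deleting an element only decreases indices. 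It is also pure: any face can be extended by adding large elements until it has size $\lfloor N/2\rfloor$.

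\emph{Step 3: assembling a pure complex for $g$.} Set
\[
\Delta:=\bigcup_{F\in\Gamma} F\ast B_{d-2|F|},
\]
where the vertices of $B_{d-2|F|}$ are taken to be the last $d-2|F|$ positions of a fixed copy of $[d]$, so that the pieces glue along $B$'s. Then $f_i(\Delta)=\sum_F c_{|F|,i}=g_i$ by Step 1, provided the union is disjoint on faces. The required closure property is that a face $F\cup A$ with $A\in B_{d-2|F|}$ has every subset of the form $F'\cup A'$ lying in $\Delta$. Here $F'\subseteq F$, so $F'\in\Gamma$, and we must check that $A'$ lies in $B_{d-2|F'|}$, i.e.\ that $B_{N}\subseteq B_{N+2}$ holds under the chosen embedding. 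Indexing positions from the end keeps the condition $a_t\geqslant 2t$ intact when $N$ grows. Faces should be disjoint across different $F$, because the $\Gamma$-part of a face is recovered as its intersection with $V(\Gamma)$. Purity also holds: any $F\cup A$ extends within $F\ast B_{d-2|F|}$ to a face of size $|F|+\lfloor (d-2|F|)/2\rfloor=m$.

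\emph{Consequences.} A pure $f$-vector is a pure $M$-vector, and an $M$-vector $g$ together with (i) and (ii) is exactly the definition of an SI-sequence. The main obstacle is Step 3, namely choosing the embeddings of the $B_N$'s so that the union is genuinely a simplicial complex. If this step fails, I would instead cite the construction of Nevo--Petersen--Tenner directly, which is precisely this statement.
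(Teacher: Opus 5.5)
Your proposal is correct and is essentially the paper's argument. The paper uses the Nevo--Petersen--Tenner complex $\{F\sqcup G : F\in\mathcal{F}(\gamma),\ G\in\mathcal{B}(d-2|F|)\}$, with the ballot complexes on a vertex set disjoint from that of $\mathcal{F}(\gamma)$, and gets purity exactly as in your Step~3 from the purity of ballot complexes; the only difference is that you rebuild the ballot complexes and their nesting explicitly instead of citing Nevo--Petersen--Tenner.

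The one loose spot is the purity of $B_N$ in Step~2. You cannot always add an element above $\max A$ (e.g.\ $A=\{4\}$ in $B_4$). However, adding $b=\max([N]\setminus A)$ does work whenever $|A|<\lfloor N/2\rfloor$, since the binding new constraint is then $N\geqslant 2(|A|+1)$.
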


\begin{proof}
    A version of the above result (without the conclusion that the $g$-vector is a \emph{pure} $f$-vector) was established in work of Nevo, Petersen, and Tenner \cite[Proposition~6.4]{nevo-petersen-tenner}. However, the complex $\Delta$ constructed by these authors is in fact pure: to see this, let us use the same notation as in their proof. Fix any facet $F\sqcup G$ of their complex $\Delta$. If $G$ were not a facet of $\mathcal{B}(d-2|F|)$, by taking any bigger face $G'$ such that $G\subsetneq G'\in \mathcal{B}(d-2|F|)$, one can readily see that $F\sqcup G'$ is a face of $\Delta$ (here it is crucial that the vertex set of the ballot complexes is set to be disjoint from the vertex set of $\mathcal{F}(\gamma)$). In particular, if $F\sqcup G$ is a facet of $\Delta$, then $G$ is a facet of $\mathcal{B}(d-2|F|)$, and given that the ballot complexes are pure themselves (cf. \cite[Proposition~6.3]{nevo-petersen-tenner}), one gets: 
    \[
    |F\sqcup G| = |F| + |G| = |F| + \left\lfloor\tfrac{d-2|F|}{2}\right\rfloor = \left\lfloor d/2\right\rfloor
    \] which gives the desired purity.
\end{proof}

As a consequence of the preceding result, we obtain the following corollary in the cases $d\leqslant 5$.  
A slightly weaker form of the first implication in the next statement can be found in \cite[Proposition~6.1]{schweitzer-vecchi}.

\begin{corollary}\label{coro:small-rank}
    Preserve the notation of Proposition~\ref{prop:gamma-f-vector-g-pure-f-vector}. Let $d\in \{4,5\}$. If $(\gamma_0,\gamma_1,\gamma_2)\in \mathbb{Z}^3$ is an $f$-vector, then $(h_0,\ldots,h_d)$ is log-concave. If $(\gamma_0,\gamma_1,\gamma_2)$ is the $f$-vector of a balanced complex, then the polynomial $h(t)$ is real-rooted.    
\end{corollary}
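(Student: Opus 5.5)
The plan is to reduce both assertions to explicit polynomial inequalities in $\gamma_1,\gamma_2$, using $\gamma_0=1$ (since $h_0=1$). For $d\in\{4,5\}$ the $\gamma$-expansion is $h(t)=(1+t)^d+\gamma_1 t(1+t)^{d-2}+\gamma_2 t^2(1+t)^{d-4}$. For $d=4$ this gives
\[
h=(1,\;4+\gamma_1,\;6+2\gamma_1+\gamma_2,\;4+\gamma_1,\;1),
\]
and for $d=5$ it gives
\[
h=(1,\;5+\gamma_1,\;10+3\gamma_1+\gamma_2,\;10+3\gamma_1+\gamma_2,\;5+\gamma_1,\;1).
\]
The hypothesis that $(1,\gamma_1,\gamma_2)$ is an $f$-vector means $\gamma_1,\gamma_2\geqslant 0$ and $\gamma_2\leqslant\binom{\gamma_1}{2}$, since a graph on $\gamma_1$ vertices has at most $\binom{\gamma_1}{2}$ edges. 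The balanced hypothesis strengthens the last inequality to $4\gamma_2\leqslant\gamma_1^2$, by the FFK remark already recorded.

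For log-concavity, palindromicity reduces the check to two inequalities, $h_1^2\geqslant h_0h_2$ and $h_2^2\geqslant h_1h_3$ (the latter for $d=4$ it is $h_2^2\geqslant h_1^2$; for $d=5$, $h_2h_2\geqslant h_1h_3$ with $h_3=h_2$, which is $h_2\geqslant h_1$). The inequalities involving the middle coefficients follow immediately from unimodality, which Proposition~\ref{prop:gamma-f-vector-g-pure-f-vector} already guarantees. Alternatively one checks directly that $h_2\geqslant h_1$ because $\gamma_1,\gamma_2\geqslant 0$.

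The only substantive inequality is $h_1^2\geqslant h_2$. For $d=4$ it reads $(4+\gamma_1)^2\geqslant 6+2\gamma_1+\gamma_2$, i.e. $10+6\gamma_1+\gamma_1^2\geqslant\gamma_2$. For $d=5$ it reads $15+7\gamma_1+\gamma_1^2\geqslant\gamma_2$. Both follow at once from $\gamma_2\leqslant\binom{\gamma_1}{2}\leqslant\gamma_1^2$.

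For real-rootedness, I will factor out $(1+t)^{d-4}$, which has only real roots. This leaves the quartic
\[
q(t)=(1+t)^4+\gamma_1t(1+t)^2+\gamma_2t^2.
\]
Since $q$ is palindromic of degree $4$, I substitute $s=t+1/t$ and divide by $t^2$. Using $(1+t)^2/t=s+2$, this gives $q(t)/t^2=(s+2)^2+\gamma_1(s+2)+\gamma_2$. Setting $u=s+2$, this is the quadratic $u^2+\gamma_1u+\gamma_2$.

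Its roots are real exactly when $\gamma_1^2\geqslant 4\gamma_2$, which is the FFK bound. When real, the roots are nonpositive because $\gamma_1,\gamma_2\geqslant 0$, so each satisfies $u\leqslant 0$, hence $s\leqslant -2$. Each equation $t+1/t=s$ with $s\leqslant -2$ has two real (negative) roots $t$. So $q$, and therefore $h$, is real-rooted.

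The main step needing care is this last translation from real roots in $u$ back to real roots in $t$, namely the requirement $|s|\geqslant 2$. Nonnegativity of the $\gamma_i$ handles it cleanly.
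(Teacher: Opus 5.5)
Your argument is correct. The real-rootedness half is essentially the paper's argument. FFK gives $\gamma_1^2\geqslant 4\gamma_2$, i.e.\ $1+\gamma_1t+\gamma_2t^2$ is real-rooted, and the paper then just asserts the standard equivalence with real-rootedness of $h(t)$. Your substitution $u=(1+t)^2/t$ is a proof of the direction needed. To make the root count airtight, record it as the factorization $q(t)=\bigl((1+t)^2-u_1t\bigr)\bigl((1+t)^2-u_2t\bigr)$; since $u_j\leqslant 0$, each factor has discriminant $u_j(u_j-4)\geqslant 0$.

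For log-concavity you take a different, more elementary route. The paper passes to the $g$-vector. It uses Proposition~\ref{prop:gamma-f-vector-g-pure-f-vector} (the Nevo--Petersen--Tenner construction) to know that $(1,g_1,g_2)$ is a pure $f$-vector, hence $g_2\leqslant g_1^2$. It then appeals to log-concavity of $(1+g_1t+g_2t^2)(1+t+t^2)$.

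You instead expand $h$ in terms of $\gamma_1,\gamma_2$ and check directly the only two inequalities that matter, $h_1^2\geqslant h_2$ and $h_2\geqslant h_1$, using Kruskal--Katona for the $\gamma$-vector itself. This is self-contained and bypasses the proposition. It also shows that only $\gamma_1,\gamma_2\geqslant 0$ and the crude bound $\gamma_2\leqslant\gamma_1^2$ are used.

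Your version is also more explicit than the paper's product step. That product agrees with $h(t)$ only in $h_0,h_1,h_2$, so it yields $h_1^2\geqslant h_2$, while $h_2\geqslant h_1$ must be read off separately from $g_2\geqslant 0$. Your formulas make both inequalities visible.

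What the paper's route buys is that it places the statement inside the $g$-vector framework. There the conclusion that the $g$-vector is a pure $f$-vector holds in every degree (cf.\ Corollary~\ref{cor:g-complete-is-pure-f-vector}), even though log-concavity itself does not persist beyond $d=5$.
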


\begin{proof}
    The log-concavity part follows from the fact that the $g$-vector $(g_0,g_1,g_2)$ is a pure $f$-vector, and $f$-vectors of pure $1$-dimensional complexes are trivially log-concave, because $g_2 \leqslant \binom{g_1}{2} \leqslant g_1^2$. The log-concavity of $h(t)$ follows by multiplying the log-concave polynomials $1+g_1t+g_2t^2$ and $1+t+t^2$. 
    
    The real-rootedness part follows from the already mentioned fact that a $1$-dimensional balanced complex corresponds to a bipartite graph, and for bipartite graphs with $\gamma_1$ vertices, the maximum possible number of edges is $\lfloor\frac{\gamma_1}{2}\rfloor^2$. This gives $\gamma_2 \leqslant \gamma_1^2/4$, which is equivalent to saying that $1+\gamma_1t+\gamma_2t^2$ is real-rooted. This, in turn, is equivalent to assert that $h(t)$ is real-rooted.
\end{proof}

\begin{remark}
    It is tempting to believe that similar statements will hold for $d\in \{6,7\}$, perhaps by considering $(\gamma_0,\gamma_1,\gamma_2,\gamma_3)$ contained in the set of $f$-vectors of complexes of dimension $d$ that are balanced or pure. However, we have been able to find examples in which $(\gamma_0,\gamma_1,\gamma_2,\gamma_3)$ is the $f$-vector of a balanced complex (resp. a pure complex) and $h(t)$ even fails to be log-concave. To see the existence of such balanced (resp. pure) complexes it suffices to rely on the FFK inequalities (resp. on the characterization of pure $f$-vectors of length $4$ found by Colbourn et al. \cite{colbourn}). 
\end{remark}

\subsection{Beyond the real-rootedness of Chow polynomials}
\label{subsec:beyond-rr}
Since the Frankl--F\"uredi--Kalai inequalities imply that the $f$-vector of a $1$-dimensional balanced complex $\Delta$ forms the coefficients of a real-rooted polynomial, we can prove the following result.

\begin{corollary}\label{cor:rk6-RR}
    Let $\M$ be a matroid of rank $r\leqslant 6$. The Chow polynomial $\H(\M,\G_{\max})(t)$ is real-rooted.
\end{corollary}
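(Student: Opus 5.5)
The plan is to combine the balanced Nevo--Petersen statement for maximal building sets with the real-rootedness criterion of Corollary~\ref{coro:small-rank}. The Chow polynomial $\H(\M,\G_{\max})(t)$ is palindromic of degree $d=r-1$. It is a product over connected components only when $\G$ is not irreducible, but $\G_{\max}$ is always irreducible, so no reduction is needed. The proof then splits according to $d=\rk(\M)-1\leqslant 5$.

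\emph{Small ranks.} For $r\leqslant 3$ we have $d\leqslant 2$, so the $\gamma$-vector is $(\gamma_0)$ or $(\gamma_0,\gamma_1)$. The $\gamma$-expansion then reads $\gamma_0(1+t)^d$ or $(1+t)^2+\gamma_1 t$. The first is clearly real-rooted. For the second, real-rootedness of a palindromic quadratic $1+(2+\gamma_1)t+t^2$ with $\gamma_1\geqslant 0$ follows from the discriminant $(2+\gamma_1)^2-4\geqslant 0$. Here $\gamma_1\geqslant 0$ is Theorem~\ref{thm:complete-formula}, since $(\M,\G_{\max})$ is complete.

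\emph{Rank $4$ or $5$, i.e. $d\in\{3,4\}$.} Now $\gamma=(\gamma_0,\gamma_1)$ with $\gamma_0=1$ and $\gamma_1\geqslant 0$, and
\[
h(t)=(1+t)^{d-2}\bigl((1+t)^2+\gamma_1 t\bigr),
\]
which is real-rooted by the previous quadratic argument. More generally, I will use the standard fact that a palindromic $h$ is real-rooted if and only if its $\gamma$-polynomial $\sum\gamma_i t^i$ has only real nonpositive roots. This follows from the substitution $h(t)=(1+t)^d\,\gamma\bigl(t/(1+t)^2\bigr)$, because $t\mapsto t/(1+t)^2$ maps the negative reals onto $(-\infty,-\tfrac14]$ twice.

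\emph{Rank $6$ or $7$, i.e. $d\in\{4,5\}$, the main case.} By Theorem~\ref{thm:main-complete-implies-nevo-petersen+balanced} (specifically Theorem~\ref{thm:gamma-maximal-is-balanced}), the vector $(\gamma_0,\gamma_1,\gamma_2)$ is the $f$-vector of a balanced complex $\Gamma_{\vartriangleleft}(\M,\G_{\max})$. Corollary~\ref{coro:small-rank} then gives real-rootedness of $h(t)$ directly. The only point needing care, which I expect to be the sole obstacle, is that ``balanced'' is used in the sense of a proper $2$-coloring of a $1$-dimensional complex, so the $1$-skeleton is bipartite.

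\emph{The coloring and the bound.} In the proof of Theorem~\ref{thm:gamma-maximal-is-balanced}, vertices are colored by $\lfloor \rk(F)/2\rfloor$. This uses exactly $\lfloor (r-1)/2\rfloor=2$ colors when $r\in\{6,7\}$, after accounting for the fact that stable descent sets avoid rank-$1$ flats. Hence the edges of $\Gamma$ form a bipartite graph, and $\gamma_2\leqslant \gamma_1^2/4$. By the equivalence above, $1+\gamma_1t+\gamma_2t^2$ is real-rooted with nonpositive roots, so $h(t)$ is real-rooted.

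One caveat: the statement says $r\leqslant 6$, which gives $d\leqslant 5$, and the argument covers all these cases. The case $r=7$ would also work for $d=6$ only if $\gamma_3$ were controlled, which is not claimed here.
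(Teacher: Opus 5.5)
Your argument is essentially the paper's: Theorem~\ref{thm:gamma-maximal-is-balanced} makes $(\gamma_0,\gamma_1,\gamma_2)$ the $f$-vector of a complex whose $1$-skeleton is bipartite, so $\gamma_2\leqslant \gamma_1^2/4$, and Corollary~\ref{coro:small-rank} (plus the trivial cases $d\leqslant 3$) then gives real-rootedness for every $d=r-1\leqslant 5$. You should still fix some bookkeeping slips, none of which breaks the proof because your main case treats $d=4$ correctly: $d\in\{4,5\}$ means $r\in\{5,6\}$, not $\{6,7\}$, and for $r=7$ the coloring would use $3$ colors; for $d=4$ the $\gamma$-polynomial already has a possibly nonzero quadratic term (e.g.\ $\gamma(\B_5,\G_{\max})(t)=1+22t+16t^2$), so your factorization $h(t)=(1+t)^{d-2}\bigl((1+t)^2+\gamma_1t\bigr)$ holds only for $d\leqslant 3$; and $t\mapsto t/(1+t)^2$ maps the negative reals two-to-one onto $(-\infty,0)$, not $(-\infty,-\tfrac14]$, although the only direction you actually use (negative real roots of the $\gamma$-polynomial force $h$ to be real-rooted) is correct.
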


\begin{proof}
    Since $\M$ has rank $r\leqslant 6$, the Chow polynomial $\H(\M,\G_{\max})(t)$ has degree $r-1 \leqslant 5$. In particular, the associated $\gamma$-polynomial has degree at most $\lfloor \frac{r-1}{2}\rfloor = 2$. Since Theorem~\ref{thm:gamma-maximal-is-balanced} implies that this $\gamma$-vector is the $f$-vector of a balanced simplicial complex, we can conclude that $\H(\M,\G^{\max})(t)$ is real-rooted thanks to Corollary~\ref{coro:small-rank}.
\end{proof}
As mentioned in the introduction, the same result but for matroids of rank at most $5$ was previously deduced in \cite[Theorem~5.10]{ferroni-matherne-stevens-vecchi}. To achieve that proof, the authors relied on the Koszulness of the Chow ring $\uCH(\M,\G_{\max})$, which was proved by Mastroeni and McCullough in \cite{mastroeni-mccullough}---the technical ingredient to deduce real-rootedness are determinantal inequalities proved by Polishchuk and Positselski in \cite[Chapter~7, Theorem~2.1]{polishchuk-positselski}. 

The same strategy employed in the proof of the last corollary also allows us to deduce (via Theorem~\ref{thm:gamma-is-simplicial-complex} and Corollary~\ref{coro:small-rank}) the following log-concavity result.

\begin{corollary}\label{cor:rk6-complete-logconcave}
    Let $(\M,\G)$ be a complete built matroid, and assume that $\M$ has rank $r\leqslant 6$. The Chow polynomial $\H(\M,\G)(t)$ is log-concave.
\end{corollary}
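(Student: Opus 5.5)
The plan is to mirror the proof of Corollary~\ref{cor:rk6-RR}, replacing the balancedness input by the weaker $f$-vector statement that holds for all complete built matroids. First, by Poincar\'e duality the Chow polynomial $\H(\M,\G)(t)$ is palindromic of degree $d = r-1 \leqslant 5$ with $h_0 = 1$. So it satisfies conditions \ref{it:g-thm(i)} and \ref{it:g-thm(ii)} of Theorem~\ref{thm:g-theorem-for-polytopes}, and its $\gamma$-polynomial has degree at most $\lfloor d/2\rfloor \leqslant 2$.

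Second, Theorem~\ref{thm:gamma-is-simplicial-complex} says that the $\gamma$-vector of $\H(\M,\G)(t)$ equals the $f$-vector of the simplicial complex $\Gamma_{\vartriangleleft}(\M,\G)$, where $\vartriangleleft$ is any order for which $(\M,\G)$ is complete. When $d\in\{4,5\}$, Corollary~\ref{coro:small-rank} (through Proposition~\ref{prop:gamma-f-vector-g-pure-f-vector}) then gives log-concavity of $(h_0,\ldots,h_d)$ directly. Concretely, the $g$-vector $(1,g_1,g_2)$ is a pure $f$-vector, so $g_2 \leqslant \binom{g_1}{2}$. Hence $1+g_1t+g_2t^2$ is log-concave, and so is its product with $1+t+t^2$, which recovers $h$ up to the palindromic completion.

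Third, the small cases $d \leqslant 3$ (rank $r\leqslant 4$) need separate treatment. There $\gamma$ has degree at most $1$, so $h(t) = (1+t)^d + \gamma_1 t(1+t)^{d-2}$ with $\gamma_1 \geqslant 0$ by Theorem~\ref{thm:complete-formula}. Such a polynomial is real-rooted, since it equals $(1+t)^{d-2}\big((1+t)^2+\gamma_1 t\big)$ and the quadratic factor has nonnegative discriminant. It is therefore log-concave by Newton's inequalities. The cases $d\leqslant 1$ are trivial.

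The only delicate point is the product step for $d \in \{4,5\}$. One must check that the product of two log-concave sequences with positive coefficients and no internal zeros is log-concave, which is the standard fact invoked in Corollary~\ref{coro:small-rank}. One must also check that $h(t) = (1+t+\cdots)\cdot g$-type reconstruction is exactly the product used there. Since Corollary~\ref{coro:small-rank} is stated and proved earlier, I will simply cite it, so I expect no real obstacle beyond the bookkeeping of the degree cases.
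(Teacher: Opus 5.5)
Your proposal is correct and follows the paper's route, which simply combines Theorem~\ref{thm:gamma-is-simplicial-complex} with Corollary~\ref{coro:small-rank}. Your separate real-rootedness argument for $d\leqslant 3$ usefully covers the low-degree cases that the paper leaves implicit. Note that in general $d=r-|\max\G|\leqslant r-1$ rather than $d=r-1$, but your case split on $d$ already handles this.

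On the bookkeeping you flagged: $(1+g_1t+g_2t^2)(1+t+t^2)$ agrees with $h(t)$ only in degrees $0,1,2$, which gives $h_1^2\geqslant h_0h_2$. Up to palindromic symmetry, the only remaining inequality is $h_2^2\geqslant h_1h_3$, and for $d\in\{4,5\}$ this reduces to $h_2\geqslant h_1$, i.e.\ $g_2\geqslant 0$.
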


Recently Schweitzer and Vecchi \cite[Theorem~6.2]{schweitzer-vecchi} proved the above result in the special case $\G=\G_{\max}$ (they work with Chow polynomials in the sense of \cite{ferroni-matherne-vecchi}, so their results are formulated for weakly ranked posets). We can in fact extend \cite[Theorems~1.1 and 1.3]{schweitzer-vecchi} to all complete building sets.

\begin{corollary}\label{cor:g-complete-is-pure-f-vector}
    Let $(\M,\G)$ be a complete built matroid, and assume that $\M$ has rank $r$. If $h_i$ is the $i$-th coefficient of $\H(\M,\G)(t)$ for each $i$, then the vector $(h_0, h_1-h_0,\ldots, h_{\lfloor (r-1)/2\rfloor } - h_{\lfloor (r-1)/2\rfloor-1})$ is a pure $f$-vector.
\end{corollary}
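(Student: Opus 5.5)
The plan is to combine Theorem~\ref{thm:gamma-is-simplicial-complex} with Proposition~\ref{prop:gamma-f-vector-g-pure-f-vector}. Let $d = r-1$ be the degree of $\H(\M,\G)(t)$, and write $h_i$ for its coefficients. The first task is to check hypotheses \ref{it:g-thm(i)} and \ref{it:g-thm(ii)} of Theorem~\ref{thm:g-theorem-for-polytopes}.

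Condition \ref{it:g-thm(i)}, $h_0 = 1$, holds because $\uCH^0(\M,\G) = \Z$. Concretely, the only Feichtner--Yuzvinsky monomial of degree $0$ is the empty monomial.

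Condition \ref{it:g-thm(ii)}, $h_i = h_{d-i}$, is Poincar\'e duality for $\uCH(\M,\G)$, recalled in the introduction from the tropical Hodge theory of Ardila--Denham--Huh. Alternatively, it can be read off directly: Theorem~\ref{thm:complete-formula} expresses $\H(\M,\G)(t)$ as a nonnegative combination of the palindromic polynomials $t^i(1+t)^{d-2i}$, each centered at $d/2$. Since $(\M,\G)$ is assumed complete, that theorem applies and gives the palindromicity for free. It also shows that $\H(\M,\G)$ has exactly degree $d$ with $h_d = h_0 = 1$.

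Next comes the key input. By Theorem~\ref{thm:gamma-is-simplicial-complex}, applied with a total order $\vartriangleleft$ witnessing completeness, the $\gamma$-vector $(\gamma_0,\ldots,\gamma_{\lfloor d/2\rfloor})$ is the $f$-vector of the simplicial complex $\Gamma_{\vartriangleleft}(\M,\G)$. One indexing check is needed. The sum in Theorem~\ref{thm:complete-formula} runs over $t^{\des(\S)}$, so $\gamma_i$ counts faces of cardinality $i$. The empty face is the descent set of $\ch_{\vartriangleleft}(\zero,\un)$, which is stable and descent-free, so $\gamma_0 = 1 = f_0$. If $\gamma$ has fewer nonzero entries than $\lfloor d/2\rfloor + 1$, we pad with zeros, which remains a valid $f$-vector. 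The index range $0 \leqslant i \leqslant \lfloor d/2\rfloor = \lfloor (r-1)/2\rfloor$ then matches the vector in the statement.

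With these verified, Proposition~\ref{prop:gamma-f-vector-g-pure-f-vector} applies directly. It says that $(g_0,\ldots,g_{\lfloor d/2\rfloor}) = (h_0, h_1-h_0,\ldots,h_{\lfloor d/2\rfloor}-h_{\lfloor d/2\rfloor-1})$ is a pure $f$-vector, which is exactly the claim.

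There is essentially no obstacle beyond bookkeeping. The one point I would double-check is the degree and indexing convention. In Proposition~\ref{prop:gamma-f-vector-g-pure-f-vector} the polynomial has degree $d$ with $\gamma$-expansion up to $\lfloor d/2\rfloor$, and the Nevo--Petersen--Tenner construction needs the full length $\lfloor d/2\rfloor$ to produce a pure complex of dimension $\lfloor d/2\rfloor - 1$. Taking $d = r-1$, which is the actual degree of the Chow polynomial as established above, makes all the conventions align.
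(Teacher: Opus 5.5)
Your proposal is correct and follows the paper's argument: the paper deduces the corollary in one line from Theorem~\ref{thm:gamma-is-simplicial-complex} combined with Proposition~\ref{prop:gamma-f-vector-g-pure-f-vector}, and your checks of $h_0=1$, palindromicity and indexing are the bookkeeping that line leaves implicit. One point should be stated explicitly. Your identification $\deg \H(\M,\G)=r-1$ relies on $(\M,\G)$ being irreducible, which is the standing assumption under which Theorem~\ref{thm:complete-formula} is proved. For reducible complete built matroids such as $(\U_{n,n},\G_{\min})$ with $n\geqslant 3$, one has $\H=1$ and the vector in the statement begins $(1,-1,\ldots)$. So the corollary, and your proof, should be read for irreducible $(\M,\G)$, or with $r-1$ replaced by $\deg \H(\M,\G)$, as the paper tacitly does.
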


\begin{proof}
    This follows directly from Theorem~\ref{thm:gamma-is-simplicial-complex} together with Proposition~\ref{prop:gamma-f-vector-g-pure-f-vector}.
\end{proof}

\section{Open questions}

We finish this article by discussing several open problems and directions we would regard as interesting for further research.

\subsection{Koszulity of Chow rings and \texorpdfstring{$\gamma$}{gamma}-positivity}

A result of Mastroeni and McCullough~\cite{mastroeni-mccullough} shows that the Chow ring of a matroid with its maximal building set is Koszul. They also prove that the augmented Chow ring of a matroid is Koszul. On the other hand, Coron~\cite{Coron_2025} proved that the Chow ring of a supersolvable built lattice is Koszul; the special case of braid matroids with minimal building set had previously been established by Dotsenko~\cite{dotsenko}. We conjecture the following generalization. 

\begin{conjecture}\label{conj:complete-implies-koszul}
    If $(\M,\G)$ is a complete built matroid, then the Chow ring of $(\M,\G)$ is Koszul.
\end{conjecture}

\begin{conjecture}\label{conj:flag-implies-koszul}
    If $(\M,\G)$ is a flag built matroid, then the Chow ring of $(\M,\G)$ is Koszul.\footnote{This question was also raised independently by Spencer Backman.}
\end{conjecture}

Note that all of the cases above fall within the class of complete built matroids, but the augmented Chow ring case does not correspond to a flag built lattice.

As Reiner and Welker explain in \cite[Section~4]{reiner-welker}, there is a mysterious connection among Koszulity, $\gamma$-positivity, and flagness. Jason McCullough brought to our attention the existence of Koszul Gorenstein Artinian algebras whose Hilbert--Poincar\'e series are not $\gamma$-positive in the work of D'Al\`i and Venturello \cite[Example~7.5]{dali-venturello}. 
\begin{question}\label{question:koszul-gorenstein-implies-gamma}
    Let $R$ be a standard graded Koszul, Artinian, Gorenstein ring. What extra conditions on $R$ ensure the $\gamma$-positivity of the Hilbert--Poincar\'e series of $R$? 
\end{question}

Both the Chow ring of a built lattice and the cohomology of the toric variety associated with a simplicial polytope satisfy all the properties in the above question; in addition, they satisfy the K\"ahler package. In particular, it would be very interesting to know whether there exist Koszul Artinian algebras satisfying the K\"ahler package whose Hilbert--Poincar\'e series are not $\gamma$-positive. A related question was posed by Mastroeni and McCullough in \cite[Question~6.8]{mastroeni-mccullough}. If Koszulity together with the K\"ahler package were sufficient for $\gamma$-positivity, this would imply Gal's conjecture for flag simplicial polytopes and recover recent results on the $\gamma$-positivity of Chow rings of built matroids in the cases where Koszulity is known; see \cite{ferroni-matherne-stevens-vecchi, aluffi-chen-marcolli}. Consequently, a proof of Conjectures~\ref{conj:complete-implies-koszul} and \ref{conj:flag-implies-koszul} would recover several of the main results of the present article.

\subsection{Complete and flag building sets on a given matroid}

Consider a matroid $\M$ with lattice of flats $\L$. The set $\mathbb{B}(\L)$ of all building sets on $\L$ ordered by containment was proved by Backman and Danner \cite[Corollary~3.14]{backman2025convexgeometrybuildingsets} to be a lattice. Let us denote $\mathbb{B}_{\text{flag}}(\L)$ (resp. $\mathbb{B}_{\text{comp}}(\L)$) the family of all flag (resp. complete) building sets on $\L$.

\begin{question}
    How do the posets $(\mathbb{B}_{\text{flag}}(\L),\subset)$  and $(\mathbb{B}_{\text{comp}}(\L),\subset)$ embed into $(\mathbb{B}(\L),\subset)$? Is any of these two subposets a sublattice?
\end{question}

Furthermore, Backman and Danner prove that $\mathbb{B}(\L)$ is supermodular and it yields a supersolvable convex geometry structure on $\L\setminus\{\widehat{0}\}$ (see \cite[Proposition~3.13]{backman2025convexgeometrybuildingsets}). It would be interesting to know if some of their results also apply to the subposets corresponding to flag and complete building sets. Progress in this direction would provide additional structural information on these classes of building sets. 

\subsection{Real-rootedness problems}

In view of Conjectures~\ref{conj:chow-rr-maximal} and \ref{conj:chow-rr-braid-min}, it is natural to ask whether the gamma-positivity results of this paper can be strengthened to real-rootedness. The following example shows that this is not the case.\footnote{In an early version of the present paper (arXiv v2), we left this as an open question. The same question was also raised independently by Spencer Backman.} The example below was obtained with the assistance of ChatGPT~5.5 Pro. 

\begin{example}[Non-real-rooted Chow polynomials of flag and complete built matroids]
\label{ex:non-RR-chordal-flag-nestohedra}
Let $\B_{7}$ denote the Boolean lattice over seven elements $1,\ldots, 7,$ and let $\G$ denote the building set 
\begin{multline*} 
\G \coloneqq \{1,\ldots, 7, 12,13,123,134,1345,1235,1234,\\
13456,12345,134567,123457,123456,1234567 \}.
\end{multline*}
One can check that $\G$ is a building set of $\B_7$, and the built lattice $(\B_7, \G)$ is complete with respect to the natural order $1 < \cdots <7$. In particular, since $\B_7$ is supersolvable, the built lattice $(\B_7, \G)$ is flag (see Example \ref{ex:flag-built-lattices} (iv)). One can compute
\[ \H(\B_7, \G)(t) = 1 + 13t + 48t^2 + 73t^3 + 48t^4 + 13t^5 + t^6.\]
This polynomial has complex roots (approximated to $3$ decimal places): 
\begin{align*}
-1.851-0.137i, 
-1.851+0.137i, 
-0.537-0.039i, 
-0.537+0.039i. 
\end{align*}
More generally, for any $k \geqslant 7$, if one considers the building set 
\[ 
\G_k \coloneqq \G \cup \{8, \ldots, k , 12345678, 123456789, \ldots, 1\cdots k\}
\]
of $\B_k$, then $(\B_k, \G_k)$ is also a complete built lattice, and the corresponding Chow polynomial is 
\[\H(\B_k, \G_k)(t) = \H(\B_7, \G)(t)(1+t)^{k-7},\]
which is not real-rooted either. 
\end{example}

The above example implies the following result.

\begin{thm}\label{thm:chordal-not-RR}
    For every dimension at least $6$, there exist flag and chordal nestohedra with a non-real-rooted $h$-polynomial.
\end{thm}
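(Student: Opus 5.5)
The theorem will follow from Example~\ref{ex:non-RR-chordal-flag-nestohedra} once three things are in place. First, the examples are flag chordal nestohedra. Second, the $h$-polynomials are the stated Chow polynomials. Third, those polynomials have non-real roots. The plan is to establish each in turn.

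\emph{Step 1: structure of $(\B_7,\G)$.} I will check that $\G$ is a building set using Proposition~\ref{prop:backman-char-building-set}. Since $\G_{\min}$ for $\B_7$ is just the atoms, it suffices to verify that $G\cup G'\in\G$ whenever $G,G'\in\G$ intersect, which is a finite check. Chordality is the condition of Example~\ref{ex:boolean-chordal}: every initial segment (in the natural order) of a member of $\G$ lies in $\G$. For instance, $13456$ has prefixes $1,13,134,1345$, and $134567$ has prefixes $1,13,134,1345,13456$, all of which appear in $\G$. By Example~\ref{ex:boolean-chordal}, chordality is equivalent to $<$-completeness, and $(\B_7,\G)$ is then supersolvable via the chain $\emptyset\prec 1\prec\cdots\prec[7]$. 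Example~\ref{ex:flag-built-lattices}(iv) then gives flagness.

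\emph{Step 2: the polynomial.} Over a Boolean lattice, $\H(\B_n,\G)$ is the $h$-polynomial of the nestohedron (Example~\ref{ex:boolean-chordal}). I would compute it as $\sum_{m\in\FY}t^{\deg m}$, or more efficiently via Theorem~\ref{thm:complete-formula} by counting stable maximal nested sets by descents. The target $\gamma$-polynomial is $1+7t+8t^2+t^3$, since
\[
(1+t)^6+7t(1+t)^4+8t^2(1+t)^2+t^3=1+13t+48t^2+73t^3+\cdots.
\]
Alternatively, I would run the recursion \eqref{eq:formula-deletion} from the second proof of Theorem~\ref{thm:complete-gamma-pos}, deleting $7$, then $6$, and so on. This bookkeeping is the main labor and the main obstacle: it is a finite but error-prone enumeration, and I would cross-check it by machine.

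\emph{Step 3: non-real-rootedness.} Since $h(t)=(1+t)^6\gamma\!\left(t/(1+t)^2\right)$ with $\gamma(u)=1+7u+8u^2+u^3$, the polynomial $h$ is real-rooted if and only if all roots of $\gamma$ are real and lie in $[-1/4,0)$. One checks that $\gamma$ has a root in $(-1/4,0)$: indeed $\gamma(-1/4)=1-7/4+1/2-1/64<0$ and $\gamma(0)=1>0$. The discriminant of the cubic shows its other roots are real, so the failure must come from a real root outside $[-1/4,0)$. Such a root gives $t^2+(2-1/u)t+1=0$ with discriminant $(2-1/u)^2-4<0$, hence a complex pair of roots of $h$. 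This matches the numerics.

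\emph{Step 4: the infinite family.} For $k\ge7$, the set $\G_k$ adds singletons and the chain $1\cdots j$ for $8\le j\le k$. These remain prefix-closed, so $\G_k$ is chordal, hence complete and flag. Each added element $1\cdots j$ is a binary augmentation with factors $1\cdots(j-1)$ and $j$. Applying the pullback formula of Section~\ref{subsec:pullback-maps}, or directly the coloop case of the recursion \eqref{eq:formula-deletion}, each new element multiplies $\H$ by $(1+t)$. This yields $\H(\B_k,\G_k)=\H(\B_7,\G)(1+t)^{k-7}$, which is non-real-rooted of degree $k-1\ge6$.
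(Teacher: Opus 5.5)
Your overall route is the paper's: the theorem is just the content of Example~\ref{ex:non-RR-chordal-flag-nestohedra}, and your Steps 1, 2 and 4 are fine. For Step 4, note that $S_e=\{[k]\}$ when $e=k$, so the coloop case of the deletion recursion gives exactly the factor $1+t$.

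\textbf{Step 3 has two independent errors.} This step is the actual content of the statement.

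\emph{Wrong $\gamma$-polynomial.} Your $(1+t)^6+7t(1+t)^4+8t^2(1+t)^2+t^3$ has $t^2$-coefficient $15+28+8=51$ and $t^3$-coefficient $79$, not $48$ and $73$. Matching $h_2=48$ forces $\gamma_2=48-15-28=5$. Then $73=20+42+10+\gamma_3$ gives $\gamma_3=1$, so $\gamma(u)=1+7u+5u^2+u^3$. A direct Feichtner--Yuzvinsky count confirms the paper's coefficients: any two non-singleton members of $\G$ contain $1$, so only chains support FY monomials, and one finds $h_2=11+37=48$.

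\emph{Wrong real-rootedness criterion.} The claim ``$h$ is real-rooted iff the roots of $\gamma$ are real and lie in $[-\tfrac14,0)$'' is false. For real $u<0$, the equation $t^2+(2-1/u)t+1=0$ has discriminant $(2-1/u)^2-4=(-1/u)(4-1/u)>0$. So every negative root of $\gamma$, including those below $-\tfrac14$, pulls back to two real roots of $h$; for example, $\gamma=1+u$ gives $h=1+3t+t^2$. The correct fact, recalled in the paper's final section, is that $h$ is real-rooted iff $\gamma$ is.

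In particular, your cubic $1+7u+8u^2+u^3$ has discriminant $697>0$ and three negative roots, in $(-8,-7)$, $(-1,-\tfrac14)$ and $(-\tfrac14,0)$. So the polynomial you wrote down is real-rooted, and your argument would ``prove'' the opposite. The two errors merely cancel.

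\textbf{The repair is short.} The correct cubic $u^3+5u^2+7u+1$ has discriminant $-44<0$. So besides its real root in $(-\tfrac14,0)$, it has a non-real conjugate pair $u\approx-2.42\pm0.61i$.

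For each root $u_0$ of $\gamma$, any solution $t_0$ of $t^2+(2-1/u_0)t+1=0$ satisfies $t_0\neq-1$ and $t_0/(1+t_0)^2=u_0$. Hence $h(t_0)=(1+t_0)^6\gamma(u_0)=0$. A real $t_0$ would force a real $u_0$, so the non-real pair produces four non-real roots of $h$. This matches the paper's numerics $-1.851\pm0.137i$ and $-0.537\pm0.039i$.

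The failure of real-rootedness therefore comes from a complex pair of roots of $\gamma$, not from a real root of $\gamma$ outside $[-\tfrac14,0)$ as you claimed.
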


A sensible question is how the real-rootedness of Chow polynomials relates to the Kruskal--Katona and FFK inequalities for the corresponding $\gamma$-vector. It is well known that, for a palindromic polynomial $h(t)$ with nonnegative coefficients, real-rootedness of $h(t)$ is equivalent to real-rootedness of its associated $\gamma$-polynomial. About two decades ago, Bell and Skandera \cite[Conjecture~4.1]{bell-skandera} conjectured that every real-rooted polynomial with positive integer coefficients and constant term equal to $1$ is the $f$-polynomial of a simplicial complex. They further asked in \cite[Question~5.1]{bell-skandera} whether every such polynomial is in fact the $f$-polynomial of a balanced complex. Both the conjecture and the question remain widely open, although some progress has been made in recent years; see \cite{mu-welker}. In particular, if the question by Bell and Skandera admits an affirmative answer, then one could view our Theorem~\ref{thm:main-complete-implies-nevo-petersen+balanced} as supporting evidence towards Conjecture~\ref{conj:chow-rr-maximal}.

\subsection{\texorpdfstring{$g$}{g}-vectors of nested set complexes}

Another polynomial naturally associated with a built matroid is the $h$-polynomial of its nested set complex. In \cite{coron-ferroni-li}, we showed that, in the case of the maximal building set, the study of the real-rootedness of this polynomial can be run in parallel with that of the Chow polynomial. Moreover, in \cite[Conjecture~6.1]{coron2026structuralpropertiesnestedset}, we conjectured that the nested set complex of any flag built matroid has real-rooted $h$-polynomial. On the other hand, this phenomenon already fails for complete built matroids, as shown by the examples in \cite[Example~4.2]{athanasiadis-ferroni}.

Recall that if $\Delta$ is a simplicial complex of dimension $d-1$, then its $g$-vector is
\[
(g_0(\Delta),\ldots,g_{\lfloor d/2\rfloor}(\Delta)),
\]
where $g_0(\Delta)=h_0(\Delta)$ and
\[
g_i(\Delta)=h_i(\Delta)-h_{i-1}(\Delta)
\qquad \text{for } i=1,\ldots,\lfloor d/2\rfloor.
\]

\begin{question}
    Consider the $g$-vector of the nested set complex $\N(\L,\G)$. Is this $g$-vector an $f$-vector
    \begin{enumerate}[\normalfont(i)]
        \item when $\G$ is a complete building set?
        \item when $\G$ is a flag building set?
    \end{enumerate}
\end{question}

In \cite[Corollary~1.4]{coron2026structuralpropertiesnestedset}, using a result of Swartz \cite[Corollary~3.2]{swartz}, we proved that the $g$-vector of $\N(\L,\G)$ is always an $M$-vector, without any assumption on the building set. On the other hand, even in the case of the maximal building set, it is not known whether the $g$-vector of $\N(\L,\G)$ (equivalently, of the order complex of $\L$) is necessarily an $f$-vector. This question is particularly relevant in light of recent conjectures of Athanasiadis and Kalampogia--Evangelinou \cite{athanasiadis-kalampogia} concerning $h$-vectors of order complexes of geometric lattices.

\subsection{A flag complex for flag built lattices}

In their paper, Nevo and Petersen conjectured that the $\gamma$-vector of a flag simplicial sphere is the $f$-vector of a balanced simplicial complex. They also asked whether one can strengthen this statement by requiring the complex to be flag; see \cite[Problem~6.4]{nevo-petersen}. This is a strictly stronger requirement, since being the $f$-vector of a flag complex is more restrictive than being the $f$-vector of a balanced complex; see \cite{frohmader} (we note that being a flag complex does not imply being a balanced complex).

\begin{question}
    Let $(\M,\G)$ be a flag built lattice. Is the $\gamma$-vector of $\H(\M,\G)(t)$ the $f$-vector of a flag simplicial complex?
\end{question}

When $\M$ is a Boolean matroid, that is, in the setting of flag nestohedra, the answer to this question is affirmative by a construction of Aisbett \cite{aisbett}. In the present manuscript, however, we prove only the nonnegativity of the $\gamma$-vector. A natural intermediate step would therefore be to show that these $\gamma$-vectors satisfy the Kruskal--Katona inequalities.

\bibliographystyle{amsalpha}
\bibliography{biblio}
\end{document}